\newcommand{\field}[1]{\mathbb{#1}} \newcommand{\rz}{\field{R}}
\newcommand{\R}{\field{R}}
\newcommand{\cz}{\field{C}} \newcommand{\nz}{\field{N}}
\newcommand{\E}{\mathbb{E}}
\newtheorem{theoreme}{Theorem}[section]
\newtheorem{proposition}[theoreme] {Proposition}
\newtheorem{lemme}[theoreme]{Lemma}
\newtheorem{corollaire}[theoreme]{Corollary}
\newtheorem{definition}[theoreme]{Definition}
\newtheorem{remarque}[theoreme]{Remark}
\newtheorem{hypothese}{Hypothesis}
\def\XXint#1#2#3{{\setbox0=\hbox{$#1{#2#3}{\int}$}
     \vcenter{\hbox{$#2#3$}}\kern-.5\wd0}}
\numberwithin{equation}{section}
\DeclareMathOperator\supp{supp}
\DeclareMathOperator\Ran{Ran} 
 \DeclareMathOperator\Id{Id}
\DeclareMathOperator\Ker{Ker}
  \def\12{\frac{1}{2}}
\def\Hess{{\rm Hess}\,}
\title{Low temperature asymptotics for Quasi-Stationary
  Distributions in a bounded domain}
\author{T.~Leli\`evre\thanks{Universit\'e Paris-Est, CERMICS,
    Project-team  INRIA MicMac, 6 \& 8 avenue Blaise Pascal, 77455
    Marne-la-Vall\'ee, France.  \textit{email: lelievre@cermics.enpc.fr} }\; ,
  F.~Nier\thanks{IRMAR, UMR-CNRS 6625,  Universit{\'e} de Rennes 1, Campus
    de Beaulieu, 35042 Rennes, France.
 \textit{email: francis.nier@univ-rennes1.fr}}\; , }
\begin{document}
\bibliographystyle{plain}
\maketitle 
\begin{abstract}
We analyze the low temperature asymptotics of the quasi-stationary distribution associated with the
overdamped Langevin dynamics (a.k.a.  the Einstein-Smoluchowski
diffusion equation)~in a bounded domain. This analysis is
useful to  rigorously prove the consistency of an algorithm used in
molecular dynamics (the hyperdynamics), in the small temperature
regime. More precisely, we show that the algorithm is exact in terms
of state-to-state dynamics up to exponentially small factor in the
limit of small temperature. The proof is based on the asymptotic spectral analysis of associated Dirichlet and
Neumann realizations of Witten Laplacians. In order to cover a reasonably large range
of applications, the usual assumptions that the energy
landscape is a Morse function has been relaxed as much as possible.

\end{abstract}


\section{Introduction}
\label{se.intro}
The motivation of this work comes from the mathematical analysis of an algorithm used in
molecular dynamics, called the hyperdynamics~\cite{voter-97}. The aim
of this algorithm is to generate very efficiently the discrete state-to-state
dynamics associated with a continuous state space metastable Markovian dynamics,
by modifying the potential function. In Section~\ref{sec.MD}, we
explain the principle of the algorithm, and state the mathematical
problem. In Section~\ref{sec.res}, the main result of this article is
given in a simple setting.

\subsection{Molecular dynamics, hyperdynamics and the quasi stationary
distribution}\label{sec.MD}
Molecular dynamics calculations consist in simulating very long
trajectories of a particle model of matter, in order to infer
macroscopic properties from an atomic description. Examples include
the study of the change of conformation of large molecules (such as
proteins) with applications in biology, or the description of the
motion of defects in materials.

In a constant temperature environment, the dynamics used in practice
contains stochastic terms which model thermostating. The
 prototypical example, which is the focus of this work, is the overdamped Langevin dynamics: 
\begin{equation}\label{eq:OL}
dX_t = -\nabla f(X_t) \, dt + \sqrt{2 \beta^{-1}} dB_t,
\end{equation}
where $X_t \in \R^{3N}$ is the position vector of $N$ particles, $f:
\R^{3N} \to \R$ is the potential function (assumed to be smooth in the
following), and $\beta^{-1}=k_B T$ with
$k_B$ the Boltzmann constant and $T$ the temperature. The stochastic
process $B_t$ is a standard $3N$-dimensional Brownian motion. The
dynamics~\eqref{eq:OL} admits the canonical ensemble $\mu(dx)=Z^{-1}
\exp(-\beta f(x)) \, dx$ as an invariant probability measure.

To relate the macroscopic properties of matter to the microscopic
phenomenon, one simulates the process $(X_t)_{t \ge 0}$ (or processes
following related dynamics, like the Langevin dynamics) over very long
times. 
The difficulty associated with such simulations
is {\em metastability}, namely the fact that the stochastic process
remains trapped for very long times in some regions of the
configurational space called the metastable states. The timestep used
to obtain stable discretization is typically $10^{-15} s$, while the
macroscopic timescales of interest is of the order a few microseconds,
up to seconds. At the macroscopic level, the details of the dynamics
$(X_t)_{t \ge 0}$ do not matter. The important information is the
history of the visited metastable states, the so-called {\em
  state-to-state dynamics}.

The principle of the hyperdynamics
algorithm~\cite{voter-97} is to modify the potential $f$ in order to
accelerate the exit from metastable states, while keeping a correct
state-to-state dynamics. In the following, we focus on one elementary
brick of this dynamics, namely the exit event from a given metastable state.

In mathematical terms, the problem is the following (we refer
to~\cite{le-bris-lelievre-luskin-perez-12} for the mathematical proofs
of the statements below).  
Assuming that the
process remained trapped for a very long time in a domain $\Omega_+ \subset
\R^{3N}$ ($\Omega_+$ is a metastable state\footnote{We use the
  notation $\Omega_+$ since in the following, we will need a subdomain
  $\Omega_{-}$ such that $\overline{\Omega_-} \subset \Omega_+$.}, as mentioned above), 
it is known that the process reaches a local equilibrium
called the quasi stationary distribution (QSD) $\nu$ attached to the domain
$\Omega_+$, before leaving it. We assume in the following that $\Omega_+$ is a smooth bounded
domain in $\R^{3N}$. The probability distribution~$\nu$ has support $\Omega_+$, and is such
that, for all smooth test function $\varphi: \R^{3N} \to \R$,
\begin{equation}\label{eq:QSDlimit}
\lim_{t \to \infty} \E(\varphi (X_t) | \tau > t) = \int_{\Omega_+} \varphi \,
d \nu
\end{equation}
where $$\tau=\inf \{t > 0, \, X_t \not\in \Omega_+\},$$
is the first exit time from $\Omega_+$ for $X_t$. The metastability of the
well $\Omega_+$ can be quantified through the rate of convergence of the
limit in~\eqref{eq:QSDlimit}: in the following, it is assumed that
this convergence is infinitely fast.
From a PDE viewpoint, $\nu$
has a density $v$ with respect to the Boltzmann Gibbs measure
$\mu(dx)=e^{-\beta f(x)}\,dx$, $v$ being the first eigenvector of the infinitesimal
generator of the dynamics~\eqref{eq:OL}, with Dirichlet boundary
conditions on $\partial \Omega_+$:
\begin{equation}\label{eq:lambda_u}
\left\{
\begin{aligned}
-\nabla f \cdot \nabla v + \beta^{-1} \Delta v &= - \lambda v \text{ in $\Omega_+$},\\
v&=0 \text{ on $\partial \Omega_+$},
\end{aligned}
\right.
\end{equation}
where $-\lambda <0$ is the first eigenvalue. In other words:
$$d\nu = \frac{\displaystyle 1_{\Omega_+} (x) v(x) \exp(-\beta f(x)) \, dx}{\displaystyle\int_{\Omega_+} v(x)  \exp(-\beta f(x)) \, dx}.$$
Starting from the QSD $\nu$ (namely if $X_0 \sim \nu$), the way the
stochastic process $X_t$ solution to~\eqref{eq:OL} leaves the
well $\Omega_+$ is known: the law of the couple of random variables $(\tau,X_{\tau})$
(exit time, exit point) is characterized by the following three properties: (i)
$\tau$ and $X_\tau$ are independent and (ii) $\tau$ is exponentially
distributed with parameter~$\lambda$:
\begin{equation}\label{eq:tau}
\tau \sim {\mathcal E}(\lambda),
\end{equation}
where the notation $\sim$ is used to indicate the law of a random variable. These two properties are the
building blocks of a Markovian transition starting from
$\Omega_+$. Moreover, (iii) the exit point
distribution has an analytic expression in terms of $v$: for all
smooth test functions $\varphi : \partial \Omega_+ \to \R$,
\begin{equation}\label{eq:Xtau}
\E^\nu (\varphi(X_\tau)) = - {\frac{\displaystyle\int_{\partial \Omega_+}
  \varphi \, \partial_n \left(v \exp(-\beta f) \right)
  d\sigma}{\displaystyle\beta \lambda \int_{\Omega_+} v(x)  \exp(-\beta f(x))
  \, dx} } 
\end{equation}
where, for any smooth function $w: \Omega_+ \to \R$, $\partial_ n w=
\nabla w \cdot n$ denotes the outward normal derivative, $\sigma$ is the Lebesgue measure on $\partial \Omega_+$ and $\E^\nu$ indicates the expectation for the stochastic process
$X_t$ following~\eqref{eq:OL} and starting under the QSD: $X_0 \sim \nu$.

In practical cases of interest, the typical exit time is very
large ($\E(\tau)=1/\lambda$ is very large). The principle of the hyperdynamics is to modify the potential
$f$ in the state $\Omega_+$ to lead to smaller exit times, {\em while keeping a correct statistics
on the exit points}. Let us make this more precise, and let us consider
the process $X^{\delta f}_t$ which evolves on a new potential $f + \delta f$:
\begin{equation}\label{eq:OL2}
dX^{\delta f}_t = -\nabla (f+\delta f) (X^{\delta f}_t) \, dt + \sqrt{2 \beta^{-1}} dB_t.
\end{equation}
Instead of simulating $(X_t)_{t \ge0}$ following the
dynamics~\eqref{eq:OL} and considering the associated random variables
$(\tau,X_\tau)$, the hyperdynamics algorithm consists in
simulating $(X^{\delta f}_t)_{t \ge 0}$ and considering the associated
random variables $(\tau^{\delta f},X^{\delta f}_{\tau^{\delta
  f}})$, where $\tau^{\delta f}$ is the first exit time from $\Omega_+$ for
$X^{\delta f}_t$.

The assertion underlying the hyperdynamics algorithm is the following:
under appropriate assumption on the perturbation $\delta f$, then (i) the exit point distribution of $X^{\delta f}_t$ from $\Omega_+$ is
(almost) the same as the exit point distribution of $X_t$ from $\Omega_+$
and (ii) the exit time distribution for $X_t$ can be inferred from the
exit time distribution for $X^{\delta f}_t$ by a simple multiplicative
factor (see Equations~\eqref{eq:law_exit}--\eqref{eq:boost_factor} below).

More precisely, the assumptions on $\delta f$ in the original
paper~\cite{voter-97} can be stated as follows:
(i) $\delta f$ is sufficiently small so that $\Omega_+$ is still a metastable
state for $X^{\delta f}_t$ and (ii) $\delta f$ is zero on the boundary of~$\Omega_+$. The first hypothesis implies that we can assume that $X^{\delta f}_0$ is
distributed according to the QSD $\nu^{\delta f}$ associated
with~\eqref{eq:OL2} and
$\Omega_+$. The aim of this paper is to prove that, in the small
temperature regime (namely $\beta \to \infty$) and under appropriate
assumptions on $\delta f$, we indeed have the following
equality in law:
\begin{equation}\label{eq:law_exit}
(\tau,X_\tau) \stackrel{\mathcal L}{\simeq} ( B \tau^{\delta f}, X^{\delta f}_{\tau^{\delta
  f}}),
\end{equation}
where, in the left-hand side, $X_0 \sim \nu$ and, in the
right-hand side, $X^{\delta f}_0 \sim \nu^{\delta f}$. 
The so-called
boost factor $B$ has the following expression:
\begin{equation}\label{eq:boost_factor}
B
=
\frac{\int_{\Omega_+} \exp(-\beta f)}{\int_{\Omega_+} \exp(-\beta(f+\delta
  f))}=\int_{\Omega_+} \exp(\beta \delta f) \frac{\exp(-\beta(f+\delta
  f)}{\int_{\Omega_+} \exp(-\beta(f+\delta f))}.
\end{equation}
The second formula is interesting because it shows that $B$ can be
approximated through ergodic averages on the process $(X^{\delta f}_t)_{t
  \ge 0}$ (and this is actually exactly what is done in practice).

In view of the formulas~\eqref{eq:tau}--\eqref{eq:Xtau} stated above for the laws of the 
distributions of the two random variables exit time and
exit point, a crucial point for the mathematical analysis of the hyperdynamics algorithm
is to study how the first eigenvalue $\lambda$ and the
normal derivative $\partial_n v$ ($v$ being the first eigenvector, see~\eqref{eq:lambda_u}) are modified when changing the
potential $f$ to $f + \delta f$. More precisely, we would like to
check that, in the limit $\beta \to \infty$, $\lambda^{\delta f}=B
\lambda$ and, up to multiplicative constant, $\partial_n
  v^{\delta f} \propto \partial_n
  v$, where, with obvious notation, $(-\lambda^{\delta f},v^{\delta
    f})$ denotes the first eigenvalue eigenvector pair solution
  to~\eqref{eq:lambda_u} when $f$ is replaced by $f + \delta f$.

\subsection{The main results in a simple setting}
\label{sec.res}

Let us state the main results obtained in this paper in a simple and restricted
setting. We assume the following on the potential $f$. There exists a
subdomain $\Omega_-$ such that $\overline{\Omega_-} \subset \Omega_+$ and:
\begin{description}
\item[(i)]  $f$ and $f|_{\partial
  \Omega_+}$ are Morse
functions, namely $\mathcal{C}^{\infty}$ functions with non-degenerate critical points;
\item[(ii)] $|\nabla f| \neq 0$ in $\overline{\Omega_+} \setminus \Omega_-$,
$\partial_n f >0$ on $\partial \Omega_-$ and $\min_{\partial \Omega_+}
f \ge \min_{\partial \Omega_-} f$\,;
\item[(iii)]   the critical values of $f$ in $\Omega_{-}$
are all distinct and the differences $f(U^{(1)})-f(U^{(0)})$, where
$U^{(0)}$ ranges over the local minima of $f\big|_{\Omega_{-}}$ and
$U^{(1)}$ ranges over the critical points of $f\big|_{\Omega_{-}}$
with index $1$\,, are all distinct;
\item[(iv)] the maximal value of $f$ at critical points, denoted by $\mathrm{cvmax}=\max \{ f(x), \, x \in \Omega_+ \, \text{ s.t. } |\nabla
f(x)|=0 \}=\max \{ f(x), \, x \in \Omega_{-} \, \text{ s.t. } |\nabla
f(x)|=0 \}$, satisfies
\begin{equation}
\label{eq.condcvmax}
 \min_{\partial \Omega_-} f - \mathrm{cvmax} > \mathrm{cvmax} - \min_{\Omega_-} f.
\end{equation}
\end{description}
Concerning the perturbation $\delta f$, let us assume that $f + \delta
f$ satisfies the same four above hypotheses as~$f$, and that, in addition:
$$\delta f= 0 \text{ on } \Omega_+ \setminus \Omega_-.$$

Under these assumptions on $f$ and $\delta f$, it can be shown that the first
eigenvalue/eigenfunction pairs $(-\lambda,v)$ and $(-\lambda^{\delta
  f},v^{\delta f})$ respectively solutions
  to~\eqref{eq:lambda_u} with the potential $f$ and $f + \delta f$
  satisfy the following estimate: for some positive constant $c$, in
  the limit $\beta \to \infty$,
$$\frac{\lambda^{\delta f}}{\lambda} = B ( 1 + \mathcal{O}(e^{-\beta c}))$$
where, we recall, $B$ is defined by~\eqref{eq:boost_factor} and
$$ \frac{\partial_{n} v \big|_{\partial \Omega_+}}{ 
\|\partial_{n} v \|_{L^{1}(\partial \Omega_+)}}
= 
\frac{\partial_{n} v^{\delta f} \big|_{\partial \Omega_+}}{ 
\|\partial_{n} v^{\delta f}\|_{L^{1}(\partial
 \Omega_+)}} +
\mathcal{O}(e^{-\beta c})\quad \text{in}~L^{1}(\partial \Omega_+).$$
These results are simple consequences of the general Theorem~\ref{th.main}
below (see Corollary~\ref{co.variaf}) together with Proposition~\ref{pr.assMorse} and
Remark~\ref{re.assMorse}.

\subsection{Outline of the article}

The main result of this article, namely Theorem~\ref{th.main}, gives general
 asymptotic formulas for the first eigenvalue $\lambda$ and the
normal derivative $\partial_n v$ in the limit of small
temperature. This Theorem will be proven under assumptions involving
the low lying spectra 
of Witten Laplacians on
$\Omega_{-}$ and on $\Omega_+ \setminus \overline{\Omega_-}$.
These
assumptions hold for potentials satisfying the four
conditions (i)--(iv) stated above, but they are also valid in much more general
cases. In particular, we have in mind (i) assumptions stated only in
terms of $\Omega_{+}$ (see Remark~\ref{se.assmor+}), or (ii)
potentials not fulfilling the Morse assumption (see Section~\ref{se.nonMorse}).



The outline of the article is the following. In
Section~\ref{se.assresult}, we specify our general assumptions  and
state the two main theorems: Theorem~\ref{th.main} and Theorem~\ref{th.main2}. In Section~\ref{se.expdecay}, exponential decay
  estimates for the eigenvectors in terms of Agmon distances are reviewed. In Section~\ref{se.quasimode}, approximate eigenvectors for the
  Dirichlet Witten Laplacians on $\Omega_{+}$ are constructed in terms
  of eigenvectors for the
  Neumann Witten Laplacians on
  $\Omega_{-}$ and eigenvectors for the
  Dirichlet Witten Laplacians on the shell
  $\Omega_{+}\setminus\overline{\Omega_{-}}$. Following the strategy of
  \cite{HKN,HeNi,Lep1,Lep3,Lep4,LNV}, 
  accurate approximation of singular values of the Witten
  differential $d_{f,h}$ 
  are computed using matrix arguments in
  Section~\ref{se.matrix}. Theorem~\ref{th.main} and Theorem~\ref{th.main2} are finally
  proved in Section~\ref{se.proof}. The general assumptions used to
  prove the theorems are then thoroughly discussed and illustrated
  with various examples in Section~\ref{se.ass}. Our approach relies on the introduction of boundary Witten
Laplacians (namely Witten Laplacians with Dirichlet or Neumann boundary conditions) and requires notions and notation of Riemannian differential
geometry. A short presentation of these notions is given in the
Appendix~\ref{se.formulas}. 
 
\section{Assumptions and precise statements of the main results}
\label{se.assresult}

In order to prove the result, we first need to restate the
eigenvalue problem~\eqref{eq:lambda_u} with the standard notation used
in the framework of Witten Laplacians, which will be the central tool
in the following. It is easy to check that $(\lambda,v)$
satisfies~\eqref{eq:lambda_u} if and only if $(\lambda_1,u_1)$
satisfies
$$\Delta_{f,h}^{D,(0)}(\Omega_+) u_1 = \lambda_1 u_1$$
with
$$h=\frac{2}{\beta}, \qquad  \lambda_1=\frac{4}{\beta}\lambda=2h\lambda, 
\qquad u_1=\exp\left(-\frac{\beta f}{2} \right) v=\exp \left(-\frac{f}{h}\right)v,$$
and where $\Delta_{f,h}^{D, (0)}(\Omega_+)$  is
the  Witten Laplacian on zero-forms on $\Omega_+\subset\rz^{d}$\,, $d=3N$\,,
with homogeneous Dirichlet boundary conditions on $\partial \Omega_+$
(see Equation~\eqref{eq:Wlapl} below for more general formulas on $p$-forms):
\begin{equation}\label{eq:lapl_0_form}
\Delta_{f,h}^{D,(0)}(\Omega_+)) u_1 =  (-h\nabla +\nabla f) \cdot \left( (h\nabla +\nabla
f)u_1 \right)= -h^{2}\Delta u_1 + \left( |\nabla
  f|^{2}-h\Delta f \right) u_1.
\end{equation}
Notice that the operator $\Delta_{f,h}^{D,(0)}(\Omega_+)$ is a {\em positive}
symmetric operator.
We recall that $\Omega_+$ is the
metastable domain of interest, and $\Omega_-$ is a subdomain of
$\Omega_+$, where the potential $f$ is
modified in the hyperdynamics algorithm. In the following, we will
thus study how the first
eigenvalue $\lambda_1$ and eigenfunction $u_1$ of the Witten Laplacian
$\Delta_{f,h}^{D,(0)}(\Omega_+)$ depend on $f|_{\Omega_-}$. We will state the results in
a very general setting, namely for open regular bounded connected subsets $\Omega_{-}$ and $\Omega_{+}$ of a
$d$-dimensional Riemannian
manifold $(M,g)$ such that $\overline{\Omega_{-}}\subset
\Omega_{+}$.

The first assumption we make on $f$ is the following:
\begin{hypothese}
\label{hyp.0}  
The function $f:M \to \R$ is a ${\mathcal C}^\infty$ function satisfying
\begin{equation}
\label{eq.Hyp0}
|\nabla f|>0
\text{ on $\overline{\Omega_{+}}\setminus \Omega_{-}$}\,, \qquad
\partial_{n}f  >0 \text{ on   $\partial\Omega_{-}$}\qquad \text{
  and   }\qquad \min_{\partial\Omega_{+}}f\geq \min_{\partial \Omega_{-}}f\,.
\end{equation}
\end{hypothese}
\noindent
In~\eqref{eq.Hyp0}, $n$ denotes the unit normal vector on $\partial
\Omega_-$ which points outward from
$\Omega_-$. This first assumption has simple consequences that will be
used many times in the following.
\begin{lemme}
Under Hypothesis~\ref{hyp.0}, for all
$x\in \overline{\Omega_{+}}\setminus\Omega_{-}$,
$$f(x)\geq \min_{\partial
  \Omega_{-}} f >\min_{\Omega_{-}}f=\min_{\Omega_{+}}f.$$
\end{lemme}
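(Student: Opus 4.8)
The plan is to prove this lemma, which asserts that on the shell $\overline{\Omega_+}\setminus\Omega_-$ the function $f$ is bounded below by its minimum value on $\partial\Omega_-$, and that the global minimum of $f$ over $\overline{\Omega_+}$ is attained inside $\Omega_-$.

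\medskip

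\textbf{Step 1: No critical points on the shell, so the minimum is on the boundary.} By the first condition in~\eqref{eq.Hyp0}, $|\nabla f|>0$ on the compact set $K=\overline{\Omega_+}\setminus\Omega_-$. Hence $f$ restricted to $K$ has no interior critical points in $\mathrm{int}(K)=\Omega_+\setminus\overline{\Omega_-}$, so its minimum over $K$ is attained on $\partial K=\partial\Omega_+\cup\partial\Omega_-$.

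\textbf{Step 2: The minimum over the shell is not attained on $\partial\Omega_+$.} Here the second and third conditions in~\eqref{eq.Hyp0} come in. If the minimum over $K$ were attained at some $x_0\in\partial\Omega_+$, then since $\min_{\partial\Omega_+}f\geq\min_{\partial\Omega_-}f$, we would have equality $\min_K f = f(x_0) = \min_{\partial\Omega_-}f$, so the minimum over $K$ is in any case $\min_{\partial\Omega_-}f$ and it is also attained at a point $y_0\in\partial\Omega_-$. The point to verify is that the minimum over $K$ \emph{cannot} be strictly smaller on $\partial\Omega_+$: since $\min_{\partial\Omega_+}f\geq\min_{\partial\Omega_-}f$ and $\partial\Omega_-\subset K$, we directly get $\min_K f\leq\min_{\partial\Omega_-}f\leq\min_{\partial\Omega_+}f$, so in combination with Step~1, $\min_K f = \min_{\partial\Omega_-}f$. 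This gives $f(x)\geq\min_{\partial\Omega_-}f$ for all $x\in K$, which is the first claimed inequality.

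\textbf{Step 3: Strict inequality with the interior minimum and identification of the global minimum.} It remains to show $\min_{\partial\Omega_-}f>\min_{\Omega_-}f=\min_{\Omega_+}f$. Pick $y_0\in\partial\Omega_-$ with $f(y_0)=\min_{\partial\Omega_-}f$. By the condition $\partial_n f>0$ on $\partial\Omega_-$ (with $n$ outward from $\Omega_-$), moving from $y_0$ a small distance into $\Omega_-$ along $-n$ strictly decreases $f$; hence $\min_{\overline{\Omega_-}}f<f(y_0)=\min_{\partial\Omega_-}f$, and this minimum of $f$ over $\overline{\Omega_-}$ is attained at an interior point of $\Omega_-$, so $\min_{\Omega_-}f=\min_{\overline{\Omega_-}}f<\min_{\partial\Omega_-}f$. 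Finally, for the global statement: $\min_{\overline{\Omega_+}}f=\min\bigl(\min_K f,\ \min_{\overline{\Omega_-}}f\bigr)=\min\bigl(\min_{\partial\Omega_-}f,\ \min_{\Omega_-}f\bigr)=\min_{\Omega_-}f$, and since this value is $<\min_{\partial\Omega_-}f\leq\min_{\partial\Omega_+}f$ it is attained at an interior point of $\Omega_+$, i.e.\ $\min_{\Omega_+}f=\min_{\Omega_-}f$. Chaining the inequalities yields, for all $x\in\overline{\Omega_+}\setminus\Omega_-$, that $f(x)\geq\min_{\partial\Omega_-}f>\min_{\Omega_-}f=\min_{\Omega_+}f$.

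\medskip

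The only mildly delicate point is Step~3: turning the sign condition $\partial_n f>0$ into the strict inequality $\min_{\overline{\Omega_-}}f<\min_{\partial\Omega_-}f$ and, simultaneously, concluding that the minimum over $\overline{\Omega_-}$ is attained in the open set $\Omega_-$ rather than on its boundary. This is a standard one-variable argument along the inward normal, using smoothness of $f$ up to $\partial\Omega_-$ and compactness of $\partial\Omega_-$ to get a uniform lower bound on $\partial_n f$; everything else is a direct combination of the three inequalities in Hypothesis~\ref{hyp.0}.
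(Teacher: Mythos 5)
Your proof is correct, but it follows a genuinely different route from the paper's. The paper establishes the first inequality by following the gradient trajectory $\dot\gamma_x=-\nabla f(\gamma_x)$ from a point $x$ with $f(x)<\min_{\partial\Omega_+}f$: the trajectory stays in $\Omega_+$, converges to the critical set, hence enters $\Omega_-$, and monotonicity of $f$ along the flow yields $f(x)\geq\min_{\partial\Omega_-}f$; the strict inequality $\min_{\partial\Omega_-}f>\min_{\Omega_-}f$ is obtained the same way, by flowing inward from a minimizer of $f|_{\partial\Omega_-}$. You replace the dynamical argument by a static one: since $|\nabla f|>0$ on the compact shell $K=\overline{\Omega_+}\setminus\Omega_-$, the minimum of $f$ over $K$ must sit on $\partial K=\partial\Omega_+\cup\partial\Omega_-$, and the hypothesis $\min_{\partial\Omega_+}f\geq\min_{\partial\Omega_-}f$ identifies it as $\min_{\partial\Omega_-}f$; the strict inequality then follows from a first-order expansion of $f$ along the inward normal at a boundary minimizer, using $\partial_n f>0$. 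Your version is more elementary — it avoids well-posedness of the flow for all positive times and the (asserted but unproved in the paper) convergence of trajectories to the critical set — and, as you note, it even needs $\partial_n f>0$ only at a single minimizing point of $f|_{\partial\Omega_-}$, not uniformly. The gradient-flow picture is the standard one in this literature because it also encodes which "well" a point is attracted to, but for the purposes of this lemma nothing is lost by your shortcut. Both arguments yield the same identification $\min_{\Omega_+}f=\min_{\Omega_-}f$ at the end.
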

\begin{proof}
The last equality is a simple consequence of the fact that the critical
points are in $\Omega_-$ and the inequality
$\min_{\partial\Omega_{+}}f\geq \min_{\partial \Omega_{-}}f$. Let us
now consider the first inequality. Let us
denote $\gamma_x(t)$ the gradient
trajectory $\dot \gamma_{x}=-\nabla f(\gamma_{x})$ starting from $x\in
\Omega_{+}$  ($\gamma_{x}(0)=x$). Let us consider $x \in
\overline{\Omega_{+}}\setminus\Omega_{-}$ such that $f(x) <
\min_{\partial \Omega_+} f$. Since $t \mapsto f(\gamma_x(t))$ is non
increasing, $(\gamma_x(t))_{t \ge 0}$ remains in the bounded domain $\Omega_+$ and is
thus well defined for all positive times. Moreover, necessarily, the
distance of $\gamma_x(t)$ to the set of critical points of $f$ tends
to $0$ as $t \to \infty$. This implies that there exists $t_0>0$ such
that $\gamma_x(t_0) \in \Omega_-$, and thus, $f(x)=f(\gamma_x(0)) \ge
f(\gamma_{x}(t_0)) \ge \min_{\partial \Omega_-} f$. This concludes the
proof of the first inequality. The second inequality is a
consequence of the assumption $\partial_{n}f  >0$ on
$\partial\Omega_{-}$, and is proven by considering the trajectory
$(\gamma_x(t))_{t \ge 0}$ with $x \in \arg\min_{\partial \Omega_-} f$.
\end{proof}
\begin{remarque}
One can easily check, using the same arguments, that $\partial_{n}f>0$ on $\partial
  \Omega_{+}$ together with the two first conditions of Hypothesis~\ref{hyp.0}
  implies $\min_{\partial \Omega_{+}}f > \min_{\partial
    \Omega_{-}}f$\,.
\end{remarque}
The second assumption on $f$ is:
\begin{hypothese}
\label{hyp.3}
The set of critical points of $f$ in $\Omega_{+}$ is included in $\left\{f\leq \min_{\partial\Omega_{+}}f-c_{0}\right\}$:
$$
\left\{x\in \Omega_{+}\,, \nabla f(x)=0\right\}\subset \left\{x\in
  \Omega_{+},\, f(x)< \min_{\partial\Omega_{+}}f-c_{0}\right\}\,.
$$
\end{hypothese}
In addition to Hypotheses~\ref{hyp.0} and~\ref{hyp.3}, our main
results are stated under assumptions on the spectrum of the Witten
Laplacians associated with $f$, on $\Omega_{-}$ and
$\Omega_{+}\setminus\overline{\Omega_{-}}$ (see Hypotheses~\ref{hyp.1}
and~\ref{hyp.2} below). We will discuss more explicit assumptions on $f$ for which those additional hypotheses
are satisfied in Section~\ref{se.ass}. Let us first
define the Witten Laplacians.  We refer
the reader to~\cite{Wit,HeSj4,CFKS,Bur,Zha}
for introductory texts to the semiclassical analysis of Witten Laplacians and its famous application
to Morse inequalities and related results. 

The Witten Laplacians are defined on $\bigwedge {\cal
  C}^{\infty}(M)=\oplus_{p=0}^{d}\bigwedge^{p} {\cal
  C}^{\infty}(M)$  as
\begin{equation}\label{eq:Wlapl}
\left\lbrace
\begin{aligned}
&\Delta_{f,h}=(d_{f,h}^{*}+d_{f,h})^{2}=d_{f,h}^{*}d_{f,h}+d_{f,h}d_{f,h}^{*}\\
&\text{where }  d_{f,h}=e^{-\frac{f}{h}}(hd)e^{\frac{f}{h}} \text{ and }
d_{f,h}^{*}=e^{\frac{f}{h}}(hd^{*})e^{-\frac{f}{h}}\,.
\end{aligned}
\right.
\end{equation}
On a domain $\Omega\subset M$ and for $m\in \nz$, the Sobolev space
$\bigwedge W^{m,2}(\Omega)$ is defined as the set of  $u\in
\Lambda L^{2}(\Omega)$ such that locally
$\partial_{x}^{\alpha}u\in \bigwedge L^{2}(\Omega)$ for all
$\alpha\in\nz^{d}$\,, $|\alpha|\leq m$  (this property does not depend
on the local coordinate system $(x^{1},\ldots, x^{d})$). 
When $\Omega$ is a regular bounded domain, $\bigwedge W^{m,2}(\Omega)$
coincides with the set of $u\in
\bigwedge L^{2}$ such that there exists $\tilde{u}\in \bigwedge
W^{m,2}(M)$ such that $\tilde{u}\big|_{\Omega}=u$. The spaces $\bigwedge
W^{s,2}(\Omega)$ for $s \in \R$ are then defined by duality and
interpolation. For $m=1$, the quantity
$\sqrt{\|u\|_{L^2(\Omega)}^2+\|du\|_{L^2(\Omega)}^2+\|d^*u\|_{L^2(\Omega)}^2}$
  is equivalent to the $W^{1,2}(\Omega)$-norm. This is a well-known
  result when $\Omega=\R^d$. The extension to a regular bounded domain
  is proved by using local charts and the reflexion principle, see~\cite{Tay,ChPi}.

In a regular bounded domain $\Omega$ of $M$\,, various self-adjoint
realizations of $\Delta_{f,h}$ can be considered:
\begin{itemize}
\item The Dirichlet realization $\Delta_{f,h}^{D}(\Omega)$ with domain
$$
D(\Delta_{f,h}^{D}(\Omega))=\left\{\omega\in \bigwedge W^{2,2}(\Omega)\,,
\quad
\mathbf{t}\omega\big|_{\partial\Omega}=0\,, \quad
\mathbf{t}d_{f,h}^{*}\omega\big|_{\partial\Omega}=0
\right\}\,.
$$
It is the Friedrichs extension of the closed quadratic form
\begin{equation}
  \label{eq.form}
  {\cal D}(\omega,\omega')=\langle d_{f,h}\omega\,,\,
d_{f,h}\omega'\rangle_{L^{2}}+\langle d_{f,h}^{*}\omega\,,\, d_{f,h}^{*}\omega'\rangle_{L^{2}}
\end{equation}
defined on the domain
$$
\bigwedge W^{1,2}_{D}(\Omega)=\left\{\omega\in \bigwedge W^{1,2}(\Omega)\,,\quad 
\mathbf{t}\omega\big|_{\partial\Omega}=0\right\}\,.
$$
Its restriction to zero-forms (functions) is simply the
operator~\eqref{eq:lapl_0_form} on $\Omega$ with homogeneous Dirichlet boundary
conditions. It is associated with the stochastic process~\eqref{eq:OL} killed at the boundary.
\item The Neumann realization $\Delta_{f,h}^{N}(\Omega)$ with domain
$$
D(\Delta_{f,h}^{N})(\Omega)=
\left\{\omega\in \bigwedge W^{2,2}(\Omega)\,,
\quad
\mathbf{n}\omega\big|_{\partial\Omega}=0\,, \quad
\mathbf{n}d_{f,h}\omega\big|_{\partial\Omega}=0
\right\}\,.
$$
It is the Friedrichs extension of the closed quadratic
form~\eqref{eq.form} defined on the domain
$$
\bigwedge W^{1,2}_{N}(\Omega)=\left\{\omega\in \bigwedge W^{1,2}(\Omega)\,,\quad 
\mathbf{n}\omega\big|_{\partial\Omega}=0\right\}\,.
$$
Its restriction to zero-forms (functions) is simply the
operator~\eqref{eq:lapl_0_form} on $\Omega$ with homogeneous Neumann boundary
conditions. It is associated with the stochastic process~\eqref{eq:OL} reflected at the boundary.
\end{itemize}

We will handle exponentially small quantities and we shall use the
following notation.
\begin{definition}
 \label{de.O} Let $(E,\|~\|)$ be a normed space. For two functions
 $a:(0,h_{0})\to E$  and $b:(0,h_{0})\to \rz_{+}$ we write 
 \begin{itemize}
 \item $a(h)=\mathcal{O}(b(h))$ 
if there exists a constant $C>0$ such that 
$
\forall h\in (0,h_{0}),\, \|a(h)\|\leq Cb(h);
$
\item $a(h)=\tilde{\mathcal{O}}(b(h))$ if for every $\varepsilon>0$,
  $a(h)=\mathcal{O}(b(h)e^{\frac{\varepsilon}{h}})$, or equivalently:
$$
\forall \varepsilon >0,\; \exists C_{\varepsilon}\geq 1,\; \forall
h\in (0,h_{0}),\, \|a(h)\|\leq C_{\varepsilon}b(h)e^{\frac{\varepsilon}{h}}\,.
$$
 \end{itemize}
\end{definition}
Notice that $a(h)=\tilde{\mathcal{O}}(b(h))$ is equivalent to
$\limsup_{h \to 0} h \log(\|a(h)\|/b(h)) \le 0$: up to an exponentially
small multiplicative term, $\|a(h)\|$ is smaller than $b(h)$.
Note in particular the identity
$\mathcal{O}(e^{-\frac{c_{1}}{h}})\tilde{\mathcal{O}}(e^{-\frac{c_{2}}{h}})
=
\tilde{\mathcal{O}}(e^{-\frac{c_{1}+c_{2}}{h}}) =
\mathcal{O}(e^{-\frac{c'}{h}})$
for any fixed $c'<c_{1}+c_{2}$ independently of $h\in (0,h_{0})$.

We are now in position to state the two additional Hypotheses on $f$,
which are stated as assumptions on the eigenvalues of Witten
Laplacians on $\Omega_{-}$ and $\Omega_{+}\setminus\overline{\Omega_{-}}$.
We assume that there exist a constant
$c_{0}>0$ and a function 
$\nu:(0,h_{0})\to (0,+\infty)$ with
\begin{equation}
  \label{eq.hypnu}
\begin{aligned}
&\forall \varepsilon>0\,,\exists C_{\varepsilon}>1,\quad
\frac{1}{C_{\varepsilon}}e^{-\frac{\varepsilon}{h}}\leq \nu(h)\leq
h,\\
\text{or equivalently }&\log\left(\frac{\nu(h)}{h}\right)\leq 0 \text{
  and }
\lim_{h\to 0}h\log\left(\nu(h)
\right)=0,
\end{aligned}
\end{equation}
and  such that the following hypotheses are fulfilled:
\begin{hypothese}
\label{hyp.1}  
The Neumann Witten Laplacian 
defined on $\Omega_{-}$ and restricted to forms of degree~$0$ and~$1$, 
$\Delta_{f,h}^{N,(p)}(\Omega_{-})$, $p=0,1$,
satisfies
\begin{align}
  \label{eq.nombN}
& \#\left[\sigma(\Delta_{f,h}^{N,(p)}(\Omega_{-}))
\cap [0,\nu(h)]\right] =: m_{p}^{N}(\Omega_{-})\,,\\
&
\label{eq.tailleN}
\sigma(\Delta_{f,h}^{N,(p)}(\Omega_{-}))
\cap [0,\nu(h)] \subset 
\left[0,e^{-\frac{c_{0}}{h}}\right]\,,
\end{align}
with $m_{p}^{N}(\Omega_{-})$ independent of $h \in (0,h_0)$. Here and
in the following, eigenvalues are counted with multiplicity, and the
symbol $\#$ denotes the cardinal of a finite ensemble.

In addition, there exists in $\overline{\Omega_{-}}$ 
 an open neighborhood $\mathcal{V}_{-}$ of $\partial\Omega_{-}$ 
such that any eigenfunction $\psi(h)$  of $\Delta_{f,h}^{N,(0)}(\Omega_{-})$
associated with a small nonzero eigenvalue $\mu(h)$ (namely $0<\mu(h)\leq \nu(h)$)  satisfies
\begin{equation}
  \label{eq.decayhyp}
  \|\psi(h)\|_{L^{2}(\mathcal{V}_{-})}=\tilde{\mathcal{O}}\left(\sqrt{\mu(h)}
    \right).
\end{equation}
\end{hypothese}
\begin{hypothese}
  \label{hyp.2}
The Dirichlet Witten Laplacian on $\Omega_{+}\setminus
  \overline{\Omega_{-}}$ restricted to one-forms satisfies
  \begin{align}
\label{eq.nombD}
    &\sharp\left[\sigma(\Delta_{f,h}^{D,(1)}(\Omega_{+}\setminus\overline{\Omega_{-}}))
\cap
[0,\nu(h)]\right]=:m_{1}^{D}(\Omega_{+}\setminus\overline{\Omega_{-}})\,,\\
\label{eq.tailleD}
& 
\sigma(\Delta_{f,h}^{D,(1)}(\Omega_{+}\setminus\overline{\Omega_{-}}))
\cap
[0,\nu(h)]\subset \left[0,e^{-\frac{c_{0}}{h}}\right]\,,
  \end{align}
with $m_{1}^{D}(\Omega_{+}\setminus\overline{\Omega_{-}})$ independent
of $h\in (0,h_{0})$.
\end{hypothese}
Our main results concern the smallest eigenvalue as well as
properties of the associated eigenfunction of $\Delta_{f,h}^{D,(0)}(\Omega_{+})$. 
\begin{theoreme}
\label{th.main}
Assume Hypotheses~\ref{hyp.0}, \ref{hyp.3},~\ref{hyp.1}, \ref{hyp.2} and $h\in (0,h_{0})$ with 
$h_{0}>0$ small enough. The eigenvalues contained in $[0,\nu(h)]$ of the
Dirichlet Witten Laplacians $\Delta_{f,h}^{D,(p)}(\Omega_{+})$, for
$p=0,1$, satisfy:
\begin{align*}
& m_{0}^{D}(\Omega_{+}):=\sharp\left[\sigma(\Delta_{f,h}^{D,(0)}(\Omega_{+}))\cap[0,\nu(h)]\right]
=m_{0}^{N}(\Omega_{-})\,,\\
& m_{1}^{D}(\Omega_{+}):=\sharp\left[\sigma(\Delta_{f,h}^{D,(1)}(\Omega_{+}))\cap[0,\nu(h)]\right]
=m_{1}^{N}(\Omega_{-})+m_{1}^{D}(\Omega_{+}\setminus\overline{\Omega_{-}})\,,\\
&\sigma(\Delta_{f,h}^{D,(p)}(\Omega_{+}))\cap[0,\nu(h)]\subset [0,e^{-\frac{c}{h}}]\,.
\end{align*}
Let $(u_{k}^{(1)})_{1\leq k\leq
  m_{1}^{D}(\Omega_{+}\setminus\overline{\Omega_{-}})}$ be an
orthonormal basis of the spectral subspace
$\Ran
1_{[0,\nu(h)]}(\Delta_{f,h}^{D,(1)}(\Omega_{+}\setminus\overline{\Omega_{-}}))$
and set 
$$
\kappa_{f}=\min_{\partial \Omega_{+}}f -\min_{\Omega_{+}}f\,.
$$
The smallest eigenvalue of $\Delta_{f,h}^{D,(0)}(\Omega_{+})$
satisfies, in the limit $h \to 0$:
\begin{align}
&\lim_{h \to 0} h \log \lambda_{1}^{(0)}(\Omega_{+}) = -2
\kappa_f \label{eq.applambda10}
\\
  &\lambda_{1}^{(0)}(\Omega_{+})=
\frac{h^{2}\sum_{k=1}^{m_{1}^{D}(\Omega_{+}\setminus\overline{\Omega_{-}})}
\left|\int_{\partial\Omega_{+}} e^{-\frac{f}{h}}u_{k}^{(1)}(n)({\sigma})~d\sigma\right|^{2}}{
\int_{\Omega_{+}}
e^{-\frac{2f(x)}{h}}~dx} (1+\mathcal{O}(e^{-\frac{c}{h}}))   \label{eq:expansion_lambda10}
\end{align}
for some constant $c>0$ and
$u_{k}^{(1)}(n)({\sigma})=\mathbf{i}_{n}u_{k}^{(1)} (\sigma)$ with the
interior product notation~\eqref{eq:interior}.
Moreover, the non negative $L^2(\Omega_+)$-normalized eigenfunction $u_1^{(0)}$
satisfies
\begin{align}
\label{eq.appu10}
  \left\|u_{1}^{(0)}-\frac{e^{\frac{-f}{h}}}{\left(\int_{\Omega_{+}}e^{-\frac{2f(x)}{h}}~dx\right)^{1/2}}\right\|_{W^{2.2}(\Omega_{+})}
&=\mathcal{O}(e^{-\frac{c}{h}})\,,
\\
\label{eq.appdu10}
\forall n\in\nz, \,
\left\|d_{f,h}u_{1}^{(0)}+\sum_{k=1}^{m_{1}^{D}(\Omega_{+}\setminus\overline{\Omega_{-}})}
\frac{h\int_{\partial
    \Omega_{+}}e^{-\frac{f(\sigma)}{h}}u_{k}^{(1)}(n)({\sigma})~d\sigma}{\left(\int_{\Omega_{+}}e^{-\frac{2f(x)}{h}}~dx\right)^{1/2}
}u_{k}^{(1)}\right\|_{W^{n,2}(\mathcal{V})}&=\mathcal{O}(e^{-\frac{\kappa_{f}+c_{\mathcal{V}}}{h}}),
\end{align}
where $\mathcal{V}$ is any neighborhood of $\partial\Omega_{+}$ lying
in $\Omega_{+}\setminus\overline{\Omega_{-}}$ and $c_{\mathcal{V}}>0$
is a constant independent on~$n$ and~$h$. The symbols $d\sigma$ and $n(\sigma)$
respectively denote the infinitesimal volume on $\partial \Omega_{+}$
and the outward normal vector at $\sigma\in \partial
\Omega_{+}$\,.
\end{theoreme}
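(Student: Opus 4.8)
The plan is to combine the supersymmetric (Hodge) structure of the Witten Laplacian with a quasimodal analysis, in the spirit of \cite{HKN,HeNi,Lep1,Lep3,Lep4,LNV}. Write $F^{(p)}=\Ran 1_{[0,\nu(h)]}(\Delta_{f,h}^{D,(p)}(\Omega_{+}))$ for $p=0,1$. I would first record that the intertwining relation $\Delta_{f,h}^{D,(1)}(\Omega_{+})\,d_{f,h}=d_{f,h}\,\Delta_{f,h}^{D,(0)}(\Omega_{+})$ holds on the operator domains (the Dirichlet boundary conditions being compatible with $d_{f,h}$), so that $d_{f,h}$ restricts to a map $L\colon F^{(0)}\to F^{(1)}$ and $\Delta_{f,h}^{D,(0)}(\Omega_{+})=d_{f,h}^{*}d_{f,h}=L^{*}L$ on $F^{(0)}$. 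Moreover $\Ker\Delta_{f,h}^{D,(0)}(\Omega_{+})=\{0\}$: a function in this kernel is annihilated by $d_{f,h}$, hence equal to a constant times $e^{-f/h}$ on the connected set $\Omega_{+}$, which contradicts the homogeneous Dirichlet condition unless it is $0$. Hence $L$ is injective, $m_{0}^{D}(\Omega_{+})\le m_{1}^{D}(\Omega_{+})$, the eigenvalues of $\Delta_{f,h}^{D,(0)}(\Omega_{+})$ in $[0,\nu(h)]$ are exactly the squares of the smallest singular values of $L$, and $\lambda_{1}^{(0)}(\Omega_{+})=\|d_{f,h}u_{1}^{(0)}\|_{L^{2}(\Omega_{+})}^{2}$. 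Everything is then reduced to describing $L$ in well-chosen bases.

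Next I would do the counting and the localization. Using the construction of Section~\ref{se.quasimode}, one builds $0$-form quasimodes on $\Omega_{+}$ from the Neumann $0$-form eigenfunctions of $\Delta_{f,h}^{N,(0)}(\Omega_{-})$, cut off in $\Omega_{-}$ and continued on the shell $\Omega_{+}\setminus\overline{\Omega_{-}}$ along the gradient field (which has no zero there, by Hypothesis~\ref{hyp.0}); and $1$-form quasimodes from the Neumann $1$-form eigenfunctions of $\Delta_{f,h}^{N,(1)}(\Omega_{-})$ together with the Dirichlet $1$-form eigenfunctions $u_{k}^{(1)}$ of $\Delta_{f,h}^{D,(1)}(\Omega_{+}\setminus\overline{\Omega_{-}})$, suitably extended. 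An IMS localization for the partition of $\Omega_{+}$ into $\Omega_{-}$ and a neighbourhood of the shell, together with the exponential decay estimates of Section~\ref{se.expdecay}, Hypotheses~\ref{hyp.1}--\ref{hyp.2}, Hypothesis~\ref{hyp.3} and the estimate \eqref{eq.decayhyp}, shows that these quasimodes are almost orthonormal, have quadratic forms of order $\tilde{\mathcal{O}}(\nu(h))$, and conversely that every normalized state with quadratic form $\le\nu(h)$ essentially concentrates in $\Omega_{-}$ or in the low-energy region of the shell. This yields $m_{0}^{D}(\Omega_{+})=m_{0}^{N}(\Omega_{-})$, $m_{1}^{D}(\Omega_{+})=m_{1}^{N}(\Omega_{-})+m_{1}^{D}(\Omega_{+}\setminus\overline{\Omega_{-}})$ and the gap $\sigma(\Delta_{f,h}^{D,(p)}(\Omega_{+}))\cap[0,\nu(h)]\subset[0,e^{-c/h}]$; in particular $F^{(1)}$ is, up to $\mathcal{O}(e^{-c/h})$, the direct sum of the span of the $\Omega_{-}$-localized $1$-form quasimodes and of the span of the $u_{k}^{(1)}$'s, the latter being concentrated near $\partial\Omega_{+}$.

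Then I would isolate the ground state. The $0$-form quasimode attached to the global minimum of $f$ is a cut-off of $e^{-f/h}$; since $f\ge\min_{\partial\Omega_{+}}f$ on $\partial\Omega_{+}$ its Dirichlet defect has size $\tilde{\mathcal{O}}(e^{-\kappa_{f}/h})$, which together with the spectral gap gives \eqref{eq.appu10}: $u_{1}^{(0)}=e^{-f/h}/\|e^{-f/h}\|_{L^{2}(\Omega_{+})}+r$ with $\|r\|_{W^{2,2}(\Omega_{+})}=\mathcal{O}(e^{-c/h})$, and, by the decay estimates, $r$ super-exponentially small on $\Omega_{-}$. Since $d_{f,h}e^{-f/h}=0$, one has $d_{f,h}u_{1}^{(0)}=d_{f,h}r\in F^{(1)}$, and its components along the $\Omega_{-}$-localized $1$-form quasimodes are negligible (integrate by parts and use the smallness of $r$ where those quasimodes live), so
\[
d_{f,h}u_{1}^{(0)}=\sum_{k}c_{k}\,u_{k}^{(1)}+\mathcal{O}(e^{-(\kappa_{f}+c)/h}),\qquad c_{k}=\langle d_{f,h}u_{1}^{(0)}\,,\,u_{k}^{(1)}\rangle_{L^{2}(\Omega_{+}\setminus\overline{\Omega_{-}})}.
\]
To compute $c_{k}$ I would integrate by parts on the shell: the Dirichlet condition $u_{1}^{(0)}\big|_{\partial\Omega_{+}}=0$ removes the $\partial\Omega_{+}$ boundary term and the concentration of $u_{k}^{(1)}$ near $\partial\Omega_{+}$ makes the $\partial\Omega_{-}$ one negligible, so $c_{k}=\langle u_{1}^{(0)}\,,\,d_{f,h}^{*}u_{k}^{(1)}\rangle_{\Omega_{+}\setminus\overline{\Omega_{-}}}+(\text{negligible})$; replacing $u_{1}^{(0)}$ by $e^{-f/h}/\|e^{-f/h}\|_{L^{2}(\Omega_{+})}$ costs $\le\|r\|_{L^{2}}\,\|d_{f,h}^{*}u_{k}^{(1)}\|_{L^{2}}=\mathcal{O}(e^{-c/h})\,\mathcal{O}(e^{-c_{0}/(2h)})$ by \eqref{eq.tailleD}, and integrating by parts back — now $d_{f,h}e^{-f/h}=0$ while $e^{-f/h}$ does \emph{not} vanish on $\partial\Omega_{+}$ — leaves exactly
\[
c_{k}=-\,\frac{h}{\|e^{-f/h}\|_{L^{2}(\Omega_{+})}}\int_{\partial\Omega_{+}}e^{-f(\sigma)/h}\,u_{k}^{(1)}(n)(\sigma)\,d\sigma\;(1+\mathcal{O}(e^{-c/h})),
\]
the $\partial\Omega_{-}$ contribution being again negligible. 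This is \eqref{eq.appdu10} (the statement in $W^{n,2}(\mathcal{V})$ uniformly in $n$ following by bootstrapping up-to-the-boundary elliptic estimates for the eigenvalue equation on $\mathcal{V}$). Finally $\lambda_{1}^{(0)}(\Omega_{+})=\|d_{f,h}u_{1}^{(0)}\|_{L^{2}(\Omega_{+})}^{2}=\sum_{k}|c_{k}|^{2}\,(1+\mathcal{O}(e^{-c/h}))$ is \eqref{eq:expansion_lambda10}, and the Laplace method applied to $\int_{\partial\Omega_{+}}e^{-f/h}u_{k}^{(1)}(n)\,d\sigma$ and $\int_{\Omega_{+}}e^{-2f/h}\,dx$ gives $h\log\lambda_{1}^{(0)}(\Omega_{+})\to-2(\min_{\partial\Omega_{+}}f-\min_{\Omega_{+}}f)=-2\kappa_{f}$, i.e.\ \eqref{eq.applambda10}.

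I expect the main difficulty to be twofold. First, the accuracy of the quasimodes across the interface $\partial\Omega_{-}$, where Neumann data coming from inside $\Omega_{-}$ must be glued to the Dirichlet data on the shell without destroying the exponential smallness; this is where the sign condition $\partial_{n}f>0$ on $\partial\Omega_{-}$ and the localization hypothesis \eqref{eq.decayhyp} are essential, and where the matrix computations of Section~\ref{se.matrix} are needed to control the cross terms between the two families of $1$-form quasimodes. Second, pinning $d_{f,h}u_{1}^{(0)}$ near $\partial\Omega_{+}$ down to the precision $e^{-(\kappa_{f}+c_{\mathcal{V}})/h}$, uniformly in the Sobolev index, requires analysing the $\mathcal{O}(h)$-thick boundary layer of $u_{1}^{(0)}$ at $\partial\Omega_{+}$ together with up-to-the-boundary elliptic regularity; this, and the matrix/singular-value computation of $L$, form the technical core (Sections~\ref{se.matrix} and~\ref{se.proof}). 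That Hypotheses~\ref{hyp.1}--\ref{hyp.2} hold under explicit conditions such as (i)--(iv) is a separate issue, handled in Section~\ref{se.ass}.
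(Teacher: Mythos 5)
Your proposal follows essentially the same route as the paper: build quasimodes for $\Delta_{f,h}^{D,(p)}(\Omega_{+})$ from the Neumann eigenforms on $\Omega_{-}$ and the Dirichlet $1$-eigenforms on the shell, count by min--max, exploit the supersymmetric factorization $\Delta_{f,h}^{D,(0)}|_{F^{(0)}}=\beta^{*}\beta$, and extract $c_{k}$ by Stokes on the shell using $d_{f,h}^{*}u_{k}^{(1)}=0$ and $d_{f,h}e^{-f/h}=0$. Most of this is sound in outline. There is, however, one genuine gap: your derivation of \eqref{eq.applambda10}. The upper bound $\limsup_{h\to0}h\log\lambda_{1}^{(0)}(\Omega_{+})\le-2\kappa_{f}$ does follow from the quasimode $\chi_{0}\tilde{\psi}_{1}^{(0)}$ (and Lemma~\ref{le.intminmaj} for the degenerate-minimum denominator), but the lower bound cannot be obtained by ``the Laplace method applied to $\int_{\partial\Omega_{+}}e^{-f/h}\mathbf{i}_{n}u_{k}^{(1)}\,d\sigma$'': the $u_{k}^{(1)}$ are abstract eigenforms (no Morse/WKB description is assumed near $\partial\Omega_{+}$), so their normal traces could a priori produce cancellations making all these integrals anomalously small, and no Laplace asymptotics is available. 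The paper supplies the missing ingredient in Lemma~\ref{le.infsigD0}, a contradiction argument combining Agmon decay, the Poincar\'e--Wirtinger inequality on the sublevel set $\{f<\min_{\partial\Omega_{+}}f+t\}$, and the trace theorem together with $u_{1}^{(0)}|_{\partial\Omega_{+}}=0$. This lower bound is not cosmetic: it is also what guarantees that $\sum_{k}|c_{k}|^{2}$ genuinely has size $\tilde{\mathcal{O}}(e^{-2\kappa_{f}/h})$ from below, so that the $\mathcal{O}(e^{-(\kappa_{f}+c)/h})$ contributions you discard (the components along the $\Omega_{-}$-localized $1$-form quasimodes) are \emph{relatively} negligible and \eqref{eq:expansion_lambda10} is a meaningful relative expansion.

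Two smaller points. First, in your computation of $c_{k}$ you bound the cost of replacing $u_{1}^{(0)}$ by $e^{-f/h}/\|e^{-f/h}\|$ by $\|r\|_{L^{2}}\|d_{f,h}^{*}u_{k}^{(1)}\|_{L^{2}}=\mathcal{O}(e^{-c/h})\mathcal{O}(e^{-c_{0}/(2h)})$; this need not be $\mathcal{O}(e^{-(\kappa_{f}+c')/h})$ since nothing relates $c+c_{0}/2$ to $\kappa_{f}$. The fix is that $d_{f,h}^{*}u_{k}^{(1)}\equiv0$ \emph{exactly}: $d_{f,h}^{*}u_{k}^{(1)}$ lies in the low spectral subspace of $\Delta_{f,h}^{D,(0)}(\Omega_{+}\setminus\overline{\Omega_{-}})$, which is trivial because that operator is bounded below by a constant (this is the paper's \eqref{eq:d*_zero}). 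Second, the claim that the pairing of $d_{f,h}u_{1}^{(0)}$ with the $\Omega_{-}$-localized $1$-form quasimodes is $\mathcal{O}(e^{-(\kappa_{f}+c)/h})$ because ``$r$ is super-exponentially small on $\Omega_{-}$'' is not justified as stated; the paper instead obtains it by choosing the cut-off $\chi_{0}$ with $d\chi_{0}$ supported in a $\delta_{+}$-thin collar of $\partial\Omega_{+}$, so that $\|d_{f,h}v_{1}^{(0)}\|=\mathcal{O}(e^{-(\kappa_{f}-C_{0}\delta_{+})/h})$ with $\delta_{+}$ tuned small against the constant in $\|\Pi^{(1)}v_{k}^{(1)}-v_{k}^{(1)}\|=\mathcal{O}(e^{-c'/h})$ (Lemma~\ref{le.M11}). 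With these repairs your argument matches the paper's.
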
 
\begin{remarque}
It would be interesting for practical applications to relax the
assumption $|\nabla f|>0$
on $\overline{\Omega_{+}}\setminus \Omega_{-}$ in
Hypothesis~\ref{hyp.0} in order to be
able to consider saddle points on $\partial \Omega_+$.
\end{remarque}
\begin{remarque}
While proving these results, we will actually show that necessarily
$m_{1}^{D}(\Omega_{+}\setminus    \overline{\Omega_{-}}) \neq
0$, see Remark~\ref{re.m1D} below.
\end{remarque}
\begin{remarque}
In spectral theory, it is natural to work with complex-valued
functions or complex-valued forms. In view of the probabilistic
interpretation of our results, the above result is stated, and actually most of the analysis of
this text is carried out, with real-valued functions or forms.
One exception is Section~\ref{se.restric} which requires
 functional calculus and resolvents for complex spectral parameters.
Notice that it is straightforward to write a complex-valued version of
the previous results, by replacing the real scalar
product by the hermitian scalar product. For example, in~\eqref{eq.appdu10}, it simply
consists in changing $\int_{\partial
    \Omega_{+}}e^{-\frac{f(\sigma)}{h}}u_{k}^{(1)}(n)(\sigma)~d\sigma$
to
 $\int_{\partial\Omega_{+}}e^{-\frac{f(\sigma)}{h}}\overline{u_{k}^{(1)}}(n)(\sigma)~d\sigma$.
\end{remarque}
Corollaries and variations of Theorem~\ref{th.main} are given in
Section~\ref{se.proof}. Among the consequences, one can prove the
following result when additionally $f\big|_{\partial \Omega_{+}}$ is a
Morse function and $\partial_n f > 0$ on $\partial \Omega_{+}$.
\begin{theoreme}
\label{th.main2}
Assume Hypotheses~\ref{hyp.0}, \ref{hyp.3}, \ref{hyp.1}, \ref{hyp.2}, $h\in (0,h_{0})$ with
 $h_{0}>0$ small enough. Assume moreover that
$f\big|_{\partial\Omega_{+}}$ is a Morse function  and $\partial_n f > 0$ on $\partial \Omega_{+}$. Then the first
eigenvalue $\lambda_{1}^{(0)}(\Omega_{+})$ of $\Delta_{f,h}^{D,(0)}(\Omega_{+})$
and the corresponding $L^2(\Omega_+)$-normalized non negative eigenfunction
$u_{1}^{(0)}$ satisfy
\begin{align}
\label{eq.l10th2}
\lambda_{1}^{(0)}(\Omega_{+})&=
\frac{\int_{\partial \Omega_{+}} 2\partial_{n}f(\sigma)
  e^{-2\frac{f(\sigma)}{h}}~d\sigma}{
\int_{\Omega_{+}}e^{-2\frac{f(x)}{h}}~dx}
(1+\mathcal{O}(h))\,,\\
\label{eq.denssortth2}
- \frac{\partial_{n}\left[e^{-\frac{f}{h}}u_{1}^{(0)}\right]\big|_{\partial \Omega_{+}}}{ 
\left\|\partial_{n}\left[e^{-\frac{f}{h}}u_{1}^{(0)}\right]\right\|_{L^{1}(\partial
  \Omega_{+})}}
&=
\frac{(2\partial_{n}f) e^{-\frac{2f}{h}}\big|_{\partial \Omega_{+}}
}{
\left\|
(2\partial_{n}f)
  e^{-\frac{2f}{h}}\right\|_{L^{1}(\partial \Omega_{+})}
}
 + \mathcal{O}(h)\quad\text{in}~L^{1}(\partial\Omega_{+})\,.
\end{align}
\end{theoreme}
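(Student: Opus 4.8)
The plan is to derive Theorem~\ref{th.main2} from Theorem~\ref{th.main} by a semiclassical (WKB $+$ Laplace) evaluation of the boundary integrals occurring in~\eqref{eq:expansion_lambda10}--\eqref{eq.appdu10}, the two extra hypotheses entering only through the behaviour of $f$ and $u_{1}^{(0)}$ in a collar of $\partial\Omega_{+}$. Write $N=\bigl(\int_{\Omega_{+}}e^{-2f/h}\,dx\bigr)^{1/2}$, so that $u_{1}^{(0)}=e^{-f/h}/N+\mathcal{O}(e^{-c/h})$ in $W^{2,2}(\Omega_{+})$ by~\eqref{eq.appu10}, and put $v=e^{f/h}u_{1}^{(0)}$, which vanishes on $\partial\Omega_{+}$ because $u_{1}^{(0)}$ does. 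Since $u_{1}^{(0)}\big|_{\partial\Omega_{+}}=0$, the one‑form $d_{f,h}u_{1}^{(0)}=h\,du_{1}^{(0)}+(df)\,u_{1}^{(0)}$ has $\mathbf{i}_{n}\bigl(d_{f,h}u_{1}^{(0)}\bigr)=h\,\partial_{n}u_{1}^{(0)}$ on $\partial\Omega_{+}$, whence
\[
\partial_{n}\!\bigl[e^{-f/h}u_{1}^{(0)}\bigr]\big|_{\partial\Omega_{+}}
=e^{-f/h}\,\partial_{n}u_{1}^{(0)}\big|_{\partial\Omega_{+}}
=\tfrac1h\,e^{-f/h}\,\mathbf{i}_{n}\bigl(d_{f,h}u_{1}^{(0)}\bigr)\big|_{\partial\Omega_{+}};
\]
moreover $d_{f,h}u_{1}^{(0)}=h\,e^{-f/h}\,dv$ and $\lambda_{1}^{(0)}(\Omega_{+})=\|d_{f,h}u_{1}^{(0)}\|_{L^{2}(\Omega_{+})}^{2}=h^{2}\int_{\Omega_{+}}e^{-2f/h}|\nabla v|^{2}\,dx$. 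Thus both~\eqref{eq.l10th2} and~\eqref{eq.denssortth2} reduce to a sharp description of $u_{1}^{(0)}$ (equivalently $v$) near $\partial\Omega_{+}$.

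That description I would obtain by a \emph{collar WKB analysis}. In a tubular neighbourhood $\mathcal{V}\subset\Omega_{+}\setminus\overline{\Omega_{-}}$ of $\partial\Omega_{+}$, Hypothesis~\ref{hyp.0} forces $|\nabla f|^{2}-h\Delta f\geq c_{0}>0$ on $\overline{\mathcal{V}}$ for small $h$, so $\Delta_{f,h}^{D,(0)}$ is there a classically forbidden operator; since $\lambda_{1}^{(0)}(\Omega_{+})$ is exponentially small by~\eqref{eq.applambda10}, $u_{1}^{(0)}$ coincides in $\mathcal{V}$, up to an error exponentially smaller than its size, with the solution of $(-h^{2}\Delta+|\nabla f|^{2}-h\Delta f)u=0$ in $\mathcal{V}$ with $u\big|_{\partial\Omega_{+}}=0$ and $u=e^{-f/h}/N+\mathcal{O}(e^{-c/h})$ on the inner face of $\mathcal{V}$ (the latter datum from~\eqref{eq.appu10}). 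As $e^{-f/h}$ is an exact null solution of $-h^{2}\Delta+|\nabla f|^{2}-h\Delta f$, the remainder $\rho=u_{1}^{(0)}-e^{-f/h}/N$ solves the same equation with exponentially small right‑hand side and vanishingly small datum on the inner face, while $\rho\big|_{\partial\Omega_{+}}=-e^{-f/h}/N$; the eikonal $|\nabla\phi|^{2}=|\nabla f|^{2}$ admits, along the integral curves of $\nabla f$ that cross $\partial\Omega_{+}$ transversally (transversality being exactly $\partial_{n}f>0$), the branch whose WKB solution decays away from $\partial\Omega_{+}$, and matching the boundary data yields, in boundary normal coordinates $(\sigma,t)$,
\[
u_{1}^{(0)}(\sigma,t)=\frac1N\Bigl(e^{-f(\sigma,t)/h}-e^{-2f(\sigma)/h}\,e^{+f(\sigma,t)/h}\Bigr)\bigl(1+\mathcal{O}(h)\bigr),
\]
uniformly on $\mathcal{V}$; the Morse hypothesis on $f\big|_{\partial\Omega_{+}}$ together with $\partial_{n}f>0$ guarantees that the WKB amplitudes are smooth and that this expansion is uniform up to $\partial\Omega_{+}$, in particular near $\arg\min f\big|_{\partial\Omega_{+}}$. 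Differentiating, $\partial_{n}u_{1}^{(0)}\big|_{\partial\Omega_{+}}=-\dfrac{2\partial_{n}f}{hN}\,e^{-f/h}\,(1+\mathcal{O}(h))$, so the first identity above gives $\partial_{n}\bigl[e^{-f/h}u_{1}^{(0)}\bigr]\big|_{\partial\Omega_{+}}=-\dfrac{2\partial_{n}f}{hN}\,e^{-2f/h}\,(1+\mathcal{O}(h))$; normalizing in $L^{1}(\partial\Omega_{+})$ the factor $(hN)^{-1}$ cancels and~\eqref{eq.denssortth2} follows.

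For~\eqref{eq.l10th2} one then has, by the localization and Agmon decay estimates underlying Theorem~\ref{th.main} (Section~\ref{se.expdecay}), that $\int_{\Omega_{+}}e^{-2f/h}|\nabla v|^{2}\,dx$ equals, up to a relatively exponentially small error, its restriction to the collar, where the collar WKB gives $v(\sigma,t)=\tfrac1N\bigl(1-e^{-2\partial_{n}f(\sigma)t/h}\bigr)(1+\mathcal{O}(h))$ and hence $|\nabla v|^{2}\simeq(\partial_{t}v)^{2}$; performing the $t$‑integral and then evaluating the resulting integral over $\partial\Omega_{+}$ by the Laplace method (using that $f\big|_{\partial\Omega_{+}}$ is Morse) produces the asymptotics~\eqref{eq.l10th2}. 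Equivalently, comparing the collar expression $\mathbf{i}_{n}(d_{f,h}u_{1}^{(0)})\big|_{\partial\Omega_{+}}=h\,\partial_{n}u_{1}^{(0)}\big|_{\partial\Omega_{+}}$ with the expansion~\eqref{eq.appdu10} identifies
\[
\sum_{k}\Bigl(\int_{\partial\Omega_{+}}e^{-f/h}u_{k}^{(1)}(n)\,d\sigma\Bigr)u_{k}^{(1)}(n)(\sigma)=\frac{2\partial_{n}f(\sigma)}{h}\,e^{-f(\sigma)/h}\,(1+\mathcal{O}(h))
\]
(the $u_{k}^{(1)}$ concentrated near $\partial\Omega_{-}$ contributing only doubly exponentially small terms and being discarded), and inserting this into~\eqref{eq:expansion_lambda10}, after writing $\sum_{k}\bigl|\int_{\partial\Omega_{+}}e^{-f/h}u_{k}^{(1)}(n)\bigr|^{2}=\int_{\partial\Omega_{+}}e^{-f/h}\sum_{k}(\cdots)\,d\sigma$, gives the same.

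The main obstacle is to carry out the collar analysis with the \emph{sharp} $(1+\mathcal{O}(h))$ accuracy rather than the exponential accuracy of Theorem~\ref{th.main}. One must (i) justify, by an elliptic/Agmon comparison using $|\nabla f|^{2}-h\Delta f\geq c_{0}>0$ and the exponential smallness of $\lambda_{1}^{(0)}(\Omega_{+})$, that $u_{1}^{(0)}$ is --- modulo errors exponentially small relative to its size --- the solution of the forbidden Dirichlet problem in $\mathcal{V}$; (ii) construct the WKB series for $\rho$ to the order needed to pin down $\partial_{n}u_{1}^{(0)}\big|_{\partial\Omega_{+}}$ with relative error $\mathcal{O}(h)$, which brings in the first transport equation and, through it, the structure of $f$ near $\arg\min f\big|_{\partial\Omega_{+}}$ provided by the Morse hypothesis and $\partial_{n}f>0$; and (iii) keep uniform control of the WKB amplitudes over $\partial\Omega_{+}$ and of the off‑minimum and remainder contributions to the boundary and bulk integrals, so that every error term is genuinely $\mathcal{O}(h)$ and not merely $\tilde{\mathcal{O}}(1)$. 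Everything else is bookkeeping on top of Theorem~\ref{th.main} and classical Laplace asymptotics.
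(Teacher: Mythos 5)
Your route is genuinely different from the paper's. The paper proves Theorem~\ref{th.main2} in two stages: first, under the \emph{full} Morse hypotheses (Hypotheses~\ref{hyp.4},~\ref{hyp.5} and the condition~\eqref{eq.conddag}), it imports from \cite{HeNi} the WKB quasimodes $w_{k}^{(1)}$ for the \emph{one-form} Dirichlet Witten Laplacian on the shell $\Omega_{+}\setminus\overline{\Omega_{-}}$, attached to each boundary minimum $U_{k}^{(1)}$, shows via Lemma~\ref{pr.wk} that their spectral projections form an almost orthonormal basis of $\Ran 1_{[0,\nu(h)]}(\Delta_{f,h}^{D,(1)}(\Omega_{+}\setminus\overline{\Omega_{-}}))$, inserts this basis into Theorem~\ref{th.main} and Corollary~\ref{co.trace}, and evaluates all the resulting integrals by the Laplace method (Proposition~\ref{pr.morseresult}); second, it removes the interior Morse assumption by replacing $f$ inside $\Omega_{-}$ with a Morse function $\tilde f$ agreeing with $f$ on $\Omega_{+}\setminus\Omega_{-}$ and invoking Corollary~\ref{co.variaf}. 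You instead perform a boundary-layer/WKB analysis of the \emph{scalar} eigenfunction $u_{1}^{(0)}$ in a collar of $\partial\Omega_{+}$. Your reduction identities ($\mathbf{i}_{n}d_{f,h}u_{1}^{(0)}=h\,\partial_{n}u_{1}^{(0)}$ on $\partial\Omega_{+}$, $\lambda_{1}^{(0)}=\|d_{f,h}u_{1}^{(0)}\|_{L^{2}}^{2}$) are correct, and if the collar analysis could be completed your route would be more economical: it uses no Morse structure in the interior, so the whole second stage would be unnecessary.

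The gap is the one you yourself label "the main obstacle", and it is not a technicality. To seed the collar Dirichlet problem you use~\eqref{eq.appu10} as data on the inner face; but~\eqref{eq.appu10} only gives $u_{1}^{(0)}=e^{-f/h}/N+\mathcal{O}(e^{-c/h})$ with a constant $c$ that is \emph{not} comparable to $\kappa_{f}$, whereas the quantity you must resolve near $\partial\Omega_{+}$ is itself of size $e^{-\kappa_{f}/h}$; an $\mathcal{O}(e^{-c/h})$ error on the inner face does not shrink when propagated across the (thin) forbidden collar, so this comparison cannot deliver the relative $\mathcal{O}(h)$ accuracy of $\partial_{n}u_{1}^{(0)}$ that both conclusions require. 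The relative-precision input of Theorem~\ref{th.main} is~\eqref{eq.appdu10}, whose error is $\mathcal{O}(e^{-(\kappa_{f}+c_{\mathcal{V}})/h})$ — but it expresses $d_{f,h}u_{1}^{(0)}$ through the shell eigenforms $u_{k}^{(1)}$, so converting it into your pointwise boundary expansion requires a WKB description of those eigenforms near $\partial\Omega_{+}$; your closing "equivalently, comparing with~\eqref{eq.appdu10}" is therefore circular unless the collar expansion is established independently (a non-resonant-well comparison with Agmon weights and transport equations, uniform over the boundary minima — precisely the content of the \cite{HeNi} construction the paper invokes). One further computational point: carried through, your collar integral gives $\lambda_{1}^{(0)}=h\,\bigl(\int_{\partial\Omega_{+}}2\partial_{n}f\,e^{-2f/h}\,d\sigma\bigr)\big/\bigl(\int_{\Omega_{+}}e^{-2f/h}\,dx\bigr)\,(1+\mathcal{O}(h))$, with a prefactor $h$; this agrees with~\eqref{eq.l10Hess} and with an exact one-dimensional check, but differs by a factor $h$ from~\eqref{eq.l10th2}/\eqref{eq.l10int} as printed. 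Your computation is the correct one; do not force it to match the displayed formula without noting the discrepancy.
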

The proof of Theorem~\ref{th.main} is given in
Proposition~\ref{pr.number}, Lemma~\ref{le.infsigD0},
Proposition~\ref{prop:u10} and
Proposition~\ref{pr.expralmostorth}. The proof of
Theorem~\ref{th.main2} is given in Section~\ref{se.resultmorse}.

\section{ A priori exponential decay and first consequences}
\label{se.expdecay}
By applying Agmon's type estimate (see  for example \cite{Hel,DiSj} for a
general introduction) for boundary Witten Laplacians, we give
here exponential decay estimates for the eigenvectors of
$\Delta_{f,h}^{N}(\Omega_{-})$,
$\Delta_{f,h}^{D}(\Omega_{+}\setminus\overline{\Omega_{-}})$ and
$\Delta_{f,h}^{D}(\Omega_{+})$\,.

\subsection{Agmon identity}
\label{se.agmon}
We shall use an identity for boundary Witten Laplacians, proved in
\cite{HeNi} in the Dirichlet case and in \cite{Lep3} in the Neumann
case.
\begin{lemme}
\label{le.Agmon}
Let $\Omega$ be a regular bounded domain of $(M,g)$ and let
$\Delta_{f,h}^{D}(\Omega)$ (resp. $\Delta_{f,h}^{N}(\Omega)$) be the Dirichlet
(resp. Neumann)
realization of $\Delta_{f,h}(\Omega)$. Let $\varphi$ be a real-valued Lipschitz function
on $\overline{\Omega}$. Then, for any real-valued $\omega\in D(\Delta_{f,h}^{D}(\Omega))$
(resp. $\omega\in D(\Delta_{f,h}^{N}(\Omega))$),
\begin{align*}
 \langle\omega\,,\,
  e^{\frac{2\varphi}{h}}\Delta_{f,h}^{D}(\Omega) \omega\rangle_{L^{2}(\Omega)}
&=
h^{2}\|de^{\frac{\varphi}{h}}\omega\|_{L^{2}(\Omega)}^{2}+
h^{2}\|d^{*}e^{\frac{\varphi}{h}}\omega\|_{L^{2}(\Omega)}^{2}
\\
&\quad 
+\langle (|\nabla f|^{2}-|\nabla \varphi|^{2}+h{\mathcal L}_{\nabla
  f}+h{\mathcal L}_{\nabla
  f}^{*})e^{\frac{\varphi}{h}}\omega\,,\,
e^{\frac{\varphi}{h}}\omega\rangle_{L^{2}(\Omega)}
\\
&\quad
-
h\int_{\partial \Omega}\langle \omega\,,\, \omega\rangle_{
  T^{*}_{\sigma}\Omega}\,
e^{\frac{2\varphi(\sigma)}{h}}\frac{\partial f}{\partial
  n}(\sigma)~d\sigma,\\
(\text{resp.})\quad
\langle \omega\,,\,
  e^{\frac{2\varphi}{h}}\Delta_{f,h}^{N}(\Omega)\omega\rangle_{L^{2}(\Omega)}
&=
h^{2}\|de^{\frac{\varphi}{h}}\omega\|_{L^{2}(\Omega)}^{2}+
h^{2}\|d^{*}e^{\frac{\varphi}{h}}\omega\|_{L^{2}(\Omega)}^{2}
\\
&\quad 
+\langle (|\nabla f|^{2}-|\nabla \varphi|^{2}+h{\mathcal L}_{\nabla
 f}+h{\mathcal L}_{\nabla
  f}^{*})e^{\frac{\varphi}{h}}\omega\,,\,
e^{\frac{\varphi}{h}}\omega\rangle_{L^{2}(\Omega)}
\\
&\quad
+
h\int_{\partial \Omega}\langle \omega\,,\, \omega\rangle_{
  T^{*}_{\sigma}\Omega} \,
e^{\frac{2\varphi(\sigma)}{h}}\frac{\partial f}{\partial
  n}(\sigma)~d\sigma.
\end{align*}
\end{lemme}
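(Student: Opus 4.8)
The plan is to prove the Agmon identity of Lemma~\ref{le.Agmon} by a direct integration-by-parts computation, performed first on smooth forms and then extended by density to the full operator domain. The starting point is the observation that, for a conjugated form $\omega_\varphi := e^{\varphi/h}\omega$, we have
$$
\langle \omega, e^{2\varphi/h}\Delta_{f,h}\omega\rangle_{L^2(\Omega)}
= \langle e^{\varphi/h}\omega, \, e^{\varphi/h}\Delta_{f,h}\omega\rangle_{L^2(\Omega)},
$$
so that the left-hand side can be rewritten using the conjugated operator $e^{\varphi/h}\Delta_{f,h}e^{-\varphi/h}$ acting on $\omega_\varphi$. One then expands $\Delta_{f,h}=d_{f,h}^*d_{f,h}+d_{f,h}d_{f,h}^*$ and conjugates each twisted differential: using $d_{f,h}=e^{-f/h}(hd)e^{f/h}$ one finds that $e^{\varphi/h}d_{f,h}e^{-\varphi/h}=d_{(f-\varphi),h}+\text{(zeroth order term)}$, and similarly $e^{\varphi/h}d_{f,h}^*e^{-\varphi/h}=d_{(f+\varphi),h}^*+\dots$; more precisely, $e^{\varphi/h}d_{f,h}e^{-\varphi/h}= hd + (\nabla f - \nabla\varphi)\wedge\cdot$ and $e^{\varphi/h}d_{f,h}^*e^{-\varphi/h}= hd^* + \mathbf{i}_{\nabla f+\nabla\varphi}$. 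The cross terms involving $\pm\nabla\varphi$ are exactly what produces the $-|\nabla\varphi|^2$ term and the surface term, and they are the part of the bookkeeping that must be done carefully.

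The key computational step is then the Green-type formula for the twisted Laplacian with boundary term: for $\alpha,\beta$ smooth forms on $\overline\Omega$,
$$
\langle d_{g,h}\alpha,\beta\rangle_{L^2(\Omega)} = \langle \alpha, d_{g,h}^*\beta\rangle_{L^2(\Omega)} + h\int_{\partial\Omega}\langle \mathbf{t}\alpha,\mathbf{i}_n\beta\rangle\,d\sigma,
$$
which follows from the usual Stokes formula $\int_\Omega d\alpha\wedge\star\bar\beta - \int_\Omega \alpha\wedge\star\overline{d^*\beta} = \int_{\partial\Omega}\alpha\wedge\star\bar\beta$ together with the definition of $d_{g,h}$. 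Applying this with $g=f\mp\varphi$ and collecting terms, the volume contributions assemble into $h^2\|d\omega_\varphi\|^2 + h^2\|d^*\omega_\varphi\|^2 + \langle(|\nabla f|^2-|\nabla\varphi|^2 + h\mathcal L_{\nabla f}+h\mathcal L_{\nabla f}^*)\omega_\varphi,\omega_\varphi\rangle$ — here one uses the Weitzenböck-type identity $\mathcal L_{\nabla f}+\mathcal L_{\nabla f}^* = (d\,\mathbf{i}_{\nabla f}+\mathbf{i}_{\nabla f}d) + (\text{its adjoint})$ to identify the first-order terms, noting that the cross terms linear in $\nabla\varphi$ cancel because $\varphi$ appears with opposite signs in $d_{f-\varphi,h}$ and $d_{f+\varphi,h}^*$ while the surface terms add up. The boundary term from the two applications of Green's formula is $h\int_{\partial\Omega}\langle\mathbf{t}\omega_\varphi,\mathbf{i}_n\,(\nabla f\pm\nabla\varphi)\text{-contributions}\rangle$, and after using the boundary conditions in the domain (see below) it reduces to $\mp h\int_{\partial\Omega}|\omega|^2 e^{2\varphi/h}\,\partial_n f\,d\sigma$ with the sign $-$ for Dirichlet and $+$ for Neumann.

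The role of the boundary conditions is crucial and is where the Dirichlet/Neumann dichotomy enters: for $\omega\in D(\Delta_{f,h}^D(\Omega))$ one has $\mathbf{t}\omega=0$ and $\mathbf{t}d_{f,h}^*\omega=0$ on $\partial\Omega$, which kills all boundary contributions except the one coming from the $\mathbf{i}_{\nabla\varphi}$-type term paired against the normal part of $\omega$ — and since $\varphi$ is scalar, $\nabla\varphi$ on the boundary splits into tangential and normal parts, of which only the term combining with $\partial_n f$ survives; for the Neumann realization $\mathbf{n}\omega=0$ and $\mathbf{n}d_{f,h}\omega=0$ play the symmetric role and flip the sign. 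The main obstacle I anticipate is precisely this boundary-term bookkeeping: one must track how $\mathbf{t}$ and $\mathbf{n}$ components of $\omega$, $d\omega$, $d^*\omega$ interact in the integration by parts, use the domain conditions to discard the right pieces, and verify that the surviving term is exactly $\mp h\int_{\partial\Omega}\langle\omega,\omega\rangle_{T^*_\sigma\Omega}e^{2\varphi/h}\partial_n f\,d\sigma$ with no leftover tangential-derivative-of-$\varphi$ terms. Since $\varphi$ is only Lipschitz, one also first proves the identity for smooth $\varphi$ and then passes to the limit using that both sides are continuous in $\varphi$ for the Lipschitz norm (the $|\nabla\varphi|^2$ and $e^{2\varphi/h}$ terms being the only ones where $\varphi$ enters, and $\nabla\varphi\in L^\infty$ suffices). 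Finally, the density of smooth forms in $D(\Delta_{f,h}^{D/N}(\Omega))$ for the graph norm — a standard elliptic regularity fact for regular bounded domains, as recalled in the excerpt via \cite{Tay,ChPi} — allows extending from smooth $\omega$ to the whole operator domain; this step is routine and I would only cite it. The identity as stated then follows, and it is exactly the tool already used (in the Dirichlet case in \cite{HeNi} and the Neumann case in \cite{Lep3}) whose statement is being recalled here.
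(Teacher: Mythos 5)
Your overall strategy is the right one, and it is the one the paper itself relies on: the lemma is not proved in the text but quoted from \cite{HeNi} (Dirichlet) and \cite{Lep3} (Neumann), and the appendix records exactly the Green-type formula with the two boundary terms $\int_{\partial\Omega}\mathbf{t}\omega_{2}\wedge\star\mathbf{n}d_{f,h}\omega_{1}$ and $\int_{\partial\Omega}\mathbf{t}d_{f,h}^{*}\omega_{1}\wedge\star\mathbf{n}\omega_{2}$ that your computation needs. The conjugation identities $e^{\varphi/h}d_{f,h}e^{-\varphi/h}=d_{f-\varphi,h}$ and $e^{\varphi/h}d_{f,h}^{*}e^{-\varphi/h}=d_{f+\varphi,h}^{*}$, the cancellation of the terms linear in $\nabla\varphi$ for real forms, the anticommutator identity $d\varphi\wedge\mathbf{i}_{\nabla\varphi}+\mathbf{i}_{\nabla\varphi}(d\varphi\wedge)=|\nabla\varphi|^{2}$ producing the $-|\nabla\varphi|^{2}$ term, the smoothing of the Lipschitz $\varphi$, and the density argument are all correct.

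The one point where your bookkeeping goes wrong is the origin of the boundary term. You attribute it to ``the $\mathbf{i}_{\nabla\varphi}$-type term paired against the normal part of $\omega$,'' but in fact \emph{every} $\varphi$-dependent boundary contribution vanishes: since $e^{\varphi/h}$ is scalar, $\mathbf{t}(e^{\varphi/h}\omega)=e^{\varphi/h}\mathbf{t}\omega$, and a short computation gives $d_{f+\varphi,h}^{*}\bigl(e^{\varphi/h}\omega\bigr)=e^{\varphi/h}d_{f,h}^{*}\omega$, so both traces appearing in the two applications of Green's formula are killed outright by the Dirichlet conditions $\mathbf{t}\omega=0$, $\mathbf{t}d_{f,h}^{*}\omega=0$ (and symmetrically in the Neumann case). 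If the surviving term came from a $\nabla\varphi$ contribution it would involve $\partial_{n}\varphi$, which does not appear in the statement. The term $\mp h\int_{\partial\Omega}\langle\omega,\omega\rangle e^{2\varphi/h}\partial_{n}f\,d\sigma$ actually arises at the \emph{next} stage, when you rewrite $\|d_{f,h}\eta\|^{2}+\|d_{f,h}^{*}\eta\|^{2}$ (with $\eta=e^{\varphi/h}\omega$) as $h^{2}\|d\eta\|^{2}+h^{2}\|d^{*}\eta\|^{2}+\langle(|\nabla f|^{2}+h\mathcal{L}_{\nabla f}+h\mathcal{L}_{\nabla f}^{*})\eta,\eta\rangle$: converting the first-order cross terms $2h\bigl[\langle d\eta,df\wedge\eta\rangle+\langle d^{*}\eta,\mathbf{i}_{\nabla f}\eta\rangle\bigr]$ into the zeroth-order operator $h(\mathcal{L}_{\nabla f}+\mathcal{L}_{\nabla f}^{*})$ via Cartan's formula $\mathcal{L}_{\nabla f}=d\,\mathbf{i}_{\nabla f}+\mathbf{i}_{\nabla f}d$ requires one further integration by parts on $\langle d\,\mathbf{i}_{\nabla f}\eta,\eta\rangle$, whose boundary contribution $\int_{\partial\Omega}\langle\mathbf{t}\,\mathbf{i}_{\nabla f}\eta,\mathbf{i}_{n}\eta\rangle d\sigma$ reduces to $\int_{\partial\Omega}\partial_{n}f\,|\eta|^{2}d\sigma$ once you split $\nabla f$ into tangential and normal parts and use $\mathbf{t}\eta=0$ (resp. $\mathbf{n}\eta=0$), which is where the sign dichotomy comes from. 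With that correction your plan carries through; as written, you would be hunting for the boundary term in a place where everything cancels.
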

In the previous formulae, the notation ${\mathcal L}_X$ refers to the
Lie derivative, see~\eqref{eq:Lie}.
We shall use this lemma with specific functions $\varphi$ associated
with the metric $|\nabla f|^{2}g$\,. 
\begin{lemme}
  \label{le.dist}
Let $\Omega$ be an open subset of $M$\,, $f\in
\mathcal{C}^{\infty}(\overline{\Omega})$,  and let $d_{Ag}$ be the geodesic
pseudo-distance on $\overline{\Omega}$
 associated with the possibly degenerate metric $|\nabla f|^{2}g$. The
 function $(x,y)\mapsto d_{Ag}(x,y)$ is Lipschitz (and thus almost
 everywhere differentiable) and satisfies 
 \begin{align}
   \text{for all } y_{0}\in \overline{\Omega}\,,\text{for}~ a.e.~x\in
\Omega \,, \quad
|\nabla_{x}d_{Ag}(x,y_{0})|&\leq |\nabla f(x)|, \nonumber
\\
\text{for all } x,y\in\overline{\Omega},\quad
|f(x)-f(y)|&\leq d_{Ag}(x,y)\,. \label{eq:fx-fy}
\end{align}
The equality $d_{Ag}(x,y)=|f(x)-f(y)|$ occurs if there is an
integral curve of $\nabla f$ joining $x$ to $y$. Moreover, for any
$A\subset \overline{\Omega}$, the function $x \mapsto d_{Ag}(x,A)$
(where $d_{Ag}(x,A)=\inf_{a\in
  A}d_{Ag}(x,a)$) is Lipschitz and
satisfies
$$
\text{for}~a.e.~x\in \Omega, \, |\nabla_x d_{Ag}(x,A)|\leq |\nabla f(x)|.
$$
\end{lemme}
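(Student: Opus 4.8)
The plan is to establish the claimed properties of the Agmon pseudo-distance $d_{Ag}$ associated with the (possibly degenerate) metric $|\nabla f|^2 g$ essentially from scratch, by exploiting the fact that $|\nabla f|^2 g$ is a conformal rescaling of the Riemannian metric $g$. First I would recall the intrinsic definition: for $x,y\in\overline\Omega$,
$$
d_{Ag}(x,y)=\inf\left\{\int_0^1 |\nabla f(\gamma(t))|\,|\dot\gamma(t)|_g\,dt \ :\ \gamma\in C^1([0,1],\overline\Omega),\ \gamma(0)=x,\ \gamma(1)=y\right\},
$$
where $|\cdot|_g$ is the $g$-norm on tangent vectors. (If $\overline\Omega$ is not path-connected one restricts to the relevant component, or sets the distance to $+\infty$; since we only use it locally and later under connectedness hypotheses this is harmless.) Because $|\nabla f|\le C$ on $\overline\Omega$ (continuity on a compact set), one has $d_{Ag}(x,y)\le C\, d_g(x,y)$, which immediately gives that $d_{Ag}$ is Lipschitz with respect to $g$ (and hence w.r.t.\ any local coordinate system); Rademacher's theorem then yields almost-everywhere differentiability. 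The same bound applied to $x\mapsto d_{Ag}(x,A)=\inf_{a\in A}d_{Ag}(x,a)$, using that an infimum of $C$-Lipschitz functions is $C$-Lipschitz, gives the Lipschitz property of $x\mapsto d_{Ag}(x,A)$.

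Next, for the gradient bound $|\nabla_x d_{Ag}(x,y_0)|\le |\nabla f(x)|$ at a point $x$ of differentiability: fix a unit $g$-vector $v\in T_x M$ and let $\gamma$ be a short $g$-geodesic with $\gamma(0)=x$, $\dot\gamma(0)=v$. By the triangle inequality and the definition of $d_{Ag}$,
$$
d_{Ag}(\gamma(s),y_0)-d_{Ag}(x,y_0)\le d_{Ag}(x,\gamma(s))\le \int_0^s |\nabla f(\gamma(t))|\,dt,
$$
so dividing by $s>0$ and letting $s\to 0^+$ gives a directional derivative bounded by $|\nabla f(x)|$; taking the supremum over unit directions $v$ (and noting $\nabla f$ is continuous so the estimate is uniform as $s\to0$) gives $|\nabla_x d_{Ag}(x,y_0)|\le |\nabla f(x)|$. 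The same argument with $y_0$ replaced by the set $A$ (using $d_{Ag}(\gamma(s),A)-d_{Ag}(x,A)\le d_{Ag}(x,\gamma(s))$) gives the last displayed inequality of the lemma. The bound $|f(x)-f(y)|\le d_{Ag}(x,y)$ then follows by integrating along any admissible path $\gamma$: $|f(x)-f(y)|=\left|\int_0^1 \frac{d}{dt}f(\gamma(t))\,dt\right|\le \int_0^1 |\nabla f(\gamma(t))|\,|\dot\gamma(t)|_g\,dt$ by Cauchy--Schwarz (here $\frac{d}{dt}f(\gamma)=g(\nabla f,\dot\gamma)$), and taking the infimum over $\gamma$ yields \eqref{eq:fx-fy}.

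Finally, for the equality case: if $\gamma$ is a reparametrized integral curve of $\nabla f$ from $x$ to $y$, then along $\gamma$ one has $\dot\gamma$ parallel to $\nabla f$, so Cauchy--Schwarz is an equality and $\int |\nabla f(\gamma)|\,|\dot\gamma|_g\,dt=\left|\int \frac{d}{dt}f(\gamma)\,dt\right|=|f(x)-f(y)|$; hence $d_{Ag}(x,y)\le|f(x)-f(y)|$, and combined with \eqref{eq:fx-fy} this forces equality. One technical care point is that an integral curve of $\nabla f$ is only defined on an open time interval and may reach the boundary or take infinite time to converge to a critical point; so one should state the equality for curves joining $x$ to $y$ \emph{within} finite length, or pass to the closure, which is what the lemma does. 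I expect the main (mild) obstacle to be bookkeeping about the degeneracy of the metric where $\nabla f=0$ and the non-smoothness of $d_g$ at the cut locus — but since all bounds are one-sided and only used up to Rademacher-a.e.\ differentiability, none of this causes real difficulty; the conformal-rescaling viewpoint makes every step a short comparison with the genuine Riemannian distance $d_g$.
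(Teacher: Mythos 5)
Your proposal is correct and follows essentially the same route as the paper: the Lipschitz property via comparison of $d_{Ag}$ with the underlying distance (triangle inequality plus boundedness of $|\nabla f|$), and the inequality $|f(x)-f(y)|\leq d_{Ag}(x,y)$ together with its equality case via Cauchy--Schwarz along admissible paths. You in fact supply more detail than the paper's very terse proof, notably the directional-derivative argument for $|\nabla_x d_{Ag}(x,y_0)|\leq |\nabla f(x)|$ at points of differentiability, which the paper leaves implicit.
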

\begin{proof}
The Lipschitz property comes from the triangular inequality for
$d_{Ag}(x,y)$. It carries over to $d_{Ag}(x,A)$.
The comparison between $|f(x)-f(y)|$ and $d_{Ag}(x,y)$ comes from
$$
|f(x)-f(y)|=\left|\int_{0}^{1}\nabla f(\gamma(t)) \cdot \dot{\gamma}(t)~dt\right|
\leq 
\int_{0}^{1}|\nabla f(\gamma(t))||\dot{\gamma}(t)|~dt=|\gamma|_{Ag}
$$
for any ${\cal C}^{1}$-path $\gamma$ joining $x$ to $y$ and by denoting
$|\gamma|_{Ag}$ its length according to $d_{Ag}$\,. 
\end{proof}
\begin{remarque}
When $f$ is Morse function, a detailed discussion about the equality $d_{Ag}(x,y)=|f(x)-f(y)|$,
  which involves the notion of generalized integral curves of $\nabla
  f$, can be found in \cite{HeSj4}.
\end{remarque}
\subsection{Exponential decay for the eigenvectors of
  $\Delta_{f,h}^{N,(p)}(\Omega_{-})$ ($p=0,1$)}
\label{se.decayO-}
Notice that from Hypothesis~\ref{hyp.0}, there exists an open set
$U$ such that
\begin{equation}\label{eq:U}
\overline{U}\subset\Omega_{-} \text{ and }|\nabla f|\neq 0 \text{ in }\overline{\Omega_{-}}\setminus {U}.
\end{equation}
\begin{proposition}
\label{pr.decayO-}
Let $U$ be an open set satisfying~\eqref{eq:U} and let $d_{Ag}(x,U)$ be
the Agmon distance to~$U$ defined for $x\in \Omega_{-}$\,. There exists a
constant $C>0$ independent of $h \in [0,h_0]$ such that every normalized eigenvector
$\omega_{\lambda_{h}}$ of $\Delta_{f,h}^{N}(\Omega_{-})$ 
associated with  an eigenvalue $\lambda_{h}\in [0,\nu(h)]$ satisfies
\begin{align*}
&\|e^{\frac{d_{Ag}(\cdot,U)}{h}}\omega_{\lambda_{h}}\|_{L^{2}(\Omega_{-}\setminus
  U)}\leq \|e^{\frac{d_{Ag}(\cdot,U)}{h}}\omega_{\lambda_{h}}\|_{L^{2}(\Omega_{-})} \leq C,\\
&\|e^{\frac{d_{Ag}(\cdot,U)}{h}}\omega_{\lambda_{h}}\|_{W^{1,2}(\Omega_{-}\setminus
  U)} \leq
\|e^{\frac{d_{Ag}(\cdot,U)}{h}}\omega_{\lambda_{h}}\|_{W^{1,2}(\Omega_{-})} 
\leq \frac{C}{h^{\frac{1}{2}}}.
\end{align*}
\end{proposition}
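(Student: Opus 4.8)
\section*{Proof strategy for Proposition~\ref{pr.decayO-}}

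The plan is to apply the Neumann Agmon identity of Lemma~\ref{le.Agmon} with the weight $\varphi=\varphi_\varepsilon$ chosen to be (a regularization of) $(1-\varepsilon)d_{Ag}(\cdot,U)$, truncated so that it vanishes near $U$, and then to absorb the error terms into the left-hand side using the spectral gap away from $U$. First I would fix $\varepsilon>0$ small and set $\varphi_\varepsilon = (1-\varepsilon)\,d_{Ag}(\cdot,U)$; by Lemma~\ref{le.dist} this is Lipschitz on $\overline{\Omega_-}$ and satisfies $|\nabla\varphi_\varepsilon|\le (1-\varepsilon)|\nabla f|$ almost everywhere, so that $|\nabla f|^2-|\nabla\varphi_\varepsilon|^2 \ge (2\varepsilon-\varepsilon^2)|\nabla f|^2 \ge \varepsilon |\nabla f|^2$ pointwise. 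Plugging $\omega=\omega_{\lambda_h}$ and $\varphi=\varphi_\varepsilon$ into the Neumann identity, the boundary term has the favourable sign because $\partial_n f>0$ on $\partial\Omega_-$ by Hypothesis~\ref{hyp.0}; dropping it and the two nonnegative squared-norm terms gives
\begin{equation*}
\lambda_h \|e^{\frac{\varphi_\varepsilon}{h}}\omega_{\lambda_h}\|_{L^2(\Omega_-)}^2
\ge
\langle (|\nabla f|^2-|\nabla\varphi_\varepsilon|^2 + h\mathcal{L}_{\nabla f}+h\mathcal{L}_{\nabla f}^*) e^{\frac{\varphi_\varepsilon}{h}}\omega_{\lambda_h}\,,\, e^{\frac{\varphi_\varepsilon}{h}}\omega_{\lambda_h}\rangle_{L^2(\Omega_-)}.
\end{equation*}

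Next I would split $\Omega_-$ into the region $\{|\nabla f|\ge \delta\}$ (which contains $\Omega_-\setminus U$, for $\delta$ small depending on $U$) and a neighbourhood of $U$. On the former, the term $\langle |\nabla f|^2 e^{\varphi_\varepsilon/h}\omega, e^{\varphi_\varepsilon/h}\omega\rangle$ dominates and is bounded below by $\varepsilon\delta^2\|e^{\varphi_\varepsilon/h}\omega_{\lambda_h}\|^2$ up to the first-order term in $h$; the Lie-derivative term $h(\mathcal{L}_{\nabla f}+\mathcal{L}_{\nabla f}^*)$ is a zeroth-order multiplication operator of order $h$ (it is $h$ times a smooth bounded symmetric endomorphism on forms of degree $0$ and $1$, see the formula for $\mathcal{L}_X$ in the appendix), hence for $h\le h_0$ small it is absorbed by a fraction of $\varepsilon\delta^2$. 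On the neighbourhood of $U$, where $\varphi_\varepsilon$ is bounded (it vanishes on $U$ and is Lipschitz), $e^{\varphi_\varepsilon/h}$ is $\mathcal{O}(e^{C_U/h})$-bounded but we do not need decay there; one simply bounds $\|e^{\varphi_\varepsilon/h}\omega_{\lambda_h}\|_{L^2(U')}$ crudely using $\|\omega_{\lambda_h}\|_{L^2(\Omega_-)}=1$ and transfers it to the right-hand side as an additive constant. Combining, and using $\lambda_h\le\nu(h)\le h$ so that the left-hand side is itself $o(1)$ times the weighted norm, one gets
$\|e^{\frac{\varphi_\varepsilon}{h}}\omega_{\lambda_h}\|_{L^2(\Omega_-)}\le C_\varepsilon$; letting the truncation parameters and finally $\varepsilon\to0$ via the $\tilde{\mathcal{O}}$-type bookkeeping of Definition~\ref{de.O} yields the stated $L^2$ bound with a constant uniform in $h$ (absorbing the $e^{\varepsilon/h}$ losses is exactly what the $\tilde{\mathcal{O}}$ notation permits, and since the final claim is a genuine $\mathcal{O}(1)$ one must be slightly careful: the cleanest route is to prove the bound for $\varphi_\varepsilon$ and then note $e^{d_{Ag}/h}\le e^{\varepsilon d_{Ag}/h}e^{\varphi_\varepsilon/h}$ with $e^{\varepsilon d_{Ag}/h}$ reabsorbed by choosing the weight slightly larger, i.e. running the argument with $(1+\varepsilon')d_{Ag}$ in place of $(1-\varepsilon)d_{Ag}$ is not allowed, so instead one keeps $\varepsilon$ fixed and observes the weight $d_{Ag}$ itself is what the statement asks for — the standard fix is to prove it with $(1-\varepsilon)d_{Ag}$ for every $\varepsilon$ and conclude by a diagonal/$h$-dependent choice $\varepsilon=\varepsilon(h)\to0$ slowly).

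For the $W^{1,2}$ estimate I would return to the Agmon identity and now \emph{keep} the two gradient terms $h^2\|d e^{\varphi_\varepsilon/h}\omega_{\lambda_h}\|^2 + h^2\|d^* e^{\varphi_\varepsilon/h}\omega_{\lambda_h}\|^2$ on the right-hand side: the already-established $L^2$ bound controls the left-hand side $\lambda_h\|e^{\varphi_\varepsilon/h}\omega\|^2 = \mathcal{O}(h)$ and all zeroth-order terms by $\mathcal{O}(1)$, so $h^2(\|d e^{\varphi_\varepsilon/h}\omega\|^2+\|d^* e^{\varphi_\varepsilon/h}\omega\|^2)\le C$, i.e. the $(d,d^*)$-part of the weighted $W^{1,2}$ norm is $\mathcal{O}(h^{-1})$. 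Using the equivalence between $\sqrt{\|\cdot\|^2+\|d\cdot\|^2+\|d^*\cdot\|^2}$ and the $W^{1,2}$ norm on the regular bounded domain $\Omega_-$ (quoted in the excerpt from \cite{Tay,ChPi}) upgrades this to the full $W^{1,2}$ bound $C h^{-1/2}$, and restricting to $\Omega_-\setminus U$ only decreases the norm. The main obstacle is the bookkeeping around making the constant in the $L^2$ bound genuinely independent of $h$ rather than merely $\tilde{\mathcal{O}}(1)$: this is handled by exploiting that on $\Omega_-\setminus U$ one has $|\nabla f|\ge\delta>0$, which provides an honest (not exponentially small) gap in the quadratic form, so the exponential weight is controlled by a true constant and no $e^{\varepsilon/h}$ factor survives; the care needed is precisely in how the small-$|\nabla f|$ region near $U$ (where $\varphi_\varepsilon$ is $\mathcal{O}(1)$ anyway) is separated off before the absorption argument.
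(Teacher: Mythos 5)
Your overall strategy is the paper's: apply the Neumann Agmon identity of Lemma~\ref{le.Agmon} with a weight proportional to $d_{Ag}(\cdot,U)$, use $\partial_n f>0$ on $\partial\Omega_-$ to discard the boundary term, absorb the zeroth-order error near $U$ where the weight vanishes (using $\|\omega_{\lambda_h}\|_{L^2(U)}\le 1$), and then read off the $W^{1,2}$ bound from the retained terms $h^2\|de^{\varphi/h}\omega\|^2+h^2\|d^*e^{\varphi/h}\omega\|^2$ together with the norm equivalence. The $W^{1,2}$ part of your argument is fine.

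The gap is in how you remove the loss in the exponent, and you have in fact flagged the right spot without closing it. With $\varphi_\varepsilon=(1-\varepsilon)d_{Ag}(\cdot,U)$ for \emph{fixed} $\varepsilon$, you obtain $\|e^{\varphi_\varepsilon/h}\omega_{\lambda_h}\|_{L^2}\le C_\varepsilon$, but the statement requires the weight $e^{d_{Ag}/h}$, and $e^{d_{Ag}/h}=e^{\varepsilon d_{Ag}/h}\,e^{\varphi_\varepsilon/h}$ with $e^{\varepsilon d_{Ag}/h}$ unbounded as $h\to0$. Your proposed "diagonal choice $\varepsilon=\varepsilon(h)\to0$ slowly" does not work as stated: the residual factor $e^{\varepsilon(h)d_{Ag}/h}$ stays bounded only if $\varepsilon(h)=\mathcal{O}(h)$, and once $\varepsilon=\alpha h$ the margin $|\nabla f|^2-|\nabla\varphi|^2\ge\alpha h|\nabla f|^2$ is of the \emph{same} order $h$ as the Lie-derivative error $-C_f h\|e^{\varphi/h}\omega\|^2$, so the absorption is no longer automatic "for $h$ small": it requires choosing $\alpha$ large enough that $\alpha\min_{\overline{\Omega_-}\setminus U}|\nabla f|^2\ge 2C_f$ (which is possible precisely because of~\eqref{eq:U}), and then splitting off the $L^2(U)$ contribution where $\varphi\equiv0$. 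This is exactly what the paper does with $\varphi=(1-\alpha h)d_{Ag}(\cdot,U)$: the loss factor is then $e^{\alpha d_{Ag}}$, which is an $h$-independent constant because $d_{Ag}(\cdot,U)$ is bounded on $\overline{\Omega_-}$. Your "spectral gap on $\{|\nabla f|\ge\delta\}$" discussion with fixed $\varepsilon$ cannot substitute for this, since the obstruction is not the constant $C_\varepsilon$ but the exponentially large conversion factor between the weight you estimate and the weight in the statement.
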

\begin{proof}
The function $d_{Ag}(\cdot,U)$ vanishes in $\overline{U}$ and satisfies the
properties of Lemma~\ref{le.dist} with $(\Omega,A)=(\Omega_{-},\overline{U})$.
Let us now apply Lemma~\ref{le.Agmon} on $\Delta_{f,h}^{N}(\Omega_{-})$, with the function $\varphi=(1-\alpha
h)d_{Ag}(\cdot,U)$ (where $\alpha$ is a positive constant to be fixed
later on) and a normalized eigenvector $\omega$:
$\Delta_{f,h}^{N}(\Omega_{-})\omega=\lambda\,\omega$ where 
$\lambda\in [0,\nu(h)]$. With $\frac{\partial f}{\partial n}>0$
on $\partial \Omega_{-}$, $\nu(h)\leq h$ and $|\nabla \varphi|^2 \le
(1 - \alpha h) |\nabla f|^2$ (for $h< 1/\alpha$), we obtain
\begin{equation}\label{eq:ineq1}
0\geq
h^{2}\|de^{\frac{\varphi}{h}}\omega\|_{L^{2}(\Omega_{-})}^{2}+h^{2}\|d^{*}e^{\frac{\varphi}{h}}\omega\|_{L^{2}(\Omega_{-})}^{2}
+  h \left[\alpha\langle e^{\frac{\varphi}{h}}\omega\,,\, |\nabla
  f|^{2} e^{\frac{\varphi}{h}} \omega\rangle_{L^{2}(\Omega_{-})}
  -C_{f}\|e^{\frac{\varphi}{h}}\omega\|^2_{L^{2}(\Omega_{-})}\right]\,.
\end{equation}
Here, we have used the fact that for any vector field $X$,
$\mathcal{L}_{X}+\mathcal{L}_{X}^{*}$ is a differential operator of
order $0$ involving derivatives of $X$ and $g$, which are uniformly
bounded in $\overline{\Omega_{-}}$.

Using~\eqref{eq:U}, choose $\alpha$ such that
$
\alpha\min_{x\in
  \overline{\Omega_{-}}\setminus U}|\nabla f(x)|^{2}\geq 2C_{f}
$
and add $2C_{f}h \|e^{\frac{\varphi}{h}}\omega\|_{L^{2}(U)}^{2}$ on both sides of the
inequality~\eqref{eq:ineq1}. Using the fact that
$$
2C_{f}h\geq 2C_{f}h\|\omega\|_{L^{2}(U)}^{2}= 2C_{f}h
\|e^{\frac{\varphi}{h}}\omega\|_{L^{2}(U)}^{2}
$$
one obtains:
$$
2C_{f}h\geq 
h^{2}\|de^{\frac{\varphi}{h}}\omega\|_{L^{2}(\Omega_{-})}^{2}+h^{2}\|d^{*}e^{\frac{\varphi}{h}}\omega\|_{L^{2}(\Omega_{-})}^{2}
+C_{f}h\|e^{\frac{\varphi}{h}}\omega\|_{L^{2}(\Omega_{-})}^{2}.
$$
This implies
$\|e^{\frac{(1-\alpha
    h)d_{Ag}(\cdot,U)}{h}}\omega\|_{L^{2}(\Omega_{-})}^{2}\leq 2$ and
$\|(hd) e^{\frac{(1-\alpha h)d_{Ag}(\cdot,U)}{h}}\omega\|_{L^{2}(\Omega_{-})}^{2}+
\|(hd)^{*}e^{\frac{(1-\alpha
    h)d_{Ag}(\cdot,U)}{h}}\omega\|_{L^{2}(\Omega_{-})}^{2}\leq 2C_{f}h$. 
Since $d_{Ag}(.,U)$ is a Lipschitz (and thus also bounded) function on
$\overline{\Omega}_{-}$, this ends the proof.
\end{proof}
Here is a useful consequence.
\begin{proposition}
\label{pr.almostorhtO-}
Let $(\psi_{j}^{(0)})_{1\leq j\leq m_{0}^{N}(\Omega_{-})}$
(resp. $(\psi_{k}^{(1)})_{1\leq k\leq m_{1}^{N}(\Omega_{-})}$) be an
orthonormal basis of eigenvectors of
$\Delta_{f,h}^{N,(0)}(\Omega_{-})$
(resp. $\Delta_{f,h}^{N,(1)}(\Omega_{-})$) associated with the
eigenvalues lying in $[0,\nu(h)]$ (or owing to Hypothesis~\ref{hyp.1}
 in $[0,e^{-\frac{c_{0}}{h}}]$)\,. Let $\chi_{-}\in
\mathcal{C}^{\infty}_{0}(\Omega_{-})$ be a cut-off function such that
$0\leq \chi_{-}\leq 1$ and
$\chi_{-}\equiv 1$ on a neighborhood of $U$ (where, as above, $U \subset\Omega_{-}$ satisfies~\eqref{eq:U}). The 
functions $v_{j}^{(0)}=\chi_{-} \psi_{j}^{(0)}$\,, $1\leq j\leq
  m_{0}^{N}(\Omega_{-})$\,
 (resp. one-forms $v_{k}^{(1)}=\chi_{-}\psi_{k}^{(1)}$\,, $1\leq k\leq
   m_{1}^{N}(\Omega_{-})$) belong to  the domain 
$D(\Delta_{f,h}^{D,(0)}(\Omega_{+}))$ (resp. $ D(\Delta_{f,h}^{D,(1)}(\Omega_{+}))$)
 of the Dirichlet realization of $\Delta_{f,h}$ in $\Omega_{+}$ and
 they satisfy: for $h \in [0, h_0]$,
\begin{align*}
  &\sum_{j=1}^{m_{0}^{N}(\Omega_{-})}\|\psi_{j}^{(0)}-v_{j}^{(0)}\|_{W^{1,2}(\Omega_{-})}
+\sum_{k=1}^{m_{1}^{N}(\Omega_{-})}\|\psi_{k}^{(1)}-v_{k}^{(1)}\|_{W^{1,2}(\Omega_{-})}
= {\mathcal O}(e^{-\frac{c_{\chi_{-}}}{h}}),
\\
&
\left(\langle v_{j}^{(0)}\,,\,
  v_{j'}^{(0)}\rangle_{L^{2}(\Omega_{+})}\right)_{j,j'}=\Id_{m_{0}^{N}(\Omega_{-})}+\mathcal{O}(e^{-\frac{c_{\chi_{-}}}{h}})\,,\quad
\left(\langle v_{k}^{(1)}\,,\,
  v_{k'}^{(1)}\rangle_{L^{2}(\Omega_{+})}\right)_{k,k'}=\Id_{m_{1}^{N}(\Omega_{-})}+\mathcal{O}(e^{-\frac{c_{\chi_{-}}}{h}})\,,\\
&
\langle v_{j}^{(0)}\,,
 \Delta_{f,h}^{D,(0)}(\Omega_{+}) v_{j}^{(0)}\rangle_{L^{2}(\Omega_{+})}=\mathcal{O}(e^{-\frac{c_{\chi_{-}}}{h}})\,,\quad
\langle v_{k}^{(1)}\,,
\Delta_{f,h}^{D,(1)}(\Omega_{+}) v_{k}^{(1)}\rangle_{L^{2}(\Omega_{+})}=\mathcal{O}(e^{-\frac{c_{\chi_{-}}}{h}})\,,
\end{align*}
where the $\mathcal{O}(e^{-\frac{c_{\chi_{-}}}{h}})$ remainders can be
bounded from above by $C_{\chi_{-}}e^{-\frac{c_{\chi_{-}}}{h}}$
for some constants $C_{\chi_{-}},c_{\chi_{-}}>0$ independent of $h \in
[0, h_0]$. Here and in the following, $\Id_{m}$ denotes the identity
matrix of size $m \times m$.
\end{proposition}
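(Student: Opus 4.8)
The plan is to deduce all the assertions from the \emph{a priori} exponential decay of Proposition~\ref{pr.decayO-} together with a standard localization (IMS-type) identity for the Witten quadratic form, keeping in mind that the number $m_{0}^{N}(\Omega_{-})+m_{1}^{N}(\Omega_{-})$ of eigenvectors involved is finite and independent of $h$. I would start with the domain membership. Since $\chi_{-}\in\mathcal{C}^{\infty}_{0}(\Omega_{-})$, each $v_{j}^{(0)}=\chi_{-}\psi_{j}^{(0)}$ and $v_{k}^{(1)}=\chi_{-}\psi_{k}^{(1)}$ is supported in a fixed compact subset $K$ of $\Omega_{-}\subset\Omega_{+}$; as the $\psi$'s belong to $\bigwedge W^{2,2}(\Omega_{-})$ and are smooth in the interior of $\Omega_{-}$ by elliptic regularity, the extensions by $0$ belong to $\bigwedge W^{2,2}(\Omega_{+})$, and since they vanish identically near $\partial\Omega_{+}$ the conditions $\mathbf{t}v|_{\partial\Omega_{+}}=0$ and $\mathbf{t}d_{f,h}^{*}v|_{\partial\Omega_{+}}=0$ hold trivially (note $d_{f,h}^{*}v_{k}^{(1)}$ is again supported in $K$, and $d_{f,h}^{*}v_{j}^{(0)}=0$ on zero-forms). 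Hence $v_{j}^{(0)}\in D(\Delta_{f,h}^{D,(0)}(\Omega_{+}))$ and $v_{k}^{(1)}\in D(\Delta_{f,h}^{D,(1)}(\Omega_{+}))$.

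Next, for the $W^{1,2}$-approximation estimate, I would observe that $\psi-v=(1-\chi_{-})\psi$ is supported in $\{\chi_{-}\neq 1\}$, which by construction stays at Agmon distance $\geq 2c_{\chi_{-}}$ from $U$ for some $c_{\chi_{-}}>0$. Writing $(1-\chi_{-})\psi=\bigl((1-\chi_{-})e^{-d_{Ag}(\cdot,U)/h}\bigr)\bigl(e^{d_{Ag}(\cdot,U)/h}\psi\bigr)$ and differentiating, the only potentially dangerous term is the $h^{-1}$ produced when $d$ hits the exponential weight, and this is controlled by the a.e.\ bound $|\nabla d_{Ag}(\cdot,U)|\leq|\nabla f|$ from Lemma~\ref{le.dist}. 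Combined with $\|e^{d_{Ag}(\cdot,U)/h}\psi\|_{W^{1,2}(\Omega_{-})}=\mathcal{O}(h^{-1/2})$ from Proposition~\ref{pr.decayO-}, this yields $\|\psi-v\|_{W^{1,2}(\Omega_{-})}\leq Ch^{-3/2}e^{-2c_{\chi_{-}}/h}=\mathcal{O}(e^{-c_{\chi_{-}}/h})$, the polynomial factor being absorbed into the exponential. Summing over the finitely many eigenvectors gives the first displayed estimate. The Gram-matrix estimates follow at once: because the $v$'s are supported in $\Omega_{-}$, $\langle v_{j},v_{j'}\rangle_{L^{2}(\Omega_{+})}=\langle v_{j},v_{j'}\rangle_{L^{2}(\Omega_{-})}$, and writing $v=\psi-(\psi-v)$, using orthonormality of the $\psi$'s and Cauchy--Schwarz with the $L^{2}$-part of the previous bound, one gets $\Id+\mathcal{O}(e^{-c_{\chi_{-}}/h})$ for both Gram matrices.

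For the quadratic-form estimates I would invoke the IMS-type localization identity for Witten Laplacians: for $\chi\in\mathcal{C}^{\infty}_{0}(\Omega_{-})$ and $\omega\in D(\Delta_{f,h}^{N}(\Omega_{-}))$, one has $\mathcal{D}_{\Omega_{+}}(\chi\omega,\chi\omega)=\langle\chi^{2}\omega\,,\,\Delta_{f,h}^{N}(\Omega_{-})\omega\rangle_{L^{2}(\Omega_{-})}+h^{2}\|\,|d\chi|\,\omega\|_{L^{2}(\Omega_{-})}^{2}$, obtained from the Leibniz rules $d_{f,h}(\chi\omega)=\chi d_{f,h}\omega+h\,d\chi\wedge\omega$ and $d_{f,h}^{*}(\chi\omega)=\chi d_{f,h}^{*}\omega-h\,\mathbf{i}_{\nabla\chi}\omega$, the pointwise relation $|d\chi\wedge\omega|^{2}+|\mathbf{i}_{\nabla\chi}\omega|^{2}=|d\chi|^{2}|\omega|^{2}$, and an integration by parts with no boundary contribution since $\chi$ has compact support (the cross terms recombine into $\langle\chi^{2}\omega,\Delta_{f,h}^{N}(\Omega_{-})\omega\rangle$). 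Applying this to $\omega=\psi_{j}^{(0)}$, resp.\ $\psi_{k}^{(1)}$, and using $\langle v_{j}^{(0)},\Delta_{f,h}^{D,(0)}(\Omega_{+})v_{j}^{(0)}\rangle_{L^{2}(\Omega_{+})}=\mathcal{D}_{\Omega_{+}}(v_{j}^{(0)},v_{j}^{(0)})$, the first term on the right equals $\lambda\|\chi_{-}\psi\|_{L^{2}}^{2}\leq e^{-c_{0}/h}$ by \eqref{eq.tailleN} of Hypothesis~\ref{hyp.1}, while the second is $\leq h^{2}\|d\chi_{-}\|_{\infty}^{2}\,\|\psi\|_{L^{2}(\supp d\chi_{-})}^{2}$, which is exponentially small because $\supp d\chi_{-}$ lies at positive Agmon distance from $U$ (again Proposition~\ref{pr.decayO-}). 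Both contributions are $\mathcal{O}(e^{-c_{\chi_{-}}/h})$ after lowering $c_{\chi_{-}}$ once more.

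The main (and essentially only) point requiring care is the simultaneous choice of the single constant $c_{\chi_{-}}$: it must be taken smaller than $c_{0}$, than $2\inf_{\supp(1-\chi_{-})}d_{Ag}(\cdot,U)$ with enough slack to absorb the $h^{-3/2}$ loss, and than $2\inf_{\supp d\chi_{-}}d_{Ag}(\cdot,U)$. Since $\chi_{-}\equiv 1$ on a neighborhood of $U$, all three quantities are strictly positive, so such a $c_{\chi_{-}}$ exists, and the constants $C_{\chi_{-}}$ obtained this way are uniform in $h\in(0,h_{0}]$. Everything else is routine: Cauchy--Schwarz, the support properties of $\chi_{-}$ and $d\chi_{-}$, and the finiteness with $h$-independent cardinality of the families $(\psi_{j}^{(0)})_{j}$ and $(\psi_{k}^{(1)})_{k}$.
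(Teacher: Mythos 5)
Your proposal is correct, and for the domain membership, the $W^{1,2}$-approximation and the Gram matrices it follows essentially the same route as the paper: extension by zero of compactly supported truncations, the decomposition $(1-\chi_{-})\psi$ supported at positive Agmon distance from $U$, and Proposition~\ref{pr.decayO-}. The only genuine methodological difference is in the quadratic-form estimate. The paper applies Lemma~\ref{le.Agmon} with $\varphi=0$ twice --- once to $\chi_{-}\psi$ on $\Omega_{+}$ and once to $\psi$ on $\Omega_{-}$ --- and then bounds each term of the difference individually by $C\|\psi\|^{2}_{W^{1,2}(\Omega_{-}\setminus\{\chi_{-}=1\})}$. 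You instead use the exact IMS localization identity $\mathcal{D}(\chi\omega,\chi\omega)=\langle\chi^{2}\omega,\Delta_{f,h}^{N}(\Omega_{-})\omega\rangle+h^{2}\|\,|d\chi|\,\omega\|^{2}$, which is valid here (the Leibniz rules, the pointwise relation $|d\chi\wedge\omega|^{2}+|\mathbf{i}_{\nabla\chi}\omega|^{2}=|d\chi|^{2}|\omega|^{2}$, and the absence of boundary terms for compactly supported $\chi$ are all correct), and then bounds the two resulting terms by $\nu(h)$ and by the decay of $\psi$ on $\supp d\chi_{-}$. This buys you an exact expression for the localization error in a single step, at the price of having to justify the integration by parts $\langle\chi^{2}\omega,\Delta_{f,h}^{N}\omega\rangle=\mathcal{D}(\chi^{2}\omega,\omega)$, which holds since $\chi^{2}\omega$ lies in the form domain; the paper's differencing argument avoids this identity but requires estimating three separate difference terms. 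Both rest on the same exponential-decay input, and your bookkeeping of the single constant $c_{\chi_{-}}$ and of the polynomial losses in $h$ is sound.
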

\begin{proof}
Let $\psi$ be a $L^2(\Omega_-)$-normalized eigenvector of $\Delta_{f,h}^{N,(p)}(\Omega_{-})$,
$p=0,1$, associated with the eigenvalue
$\lambda=\mathcal{O}(e^{-\frac{c_{0}}{h}})$,
 and set $v=\chi_{-} \psi$. Since $\chi_{-}$ belongs to $\mathcal{C}^{\infty}_{0}(\Omega_{-})$ the
form $v=\chi_{-} \psi$ belongs to
$D(\Delta_{f,h}^{D,(p)}(\Omega_{+}))$.

The $W^{1,2}(\Omega_-)$-estimates as well as the result on the Gram matrices are
consequences of:
\begin{equation}
  \label{eq.expdecpsiO-}
\| \psi - v\|_{W^{1,2}(\Omega_{-})}=\|(1-\chi_{-})\psi\|_{W^{1,2}(\Omega_{-})}\leq \|\psi\|_{W^{1,2}(\Omega_{-}\setminus\left\{\chi_{-}=1\right\})}
\leq C'_{\chi_{-}}e^{-\frac{c_{\chi_{-}}'}{h}}
\end{equation}
for some
constants
$c'_{\chi_{-}}>0$ and $C'_{\chi_{-}}>0$. The estimate~\eqref{eq.expdecpsiO-} is
derived from Proposition~\ref{pr.decayO-}  by using the fact that there
exists $c>0$ such that, for all $x \in
\Omega_{-}\setminus\left\{\chi_{-}=1\right\}$, $d_{Ag}(x,U) \ge c$
(this is a consequence of~\eqref{eq:U}  and~\eqref{eq:fx-fy}).

For the last estimate of Proposition~\ref{pr.almostorhtO-}, we use the estimate on $\Delta_{f,h}^{D}$ in Lemma~\ref{le.Agmon}  with $\varphi=0$
and successively $\Omega=\Omega_{+}$, $\omega=v=\chi_{-} \psi$ and 
$\Omega=\Omega_{-}$, $\omega=\psi$. This yields
\begin{equation}\label{eq:1}
\begin{aligned}
  \langle \chi_{-} \psi\,,\,
 \Delta_{f,h}^{D}(\Omega_+)  \chi_{-}\psi\rangle_{L^{2}(\Omega_{+})}
&=
h^{2}\|d\chi_{-}\psi\|_{L^{2}(\Omega_{+})}^{2}+
h^{2}\|d^{*}\chi_{-}\psi\|_{L^{2}(\Omega_{+})}^{2}
\\
&\quad +\langle (|\nabla f|^{2}+h{\mathcal L}_{\nabla
  f}+h{\mathcal L}_{\nabla
  f}^{*})\chi_{-}\psi\,,\,
\chi_{-}\psi\rangle_{L^{2}(\Omega_{+})}+ 0
\end{aligned}
\end{equation}
and (since $\frac{\partial f}{\partial n} > 0$ on $\partial \Omega_-$)
\begin{equation}\label{eq:2}
\begin{aligned}
e^{-\frac{c_{0}}{h}}\geq
\lambda\geq
h^{2}\|d\psi\|_{L^{2}(\Omega_{-})}^{2}+
h^{2}\|d^{*}\psi\|_{L^{2}(\Omega_{-})}^{2}
+\langle (|\nabla f|^{2}+h{\mathcal L}_{\nabla
  f}+h{\mathcal L}_{\nabla
  f}^{*})\psi\,,\,
\psi\rangle_{L^{2}(\Omega_{-})}\,.
\end{aligned}
\end{equation}
By considering the difference between~\eqref{eq:1} and~\eqref{eq:2}, we
thus have:
\begin{align*}
  &\langle \chi_{-} \psi\,,\, \Delta_{f,h}^{D}(\Omega_+)   \chi_{-}\psi\rangle_{L^{2}(\Omega_{+})}
\\
&\le
e^{-\frac{c_{0}}{h}} + h^2
\left(\|d\chi_{-}\psi\|_{L^{2}(\Omega_{+})}^{2} -
  \|d\psi\|_{L^{2}(\Omega_{-})}^{2} \right) + h^2 \left(
  \|d^{*}\chi_{-}\psi\|_{L^{2}(\Omega_{+})}^{2} -
  \|d^{*}\psi\|_{L^{2}(\Omega_{-})}^{2} \right) \\
& + \left( \langle (|\nabla f|^{2}+h{\mathcal L}_{\nabla
  f}+h{\mathcal L}_{\nabla
  f}^{*})\chi_{-}\psi\,,\,
\chi_{-}\psi\rangle_{L^{2}(\Omega_{+})} - \langle (|\nabla f|^{2}+h{\mathcal L}_{\nabla
  f}+h{\mathcal L}_{\nabla
  f}^{*})\psi\,,\,
\psi\rangle_{L^{2}(\Omega_{-})} \right).
\end{align*}
The last three terms in the right-hand side are all of the order
$\mathcal{O}(e^{-\frac{c_{\chi_{-}}}{h}})$. Indeed, for the first term
(the two other terms are estimated in the same way):
\begin{align*}
\left|\|d\chi_{-}\psi\|^{2}_{L^{2}(\Omega_{+})}-
\|d\psi\|^{2}_{L^{2}(\Omega_{-})}\right|&=\left|\langle d(1-\chi_{-})\psi\,,\,
d(1+\chi_{-})\psi\rangle_{L^{2}(\Omega_{-})}\right|\\
&\leq C_{\chi_{-}}'' \|\psi\|^{2}_{W^{1,2}(\Omega_{-}\setminus\left\{\chi_{-}=1\right\})}
\leq C^{(3)}_{\chi_{-}}e^{-\frac{2c_{\chi_{-}}'}{h}},
\end{align*}
using again~\eqref{eq.expdecpsiO-}. This proves the last estimate.
\end{proof}
According to the terminology of \cite{Lep1}, the property on the  Gram matrices in
Proposition~\ref{pr.almostorhtO-} is equivalent to
the almost orthonormality of the family $(v_{j}^{(p)})_{1\leq j\leq
  m_{p}^{N}(\Omega_{-})}$, $p=0,1$ in $L^2(\Omega_+)$.
\begin{definition}
  \label{de.almost} A finite family of $h$-dependent vectors
  $(u_{k}^{h})_{1\leq k\leq N}$ in a  Hilbert space $\mathcal{H}$ is
   almost orthonormal if the Gram matrix satisfies
$$
\left(\langle u_{j}^{h}\,,\,
  u_{k}^{h}\rangle\right)_{1\leq j,k\leq N}=\Id_{N}+
\mathcal{O}(e^{-\frac{c}{h}})
$$
for some $c>0$ independent of $h$.
\end{definition}

We end this paragraph with some remarks on the spectrum of
$\Delta_{f,h}^{N,(0)}(\Omega_{-})$, that we denote in the following  (as usual in
increasing order and with multiplicity) 
$\left(\mu_k^{(0)}(\Omega_{-})\right)_{k \ge 1}$.
The first eigenvalue of
$\Delta_{f,h}^{N,(0)}(\Omega_{-})$ is $\mu_{1}^{(0)}(\Omega_{-})=0$ associated with the eigenvector 
$$
\psi_{1}^{(0)}=\frac{e^{-\frac{f}{h}}}{\left(\int_{\Omega_{-}}e^{-\frac{2f(x)}{h}}~dx\right)^{1/2}}.
$$
One can prove that the second eigenvalue $\mu_2^{(0)}(\Omega_{-})$ of $\Delta_{f,h}^{N,(0)}(\Omega_{-})$
is exponentially large as compared
to  $e^{-\frac{2\kappa_{f}}{h}}$, where we recall $\kappa_{f}=\min_{\partial
  \Omega_{+}}f-\min_{\Omega_{+}}f=\min_{\partial
  \Omega_{+}}f-\min_{\Omega_{-}}f$. 
\begin{proposition}
 \label{pr.gapO-}
Let $\mathrm{cvmax}$ be the maximum critical value of $f$
in $\Omega_{-}$:
$$
\mathrm{cvmax}=\max\left\{f(x), x\in \Omega_{-}, \nabla f(x)=0\right\}\,.
$$
Then the second eigenvalue $\mu_{2}^{(0)}(\Omega_-)$ of $\Delta_{f,h}^{N,(0)}(\Omega_{-})$
satisfies
$$
\liminf_{h\to 0}h\log \left( \mu_{2}^{(0)}(\Omega_-) \right) \geq
-2(\mathrm{cvmax}-\min_{\Omega_{-}}f) \geq -2 \kappa_f + 2 c_0,
$$
where $c_0$ denotes the positive constant used in Hypothesis~\ref{hyp.3}.
\end{proposition}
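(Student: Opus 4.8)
We sketch the argument. The plan is to reduce the statement to a weighted Poincaré inequality on the fixed domain $\Omega_{-}$ with an explicit, exponentially large constant, from which the exponential rate is read off. Recall that $\mu_{1}^{(0)}(\Omega_{-})=0$ is a simple eigenvalue, with positive ground state $\psi_{1}^{(0)}$ proportional to $e^{-f/h}$. Since $d_{f,h}^{*}=0$ on $0$-forms, the substitution $u=e^{-f/h}w$, for which $d_{f,h}u=h\,e^{-f/h}\,dw$, together with the min-max principle gives
$$
\mu_{2}^{(0)}(\Omega_{-})=h^{2}\,\inf\left\{\frac{\int_{\Omega_{-}}e^{-\frac{2f}{h}}|\nabla w|^{2}\,dx}{\int_{\Omega_{-}}e^{-\frac{2f}{h}}w^{2}\,dx}\ :\ w\in W^{1,2}(\Omega_{-})\,,\ \int_{\Omega_{-}}e^{-\frac{2f}{h}}w\,dx=0\right\}.
$$
Hence it suffices to show that for every $\varepsilon>0$ there is $C_{\varepsilon}>0$ such that, for $h$ small enough,
$$
\int_{\Omega_{-}}e^{-\frac{2f}{h}}(w-\langle w\rangle_{h})^{2}\,dx\ \le\ C_{\varepsilon}\,e^{\frac{2(\mathrm{cvmax}-\min_{\Omega_{-}}f)+\varepsilon}{h}}\int_{\Omega_{-}}e^{-\frac{2f}{h}}|\nabla w|^{2}\,dx
$$
for all $w\in W^{1,2}(\Omega_{-})$, $\langle w\rangle_{h}$ being the $e^{-2f/h}\,dx$-weighted mean: plugging the (real) minimizing eigenfunction into this inequality and letting $h\to0$, then $\varepsilon\to0$, yields $\liminf_{h\to0}h\log\mu_{2}^{(0)}(\Omega_{-})\ge -2(\mathrm{cvmax}-\min_{\Omega_{-}}f)$. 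The second inequality of the statement then follows from $\min_{\Omega_{-}}f=\min_{\Omega_{+}}f$ together with Hypothesis~\ref{hyp.3}, which forces $\mathrm{cvmax}<\min_{\partial\Omega_{+}}f-c_{0}$, that is $\mathrm{cvmax}-\min_{\Omega_{-}}f<\kappa_{f}-c_{0}$.

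To prove the weighted Poincaré inequality, fix $\varepsilon>0$ and pick, via Sard's theorem, a value $c$ with $\mathrm{cvmax}<c<\mathrm{cvmax}+\varepsilon$ that is a regular value of both $f|_{\overline{\Omega_{-}}}$ and $f|_{\partial\Omega_{-}}$; set $\Omega_{c}=\{f<c\}\cap\Omega_{-}$. The crucial geometric point is that $\Omega_{c}$ is a connected Lipschitz domain. Indeed, by Hypotheses~\ref{hyp.0} and~\ref{hyp.3}, $f$ has no critical point in $\overline{\Omega_{-}}$ with value $\ge c$, while $\partial_{n}f>0$ on $\partial\Omega_{-}$ makes $-\nabla f$ point strictly inward there; hence the flow of $-\nabla f/|\nabla f|^{2}$ keeps $\overline{\Omega_{-}}$ forward-invariant, decreases $f$ at unit rate, and deformation-retracts $\{f\le c\}\cap\overline{\Omega_{-}}$ onto the connected set $\overline{\Omega_{-}}$. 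On $\Omega_{c}$, the classical Poincaré inequality for a fixed bounded connected Lipschitz domain, combined with the bound $e^{-2c/h}\le e^{-\frac{2f}{h}}\le e^{-\frac{2}{h}\min_{\Omega_{-}}f}$ valid on $\Omega_{c}$ (the global minimizer of $f$ on $\Omega_{-}$ lies in $\Omega_{c}$, since $\min_{\Omega_{-}}f<c$), gives
$$
\int_{\Omega_{c}}e^{-\frac{2f}{h}}(w-a)^{2}\,dx\ \le\ C_{c}\,e^{\frac{2(c-\min_{\Omega_{-}}f)}{h}}\int_{\Omega_{c}}e^{-\frac{2f}{h}}|\nabla w|^{2}\,dx\,,
$$
where $a$ is the Lebesgue mean of $w$ on $\Omega_{c}$.

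It remains to control the tail $\{f>c\}\cap\Omega_{-}$, on which $|\nabla f|\ge\delta>0$ on a compact set, and this is the main obstacle. Here I would integrate along the flow lines of $-\nabla f$: each $x$ with $f(x)>c$ flows down, staying in $\Omega_{-}$, to a point $\pi(x)\in\{f=c\}\cap\Omega_{-}$ in bounded time. A Cauchy--Schwarz estimate along each flow line, together with the co-area formula and the change of variables $x\mapsto(\pi(x),f(x))$ (whose Jacobians are bounded on the compact set $\{f\ge c\}\cap\overline{\Omega_{-}}$), bounds $\int_{\{f>c\}\cap\Omega_{-}}e^{-\frac{2f}{h}}(w-w\circ\pi)^{2}\,dx$ by $C\int_{\Omega_{-}}e^{-\frac{2f}{h}}|\nabla w|^{2}\,dx$ and $\int_{\{f>c\}\cap\Omega_{-}}e^{-\frac{2f}{h}}(w\circ\pi-a)^{2}\,dx$ by $C\,e^{-\frac{2c}{h}}\int_{\{f=c\}\cap\Omega_{-}}(w-a)^{2}\,d\mathcal{H}^{d-1}$, the last surface integral being estimated by the trace theorem on $\Omega_{c}$ and the previous display. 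Adding the contributions of $\Omega_{c}$ and of the tail, and using that $\langle w\rangle_{h}$ minimizes $b\mapsto\int_{\Omega_{-}}e^{-\frac{2f}{h}}(w-b)^{2}\,dx$, gives the desired inequality, with $C_{\varepsilon}$ depending only on $c$ (hence on $\varepsilon$) and the fixed geometry. One has to be careful in setting up the gradient-flow change of variables --- that downward flow lines stay in $\Omega_{-}$ (precisely where $\partial_{n}f>0$ on $\partial\Omega_{-}$ enters), that the Jacobians are uniformly bounded on the compact region where $|\nabla f|\ge\delta$, and that $\{f=c\}$ meets $\partial\Omega_{-}$ transversally so that $\Omega_{c}$ is Lipschitz; apart from this, everything is a routine combination of the min-max principle, the conjugation $u=e^{-f/h}w$, and standard Sobolev inequalities on the $h$-independent domains $\Omega_{c}$ and $\Omega_{-}$.
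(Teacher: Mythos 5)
Your argument is correct in outline, but it takes a genuinely different route from the paper. The paper's proof is by contradiction: assuming $\mu_{2}^{(0)}(\Omega_-)$ were too small along a sequence $h_n\to 0$, it shows (via the conjugation $\psi_2^{(0)}\mapsto e^{(f-\min f)/h}\psi_2^{(0)}$ and the Poincar\'e--Wirtinger inequality on the fixed sublevel set $\Omega=\{f<\mathrm{cvmax}+\varepsilon_0/2\}\cap\Omega_-$) that $\psi_2^{(0)}$ is $L^2(\Omega)$-close to a multiple of $\psi_1^{(0)}$, and then invokes the already-established Agmon decay of Proposition~\ref{pr.decayO-} to confine both eigenfunctions to $\Omega$, contradicting their orthogonality. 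You instead prove a direct, quantitative weighted Poincar\'e inequality with constant $C_\varepsilon e^{(2(\mathrm{cvmax}-\min f)+\varepsilon)/h}$, splitting $\Omega_-$ into the sublevel set $\Omega_c$ (classical Poincar\'e plus crude bounds on the weight) and the tail $\{f>c\}$, which you control by integrating along the flow of $-\nabla f$ down to $\{f=c\}$ with a co-area change of variables; the weight is monotone along these flow lines, which is exactly what makes the Cauchy--Schwarz step close. Both proofs hinge on the Poincar\'e inequality on a sublevel set just above $\mathrm{cvmax}$ and hence on its connectedness --- a point the paper leaves implicit and you address explicitly via the gradient-flow retraction (note you state the retraction backwards: it is $\overline{\Omega_-}$ that retracts onto $\{f\le c\}\cap\overline{\Omega_-}$, not the other way around). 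What each buys: your version yields a functional inequality valid for every $w\in W^{1,2}(\Omega_-)$ with weighted mean zero, not just for eigenfunctions, and avoids any appeal to Agmon estimates, at the price of the flow-map/Jacobian technicalities you flag; the paper's version recycles machinery it has already built and sidesteps the co-area argument entirely. The reduction via $u=e^{-f/h}w$, the min-max identity, and the deduction of the second inequality from Hypothesis~\ref{hyp.3} and $\min_{\Omega_-}f=\min_{\Omega_+}f$ are all correct.
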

\begin{proof}
The second inequality $-2(\mathrm{cvmax}-\min_{\Omega_{-}}f) \geq -2 \kappa_f
+ 2 c_0$ is of course a consequence of Hypothesis~\ref{hyp.3}. To
prove the first inequality, let us reason by contradiction and  assume that there exists
$\varepsilon_{0}>0$ and a sequence $h_{n}$ 
such that $\lim_{n \to \infty} h_n = 0$ and
$$
\min \left\{
  \sigma(\Delta_{f,h_{n}}^{N,(0)}(\Omega_{-}))\setminus\left\{0\right\}
  \right\}\leq C
e^{-2\frac{\mathrm{cvmax}-\min_{\Omega_{-}}f+\varepsilon_{0}}{h_{n}}}\,.
$$
To simplify the notation, let us drop the subscript
$_{n}$ in $h_{n}$\,. 
Let $\psi_{2}^{(0)}$ be a normalized eigenfunction of $\Delta_{f,h}^{N,(0)}(\Omega_{-})$  associated with
$\mu_{2}^{(0)}(\Omega_-)>0$. It is orthogonal to $\psi_{1}^{(0)}$ in $L^2(\Omega_-)$ and it
satisfies: for any $\Omega\subset \Omega_{-}$,
$$
\|d_{f,h}\psi_{2}^{(0)}\|_{L^{2}(\Omega)}^{2}\leq 
\|d_{f,h}\psi_{2}^{(0)}\|_{L^{2}(\Omega_{-})}^{2}=\langle
\psi_{2}^{(0)}\,,\,
\Delta_{f,h}^{N,(0)}(\Omega_{-})\psi_{2}^{(0)}\rangle_{L^{2}(\Omega_{-})}
=\lambda_{2}^{(0)}\leq Ce^{-2\frac{\mathrm{cvmax}-\min_{\Omega_{-}}f+\varepsilon_{0}}{h}}\,.
$$
In particular, for $\Omega=\left\{x\in \Omega_{-}, f(x)<
  \mathrm{cvmax}+\frac{\varepsilon_{0}}{2}\right\}$,
this gives
\begin{align*}
\left\|d \left(e^{\frac{f-\min_{\Omega_{-}}f}{h}}\psi_{2}^{(0)}
  \right) \right\|_{L^{2}(\Omega)}^{2}& \leq 
h^{-2} \max_{x\in \Omega}
\left|e^{\frac{f(x)-\min_{\Omega_{-}}f}{h}}\right|^{2} \left\|d_{f,h}\psi_{2}^{(0)}
  \right\|_{L^{2}(\Omega)}^{2}\\
&\leq Ch^{-2}
e^{-2\frac{\mathrm{cvmax}-\min_{\Omega_{-}}f+\varepsilon_{0}}{h}}
\max_{x\in \Omega} \left|e^{\frac{f(x)-\min_{\Omega_{-}}f}{h}}\right|^{2}\leq C'e^{-\frac{\varepsilon_{0}}{h}}\,.
\end{align*}
Using the spectral gap estimate for the Neumann Laplacian in $\Omega$
(or equivalently the Poincar\'e-Wirtinger inequality on $\Omega$), there is a
constant $C_{h}$ (depending on $\psi_2^{(0)}$) such that
\begin{equation*}
\left\|\psi_{2}^{(0)}-C_{h}e^{-\frac{f-\min_{\Omega_{-}}f}{h}} \right\|_{L^{2}(\Omega)}=\mathcal{O}(e^{-\frac{\varepsilon_{0}}{2h}})\,.
\end{equation*}
Equivalently, there is a constant $C_h$ such that
\begin{equation}
  \label{eq.estimpsi21}
\left\|\psi_{2}^{(0)}-C_{h}  \psi_{1}^{(0)} \right\|_{L^{2}(\Omega)}=\mathcal{O}(e^{-\frac{\varepsilon_{0}}{2h}})\,.
\end{equation}
Besides, using Proposition~\ref{pr.decayO-} with $U=\left\{x\in \Omega_{-}, f(x)<
  \mathrm{cvmax}+\frac{\varepsilon_{0}}{4}\right\}\subset
\Omega\,,
$ and  a lower bound on $d_{Ag}(x,U)$ (see~\eqref{eq.expdecpsiO-} for
a similar reasoning),
one obtains
\begin{equation}
  \label{eq.estimpsi22}
\|\psi_{1}^{(0)}\|_{L^{2}(\Omega_{-}\setminus \Omega)}+\|\psi_{2}^{(0)}\|_{L^{2}(\Omega_{-}\setminus \Omega)}\leq
C_{\varepsilon_{0}}e^{-\frac{c_{\varepsilon_{0}}}{h}}\,.
\end{equation}
The two estimates \eqref{eq.estimpsi21} and \eqref{eq.estimpsi22}
contradict the orthogonality of  $\psi_{2}^{(0)}$
and $\psi_{1}^{(0)}$ in $L^2(\Omega_-)$, in the limit $h \to 0$
(actually $n \to \infty$).
\end{proof}

\subsection{Exponential decay for the eigenvectors of
  $\Delta_{f,h}^{D,(p)}(\Omega_{+}\setminus\overline{\Omega_{-}})$}
In this section, we check that
$\sigma(\Delta_{f,h}^{D,(0)}(\Omega_{+}\setminus\overline{\Omega_{-}}))\cap
[0,\nu(h)]=\emptyset$ and provide the same results as in the previous
section for the
eigenvectors of
$\Delta_{f,h}^{D,(1)}(\Omega_{+}\setminus\overline{\Omega_{-}})$. Let us start with an equivalent of Proposition~\ref{pr.decayO-}.
\begin{proposition}\label{prop:exp_decay}
Let $\mathcal{V}$ be a  subset of $\overline{\Omega_+} \setminus
\Omega_-$ such that $\partial\Omega_{+} \subset \mathcal{V}$
and let $d_{Ag}(x,\mathcal{V})$
be the Agmon distance to ${\mathcal V}$ defined for $x \in \Omega_+
\setminus \overline{\Omega_-}$. There exists a constant $C>0$
independent of $h \in [0,h_0]$ such that
every normalized eigenvector $\psi$ of
$\Delta_{f,h}^{D,(1)}(\Omega_{+}\setminus\overline{\Omega_{-}})$
associated with an eigenvalue $\lambda \in [0,\nu(h)]$ satisfies
$$
\|e^{\frac{d_{Ag}(.,
    \mathcal{V})}{h}}\psi\|_{W^{1,2}(\Omega_{+}\setminus\overline{\Omega_{-}})}\leq
\frac{C}{h}.$$
\end{proposition}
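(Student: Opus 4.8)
The plan is to mimic the Agmon-estimate argument of Proposition~\ref{pr.decayO-}, but now on the shell $\Omega_{+}\setminus\overline{\Omega_{-}}$ and with the weight measured from the outer boundary $\partial\Omega_{+}$ (through the enlarged set $\mathcal{V}$) rather than from an interior set. The key geometric input is Hypothesis~\ref{hyp.0}: on $\overline{\Omega_{+}}\setminus\Omega_{-}$ one has $|\nabla f|>0$, so the degenerate Agmon metric $|\nabla f|^{2}g$ is genuinely non-degenerate on the shell, $d_{Ag}(\cdot,\mathcal{V})$ is Lipschitz with $|\nabla_{x}d_{Ag}(x,\mathcal{V})|\le |\nabla f(x)|$ a.e.\ by Lemma~\ref{le.dist}, and moreover $|\nabla f|^{2}$ is bounded below by a positive constant on $\overline{\Omega_{+}}\setminus\Omega_{-}$. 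This last point replaces the open set $U$ of~\eqref{eq:U}: here the lower bound on $|\nabla f|$ holds on the \emph{whole} shell, so no cut-off region needs to be excised.

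First I would apply the Dirichlet version of the Agmon identity, Lemma~\ref{le.Agmon}, on $\Omega=\Omega_{+}\setminus\overline{\Omega_{-}}$ with $\varphi=(1-\alpha h)\,d_{Ag}(\cdot,\mathcal{V})$ and $\omega=\psi$ a normalized eigenform with eigenvalue $\lambda\in[0,\nu(h)]\subset[0,h]$. The boundary $\partial(\Omega_{+}\setminus\overline{\Omega_{-}})$ has two pieces: on $\partial\Omega_{-}$ one has $\partial_{n}f>0$ where $n$ points outward from $\Omega_{-}$, i.e.\ \emph{into} the shell, so with respect to the shell's outward normal this is $\partial_{n}f<0$ and the boundary term $-h\int\langle\psi,\psi\rangle e^{2\varphi/h}\partial_{n}f\,d\sigma$ has the favorable sign there; on $\partial\Omega_{+}$ the Dirichlet condition $\mathbf{t}\psi|_{\partial\Omega_{+}}=0$ together with $\mathbf{t}d_{f,h}^{*}\psi|_{\partial\Omega_{+}}=0$ from the domain of $\Delta_{f,h}^{D}$ makes the analysis near $\partial\Omega_{+}$ work exactly as in the interior-set case of Proposition~\ref{pr.decayO-} (and $\varphi$ vanishes on $\mathcal{V}\supset\partial\Omega_{+}$, so there is nothing to control there anyway). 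Using $|\nabla\varphi|^{2}\le(1-\alpha h)|\nabla f|^{2}$ for $h<1/\alpha$, and absorbing the zeroth-order operator $h(\mathcal{L}_{\nabla f}+\mathcal{L}_{\nabla f}^{*})$ — bounded on $\overline{\Omega_{+}}\setminus\Omega_{-}$ — into $\alpha h|\nabla f|^{2}$ by choosing $\alpha$ with $\alpha\min_{\overline{\Omega_{+}}\setminus\Omega_{-}}|\nabla f|^{2}\ge 2C_{f}$, I obtain from $\lambda\le h$
$$
Ch\ \ge\ h^{2}\|d\,e^{\varphi/h}\psi\|_{L^{2}}^{2}+h^{2}\|d^{*}e^{\varphi/h}\psi\|_{L^{2}}^{2}+C_{f}h\,\|e^{\varphi/h}\psi\|_{L^{2}}^{2}.
$$
This gives $\|e^{(1-\alpha h)d_{Ag}(\cdot,\mathcal{V})/h}\psi\|_{L^{2}}^{2}=\mathcal{O}(1)$ and $\|(hd)e^{(1-\alpha h)d_{Ag}(\cdot,\mathcal{V})/h}\psi\|_{L^{2}}^{2}+\|(hd)^{*}e^{(1-\alpha h)d_{Ag}(\cdot,\mathcal{V})/h}\psi\|_{L^{2}}^{2}=\mathcal{O}(h)$; since $d_{Ag}(\cdot,\mathcal{V})$ is bounded on $\overline{\Omega_{+}}$, one may drop the $-\alpha h\,d_{Ag}$ correction in the exponent at the cost of a constant, and then the equivalence of $\sqrt{\|u\|_{L^{2}}^{2}+\|du\|_{L^{2}}^{2}+\|d^{*}u\|_{L^{2}}^{2}}$ with the $W^{1,2}$-norm (recalled after~\eqref{eq:Wlapl}) converts the estimate into $\|e^{d_{Ag}(\cdot,\mathcal{V})/h}\psi\|_{W^{1,2}(\Omega_{+}\setminus\overline{\Omega_{-}})}\le C/h$, as claimed. (One can also state the stronger bound $\mathcal{O}(1)$ for the $L^{2}$-part and $\mathcal{O}(h^{-1})$ only for the derivatives, matching the format of Proposition~\ref{pr.decayO-}.)

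The main obstacle is the bookkeeping at the \emph{inner} boundary $\partial\Omega_{-}$: one must check that the sign of $\partial_{n}f$ there (with $n$ the outward normal \emph{of the shell}, hence inward-pointing for $\Omega_{-}$) produces a boundary term of the right sign in the Dirichlet Agmon identity, which it does precisely because Hypothesis~\ref{hyp.0} gives $\partial_{n}f>0$ with $n$ the outward normal of $\Omega_{-}$. A secondary subtlety is that $d_{Ag}(\cdot,\mathcal{V})$ is only Lipschitz, but this is already handled by Lemma~\ref{le.dist} and causes no issue since Lemma~\ref{le.Agmon} applies to Lipschitz weights $\varphi$. Finally, the companion claim that $\sigma(\Delta_{f,h}^{D,(0)}(\Omega_{+}\setminus\overline{\Omega_{-}}))\cap[0,\nu(h)]=\emptyset$ follows from the same identity with $\varphi\equiv 0$ and $\omega$ a hypothetical low-lying $0$-form eigenfunction: the term $\langle|\nabla f|^{2}\omega,\omega\rangle$ is bounded below by a positive constant times $\|\omega\|^{2}$ on the shell, the boundary contributions have the right sign, and the $h$-order Lie-derivative term is lower order, forcing the eigenvalue to be $\gtrsim 1$, hence no eigenvalue can lie in $[0,\nu(h)]\subset[0,h]$ for $h$ small — so that part needs no new idea beyond what is set up above.
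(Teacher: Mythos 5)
Your overall strategy (Agmon identity on the shell with weight $\varphi=(1-\alpha h)d_{Ag}(\cdot,\mathcal{V})$, favorable sign of the boundary term on $\partial\Omega_{-}$, absorption of the Lie-derivative term using $|\nabla f|^{2}\geq c>0$ on the whole shell) is the right one and matches the paper up to one point — but that point is the heart of the proof, and your treatment of it is wrong. You claim that on $\partial\Omega_{+}$ "there is nothing to control" because $\mathbf{t}\psi|_{\partial\Omega_{+}}=0$, $\mathbf{t}d_{f,h}^{*}\psi|_{\partial\Omega_{+}}=0$ and $\varphi$ vanishes on $\mathcal{V}$. For a \emph{one-form}, the Dirichlet condition $\mathbf{t}\psi=0$ kills only the tangential components; the normal component $\mathbf{n}\psi$ survives, so $\langle\psi,\psi\rangle_{T_{\sigma}^{*}\Omega}$ does not vanish on $\partial\Omega_{+}$. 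The Agmon identity of Lemma~\ref{le.Agmon} therefore still produces the term $-h\int_{\partial\Omega_{+}}\langle\psi,\psi\rangle\,e^{2\varphi/h}\,\partial_{n}f\,d\sigma$, and Hypothesis~\ref{hyp.0} imposes no sign on $\partial_{n}f$ along $\partial\Omega_{+}$ (in the applications one even has $\partial_{n}f>0$ there, which is the \emph{unfavorable} sign in the Dirichlet identity). The fact that $\varphi\equiv 0$ on $\mathcal{V}$ only normalizes the weight to $1$ on $\partial\Omega_{+}$; it does not make the term disappear. This is exactly why the paper's Proposition~\ref{pr.decayO+} distinguishes $p=0$ (boundary term gone) from $p=1$ (boundary term must be estimated).

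The missing step is the control of this boundary term: the paper bounds $\int_{\partial\Omega_{+}}\langle e^{\varphi/h}\psi,e^{\varphi/h}\psi\rangle\,d\sigma$ by the trace theorem in terms of $\|e^{\varphi/h}\psi\|_{L^{2}(\mathcal{V})}$ and $\|e^{\varphi/h}\psi\|_{W^{1,2}(\mathcal{V})}$, uses the elliptic equivalence of the $W^{1,2}$-norm with $\|\cdot\|_{L^{2}}+\|d\cdot\|_{L^{2}}+\|d^{*}\cdot\|_{L^{2}}$ to absorb the derivative part into $\frac{h^{2}}{2}\bigl[\|de^{\varphi/h}\psi\|^{2}+\|d^{*}e^{\varphi/h}\psi\|^{2}\bigr]$, and bounds the remainder by a constant since $\varphi\equiv 0$ on $\mathcal{V}$ and $\|\psi\|_{L^{2}}=1$. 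This absorption is also the reason the final estimate is only $C/h$ in $W^{1,2}$ rather than the $C/h^{1/2}$ of Proposition~\ref{pr.decayO-}; your intermediate inequality $Ch\geq h^{2}\|de^{\varphi/h}\psi\|^{2}+\dots+C_{f}h\|e^{\varphi/h}\psi\|^{2}$, which would yield the stronger $h^{-1/2}$ bound, is precisely the signature of having dropped a boundary term that cannot be dropped. The rest of your argument (sign on $\partial\Omega_{-}$, choice of $\alpha$, Lipschitz weight, the $0$-form spectral gap) is correct and agrees with the paper.
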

\begin{proof}
The proof follows ideas from~\cite{DiSj}. Using Lemma~\ref{le.Agmon}, the fact that $\lambda \leq
h$ and the assumption on the sign of the normal derivative of $f$
on $\partial \Omega_-$ stated in Hypothesis~\ref{hyp.0},  we have
\begin{equation}
\label{eq.estimbord}
\begin{aligned}
0
& \geq h^{2}\|de^{\frac{\varphi}{h}}\psi\|_{L^{2}(\Omega_{+}\setminus\overline{\Omega_{-}})}^{2}
+
h^{2}\|d^{*}e^{\frac{\varphi}{h}}\psi\|_{L^{2}(\Omega_{+}\setminus\overline{\Omega_{-}})}^{2}
+\langle (|\nabla f|^{2}-|\nabla
\varphi|^{2})e^{\frac{\varphi}{h}}\psi\,,\,e^{\frac{\varphi}{h}}\psi\rangle_{L^{2}(\Omega_{+}\setminus\overline{\Omega_{-}})}
\\
& \quad - hC_{f}\|e^{\frac{\varphi}{h}}\psi\|_{L^{2}(\Omega_{+}\setminus\overline{\Omega_{-}})}^{2}
-h\int_{\partial\Omega_{+}}
\langle \psi\,,\,\psi\rangle_{\bigwedge
  T_{\sigma}^{*}\Omega_{+}}e^{\frac{2\varphi
    (\sigma)}{h}}\frac{\partial f}{\partial n}(\sigma)~d\sigma\,.
\end{aligned}
\end{equation}
Using the trace theorem, there exists a constant $C_{\mathcal{V}}$ such that for any $\omega
\in \bigwedge W^{1,2}(\mathcal{V})$,
$$
\int_{\partial \Omega_{+}}\langle
\omega\,,\,\omega\rangle_{\bigwedge T_{\sigma}^{*}\Omega_{+}}~d\sigma
\leq C_{\mathcal{V}}\left[\|\omega\|_{L^{2}(\mathcal{V})}^{2}+
  \|\omega\|_{W^{1,2}(\mathcal{V})}
\|\omega\|_{L^{2}(\mathcal{V})}\right]\,.
$$
By applying this inequality to $\omega=e^{\frac{\varphi}{h}}\psi$ and
using 
$$\|\omega\|_{W^{1,2}(\mathcal{V})}^2 \le C_{\mathcal{V}}\left[ \|\omega\|_{L^2
  (\mathcal{V})}^2 + \|d \omega\|_{L^2
  (\mathcal{V})}^2 + \|d^*\omega \|_{L^2
  (\mathcal{V})}^2 \right],$$ the last term of \eqref{eq.estimbord} is
estimated by
\begin{align*}
\left|
h\int_{\partial\Omega_{+}}
\langle \psi\,,\,\psi\rangle_{\bigwedge
  T_{\sigma}^{*}\Omega_{+}}e^{\frac{2\varphi
    (\sigma)}{h}}\frac{\partial f}{\partial n}(\sigma)~d\sigma
\right|
&\leq
\frac{h^{2}}{2}\left[\|de^{\frac{\varphi}{h}}\psi\|_{L^{2}(\mathcal{V})}^{2}
+
\|d^{*}e^{\frac{\varphi}{h}}\psi\|_{L^{2}(\mathcal{V})}^{2}
\right]
+C_{f,\mathcal{V}}\|e^{\frac{\varphi}{h}}\psi\|_{L^{2}(\mathcal{V})}^{2}\\
&\leq
\frac{h^{2}}{2}\left[\|de^{\frac{\varphi}{h}}\psi\|_{L^{2}(\Omega_{+}\setminus\overline{\Omega_{-}})}^{2}
+
\|d^{*}e^{\frac{\varphi}{h}}\psi\|_{L^{2}(\Omega_{+}\setminus\overline{\Omega_{-}})}^{2}
\right]
+C_{f,\mathcal{V}}
\end{align*}
since $\varphi \equiv 0$ sur ${\mathcal V}$. 
Taking $\varphi=(1-\alpha h)d_{Ag}(x,\mathcal{V})$
in~\eqref{eq.estimbord} gives (using $|\nabla \varphi|^2 \le
(1 - \alpha h) |\nabla f|^2$ and the inequality
$\|e^{\frac{\varphi}{h}}\psi\|_{L^{2}(\Omega_{+}\setminus\overline{\Omega_{-}})}^{2}
=
\|e^{\frac{\varphi}{h}}\psi\|_{L^{2}({\mathcal V})}^{2}
+
\|e^{\frac{\varphi}{h}}\psi\|_{L^{2}(\Omega_{+}\setminus\overline{\Omega_{-}
  \cup {\mathcal V}})}^{2} \le C_{\mathcal V}' + \|e^{\frac{\varphi}{h}}\psi\|_{L^{2}(\Omega_{+}\setminus\overline{\Omega_{-}
  \cup {\mathcal V}})}^{2}$)
$$
C_{f,\mathcal{V}}' \geq
\frac{h^{2}}{2}\left[\|de^{\frac{\varphi}{h}}\psi\|_{L^{2}(\Omega_{+}\setminus\overline{\Omega_{-}})}^{2}+
\|d^{*}e^{\frac{\varphi}{h}}\psi\|_{L^{2}(\Omega_{+}\setminus\overline{\Omega_{-}})}^{2}\right]
+h \left(\alpha\min_{x \in \Omega_{+}\setminus\overline{\Omega_{-}\cup
  \mathcal{V}}}|\nabla
f(x)|^{2}-C_{f} \right)\|e^{\frac{\varphi}{h}}\psi\|_{L^{2}(\Omega_{+}\setminus\overline{\Omega_{-}\cup
  \mathcal{V}})}^{2}\,.
$$
By taking $\alpha$ large enough, this yields the exponential decay estimate:
$$
\|e^{\frac{d_{Ag}(., \mathcal{V})}{h}}\psi\|_{W^{1,2}(\Omega_{+}\setminus\overline{\Omega_{-}})}\leq \frac{C_{f,\mathcal{V}}''}{h}\,.
$$
\end{proof}
We are now in position to state the main result of this section, which
can be seen as an equivalent of Proposition~\ref{pr.decayO-} for 
$\Delta_{f,h}^{D,(p)}(\Omega_{+}\setminus\overline{\Omega_{-}})$.
\begin{proposition}
\label{pr.decayO+-}
{\em 1)}  There is a constant $c>0$ such that 
\begin{equation}\label{eq:specLaplD0}
\forall h\in (0,h_{0})\,,\quad
\sigma(\Delta_{f,h}^{D,(0)}(\Omega_{+}\setminus\overline{\Omega_{-}}))\cap [0,c]=\emptyset\,.
\end{equation}
{\em 2)}
  Let $(\psi_{k}^{(1)})_{m_{1}^{N}(\Omega_{-})+1\leq k\leq
  m_{1}^{N}(\Omega_{-})+m_{1}^{D}(\Omega_{+}\setminus\overline{\Omega_{-}})}$ 
be an orthonormal basis of eigenvectors of
$\Delta_{f,h}^{D,(1)}(\Omega_{+}\setminus\overline{\Omega}_{-})$
associated with the eigenvalues in $[0,\nu(h)]$, and
let $\chi_{+}\in \mathcal{C}^{\infty}(\overline{\Omega_{+}})$ be
such that $\chi_{+}\equiv 1$ in a neighborhood of $\partial
\Omega_{+}$ and $0$ in a neighborhood of $\overline{\Omega_{-}}$\,. For all $k\in
\left\{m_{1}^{N}(\Omega_{-})+1,\ldots, m_{1}^{N}(\Omega_{-})+
  m_{1}^{D}(\Omega_{+}\setminus\overline{\Omega_{-}})\right\}$, set
$v_{k}^{(1)}=\chi_{+}\psi_{k}^{(1)}$. Then, the one-forms
$v_{k}^{(1)}$ are close to $\psi_{k}^{(1)}$, for $k\in
\left\{m_{1}^{N}(\Omega_{-})+1,\ldots, m_{1}^{N}(\Omega_{-})+
  m_{1}^{D}(\Omega_{+}\setminus\overline{\Omega_{-}})\right\}$:
\begin{equation}\label{eq:exp_decay_psi1}
 \sum_{k=m_{1}^{N}(\Omega_{-})+1}^{m_{1}^{N}(\Omega_{-})+m_{1}^{D}(\Omega_{+}\setminus\overline{\Omega_{-}})} 
\|\psi_{k}^{(1)}-v_{k}^{(1)}\|_{W^{1,2}(\Omega_{+}\setminus\overline{\Omega_{-}})}
= \mathcal{O}(e^{-\frac{c_{\chi_{+}}}{h}}).
\end{equation}
They are almost orthonormal in $L^2(\Omega_+)$:
$$
\left(\langle v_{k}^{(1)}\,,\,
  v_{k'}^{(1)}\rangle_{L^2(\Omega_+)}\right)_{k,k'}=\Id_{m_{1}^{D}(\Omega_{+}\setminus
    \overline{\Omega_{-}})}+\mathcal{O}(e^{-\frac{c_{\chi_{+}}}{h}}).
$$
Moreover, they belong to $D(\Delta_{f,h}^{D,(1)}(\Omega_{+}))$ 
and they satisfy
\begin{align*}
&\langle v_{k}^{(1)}\,,\, \Delta_{f,h}^{D,(1)}(\Omega_{+})
v_{k}^{(1)}\rangle_{L^{2}(\Omega_{+})} = \mathcal{O}(e^{-\frac{c_{\chi_{+}}}{h}})\,,\\
& d_{f,h}^{*}v_{k}^{(1)}\equiv 0 \quad \text{in~}~\left\{\chi_{+}= 1\right\}.
\end{align*}
All the $\mathcal{O}(e^{-\frac{c_{\chi_{+}}}{h}})$ remainders can be
bounded from above by $C_{\chi_{+}}e^{-\frac{c_{\chi_{+}}}{h}}$
for some constants $C_{\chi_{+}},c_{\chi_{+}}>0$ independent of $h \in
[0, h_0]$. 
\end{proposition}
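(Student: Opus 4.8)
The plan is to treat the two parts separately: for part~1), a one-line consequence of the Agmon identity, and for part~2), the scheme of Proposition~\ref{pr.almostorhtO-} together with one structural observation specific to the shell.

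For part~1), apply Lemma~\ref{le.Agmon} to $\Delta_{f,h}^{D,(0)}(\Omega_{+}\setminus\overline{\Omega_{-}})$ with the trivial weight $\varphi\equiv 0$. On $0$-forms $d_{f,h}^{*}$ vanishes, and the Dirichlet condition $\mathbf{t}\omega=0$ on $\partial(\Omega_{+}\setminus\overline{\Omega_{-}})=\partial\Omega_{+}\cup\partial\Omega_{-}$ forces $\omega$ to vanish on the \emph{whole} boundary, so the boundary integral disappears. Since $\mathcal{L}_{\nabla f}+\mathcal{L}_{\nabla f}^{*}$ is a zeroth order operator bounded on $\overline{\Omega_{+}}\setminus\Omega_{-}$ by some $C_{f}$ (as recalled in the proof of Proposition~\ref{pr.decayO-}), we get, for every $\omega\in D(\Delta_{f,h}^{D,(0)}(\Omega_{+}\setminus\overline{\Omega_{-}}))$,
\begin{align*}
\langle\omega\,,\,\Delta_{f,h}^{D,(0)}(\Omega_{+}\setminus\overline{\Omega_{-}})\omega\rangle_{L^{2}}
&=h^{2}\|d\omega\|_{L^{2}}^{2}+\langle\big(|\nabla f|^{2}+h\mathcal{L}_{\nabla f}+h\mathcal{L}_{\nabla f}^{*}\big)\omega\,,\,\omega\rangle_{L^{2}}\\
&\geq\Big(\min_{\overline{\Omega_{+}}\setminus\Omega_{-}}|\nabla f|^{2}-C_{f}h\Big)\|\omega\|_{L^{2}}^{2}\,,
\end{align*}
where $\min_{\overline{\Omega_{+}}\setminus\Omega_{-}}|\nabla f|^{2}>0$ by Hypothesis~\ref{hyp.0}. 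Taking $c=\tfrac12\min_{\overline{\Omega_{+}}\setminus\Omega_{-}}|\nabla f|^{2}$ and $h_{0}$ small enough gives~\eqref{eq:specLaplD0}, and hence $\sigma(\Delta_{f,h}^{D,(0)}(\Omega_{+}\setminus\overline{\Omega_{-}}))\cap[0,\nu(h)]=\emptyset$ since $\nu(h)\leq h$.

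For part~2), the key structural observation is that every eigenform $\psi$ of $\Delta_{f,h}^{D,(1)}(\Omega_{+}\setminus\overline{\Omega_{-}})$ associated with an eigenvalue $\lambda\leq\nu(h)\leq h$ satisfies $d_{f,h}^{*}\psi=0$ on $\Omega_{+}\setminus\overline{\Omega_{-}}$. Indeed, $u:=d_{f,h}^{*}\psi$ is a $0$-form whose trace $\mathbf{t}u=\mathbf{t}d_{f,h}^{*}\psi$ vanishes on the boundary, this being exactly one of the two conditions in the definition of $D(\Delta_{f,h}^{D,(1)})$. Since the relative (``Dirichlet'') boundary conditions are stable under $d_{f,h}^{*}$, which commutes with $\Delta_{f,h}$, and since elliptic regularity promotes $u$ from the form domain to $W^{2,2}$, we get $u\in D(\Delta_{f,h}^{D,(0)}(\Omega_{+}\setminus\overline{\Omega_{-}}))$ with $\Delta_{f,h}^{D,(0)}(\Omega_{+}\setminus\overline{\Omega_{-}})u=\lambda u$; by part~1), $\lambda<c\leq\inf\sigma(\Delta_{f,h}^{D,(0)}(\Omega_{+}\setminus\overline{\Omega_{-}}))$ for $h$ small, which forces $u=0$. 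In particular $\|d_{f,h}\psi_{k}^{(1)}\|_{L^{2}}^{2}=\lambda_{k}$, and the Leibniz rule $d_{f,h}^{*}(\chi_{+}\psi)=\chi_{+}d_{f,h}^{*}\psi-h\,\mathbf{i}_{\nabla\chi_{+}}\psi$ then gives $d_{f,h}^{*}v_{k}^{(1)}=-h\,\mathbf{i}_{\nabla\chi_{+}}\psi_{k}^{(1)}$, which vanishes on $\{\chi_{+}=1\}$ since $\nabla\chi_{+}=0$ there.

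The remaining estimates reproduce the proof of Proposition~\ref{pr.almostorhtO-} almost verbatim, with Proposition~\ref{prop:exp_decay} in place of Proposition~\ref{pr.decayO-}. The form $v_{k}^{(1)}=\chi_{+}\psi_{k}^{(1)}$ extended by $0$ to $\Omega_{+}$ lies in $W^{2,2}(\Omega_{+})$, vanishes near $\partial\Omega_{-}$, and coincides with $\psi_{k}^{(1)}$ on the open neighborhood $W$ of $\partial\Omega_{+}$ where $\chi_{+}\equiv 1$, so it inherits there the two Dirichlet conditions at $\partial\Omega_{+}$; hence $v_{k}^{(1)}\in D(\Delta_{f,h}^{D,(1)}(\Omega_{+}))$. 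Choosing the set $\mathcal{V}$ of Proposition~\ref{prop:exp_decay} so that $\partial\Omega_{+}\subset\mathcal{V}$ and $\overline{\mathcal{V}}\subset W$, the compact set $\supp(1-\chi_{+})\subset\Omega_{+}\setminus\overline{\Omega_{-}}$ is at a positive $d_{Ag}$-distance $c_{\chi_{+}}$ from $\mathcal{V}$ (on $\overline{\Omega_{+}}\setminus\Omega_{-}$ one has $|\nabla f|>0$, so $d_{Ag}$ is comparable there to the Riemannian distance), and, exactly as in the derivation of~\eqref{eq.expdecpsiO-}, Proposition~\ref{prop:exp_decay} yields $\|\psi_{k}^{(1)}\|_{W^{1,2}(\supp(1-\chi_{+}))}=\mathcal{O}(e^{-c_{\chi_{+}}/h})$. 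This gives $\|\psi_{k}^{(1)}-v_{k}^{(1)}\|_{W^{1,2}(\Omega_{+}\setminus\overline{\Omega_{-}})}=\|(1-\chi_{+})\psi_{k}^{(1)}\|_{W^{1,2}}=\mathcal{O}(e^{-c_{\chi_{+}}/h})$; writing $\chi_{+}^{2}=1-(1-\chi_{+})(1+\chi_{+})$ and using the $L^{2}(\Omega_{+}\setminus\overline{\Omega_{-}})$-orthonormality of the $\psi_{k}^{(1)}$ gives the claimed Gram matrix identity for $(v_{k}^{(1)})_{k}$ in $L^{2}(\Omega_{+})$; and, from $d_{f,h}v_{k}^{(1)}=\chi_{+}d_{f,h}\psi_{k}^{(1)}+h(d\chi_{+})\wedge\psi_{k}^{(1)}$ and $d_{f,h}^{*}v_{k}^{(1)}=-h\,\mathbf{i}_{\nabla\chi_{+}}\psi_{k}^{(1)}$, together with $\|d_{f,h}\psi_{k}^{(1)}\|_{L^{2}}^{2}=\lambda_{k}\leq e^{-c_{0}/h}$ (Hypothesis~\ref{hyp.2}) and the decay estimate, one obtains $\langle v_{k}^{(1)},\Delta_{f,h}^{D,(1)}(\Omega_{+})v_{k}^{(1)}\rangle_{L^{2}(\Omega_{+})}=\|d_{f,h}v_{k}^{(1)}\|_{L^{2}}^{2}+\|d_{f,h}^{*}v_{k}^{(1)}\|_{L^{2}}^{2}=\mathcal{O}(e^{-c/h})$ for some $c>0$. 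After replacing $c_{\chi_{+}}$ by $\min(c,c_{\chi_{+}})$, all remainders are of the stated form.

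The step I expect to be the main obstacle is the identity $d_{f,h}^{*}\psi_{k}^{(1)}=0$ invoked in the second step: one must verify rigorously that $d_{f,h}^{*}$ carries the operator domain of $\Delta_{f,h}^{D,(1)}(\Omega_{+}\setminus\overline{\Omega_{-}})$ into that of $\Delta_{f,h}^{D,(0)}(\Omega_{+}\setminus\overline{\Omega_{-}})$ and intertwines the two self-adjoint realizations, which requires care with the boundary regularity of $d_{f,h}^{*}\psi$ and with the precise meaning of the relative boundary conditions (these are the standard properties of the boundary Witten complex, cf.~\cite{HeNi,Lep3}). All the other steps are routine adaptations of computations already carried out on $\Omega_{-}$.
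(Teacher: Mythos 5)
Your proposal is correct and follows essentially the same route as the paper: part~1) via the Agmon identity of Lemma~\ref{le.Agmon} with $\varphi\equiv 0$ (the boundary term vanishing for Dirichlet $0$-forms), and part~2) by first deducing $d_{f,h}^{*}\psi_{k}^{(1)}=0$ from the intertwining property~\eqref{eq:vp_p_p+1} combined with~\eqref{eq:specLaplD0}, then transposing the truncation estimates of Proposition~\ref{pr.almostorhtO-} with the decay estimate of Proposition~\ref{prop:exp_decay}. The one step you flag as delicate (that $d_{f,h}^{*}$ maps eigenvectors of the Dirichlet $1$-form Laplacian to the domain of the Dirichlet $0$-form Laplacian) is exactly what the paper delegates to the boundary Witten-complex properties of \cite{HeNi} recalled in Proposition~\ref{pr.restrict}, so no gap remains.
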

\begin{proof}
1) The lower bound on the spectrum of
$\Delta_{f,h}^{D,(0)}(\Omega_{+}\setminus\overline{\Omega_{-}})$
comes from Lemma~\ref{le.Agmon} used with $\varphi=0$, and Hypothesis~\ref{hyp.0}: for any function
$\omega\in D(\Delta_{f,h}^{D,(0)}(\Omega_{+}\setminus\overline{\Omega_{-}}))$,
\begin{align*}
 \langle \omega\,,\,
 \Delta_{f,h}^{D,(0)}(\Omega_{+}\setminus\overline{\Omega_{-}}) \omega\rangle_{L^{2}(\Omega_{+}\setminus\overline{\Omega_{-}})}
&=
h^{2}\|d\omega\|_{L^{2}(\Omega_{+}\setminus\overline{\Omega_{-}})}^{2}+
h^{2}\|d^{*}\omega\|_{L^{2}(\Omega_{+}\setminus\overline{\Omega_{-}})}^{2}
\\
&\quad +\langle (|\nabla f|^{2}+h{\mathcal L}_{\nabla
  f}+h{\mathcal L}_{\nabla
  f}^{*})\omega\,,\,
\omega\rangle_{L^{2}(\Omega_{+}\setminus\overline{\Omega_{-}})}
\geq C_{f}\|\omega\|_{L^{2}(\Omega_{+}\setminus\overline{\Omega_{-}})}^{2}\,.
\end{align*}
2)  Let us start by proving that $d_{f,h}^{*}v_{k}^{(1)}\equiv 0$ in
$\left\{\chi_{+}= 1\right\}$. Let $\psi$ be  an eigenvector  of
$\Delta_{f,h}^{D,(1)}(\Omega_{+}\setminus\overline{\Omega_{-}})$
associated with an eigenvalue $\lambda\in [0,\nu(h)]$. Then,
$d_{f,h}^{*}\psi$ belongs to
$D(\Delta_{f,h}^{D,(0)}(\Omega_{+}\setminus\overline{\Omega_{-}}))$ and
$$
\Delta_{f,h}^{D,(0)}(d_{f,h}^{*}\psi)=\lambda d_{f,h}^{*}\psi\,,
$$
according to \cite{HeNi} (see also~\eqref{eq:vp_p_p+1} below). 
Using now~\eqref{eq:specLaplD0} and $\lambda\leq\nu(h)\leq h$,  this
implies 
\begin{equation}\label{eq:d*_zero}
d_{f,h}^{*}\psi\equiv 0
\end{equation}
and thus $d_{f,h}^{*}v\equiv 0$ in $\left\{\chi_{+}\equiv
  1\right\}$ .


All the other estimates are proved like in
Proposition~\ref{pr.almostorhtO-} as consequences of the
 exponential decay estimate for the
eigenvector $\psi$, stated in Proposition~\ref{pr.decayO+-}, using a neighborhood
 $\mathcal{V} \subset \overline{\Omega_+} \setminus \Omega_-$ of $\partial\Omega_{+}$ such
that $\chi_{+}\equiv 1$ in a neighborhood of
$\overline{\mathcal{V}}$.

For example, for~\eqref{eq:exp_decay_psi1}, using $d_{Ag}(x,\mathcal{V})\geq 2c_{\chi_{+}}'>0$ for $x\in \supp
(1-\chi_{+})$, Proposition~\ref{pr.decayO+-} provides
\begin{equation}
  \label{eq.boundbord}
\|(1-\chi_{+})\psi\|_{W^{1,2}(\Omega_{+}\setminus\overline{\Omega_{-}})}
\leq C_{\chi_{+}}'e^{-\frac{c'_{\chi_{+}}}{h}}.
\end{equation} The proofs of the two other
estimates on
$\langle v_{k}^{(1)}\,,\,
  v_{k'}^{(1)}\rangle_{L^2(\Omega_+)}$ and $\langle v_{k}^{(1)}\,,\,
 \Delta_{f,h}^{D,(1)}(\Omega_{+})  v_{k}^{(1)}\rangle_{L^{2}(\Omega_{+})}$ follow
the same lines as in the proof of
Proposition~\ref{pr.almostorhtO-}.
\end{proof}

\subsection{Exponential decay for the eigenvectors of
  $\Delta_{f,h}^{D,(p)}(\Omega_{+})$, ($p=0,1$)}
\label{se.decayO+}

We will use the two operators $\Delta_{f,h}^{N}(\Omega_{-})$ and
$\Delta_{f,h}^{D}(\Omega_+\setminus\overline{\Omega_{-}})$ to analyze the spectrum of $\Delta_{f,h}^{D}(\Omega_{+})$.
\begin{definition}
 \label{de.dirsom}
On $\bigwedge L^{2}(\Omega_{+})=\bigwedge L^{2}(\Omega_{-})\oplus
\bigwedge L^{2}(\Omega_{+}\setminus \overline{\Omega_{-}})$, the
self-adjoint operator $\Delta_{f,h}^{N}(\Omega_{-})\oplus
\Delta_{f,h}^{D} (\Omega_{+}\setminus \overline{\Omega_{-}})$ is 
denoted by $\Delta_{f,h}^{\oplus}(\Omega_{+})$\,.
\end{definition}
In other words, for any form $u$ such that $u 1_{\Omega_-} \in
D(\Delta_{f,h}^{N}(\Omega_{-}))$ and $u 1_{\Omega_{+}\setminus
  \overline{\Omega_{-}}} \in D(\Delta_{f,h}^{D} (\Omega_{+}\setminus
\overline{\Omega_{-}}))$ (namely if $u \in D(\Delta_{f,h}^{\oplus}(\Omega_{+}))$),
$$\Delta_{f,h}^{\oplus}(\Omega_{+}) u = \Delta_{f,h}^{N}(\Omega_{-})
\left( u 1_{\Omega_-} \right) + \Delta_{f,h}^{D} (\Omega_{+}\setminus
\overline{\Omega_{-}}) \left(u 1_{\Omega_{+}\setminus
  \overline{\Omega_{-}}} \right).$$
It is easy to check that the spectrum of $\Delta_{f,h}^{\oplus, (p)}(\Omega_{+})$ is the union
of the two spectra $\sigma(\Delta_{f,h}^{N,(p)}(\Omega_{-}))$ and
$\sigma(\Delta_{f,h}^{D,(p)}(\Omega_{+}\setminus\overline{\Omega_{-}}))$. Bases
of eigenvectors are given by the direct sum structure. In particular,
we have
$$m_p^{\oplus}(\Omega_+)=m_{p}^{N}(\Omega_{-})+m_{p}^{D}(\Omega_{+}\setminus\overline{\Omega_{-}})$$
where
$m_p^{\oplus}(\Omega_+)=\#\left[\sigma(\Delta_{f,h}^{\oplus}(\Omega_{+})) \cap [0,\nu(h)] \right]
$
denotes the number of small eigenvalues of $\Delta_{f,h}^{\oplus}(\Omega_{+})$.

\begin{proposition}
\label{pr.decayO+}
Let $U$ be an open set satisfying~\eqref{eq:U}.
 Let $(\psi_{k}^{(p)})_{1\leq k\leq m^D_p(\Omega_+)}$, $p=0$ or $1$, 
be an orthonormal basis of eigenvectors of
$\Delta_{f,h}^{D,(p)}(\Omega_{+})$ associated with the eigenvalues in
$[0,\nu(h)]$, and
let $\chi\in \mathcal{C}^{\infty}(\overline{\Omega_{+}})$ be
such that $\chi\equiv 1$ in a neighborhood of $\partial
\Omega_{+}\cup U$ and $0$ in a neighborhood of $\partial\Omega_{-}$\,. For all $k\in
\left\{1,\ldots, m^D_p(\Omega_+) \right\}$, set
$v_{k}^{(p)}=\chi\psi_{k}^{(p)}$\,. The forms $v_{k}^{(p)}$
are close to  $\psi_{k}^{(p)}$, for $k \in
\left\{1,\ldots, m^D_p(\Omega_+)\right\}$:
$$
\sum_{k=1}^{m^D_p(\Omega_+) }
\|\psi_{k}^{(p)}-v_{k}^{(p)}\|_{W^{1,2}(\Omega_{+})}
=\mathcal{O}(e^{-\frac{c_{\chi}}{h}}).
$$
They are almost orthonormal in $L^2(\Omega_+)$:
$$(\langle v_{k}^{(p)}\,,\,
  v_{k'}^{(p)}\rangle_{L^2(\Omega_+)} )_{k,k'}=\Id_{m^D_p(\Omega_+)}+\mathcal{O}(e^{-\frac{c_{\chi}}{h}}).$$
Moreover, they belong to the domain $ D(\Delta_{f,h}^{\oplus,
  (p)}(\Omega_{+}))$ and they satisfy
\begin{equation*}
\langle v_{k}^{(p)}\,,\, \Delta_{f,h}^{\oplus,(p)}(\Omega_{+}) 
v_{k}^{(p)}\rangle_{L^{2}(\Omega_{+})}=\mathcal{O}(e^{-\frac{c_{\chi}}{h}}).
\end{equation*}
All the $\mathcal{O}(e^{-\frac{c_{\chi}}{h}})$ remainders can be
bounded from above by $C_{\chi}e^{-\frac{c_{\chi}}{h}}$
for some constants $C_{\chi},c_{\chi}>0$ independent of $h \in
[0, h_0]$.
\end{proposition}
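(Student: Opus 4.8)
The plan is to mirror the two-step structure already established in Propositions~\ref{pr.almostorhtO-} and~\ref{pr.decayO+-}: use exponential decay of the true eigenvectors away from the region where the cut-off $\chi$ equals~$1$, then estimate the defect introduced by truncation via the Agmon identity (Lemma~\ref{le.Agmon}) with $\varphi=0$. The only genuinely new ingredient is an exponential decay estimate for the eigenvectors of $\Delta_{f,h}^{D,(p)}(\Omega_{+})$ themselves, which I would obtain by a third application of Lemma~\ref{le.Agmon}, this time on the full domain $\Omega_{+}$.

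First I would establish the decay estimate on $\Omega_{+}$. Applying Lemma~\ref{le.Agmon} to $\Delta_{f,h}^{D,(p)}(\Omega_{+})$ with $\varphi=(1-\alpha h)d_{Ag}(\cdot,U)$ and a normalized eigenvector $\psi$ associated with $\lambda\in[0,\nu(h)]\subset[0,h]$: the boundary term carries the sign $-h\int_{\partial\Omega_{+}}\langle\psi,\psi\rangle e^{2\varphi/h}\partial_{n}f\,d\sigma$, which is $\le 0$ since $\partial_{n}f>0$ on $\partial\Omega_{+}$ is \emph{not} assumed here — but it is nonetheless harmless because the Dirichlet condition $\mathbf{t}\psi|_{\partial\Omega_{+}}=0$ together with $d_{f,h}^{*}\psi$ controls the relevant trace; alternatively, one bounds this boundary term by a trace inequality exactly as in the proof of Proposition~\ref{prop:exp_decay}, absorbing $\tfrac{h^2}{2}$ of the gradient terms. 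Since the critical points of $f$ all lie in $\Omega_{-}$, choosing $\alpha$ with $\alpha\min_{\overline{\Omega_{+}}\setminus U}|\nabla f|^{2}\ge 2C_{f}$ and adding $2C_{f}h\|e^{\varphi/h}\psi\|_{L^{2}(U)}^{2}$ to both sides yields, just as in Proposition~\ref{pr.decayO-}, the bounds $\|e^{(1-\alpha h)d_{Ag}(\cdot,U)/h}\psi\|_{L^{2}(\Omega_{+})}\le C$ and $\|e^{(1-\alpha h)d_{Ag}(\cdot,U)/h}\psi\|_{W^{1,2}(\Omega_{+})}\le C h^{-1/2}$. Note that $d_{Ag}(\cdot,U)$ here is measured inside $\Omega_{+}$, so it does control decay both toward $\partial\Omega_{+}$ and, on crossing $\partial\Omega_{-}$, toward~$U$; since $\chi\equiv 1$ on a neighborhood of $\partial\Omega_{+}\cup U$, the complement $\supp(1-\chi)$ lies in $\overline{\Omega_{+}}\setminus U$ at positive Agmon distance $d_{Ag}(\cdot,U)\ge 2c_{\chi}'>0$ from $U$, by~\eqref{eq:U} and~\eqref{eq:fx-fy}.

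With this in hand, the remaining assertions follow verbatim from the earlier arguments. The estimate $\|\psi_{k}^{(p)}-v_{k}^{(p)}\|_{W^{1,2}(\Omega_{+})}=\|(1-\chi)\psi_{k}^{(p)}\|_{W^{1,2}(\Omega_{+})}=\mathcal{O}(e^{-c_{\chi}/h})$ is immediate from the decay bound and $d_{Ag}(\cdot,U)\ge 2c_{\chi}'$ on $\supp(1-\chi)$; the almost-orthonormality of $(v_{k}^{(p)})_{k}$ follows by polarization from $\langle v_{k}^{(p)},v_{k'}^{(p)}\rangle_{L^{2}(\Omega_{+})}=\langle\psi_{k}^{(p)},\psi_{k'}^{(p)}\rangle_{L^{2}(\Omega_{+})}+\mathcal{O}(e^{-c_{\chi}/h})=\delta_{kk'}+\mathcal{O}(e^{-c_{\chi}/h})$. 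That $v_{k}^{(p)}\in D(\Delta_{f,h}^{\oplus,(p)}(\Omega_{+}))$ holds because $\chi\equiv 1$ near $\partial\Omega_{+}$ forces $v_{k}^{(p)}$ to satisfy the Dirichlet conditions there (inherited from $\psi_{k}^{(p)}$), while $\chi\equiv 0$ near $\partial\Omega_{-}$ makes $v_{k}^{(p)}$ vanish near $\partial\Omega_{-}$, hence trivially satisfy the Neumann conditions on $\partial\Omega_{-}$; smoothness of $\chi$ ensures $v_{k}^{(p)}\in\bigwedge W^{2,2}$. Finally, for the energy estimate I would compare $\langle v_{k}^{(p)},\Delta_{f,h}^{\oplus,(p)}(\Omega_{+})v_{k}^{(p)}\rangle$ with $\langle\psi_{k}^{(p)},\Delta_{f,h}^{D,(p)}(\Omega_{+})\psi_{k}^{(p)}\rangle=\lambda_{k}^{(p)}=\mathcal{O}(e^{-c_{0}/h})$ by applying Lemma~\ref{le.Agmon} with $\varphi=0$ separately on $\Omega_{-}$ (Neumann) and on $\Omega_{+}\setminus\overline{\Omega_{-}}$ (Dirichlet), then subtracting; the cross terms $\langle d(1-\chi)\psi_{k}^{(p)},d(1+\chi)\psi_{k}^{(p)}\rangle$ and their $d^{*}$-analogues, together with the zeroth-order Lie-derivative contributions, are all supported in $\Omega_{+}\setminus\{\chi=1\}$ and are thus $\mathcal{O}(e^{-c_{\chi}/h})$ by~\eqref{eq.expdecpsiO-}-type bounds. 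I expect \textbf{the main obstacle} to be the control of the boundary term $h\int_{\partial\Omega_{+}}\langle\psi,\psi\rangle e^{2\varphi/h}\partial_{n}f\,d\sigma$ in the Agmon identity on $\Omega_{+}$ without assuming $\partial_{n}f>0$ on $\partial\Omega_{+}$; this is resolved by the trace-theorem absorption argument of Proposition~\ref{prop:exp_decay}, noting that near $\partial\Omega_{+}$ the weight $e^{2\varphi/h}$ is bounded since $d_{Ag}(\cdot,U)$ is continuous and $\varphi$ is controlled, or more cleanly by choosing the reference set in the weight to include $\partial\Omega_{+}$ so that $\varphi\equiv 0$ there.
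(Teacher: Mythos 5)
Your proposal follows the same route as the paper: an Agmon estimate on $\Omega_{+}$ for the eigenvectors of $\Delta_{f,h}^{D,(p)}(\Omega_{+})$, followed by the truncation and comparison arguments already used in Propositions~\ref{pr.decayO-}, \ref{pr.almostorhtO-} and~\ref{pr.decayO+-}. The paper's proof is precisely your ``cleaner'' alternative: for $p=0$ one keeps $\varphi=(1-\alpha h)d_{Ag}(\cdot,U)$ because the boundary term in Lemma~\ref{le.Agmon} vanishes identically ($\psi\big|_{\partial\Omega_{+}}=0$ for a $0$-form), while for $p=1$ one takes $\varphi=(1-\alpha h)d_{Ag}(\cdot,U\cup\mathcal{V})$ with $\mathcal{V}$ a neighborhood of $\partial\Omega_{+}$ contained in $\left\{\chi=1\right\}$, so that $\varphi\equiv 0$ on $\mathcal{V}$ and the trace absorption of Proposition~\ref{prop:exp_decay} applies; this still gives $d_{Ag}(\cdot,U\cup\mathcal{V})\geq c>0$ on $\supp(1-\chi)$, which is all the truncation step needs.

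Two of the justifications you offer for keeping $\varphi=(1-\alpha h)d_{Ag}(\cdot,U)$ when $p=1$ are not valid and should be discarded. First, $\mathbf{t}\psi\big|_{\partial\Omega_{+}}=0$ kills only the tangential part of a one-form; the pointwise norm $\langle\psi,\psi\rangle_{T^{*}_{\sigma}\Omega_{+}}$ in the boundary integral retains the normal component $\mathbf{i}_{n}\psi$, which does not vanish (it is exactly the quantity the rest of the paper computes), so the boundary term does not disappear. Second, the claim that ``near $\partial\Omega_{+}$ the weight $e^{2\varphi/h}$ is bounded since $d_{Ag}(\cdot,U)$ is continuous'' is false: $d_{Ag}(\cdot,U)$ is bounded but strictly positive on $\partial\Omega_{+}$ (the Agmon metric is nondegenerate on $\overline{\Omega_{+}}\setminus U$ and $\partial\Omega_{+}$ is at positive distance from $U$), so $e^{2\varphi/h}$ blows up exponentially as $h\to 0$ and the term $C\|e^{\varphi/h}\psi\|_{L^{2}(\mathcal{V})}^{2}$ produced by the trace inequality cannot be bounded a priori. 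Only your final suggestion --- forcing $\varphi$ to vanish on a neighborhood of $\partial\Omega_{+}$ by enlarging the reference set --- actually closes the argument, and it is the one the paper uses.
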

\begin{proof}
The proof for $p=0$ follows the same lines as the proofs of
Proposition~\ref{pr.decayO-} and Proposition~\ref{pr.almostorhtO-},
because the boundary term in Lemma~\ref{le.Agmon} disappears for
functions vanishing along $\partial \Omega_{+}$\,.\\
For $p=1$, the boundary term has to be taken into account as we did in
the proofs of Proposition~\ref{prop:exp_decay} and Proposition~\ref{pr.decayO+-}. A neighborhood
$\mathcal{V}$ of $\partial \Omega_{+}$ has to be introduced and the
function $\varphi$ used in Lemma~~\ref{le.Agmon}  is $\varphi(x)=(1-\alpha h)d_{Ag}(x,U\cup
\mathcal{V})$ with $\alpha>0$ large enough.
\end{proof}

Notice that  the number
  $m_p^D(\Omega_+)$ of small eigenvalues for
  $\Delta^{D,(p)}_{f,h}(\Omega_+)$ is {\em a priori} depending on
  $h$. We did not indicate explicitly this dependency since the
  result of the next section is that   $m_p^D(\Omega_+)$ is actually
  independent of $h$.

\subsection{On the number of small eigenvalues of $\Delta_{f,h}^{D,(p)}(\Omega_{+})$}
\label{se.conseq}
Using the results of the three previous sections, one can show that
the number $m_{p}^{D}(\Omega_{+})$  of eigenvalues  of
$\Delta_{f,h}^{D,(p)}(\Omega_{+})$ in $[0,\nu(h)]$,
 is actually independent of $h\in (0,h_{0})$.
\begin{proposition}
  \label{pr.number}
For $p\in \left\{0,1\right\}$, the number of eigenvalues of $\Delta_{f,h}^{D,(p)}(\Omega_{+})$ lying
in $[0,\nu(h)]$ is given by
$$
m_{p}^{D}(\Omega_{+})=m_{p}^{N}(\Omega_{-})+m_{p}^{D}(\Omega_{+}\setminus\overline{\Omega_{-}})\,,
$$
where we recall (see Equation~\eqref{eq:specLaplD0})
$m_{0}^{D}(\Omega_{+}\setminus\overline{\Omega_{-}})=0$\,.
Moreover all these eigenvalues are exponentially small, i.e. there
exists $c_{0}'>0$ such that
$$
\forall h\in (0,h_{0})\,,\quad
\sigma(\Delta_{f,h}^{D,(p)}(\Omega_{+}))\cap [0,\nu(h)]\subset
\left[0,e^{-\frac{c_{0}'}{h}} \right]
,\quad p=0,1\,.
$$
\end{proposition}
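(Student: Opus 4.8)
The plan is to compare $\Delta_{f,h}^{D,(p)}(\Omega_{+})$ with the decoupled operator $\Delta_{f,h}^{\oplus,(p)}(\Omega_{+})$ of Definition~\ref{de.dirsom} through a two-sided quasimode/min-max argument, feeding in the a priori exponential decay estimates of Propositions~\ref{pr.almostorhtO-}, \ref{pr.decayO+-} and~\ref{pr.decayO+}. First I would record what Hypotheses~\ref{hyp.1} and~\ref{hyp.2} (together with $m_{0}^{D}(\Omega_{+}\setminus\overline{\Omega_{-}})=0$ from~\eqref{eq:specLaplD0}) give for the comparison operator: it has exactly $m_{p}^{\oplus}(\Omega_{+})=m_{p}^{N}(\Omega_{-})+m_{p}^{D}(\Omega_{+}\setminus\overline{\Omega_{-}})$ eigenvalues in $[0,\nu(h)]$, all contained in $[0,e^{-c_{0}/h}]$, and --- since by~\eqref{eq.hypnu} the quantity $\nu(h)$ dominates $e^{-c/h}$ for every $c>0$ --- it therefore has a genuine spectral gap $\sigma(\Delta_{f,h}^{\oplus,(p)}(\Omega_{+}))\cap(e^{-c_{0}/h},\nu(h)]=\emptyset$, and $m_{p}^{\oplus}(\Omega_{+})$ is independent of $h$. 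Proving $m_{p}^{D}(\Omega_{+})=m_{p}^{\oplus}(\Omega_{+})$ then yields both the counting formula and the $h$-independence.

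For $m_{p}^{\oplus}(\Omega_{+})\le m_{p}^{D}(\Omega_{+})$, I would glue together the quasimodes $v_{j}^{(p)}=\chi_{-}\psi_{j}^{(p)}$ of Proposition~\ref{pr.almostorhtO-} (built from the Neumann eigenvectors on $\Omega_{-}$) and, when $p=1$, the quasimodes $v_{k}^{(1)}=\chi_{+}\psi_{k}^{(1)}$ of Proposition~\ref{pr.decayO+-} (built from the Dirichlet eigenvectors on the shell $\Omega_{+}\setminus\overline{\Omega_{-}}$). Both families lie in $D(\Delta_{f,h}^{D,(p)}(\Omega_{+}))$, they have disjoint supports (one near $U\subset\Omega_{-}$, the other near $\partial\Omega_{+}$), each is almost orthonormal in $L^{2}(\Omega_{+})$ and satisfies $\langle v_{\ell},\Delta_{f,h}^{D,(p)}(\Omega_{+})v_{\ell}\rangle_{L^{2}}=\mathcal{O}(e^{-c/h})$; applying Cauchy--Schwarz to $(\Delta_{f,h}^{D,(p)}(\Omega_{+}))^{1/2}$ makes all off-diagonal quadratic-form entries $\mathcal{O}(e^{-c/h})$ as well. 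Hence on the $m_{p}^{\oplus}(\Omega_{+})$-dimensional subspace they span the Rayleigh quotient is $\mathcal{O}(e^{-c/h})$, and the min-max principle gives $\mu_{m_{p}^{\oplus}(\Omega_{+})}\big(\Delta_{f,h}^{D,(p)}(\Omega_{+})\big)\le Ce^{-c/h}<\nu(h)$ for $h$ small. This proves $m_{p}^{D}(\Omega_{+})\ge m_{p}^{\oplus}(\Omega_{+})$ and, once the two counts are shown equal, also that all eigenvalues of $\Delta_{f,h}^{D,(p)}(\Omega_{+})$ in $[0,\nu(h)]$ are $\le e^{-c_{0}'/h}$ for any fixed $c_{0}'<c$, i.e.\ the last assertion.

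For the reverse inequality $m_{p}^{D}(\Omega_{+})\le m_{p}^{\oplus}(\Omega_{+})$, I would run the symmetric argument with Proposition~\ref{pr.decayO+}: starting from an orthonormal basis $(\psi_{k}^{(p)})$ of $\Ran 1_{[0,\nu(h)]}\big(\Delta_{f,h}^{D,(p)}(\Omega_{+})\big)$, the cut-off forms $v_{k}^{(p)}=\chi\psi_{k}^{(p)}$ lie in $D(\Delta_{f,h}^{\oplus,(p)}(\Omega_{+}))$, are almost orthonormal, and have $\langle v_{k}^{(p)},\Delta_{f,h}^{\oplus,(p)}(\Omega_{+})v_{k}^{(p)}\rangle_{L^{2}}=\mathcal{O}(e^{-c/h})$. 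Again by the Cauchy--Schwarz bound on the cross terms and min-max, $\Delta_{f,h}^{\oplus,(p)}(\Omega_{+})$ has at least $m_{p}^{D}(\Omega_{+})$ eigenvalues bounded by $Ce^{-c/h}$; since each of these is $<\nu(h)$ for $h$ small and since, by the spectral gap recorded above, every eigenvalue of $\Delta_{f,h}^{\oplus,(p)}(\Omega_{+})$ below $\nu(h)$ must be one of the $m_{p}^{\oplus}(\Omega_{+})$ small ones in $[0,e^{-c_{0}/h}]$, I get $m_{p}^{D}(\Omega_{+})\le m_{p}^{\oplus}(\Omega_{+})$. Combining the two inequalities finishes the counting formula.

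I expect the only genuinely delicate point to be making the min-max comparison immune to ``intermediate'' spectrum: a subspace on which the Rayleigh quotient is $\mathcal{O}(e^{-c/h})$ must be shown to see only the exponentially small eigenvalues and not to be fooled by spectrum sitting in $(e^{-c_{0}/h},\nu(h)]$. This is exactly what Hypotheses~\ref{hyp.1}--\ref{hyp.2} buy us, via the exponential spectral gap of $\Delta_{f,h}^{\oplus,(p)}(\Omega_{+})$ derived above; without them the argument collapses. The remaining care is bookkeeping: choosing $h_{0}$ small enough that all the $\mathcal{O}(e^{-c/h})$ remainders coming from the three propositions are $<\nu(h)$ and that all the Gram matrices $\Id+\mathcal{O}(e^{-c/h})$ are invertible, which is guaranteed by the two equivalent forms of~\eqref{eq.hypnu}.
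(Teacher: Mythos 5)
Your proposal is correct and follows essentially the same route as the paper: a two-sided min--max comparison between $\Delta_{f,h}^{D,(p)}(\Omega_{+})$ and $\Delta_{f,h}^{\oplus,(p)}(\Omega_{+})$, using the truncated eigenvectors of Propositions~\ref{pr.almostorhtO-}, \ref{pr.decayO+-} and~\ref{pr.decayO+} as quasimodes in each direction, with the equality of the two counts then forcing all small eigenvalues into $[0,e^{-c_0'/h}]$. Your explicit remarks on the spectral gap of the decoupled operator and on controlling the off-diagonal form entries by Cauchy--Schwarz are just a more detailed writing of the same bookkeeping the paper performs.
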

\begin{proof}
This is obtained as an application of the min-max principle. Indeed,
we know that the spectrum of $\Delta_{f,h}^{D,(p)}(\Omega_{+})$ is
given by the formula: for $k \ge 1$,
$$\lambda^{(p)}_k(\Omega_+)=\sup_{\{\omega_1, \ldots, \omega_{k-1}\}}
Q(\omega_1, \ldots, \omega_{k-1})$$
where
$$Q(\omega_1, \ldots, \omega_{k-1})=
\inf_{v} \left\{ \frac{\langle v \,,\, \Delta_{f,h}^{D,(p)}(\Omega_{+})
v \rangle_{L^{2}(\Omega_{+})}}{\|v\|^2_{L^2(\Omega_+)}}, v \in
D(\Delta_{f,h}^{D,(p)}(\Omega_{+})),  \,
v \in {\rm Span}(\omega_1, \ldots, \omega_{k-1})^{\perp}  \right\}.
$$
By convention, for $k=1$, the supremum is taken over an empty
set (and can thus be neglected). Using
Proposition~\ref{pr.almostorhtO-} and Proposition~\ref{pr.decayO+-},
one can build
$m_{p}:=m_{p}^{N}(\Omega_{-})+m_{p}^{D}(\Omega_{+}\setminus\overline{\Omega_{-}})$
almost orthonormal vectors for which the Rayleigh quotients associated with
$\Delta_{f,h}^{D,(p)}(\Omega_{+})$ are exponentially small. Let us fix
$\varepsilon >0$ and consider $\{\omega_1, \ldots, \omega_{m_{p}-1}\}$
such that $\lambda^{(p)}_{m_{p}}(\Omega_+) \le Q(\omega_1, \ldots,
\omega_{m_{p}-1}) +\varepsilon$. Since, in the limit $h \to 0$, the $m_{p}$
vectors built in Proposition~\ref{pr.almostorhtO-} and
Proposition~\ref{pr.decayO+-} are linearly independent, there exist a
linear combination $v \in D(\Delta_{f,h}^{D,(p)}(\Omega_{+}))$ of these vectors which is in ${\rm
  Span}(\omega_1, \ldots, \omega_{m_{p}-1})^{\perp}$. Using the estimates
on the Rayleigh quotients and the almost orthonormality of these
vectors, one obtains that $\frac{\langle v \,,\, \Delta_{f,h}^{D,(p)}(\Omega_{+})
v \rangle_{L^{2}(\Omega_{+})}}{\|v\|^2_{L^2(\Omega_+)}}={\mathcal
O}(e^{-\frac{c}{h}})$ for some positive constant $c$. This implies
that $Q(\omega_1, \ldots, \omega_{k-1})={\mathcal
O}(e^{-\frac{c}{h}})$ and thus $\lambda^{(p)}_{m_{p}}(\Omega_+)={\mathcal
O}(e^{-\frac{c}{h}})$. Therefore, one gets $m_{p}^{D}(\Omega_{+})\geq m_p=
m_{p}^{N}(\Omega_{-})+m_{p}^{D}(\Omega_{+}\setminus\overline{\Omega_{-}})$.

 A similar reasoning on $\Delta^{\oplus,(p)}_{f,h}(\Omega_+)$ using
 Proposition~\ref{pr.decayO+} gives the opposite inequality
$m_p^{\oplus}(\Omega_+) = m_{p}^{N}(\Omega_{-})+m_{p}^{D}(\Omega_{+}\setminus\overline{\Omega_{-}})\geq
m_{p}^{D}(\Omega_{+})$. This ends the proof.
\end{proof}

\section{Quasimodes for $\Delta_{f,h}^{D,(0)}(\Omega_{+})$}
\label{se.quasimode}

In this section, we specify the quasimodes which will be useful for
the analysis of the spectrum of
$\Delta_{f,h}^{D,(0)}(\Omega_{+})$ lying in $[0,\nu(h)]$. In our
context, a quasimode for
$\Delta_{f,h}^{D,(0)}(\Omega_{+})$ is simply a function $v$ in the
domain $D(\Delta_{f,h}^{D,(0)}(\Omega_{+}))$ such that
$\displaystyle{\frac{\langle v ,\Delta_{f,h}^{D,(0)}(\Omega_{+})  v
    \rangle_{L^2(\Omega_+)}}{\|v\|_{L^2(\Omega_+)}^2}} ={\mathcal O}(e^{-\frac{c}{h}})$. Quasimodes for
$\Delta_{f,h}^{D,(0)}(\Omega_{+})$ will be built from the eigenvectors
of $\Delta_{f,h}^{D,(1)}(\Omega_{+} \setminus \overline{\Omega_-})$ and
of $\Delta_{f,h}^{N,(p)}(\Omega_{-})$
($p=0,1$).

\subsection{The restricted differential $\beta$}
\label{se.restric}

We recall here basic properties of boundary  Witten Laplacians.
\begin{proposition}
 \label{pr.restrict}
Let $\Omega$ be a regular bounded domain of $(M,g)$ and consider the Dirichlet
(resp. Neumann) realization $A=\Delta_{f,h}^{D}(\Omega)$
(resp. $A=\Delta_{f,h}^{N}(\Omega)$) of the Witten Laplacian with form domain
$Q(A)=W^{1,2}_{D}(\Omega)$ (resp. $Q(A)=W^{1,2}_{N}(\Omega)$). The
differential $d_{f,h}$ and codifferential $d_{f,h}^{*}$ satisfy the
commutation property: $\forall z\in \cz\setminus \sigma(A), \, \forall u\in Q(A),$
$$
d_{f,h}(z-A)^{-1}u=(z-A)^{-1}d_{f,h}u\quad\text{and}\quad
d_{f,h}^{*}(z-A)^{-1}u=(z-A)^{-1}d_{f,h}^{*}u\,.
$$
Consequently, for any $\ell \in \rz_{+}$,
\begin{equation}\label{eq:commut}
\begin{aligned}
 d_{f,h}\circ 1_{[0,\ell]}(A^{(p)})&= 1_{[0,\ell]}(A^{(p+1)}) \circ d_{f,h}\\
 d_{f,h}^{*}\circ 1_{[0,\ell]}(A^{(p)})&= 1_{[0,\ell]}(A^{(p-1)}) \circ d_{f,h}^{*}
\end{aligned}
\end{equation}
where $A^{(p)}$ denotes the restriction of $A$ to $p$-forms.
Moreover if $F_{\ell}^{(p)}$ denotes the spectral subspace $\Ran
1_{[0,\ell]}(A^{(p)})$, the chain complex
\begin{equation}
  \label{eq.redcompl}
\xymatrix{
0\ar[r]&
F_{\ell}^{(0)}
\ldots
\ar[r]&F_{\ell}^{(p-1)}
\ar[r]^-{d_{f,h}}&
F_{\ell}^{(p)}
\ar[r]^-{d_{f,h}}&
F_{\ell}^{(p+1)}
\ldots
\ar[r]&F_{\ell}^{(d)}\ar[r]&0
}
\end{equation}
is quasi-isomorphic to the relative (resp. absolute) Hodge-de~Rham
chain complex.
The Witten codifferential $d_{f,h}^{*}$ implements the dual chain-complex.
\end{proposition}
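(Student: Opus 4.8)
The statement to prove, Proposition~\ref{pr.restrict}, is a standard package of facts about boundary Witten Laplacians, and the proof should proceed by first establishing the commutation of $d_{f,h}$ and $d_{f,h}^{*}$ with the resolvent $(z-A)^{-1}$, then deducing the commutation with spectral projectors, and finally reading off the quasi-isomorphism of chain complexes. The key algebraic input is that, on the appropriate form domains, $d_{f,h}$ maps $Q(A^{(p)})$ into $Q(A^{(p+1)})$ (and $d_{f,h}^{*}$ lowers degree accordingly), and that on these domains $A$ acts as $d_{f,h}^{*}d_{f,h}+d_{f,h}d_{f,h}^{*}$ with $A\circ d_{f,h}=d_{f,h}\circ A$ in the weak (form) sense. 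This last point is where the boundary conditions really matter: for the Dirichlet realization the form domain $W^{1,2}_{D}(\Omega)$ consists of forms with $\mathbf{t}\omega|_{\partial\Omega}=0$, and since $\mathbf{t}$ commutes with $d$, one has $d_{f,h}W^{1,2}_{D}(\Omega)\subset W^{1,2}_{D}(\Omega)$; dually, $d_{f,h}^{*}$ preserves $W^{1,2}_{N}(\Omega)$ for the Neumann realization, and one checks the mixed statements ($d_{f,h}^{*}$ on Dirichlet, $d_{f,h}$ on Neumann) using that the quadratic form $\mathcal D$ treats $d_{f,h}$ and $d_{f,h}^{*}$ symmetrically so that the Friedrichs extension is stable under both.

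\textbf{Step 1: resolvent commutation.} Fix $z\in\cz\setminus\sigma(A)$ and $u\in Q(A)$. Set $w=(z-A)^{-1}u\in D(A)\subset Q(A)$. For any test form $\phi\in Q(A^{(p+1)})$ one computes, using the definition of $A$ via the form $\mathcal D$ and the fact that $d_{f,h}^{2}=0$,
\begin{equation*}
\langle d_{f,h}w,(z-A)\phi\rangle=\langle (z-A)w,?\rangle
\end{equation*}
— more precisely, one verifies the identity $\langle d_{f,h}w,\phi\rangle_{\mathcal D-\text{form}}$ matches $\langle d_{f,h}u,\phi\rangle$ after multiplication by $(z-A)^{-1}$, i.e. that $(z-A)d_{f,h}w=d_{f,h}u=d_{f,h}(z-A)w$, which gives $d_{f,h}(z-A)^{-1}u=(z-A)^{-1}d_{f,h}u$. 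The cleanest way is to observe that $d_{f,h}$ and $d_{f,h}^{*}$ are closed operators commuting with $A$ on $D(A)$ in the graph sense (because $A d_{f,h}=d_{f,h}^{*}d_{f,h}d_{f,h}+d_{f,h}d_{f,h}^{*}d_{f,h}=d_{f,h}d_{f,h}^{*}d_{f,h}=d_{f,h}A$ on suitable cores), and then a standard argument transfers this to the resolvent. The boundary conditions guarantee that $d_{f,h}w$ and $d_{f,h}^{*}w$ again lie in the relevant form domain so that the manipulation stays inside $D(A^{(p\pm1)})$.

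\textbf{Step 2: spectral projectors.} Writing $1_{[0,\ell]}(A)$ as a Riesz contour integral $\frac{1}{2\pi i}\oint_{\Gamma}(z-A)^{-1}\,dz$ around $[0,\ell]$, the resolvent commutation of Step~1 immediately yields $d_{f,h}\circ 1_{[0,\ell]}(A^{(p)})=1_{[0,\ell]}(A^{(p+1)})\circ d_{f,h}$ and the dual relation for $d_{f,h}^{*}$, i.e.~\eqref{eq:commut}. In particular $d_{f,h}$ restricts to a differential on the finite-dimensional complex $(F_{\ell}^{(p)})_{p}$ in~\eqref{eq.redcompl}.

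\textbf{Step 3: quasi-isomorphism.} It remains to identify the cohomology of~\eqref{eq.redcompl} with relative (Dirichlet) resp.\ absolute (Neumann) de~Rham cohomology. The inclusion $\iota\colon F_{\ell}^{\bullet}\hookrightarrow (\text{full complex }\bigwedge W^{1,2}_{D/N})$ is a chain map; it induces an isomorphism on cohomology because the complementary spectral subspace $\Ran 1_{(\ell,\infty)}(A^{\bullet})$ is acyclic for $d_{f,h}$ — indeed on that subspace $A$ is invertible, and the Hodge-type identity $\omega=A^{-1}(d_{f,h}^{*}d_{f,h}+d_{f,h}d_{f,h}^{*})\omega$ shows every $d_{f,h}$-closed form there is $d_{f,h}$-exact. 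Since $d_{f,h}=e^{-f/h}(hd)e^{f/h}$ is conjugate to $hd$, the cohomology of the full complex is that of $(d,\text{same boundary conditions})$, which is de~Rham cohomology relative to $\partial\Omega$ (Dirichlet) or absolute (Neumann), by the classical Hodge theory for manifolds with boundary; see~\cite{Tay,ChPi}. The statement that $d_{f,h}^{*}$ implements the dual complex is then automatic from $d_{f,h}^{*}=\ast^{-1}d_{f,h}\ast$ up to the $e^{\pm f/h}$ conjugation, interchanging the two boundary conditions.

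\textbf{Main obstacle.} The only genuinely delicate point is the \emph{domain bookkeeping} in Steps~1--2: one must be sure that $d_{f,h}$ (resp.\ $d_{f,h}^{*}$) maps the operator domain $D(\Delta_{f,h}^{D}(\Omega))$ — defined by the two boundary conditions $\mathbf t\omega=0$ and $\mathbf t d_{f,h}^{*}\omega=0$ — into the corresponding domain on $(p{+}1)$-forms (resp.\ $(p{-}1)$-forms), so that the computation $Ad_{f,h}=d_{f,h}A$ is legitimate and not merely formal. This is exactly the content of the relation $\Delta_{f,h}^{D,(0)}(d_{f,h}^{*}\psi)=\lambda d_{f,h}^{*}\psi$ invoked from~\cite{HeNi} in the proof of Proposition~\ref{pr.decayO+-} (see also~\eqref{eq:vp_p_p+1} below), and the verification amounts to checking that the boundary conditions defining the Friedrichs extension are preserved under $d_{f,h}$ and $d_{f,h}^{*}$, which follows from $\mathbf t d=d\mathbf t$, $\mathbf n d^{*}=d^{*}\mathbf n$, and the symmetry of the quadratic form $\mathcal D$ in~\eqref{eq.form}. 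Everything else is either the classical Riesz-projector calculus or classical Hodge theory on manifolds with boundary.
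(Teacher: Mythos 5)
The paper itself does not prove Proposition~\ref{pr.restrict}: it simply refers the reader to \cite{ChLi,HeNi,Lep3} for the adaptation to boundary cases of the properties established in \cite[Chapter 11]{CFKS}. Your sketch reconstructs precisely the standard argument of those references (resolvent commutation $\Rightarrow$ Riesz-projector commutation $\Rightarrow$ acyclicity of $\Ran 1_{(\ell,\infty)}(A)$ $\Rightarrow$ quasi-isomorphism with the relative/absolute Hodge--de~Rham complex), and you correctly identify the one genuinely delicate point, namely the domain bookkeeping ensured by $\mathbf{t}d=d\mathbf{t}$, $\mathbf{n}d^{*}=d^{*}\mathbf{n}$ and the symmetry of the form $\mathcal{D}$. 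So the outline is sound and consistent with what the paper (implicitly) relies on.

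Two places where your write-up is thinner than it should be. First, the displayed computation in Step~1 is garbled as written (the right-hand side ``$\langle (z-A)w,?\rangle$'' is a placeholder, and the subsequent claim $Ad_{f,h}=d_{f,h}A$ ``on suitable cores'' quietly assumes that $d_{f,h}$ maps $D(A^{(p)})$ into $D(A^{(p+1)})$, which requires more regularity than $D(A)\subset W^{2,2}$ provides); the honest proof works at the level of the quadratic form, testing $(z-A^{(p+1)})d_{f,h}(z-A^{(p)})^{-1}u$ against $\phi\in D(A^{(p+1)})$ via Green's formula and $d_{f,h}^{2}=0$, which is exactly the content of the lemma in \cite{HeNi} you invoke. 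Second, the Riesz contour representation of $1_{[0,\ell]}(A)$ requires $\ell\notin\sigma(A)$; since the statement allows any $\ell\in\rz_{+}$, you should add one line noting that $A$ has compact resolvent on the bounded regular domain $\Omega$, so both $\sigma(A^{(p)})$ and $\sigma(A^{(p\pm1)})$ are discrete and one may replace $\ell$ by a nearby $\ell''$ avoiding all of them without changing the projectors. Neither point is a conceptual gap --- they are exactly the details the cited references supply --- but as written your Step~1 would not compile into a self-contained proof.
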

%
%
%
%
%
%
%
%
%
%

We refer to \cite{ChLi,HeNi,Lep3} for the adaptation to
boundary cases of these well known properties of Witten
Laplacians~\cite[Chapter 11]{CFKS}.

Let us give two consequences of that result that are useful in our
context. First, the following property which was
already used in the proof of Proposition~\ref{pr.decayO+-} holds
(using the notation of Proposition~\ref{pr.restrict}):
\begin{equation}\label{eq:vp_p_p+1}
A^{(p)}\psi=\lambda\psi \Rightarrow
\left\{
\begin{array}[c]{l}
 A^{(p+1)}d_{f,h}\psi=\lambda d_{f,h}\psi
\\
A^{(p-1)}d_{f,h}^{*}\psi=\lambda d_{f,h}^{*}\psi
\end{array}
\right.
\end{equation}
with the convention $A^{(-1)}=A^{ (d+1)}=0$. Secondly,
the following orthogonal decompositions hold:
%
%
\begin{align}
  F_{\ell}&=\Ker[A\big|_{F_{\ell}}]\mathop{\oplus}^{\perp}\Ran[d_{f,h}\big|_{F_{\ell}}]
\mathop{\oplus}^{\perp}\Ran[d_{f,h}^{*}\big|_{F_{\ell}}] \label{eq:decomp}\\
 \Ran[d_{f,h}^{*}\big|_{F_{\ell}}]^{\perp}&=\Ker[d_{f,h}\big|_{F_{\ell}}]=
\Ker[A\big|_{F_{\ell}}]\mathop{\oplus}^{\perp}\Ran[d_{f,h}\big|_{F_{\ell}}]\nonumber\\
 \Ran[d_{f,h}\big|_{F_{\ell}}]^{\perp}&=\Ker[d_{f,h}^{*}\big|_{F_{\ell}}]=
\Ker[A\big|_{F_{\ell}}]\mathop{\oplus}^{\perp}\Ran[d_{f,h}^{*}\big|_{F_{\ell}}] \nonumber
\end{align}
where $F_\ell=\oplus_{p=0}^d F_\ell^{(p)}$.
In our problem, we shall use the following notation.
\begin{definition}
  \label{de.pibeta}
Consider the Dirichlet realization $\Delta_{f,h}^{D}(\Omega_{+})$ of
$\Delta_{f,h}$ on $\Omega_{+}$. For $p=0,1$, the operators
$\Pi^{(p)}$ are the  spectral projections 
$$
\Pi^{(p)}=1_{[0,\nu(h)]}(\Delta_{f,h}^{D,(p)}(\Omega_{+}))\,,\quad p=0,1\,
$$
and their range is denoted by $F^{(p)}$. Moreover, the Witten differential $d_{f,h}$
restricted to $F^{(0)}$ is  written $\beta =d_{f,h}\big|_{F^{(0)}}:
F^{(0)}\to F^{(1)} $, so that $\Delta_{f,h}^{D,(0)}(\Omega_{+})|_{F^{(0)}}=
\beta^* \beta$, where $\beta^* =d^*_{f,h}\big|_{F^{(1)}}:
F^{(1)}\to F^{(0)} $.
\end{definition}
A consequence of the commutation properties~\eqref{eq:commut} is the identity:
\begin{equation}
  \label{eq.betaproj}
\beta=\Pi^{(1)}d_{f,h}=d_{f,h}\Pi^{(0)}=\Pi^{(1)}\beta \Pi^{(0)}\,.
\end{equation}
Moreover,~\eqref{eq:decomp} rewrites:
$$F^{(0)}=\Ker[\Delta_{f,h}^{D,(0)}(\Omega_{+})] \mathop{\oplus}^{\perp} \Ran[\beta^*] \text{ and } \Ker(\beta)=\{0\},$$
since $\beta u = d_{f,h} u =0$ and $u=0$ on $\partial \Omega$ imply
$u=0$, and
\begin{align}
 F^{(1)}&=\Ker(\beta^*) \mathop{\oplus}^{\perp} \Ran(\beta)   \label{eq.betadecomp}
\\
&=\Ker[ \Delta_{f,h}^{D,(1)}(\Omega_{+})
] \mathop{\oplus}^{\perp} \Ran(\beta) \mathop{\oplus}^{\perp}
\Ran[ d_{f,h}^{*}\big|_{F^{(2)}}]. \nonumber
\end{align}

\subsection{Truncated eigenvectors}
\label{se.treig}

Let us recall the eigenvectors which have been introduced in
Propositions~\ref{pr.almostorhtO-} and~\ref{pr.decayO+-}.
\begin{itemize}
\item $(\psi_{j}^{(0)})_{1\leq j\leq m_{0}^{N}(\Omega_{-})}$ are
  eigenvectors for the
  operator $\Delta_{f,h}^{N,(0)}(\Omega_{-})$ associated with the  eigenvalues
$0= \mu_{1}^{(0)} (\Omega_-)\leq C_0 e^{-2\frac{\kappa_f -c_0}{h}}\leq \mu_{2}^{(0)} (\Omega_-) \ldots \leq
\mu_{m_{0}^{N}(\Omega_{-})}^{(0)} (\Omega_-)\leq e^{-\frac{c_{0}}{h}} \le
\nu(h)$. The first eigenvector
$\psi_{1}^{(0)}$ associated with the eigenvalue  $\mu_{1}^{(0)}
(\Omega_-) =0$ is 
$
\psi_{1}^{(0)}=\frac{e^{-\frac{f}{h}} \, 1_{\Omega_{-}} } {\left(\int_{\Omega_{-}}e^{-2\frac{f(x)}{h}}~dx\right)^{\frac{1}{2}}}.
$
The lower bound on
$\mu_{2}^{(0)}(\Omega_-)$ stated above is valid for sufficiently small
$h$ and was proven in Proposition~\ref{pr.gapO-}.
\item $(\psi_{k}^{(1)})_{1\leq
  k\leq m_{1}^{N}(\Omega_{-})}$  are
  eigenvectors for the operator
$\Delta_{f,h}^{N,(1)}(\Omega_{-})$ associated with the
$m_{1}^{N}(\Omega_{-})$ eigenvalues
smaller than $\nu(h)$. 
 Using~\eqref{eq:vp_p_p+1}, those
eigenvectors can be labelled such that 
$$
\psi_{k}^{(1)}=(\mu_{k+1}^{(0)}(\Omega_-))^{-1/2}
d_{f,h}\psi_{k+1}^{(0)}=(\mu_{k+1}^{(0)}(\Omega_-) )^{-1/2}
\beta\psi_{k+1}^{(0)}
\qquad\text{for}\qquad
k\in\left\{1,\ldots, m_{0}^{N}(\Omega_{-})-1\right\}.
$$
Notice that we may have
$m_{1}^{N}(\Omega_{-}) = m_{0}^{N}(\Omega_{-})-1$. If not,
using~\eqref{eq.betadecomp}, $\beta^*\psi_{k}^{(1)}
=d_{f,h}^{*}\psi_{k}^{(1)}=0$\,, for $k\geq m_{0}^{N}(\Omega_{-})$\,.
%
%
\item $(\psi_{k}^{(1)})_{m_{1}^{N}(\Omega_{-})+1\leq k\leq
  m_{1}^{N}(\Omega_{-})+m_{1}^{D}(\Omega_{+}\setminus\overline{\Omega_{-}})}$
are eigenvectors for the operator
$\Delta_{f,h}^{D,(1)}(\Omega_{+}\setminus\overline{\Omega_{-}})$ associated
with the $m_{1}^{D}(\Omega_{+}\setminus\overline{\Omega_{-}})$ eigenvalues smaller than $\nu(h)$.
From~\eqref{eq:d*_zero} in the
proof of Proposition~\ref{pr.decayO+-}, we know that
$d_{f,h}^{*}\psi_{k}^{(1)}=\beta^*\psi_{k}^{(1)} =0$. 
\end{itemize}

It has been proven in Proposition~\ref{pr.number}  that
  $m_{0}^{D}(\Omega_{+})=m_{0}^{N}(\Omega_{-})$ and
  $m_{1}^{D}(\Omega_{+})=m_{1}^{N}(\Omega_{-})+m_{1}^{D}(\Omega_{+}\setminus\overline{\Omega_{-}})$. The
  family $(\psi_{j}^{(0)})_{1\leq j\leq
    m_{0}^{D}(\Omega_{+})}$ (respectively $(\psi_{k}^{(1)})_{1\leq k\leq
    m_{1}^{D}(\Omega_{+})}$) is an orthonormal basis of eigenvectors
  for $\Delta_{f,h}^{\oplus,(0)}(\Omega_{+})$ (respectively $\Delta_{f,h}^{\oplus,(1)}(\Omega_{+})$) restricted to the
  spectral range $[0,\nu(h)]$. These two families will be used to
  construct quasimodes for the operator
  $\Delta_{f,h}^{D,(p)}(\Omega_{+})$ restricted to the
  spectral range $[0,\nu(h)]$. This will require some appropriate
  truncations or extrapolations, detailed below.

Let us start with $\psi_{1}^{(0)}$ and let us introduce
\begin{equation}
  \label{eq.expo}
  \tilde{\psi}_{1}^{(0)}= \frac{e^{-\frac{f}{h}} 1_{\Omega_{+}}(x)}{\left(\int_{\Omega_{+}}e^{-2\frac{f(x)}{h}}~dx\right)^{\frac{1}{2}}}.
\end{equation}
These two functions are exponentially close in $L^{2}(\Omega_{+})$:
$$
\|\psi_{1}^{(0)}-\tilde{\psi}_{1}^{(0)}\|_{L^{2}(\Omega_{+})}\leq Ce^{-\frac{c}{h}}\,,
$$
owing to $\forall x \in  \overline{\Omega_+} \setminus
\Omega_-,\,f(x)\geq \min_{\partial \Omega_{-}}f>\min_{\Omega_{+}}f$ and the following upper
and lower bounds of the integral factor.
\begin{lemme}
\label{le.intminmaj} Let $\Omega$ be a regular bounded  domain of $(M,g)$ and
let $f$ belong to  $\mathcal{C}^{\infty}(\overline{\Omega})$ such that
$\min_{\overline{\Omega}}f$ is achieved in $\Omega$. Then
there exists a constant $C_{f}>0$ such that
$$
\frac{1}{C_{f}}h^{d/2}e^{-2\frac{\min_{\Omega}f}{h}}\leq
\int_{\Omega}e^{-2\frac{f(x)}{h}}~dx\leq {\rm Vol}_{g}(\Omega) e^{-2\frac{\min_{\Omega}f}{h}}\,,
$$ 
where ${\rm Vol}_{g}(\Omega)$ denotes the volume of $\Omega$ for
the metric $g$\,.
\end{lemme}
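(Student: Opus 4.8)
The plan is a routine Laplace-type estimate, so I do not expect any real difficulty: the upper bound is essentially free, and the lower bound comes from localizing near an interior minimizer. For the upper bound, since $\min_{\overline{\Omega}}f$ is attained inside $\Omega$ one has $f(x)\geq \min_{\Omega}f$ for every $x\in\overline{\Omega}$, hence $e^{-2f(x)/h}\leq e^{-2\min_{\Omega}f/h}$ pointwise; integrating this inequality over $\Omega$ against the Riemannian volume immediately gives $\int_{\Omega}e^{-2f/h}\,dx\leq \mathrm{Vol}_{g}(\Omega)\,e^{-2\min_{\Omega}f/h}$.

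For the lower bound I would fix a point $x_{0}\in\Omega$ realizing $\min_{\overline{\Omega}}f$; since $x_{0}$ lies in the open set $\Omega$, it is a critical point, so $\nabla f(x_{0})=0$. Passing to geodesic normal coordinates $y$ centered at $x_{0}$, I would choose $r>0$ small enough that the coordinate ball $B_{r}=\{\,|y|\leq r\,\}$ is contained in $\Omega$ and that $\sqrt{\det g(y)}\geq \tfrac12$ on $B_{r}$. Taylor's formula (with integral remainder), together with $\nabla f(x_{0})=0$ and the boundedness of $\Hess f$ on $B_{r}$, then provides a constant $C>0$, depending only on $f$, $g$ and $r$, such that $f(y)\leq \min_{\Omega}f+C|y|^{2}$ for all $y\in B_{r}$; note that no non-degeneracy of the minimum is used, only this one-sided quadratic upper bound.

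Restricting the integral to $B_{r}$ and inserting these two facts yields
\[
\int_{\Omega}e^{-2f/h}\,dx\;\geq\;\tfrac12\,e^{-2\min_{\Omega}f/h}\int_{B_{r}}e^{-2C|y|^{2}/h}\,dy .
\]
The substitution $y=\sqrt{h}\,z$ turns the remaining integral into $h^{d/2}\int_{\{|z|\leq r/\sqrt{h}\}}e^{-2C|z|^{2}}\,dz\geq h^{d/2}\int_{\{|z|\leq 1\}}e^{-2C|z|^{2}}\,dz=:\kappa\,h^{d/2}$ as soon as $h\leq r^{2}$, where $\kappa>0$ is a fixed constant. Taking $C_{f}=2/\kappa$ then gives the announced lower bound, valid for $h\in(0,h_{0})$ with $h_{0}\leq r^{2}$.

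I do not expect a genuine obstacle here. The only points that deserve a little care are the passage to the Riemannian volume element, handled by working in normal coordinates where it is bounded below near $x_{0}$, and the implicit restriction to small $h$, which is consistent with the standing assumption $h\in(0,h_{0})$ used throughout the paper: for large $h$ the quantity $h^{d/2}e^{-2\min_{\Omega}f/h}$ grows unboundedly, while $\int_{\Omega}e^{-2f/h}\,dx$ stays bounded by $\mathrm{Vol}_{g}(\Omega)$, so a lower bound of this form can only hold in the small-$h$ regime.
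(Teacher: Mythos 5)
Your proof is correct. It takes a very slightly different route from the paper's: the paper rewrites the integral via the layer-cake formula $\int_{\Omega}e^{-2f/h}\,dx=\int_{2\min_{\Omega}f}^{\infty}{\rm Vol}_{g}(2f<t)\,e^{-t/h}\,\frac{dt}{h}$ and then shows that each sublevel set $\{f<\min_{\Omega}f+\tfrac{hs}{2}\}$ contains a ball of radius $\sim\sqrt{hs}$, whereas you restrict directly to a fixed coordinate ball and dominate $e^{-2f/h}$ from below by a Gaussian before rescaling. Both arguments rest on exactly the same key fact — the one-sided quadratic bound $f\leq\min_{\Omega}f+C|y|^{2}$ coming from Taylor's formula at the interior critical point, with no non-degeneracy required — so the difference is purely one of bookkeeping; your direct Gaussian comparison is the more standard presentation of the Laplace lower bound, and your closing remark about the implicit restriction to $h\in(0,h_{0})$ is accurate and matches the paper's own use of $h<h_{0}$ in its proof.
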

\begin{proof}
The upper bound is obvious since $e^{-2\frac{f(x)}{h}}\leq
e^{-2\frac{\min_{\Omega}f}{h}}$ for all $x\in \Omega$. For the lower bound, write
\begin{align*}
\int_{\Omega}e^{-2\frac{f(x)}{h}}~dx&=\int_{\Omega}\int_{2f(x)}^{+\infty}e^{-\frac{t}{h}}~\frac{dt}{h}dx
=\int_{2\min_{\Omega}f}^{+\infty}{\rm Vol}_{g}(2f<
t)e^{-\frac{t}{h}}~\frac{dt}{h}
\\
&=e^{-2\frac{\min_{\Omega}f}{h}}\int_{0}^{+\infty}
{\rm Vol}_{g}(2f<2\min_{\Omega}f+hs)e^{-s}~ds.
\end{align*}
We assumed the existence of $x_{0}\in \Omega$ such that
$f(x_{0})=\min_{\Omega}f$. Using the Taylor expansion of $f$ around
$x_{0}$, there exists  $r>0$, $h_0>0$ and $s_{0}>0$ such that the ball
$B(x_{0}, \frac{(hs)^{1/2}}{r})$ 
is included
in $\left\{f<\min_{\Omega}f+\frac{hs}{2}\right\}$ for all $s< s_{0}$ and
$h<h_{0}$.
Since ${\rm Vol}_g\left[B(x_{0}, \frac{(hs)^{1/2}}{r})\right] \geq
\frac{1}{C_{r}}(hs)^{d/2}$, we get
$$
\int_{\Omega}e^{-2\frac{f(x)}{h}}~dx\geq
\frac{1}{C_{r}}e^{-2\frac{\min_{\Omega}f}{h}}\int_{0}^{s_{0}}e^{-s}(hs)^{d/2}~ds\geq \frac{h^{d/2}e^{-2\frac{\min_{\Omega}f}{h}}}{C_{f}}\,.
$$
\end{proof}
Compared to the standard Laplace estimate, the interest of
Lemma~\ref{le.intminmaj} is that it holds even if the minimum of $f$
is degenerate.

In all what follows, $U$ denotes a fixed subset of $\Omega_-$
satisfying~\eqref{eq:U}. Let us introduce various cut-off
functions, which all satisfy $0\leq \chi\leq 1$. We refer to Figure~\ref{fig:cutoff}
for an illustration of these cut-off functions, with respect to the
three sets $U \subset \Omega_- \subset \Omega_+$.
\begin{itemize}
\item $\chi_{-}^{(0)}$ and $\chi_{-}^{(1)}$ are two cut-off functions
  like $\chi_{-}$ in Proposition~\ref{pr.almostorhtO-}, that is
  $\chi_{-}^{(p)}\in \mathcal{C}^{\infty}_{0}(\Omega_{-})$ and
  $\chi_{-}^{(p)}\equiv 1$ in a neighborhood of $U$, with the
  additional condition that $\chi_{-}^{(0)}\equiv 1$ in a neighborhood
  of $\supp\chi_{-}^{(1)}$. 
\item $\chi_{+}$ is chosen as in Proposition~\ref{pr.decayO+-}, that
  is $\chi_{+}\in \mathcal{C}^{\infty}(\overline{\Omega_{+}})$,
  $\chi_{+}\equiv 1$ in a neighborhood of $\partial \Omega_{+}$
and $\chi_{+}\equiv 0$ in a neighborhood of
$\overline{\Omega_{-}}$. Let us introduce $c_+>0$ such that
$\chi_{+}\equiv 1$ on $\left\{x\in \overline{\Omega_{+}}, \, d(x,\partial\Omega_{+})\leq
 c_{+} \right\}$.
\item $\chi_{0}$ belongs to $\mathcal{C}^{\infty}_{0}(\Omega_{+})$,
  is equal to $1$ in a neighborhood of $\overline{\Omega_{-}}$ and is
  chosen in such a way that its gradient is supported in $\left\{x\in \Omega_{+},
    d(x,\partial\Omega_{+})\leq \delta_{+}\right\}$, where
  $\delta_{+} \in (0,c_+)$ will be fixed further.
\end{itemize}
\begin{figure}[htbp]
\centerline{\includegraphics[width=0.6\textwidth]{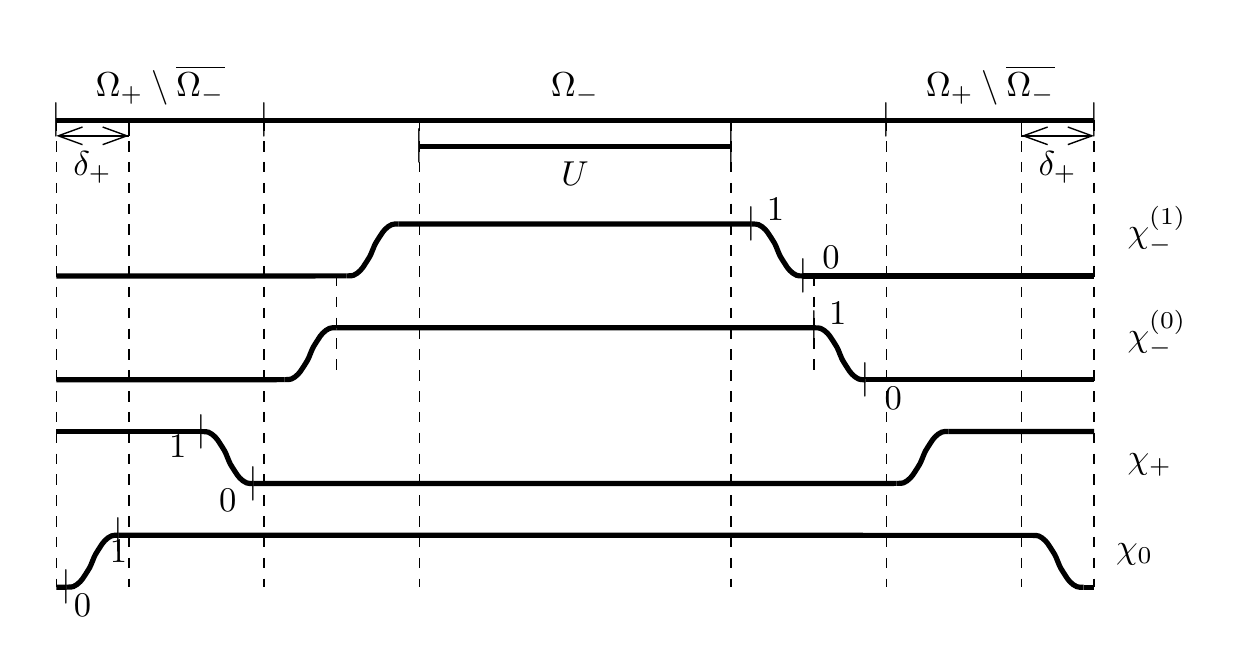}}
\caption{Positions of the domains $\Omega_{+}$, $\Omega_{-}$, $U$
and of the supports of the cut-off functions $\chi_{-}^{(0)}$, $\chi_{-}^{(1)}$,
$\chi_{+}$, $\chi_{0}$.}\label{fig:cutoff}
\end{figure}
We are now in position to introduce a family of quasimodes for the operator
$\Delta^{D,(p)}_{f,h}(\Omega_+)$.
\begin{definition} \label{de.quasimodes} 
Let $\chi_{-}^{(0)}$, $\chi_{-}^{(1)}$,
  $\chi_{+}$ and $\chi_{0}$ be the cut-off functions defined above.
  Let $( \psi_{j}^{(0)})_{1\leq j\leq m_{0}^{D}(\Omega_{+})}$ 
and $(\psi_{k}^{(1)})_{1\leq k\leq m_{1}^{D}(\Omega_{+})}$ be the previously gathered families of eigenvectors of
  $\Delta_{f,h}^{N,(p)}(\Omega_{-})$ and
  $\Delta_{f,h}^{D,(1)}(\Omega_{+}\setminus\overline{\Omega_{-}})$,
  and finally let $\tilde{\psi}_{1}^{(0)}$ be given by
  \eqref{eq.expo}.
The families of vectors $(v_{j}^{(0)})_{1\leq
    j\leq m_{0}^{D}(\Omega_{+})}$ and $(v_{k}^{(1)})_{1\leq k\leq
    m_{1}^{D}(\Omega_{+})}$ are defined by:
  \begin{itemize}
  \item $v_{1}^{(0)}=\chi_{0}\tilde{\psi}_{1}^{(0)}$\,;
  \item $v_{j}^{(0)}=\chi_{-}^{(0)}\psi_{j}^{(0)}$ for $j\in
    \left\{2,\ldots, m_{0}^{D}(\Omega_{+})\right\}$\,;
  \item $v_{k}^{(1)}= \chi_{-}^{(1)}\psi_{k}^{(1)}$ for $k\in
    \left\{1,\ldots, m_{1}^{N}(\Omega_{-})\right\}$\,;
\item $v_{k}^{(1)}=\chi_{+}\psi_{k}^{(1)}$ for $k\in
  \left\{m_{1}^{N}(\Omega_{-})+1,\ldots, m_{1}^{D}(\Omega_{+})\right\}$\,.
  \end{itemize}
\end{definition}

\begin{proposition}  \label{pr.quasi}
The families $(v_{j}^{(0)})_{1\leq j\leq m_{0}^{D}(\Omega_{+})}$ and
$(v_{k}^{(1)})_{1\leq k\leq m_{1}^{D}(\Omega_{+})}$ of
Definition~\ref{de.quasimodes} satisfy the
following properties:
\begin{itemize}
\item[\em 1)] They are almost orthonormal in $L^2(\Omega_+)$:
  \begin{align*}
\left(\langle v_{j}^{(0)}\,,\, v_{j'}^{(0)}\rangle_{L^2(\Omega_+)}\right)_{1\leq
  j,j'\leq m_{0}^{D}(\Omega_{+})}&=\Id_{m_{0}^{D}(\Omega_{+})}+
\mathcal{O}(e^{-\frac{c}{h}})
\,,\\
\left(\langle v_{k}^{(1)}\,,\, v_{k'}^{(1)}\rangle_{L^2(\Omega_+)}\right)_{1\leq
  k,k'\leq m_{1}^{D}(\Omega_{+})}&=\Id_{m_{1}^{D}(\Omega_{+})}+
\mathcal{O}(e^{-\frac{c}{h}})\,,
\end{align*}
for some constant $c>0$ independent of $\delta_{+}$.
\item[\em 2)] The elements $v_{j}^{(0)}$, $1\leq j\leq
  m_{0}^{D}(\Omega_{+})$ (resp. $v_{k}^{(1)}$, $1\leq k\leq
  m_{1}^{D}(\Omega_{+})$) belong to
  $D(\Delta_{f,h}^{D,(0)}(\Omega_{+}))$
  (resp. $\Delta_{f,h}^{D,(1)}(\Omega_{+})$) and satisfy
  \begin{align*}
    &\langle v_{j}^{(0)}\,,\,
    \Delta_{f,h}^{D,(0)}(\Omega_{+})v_{j}^{(0)}\rangle_{L^2(\Omega_+)}=\mathcal{O}(e^{-\frac{c}{h}})\,,
\\
\text{resp. }
&
\langle v_{k}^{(1)}\,,\,
    \Delta_{f,h}^{D,(1)}(\Omega_{+})v_{k}^{(1)}\rangle_{L^2(\Omega_+)}=\mathcal{O}(e^{-\frac{c}{h}})\,,
  \end{align*}
for some constant $c>0$ independent of $\delta_{+}$\,.
\item[\em 3)] Let us consider the spectral projections $\Pi^{(0)}$ and $\Pi^{(1)}$ 
 associated with $\Delta_{f,h}^{D}(\Omega_{+})$ introduced in
 Definition~\ref{de.pibeta}. The elements $v_{j}^{(0)}$, $1\leq j\leq
  m_{0}^{D}(\Omega_{+})$ (resp. $v_{k}^{(1)}$, $1\leq k\leq
  m_{1}^{D}(\Omega_{+})$) satisfy:
  \begin{align*}
&
    \|v_{j}^{(0)}-\Pi^{(0)}v_{j}^{(0)}\|_{L^{2}(\Omega_{+})}=\mathcal{O}(e^{-\frac{c}{h}})\,,\\
\text{resp. }
&
 \|v_{k}^{(1)}-\Pi^{(1)}v_{k}^{(1)}\|_{L^{2}(\Omega_{+})}=\mathcal{O}(e^{-\frac{c}{h}})\,,
  \end{align*}
for some constant $c>0$ independent of $\delta_{+}$.
\end{itemize}
\end{proposition}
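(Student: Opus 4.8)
The plan is to separate the two families into the ``bulk'' quasimodes — those built with the cut-offs $\chi_{-}^{(0)},\chi_{-}^{(1)},\chi_{+}$, i.e.\ $v_{j}^{(0)}=\chi_{-}^{(0)}\psi_{j}^{(0)}$ for $2\le j\le m_{0}^{D}(\Omega_{+})$, $v_{k}^{(1)}=\chi_{-}^{(1)}\psi_{k}^{(1)}$ for $1\le k\le m_{1}^{N}(\Omega_{-})$, and $v_{k}^{(1)}=\chi_{+}\psi_{k}^{(1)}$ for $m_{1}^{N}(\Omega_{-})<k\le m_{1}^{D}(\Omega_{+})$ — and the single ``exponential'' quasimode $v_{1}^{(0)}=\chi_{0}\tilde{\psi}_{1}^{(0)}$. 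For every bulk vector, membership in the Dirichlet domain, the three estimates \emph{within the subfamily attached to a single cut-off}, and the exponentially small Rayleigh quotients are precisely what is proved in Proposition~\ref{pr.almostorhtO-} (whose proof applies verbatim with $\chi_{-}^{(0)}$ or $\chi_{-}^{(1)}$ in place of the generic $\chi_{-}$) and in Proposition~\ref{pr.decayO+-} (for $\chi_{+}$). So the actual work consists of three things: (a) handling $v_{1}^{(0)}$ directly; (b) controlling the Gram-matrix entries joining two different subfamilies; (c) deducing item~3 from item~2.

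For (a) I would set $N_{\pm}=\bigl(\int_{\Omega_{\pm}}e^{-2f/h}\,dx\bigr)^{1/2}$, so that $v_{1}^{(0)}=N_{+}^{-1}\chi_{0}e^{-f/h}$ is smooth and compactly supported in $\Omega_{+}$, hence lies in $D(\Delta_{f,h}^{D,(0)}(\Omega_{+}))$. Since $d^{*}$ kills $0$-forms, $d_{f,h}^{*}v_{1}^{(0)}=0$, and the Leibniz rule together with $d_{f,h}e^{-f/h}=0$ gives $d_{f,h}v_{1}^{(0)}=h\,(d\chi_{0})\,\tilde{\psi}_{1}^{(0)}$. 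Consequently $\langle v_{1}^{(0)},\Delta_{f,h}^{D,(0)}(\Omega_{+})v_{1}^{(0)}\rangle=\|d_{f,h}v_{1}^{(0)}\|_{L^{2}(\Omega_{+})}^{2}=h^{2}N_{+}^{-2}\int|d\chi_{0}|^{2}e^{-2f/h}\,dx$ and $\bigl|\,\|v_{1}^{(0)}\|_{L^{2}(\Omega_{+})}^{2}-1\,\bigr|=N_{+}^{-2}\int(1-\chi_{0}^{2})e^{-2f/h}\,dx$. Both $d\chi_{0}$ and $1-\chi_{0}$ are supported in $\Omega_{+}\setminus\overline{\Omega_{-}}$, where the elementary consequence of Hypothesis~\ref{hyp.0} gives $f\ge\min_{\partial\Omega_{-}}f>\min_{\Omega_{+}}f$; combining this with the lower bound $N_{+}^{2}\ge C^{-1}h^{d/2}e^{-2\min_{\Omega_{+}}f/h}$ of Lemma~\ref{le.intminmaj}, both quantities are $\mathcal{O}(e^{-c/h})$ for any $c<2\bigl(\min_{\partial\Omega_{-}}f-\min_{\Omega_{+}}f\bigr)$. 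The point to emphasize is that this exponent is $\delta_{+}$-free — only the implicit prefactor, which behaves like $\|d\chi_{0}\|_{\infty}^{2}\sim\delta_{+}^{-2}$, depends on $\delta_{+}$, which the statement allows.

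For (b): for one-forms with $k\le m_{1}^{N}(\Omega_{-})<k'$, $v_{k}^{(1)}$ is supported inside $\Omega_{-}$ while $v_{k'}^{(1)}$ vanishes on a neighborhood of $\overline{\Omega_{-}}$, so $\langle v_{k}^{(1)},v_{k'}^{(1)}\rangle_{L^{2}(\Omega_{+})}=0$ identically; the full one-form Gram matrix is therefore block diagonal, hence $\Id_{m_{1}^{D}(\Omega_{+})}+\mathcal{O}(e^{-c/h})$ by the within-block bounds. For $0$-forms the only new entries are $\langle v_{1}^{(0)},v_{j}^{(0)}\rangle$, $j\ge2$: on $\supp\chi_{-}^{(0)}\subset\Omega_{-}$ one has $\chi_{0}\equiv1$ and $\tilde{\psi}_{1}^{(0)}\big|_{\Omega_{-}}=(N_{-}/N_{+})\psi_{1}^{(0)}$ with $N_{-}/N_{+}=1+\mathcal{O}(e^{-c/h})$ (Lemma~\ref{le.intminmaj}), so the entry equals $(1+\mathcal{O}(e^{-c/h}))\langle\psi_{1}^{(0)},\chi_{-}^{(0)}\psi_{j}^{(0)}\rangle_{L^{2}(\Omega_{-})}=-(1+\mathcal{O}(e^{-c/h}))\langle(1-\chi_{-}^{(0)})\psi_{1}^{(0)},\psi_{j}^{(0)}\rangle_{L^{2}(\Omega_{-})}$ using $\psi_{j}^{(0)}\perp\psi_{1}^{(0)}$, and this is $\mathcal{O}(e^{-c/h})$ because $(1-\chi_{-}^{(0)})\psi_{1}^{(0)}$ is exponentially small in $L^{2}(\Omega_{-})$, exactly as in~\eqref{eq.expdecpsiO-}. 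Combining (a) and (b) with Propositions~\ref{pr.almostorhtO-} and~\ref{pr.decayO+-} yields items~1 and~2.

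For (c), I would fix $p\in\{0,1\}$, write $A=\Delta_{f,h}^{D,(p)}(\Omega_{+})$, and use Proposition~\ref{pr.number}: $\sigma(A)\cap(e^{-c_{0}'/h},\nu(h)]=\emptyset$, so $A\ge\nu(h)(1-\Pi^{(p)})$ and hence $\langle v,Av\rangle\ge\nu(h)\,\|v-\Pi^{(p)}v\|_{L^{2}(\Omega_{+})}^{2}=\mathcal{O}(e^{-c/h})$ for each quasimode $v\in D(A)$. Absorbing $\nu(h)^{-1}$ via~\eqref{eq.hypnu} with $\varepsilon=c/2$ gives $\|v-\Pi^{(p)}v\|_{L^{2}(\Omega_{+})}=\mathcal{O}(e^{-c/(4h)})$, and relabelling the constant proves item~3 with a $\delta_{+}$-free exponent. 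I do not expect a serious obstacle here: the substance is the assembly of the three already-established decay/almost-orthogonality packages plus the explicit computation for $v_{1}^{(0)}$. The two delicate checks will be that $\supp(d\chi_{0})$ really stays inside $\Omega_{+}\setminus\overline{\Omega_{-}}$ (so the rate in (a) does not degenerate as $\delta_{+}\to0$), and the harmless loss of a factor in the exponent in (c) caused by the fact that $\nu(h)$ is only known to satisfy $\nu(h)\gtrsim e^{-\varepsilon/h}$.
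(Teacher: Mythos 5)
Your proposal is correct and follows essentially the same route as the paper: assemble the within-block estimates from Propositions~\ref{pr.almostorhtO-} and~\ref{pr.decayO+-}, treat $v_{1}^{(0)}$ via the explicit identity $d_{f,h}v_{1}^{(0)}=h(d\chi_{0})\tilde{\psi}_{1}^{(0)}$ together with Lemma~\ref{le.intminmaj} and the bound $f\geq\min_{\partial\Omega_{-}}f$ on $\supp(d\chi_{0})$ (which is indeed what makes the exponent $\delta_{+}$-free), and obtain item~3 from item~2 by the spectral theorem and~\eqref{eq.hypnu}. The only cosmetic deviation is your direct computation of the cross terms $\langle v_{1}^{(0)},v_{j}^{(0)}\rangle$ via eigenvector orthogonality, where the paper instead bounds $\|\chi_{0}\tilde{\psi}_{1}^{(0)}-\chi_{-}^{(0)}\psi_{1}^{(0)}\|_{L^{2}(\Omega_{+})}$ and appeals to Proposition~\ref{pr.almostorhtO-}; both are equally valid.
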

\begin{proof}
\noindent{1)}
The family $(\psi_{j}^{(0)})_{1\leq j \leq m_{0}^{D}(\Omega_{+})}$
(resp. $(\psi_{k}^{(1)})_{1\leq k \leq m_{1}^{D}(\Omega_{+})}$) is an
orthonormal basis of eigenvectors of $\Delta_{f,h}^{\oplus, (0)}$
(resp. of $\Delta_{f,h}^{\oplus, (1)}$).
Proposition~\ref{pr.almostorhtO-} implies that
$(\chi_{-}^{(0)}\psi_{j}^{(0)})_{1\leq j\leq m_{0}^{D}(\Omega_{+})}$
is almost orthonormal. The estimate
$\|\chi_{0}\tilde{\psi}_{1}^{(0)}-\chi_{-}^{(0)}\psi_{1}^{(0)}\|_{L^{2}(\Omega_{+})}\leq
Ce^{-\frac{c}{h}}$ (which is a consequence of Lemma~\ref{le.intminmaj}
 and $\forall x \in \overline{\Omega_+} \setminus \Omega_-, \, f(x)\geq \min_{\partial \Omega_{-}}f>\min_{\Omega_{+}}f$) ends the proof of the almost orthonormality of
$(v_{j}^{(0)})_{1\leq j\leq m_{0}^{D}(\Omega_{+})}$\,. For $p=1$, the
two families $(v_{k}^{(1)}=\chi_{-}^{(1)}\psi_{k}^{(1)})_{1\leq k\leq
  m_{1}(\Omega_{-})}$ and $(v_{k}^{(1)}=\chi_{+}\psi_{k}^{(1)})_{m_{1}^{N}(\Omega_{-})+1\leq
k\leq m_{1}^{D}(\Omega_{+})}$ have disjoint supports and lie therefore in
orthogonal subspaces of $L^2(\Omega_+)$. Besides, the almost orthonormality of both
families is again a consequence of the exponential decay of the
$\psi_{k}^{(1)}$, see Proposition~\ref{pr.almostorhtO-} and
Proposition~\ref{pr.decayO+}.\\
\noindent{2)} With the chosen truncations all the vectors
$v_{j}^{(0)}$ (resp. $v_{k}^{(1)}$) belong to the domain
$D(\Delta_{f,h}^{D,(0)}(\Omega_{+}))$
(resp. $D(\Delta_{f,h}^{D,(1)}(\Omega_{+}))$).
In all cases except $p=0$ and $k=1$,  we write for $v=\chi \psi$ (we
omit the index $k$ and the superscript $(p)$) and
$A\psi=\lambda\psi$, 
$A=\Delta_{f,h}^{N}(\Omega_{-})$ or 
$A=\Delta_{f,h}^{D}(\Omega_{+}\setminus\overline{\Omega_{-}})$\,,
$$
\langle v\,,\, \Delta_{f,h}^{D}(\Omega_{+})v\rangle_{L^{2}(\Omega_{+})}= \|d_{f,h}v\|_{
  L^{2}}^{2}+\|d_{f,h}^{*}v\|_{L^{2}(\Omega_{+})}^{2}
\leq \langle \psi\,, A \psi\rangle + C\|\psi\|^{2}_{W^{1,2}(\left\{\chi \neq 1\right\})}
\leq Ce^{-\frac{c}{h}}\,,
$$
owing to $\langle\psi\,,\,
A\psi\rangle=\lambda=\mathcal{O}(e^{-\frac{c_{0}}{h}})$ and to the
estimates on $\psi-v$ given in Proposition~\ref{pr.almostorhtO-} and
Proposition~\ref{pr.decayO+}. For $p=0$ and $k=1$, it is even simpler
because $d_{f,h}\tilde{\psi}_{1}^{(0)}=0$ implies
$$
\langle v_{1}^{(0)},
\Delta_{f,h}^{D,(0)}(\Omega_{+})v_{1}^{(0)}\rangle_{L^{2}(\Omega_{+})}=\|d_{f,h}(\chi_{0}\tilde\psi_{1}^{(0)})\|_{L^{2}(\Omega_{+})}^{2}=\|(hd\chi_{0})\tilde{\psi}_{1}^{(0)}\|_{L^{2}(\Omega_{+})}^{2}\leq C e^{-\frac{c}{h}},
$$
as a consequence of Lemma~\ref{le.intminmaj}
(see~\eqref{eq.estim_delta_v10} below for a more precise estimate).\\
\noindent{3)} All the $v_{j}^{(0)}$'s and $v_{k}^{(1)}$'s satisfy $\langle v\,,\, A
v\rangle_{L^{2}(\Omega_{+})}=\mathcal{O}(e^{-\frac{c}{h}})$ with
$A=\Delta_{f,h}^{D,(0)}(\Omega_{+})$ or
$A=\Delta_{f,h}^{D,(1)}(\Omega_{+})$, and recall that $\Pi^{(0)}$ and
$\Pi^{(1)}$ are the spectral projectors $1_{[0,\nu(h)]}(A)$. The last
estimates are consequences of
$$
\nu(h)\|1_{(\nu(h),+\infty)}(A)v\|_{L^{2}(\Omega_{+})}^{2}\leq \langle
v\,,\, Av\rangle_{L^{2}(\Omega_{+})}\leq Ce^{-\frac{c}{h}}\,,
$$
together with the fact that $\lim_{h\to 0}h\log\nu(h)= 0$, see~\eqref{eq.hypnu}.
\end{proof}
We will need in the following the coefficients 
$\langle v_{k}^{(1)}\,,\, d_{f,h}v_{j}^{(0)}\rangle_{L^2(\Omega_+)}$,
for $j\in \left\{1,\ldots, m_{0}^{D}(\Omega_{+})\right\}$ and $k\in
\left\{1,\ldots, m_{1}^{D}(\Omega_{+})\right\}$.
\begin{proposition}
  \label{pr.dfv}
The coefficients $\langle v_{k}^{(1)}\,,\,
d_{f,h}v_{j}^{(0)}\rangle_{L^2(\Omega_+)}$,
$j\in \left\{1,\ldots, m_{0}^{D}(\Omega_{+})\right\}$, $k\in
\left\{1,\ldots, m_{1}^{D}(\Omega_{+})\right\}$ satisfy:
\begin{itemize}
\item[1)] For $j=1$ and $k\in \left\{1,\ldots,m_{1}^{N}(\Omega_{-})\right\}$: $\langle
  v_{k}^{(1)}\,,\, d_{f,h}v_{1}^{(0)}\rangle_{L^2(\Omega_+)}=0$.
\item[2)] For $j=1$ and $k\in \left\{m_{1}^{N}(\Omega_{-})+1,\ldots,
    m_{1}^{D}(\Omega_{+})
\right\}$: $$\langle
  v_{k}^{(1)}\,,\,
  d_{f,h}v_{1}^{(0)}\rangle_{L^2(\Omega_+)}=-\frac{h\int_{\partial\Omega_{+}}e^{-\frac{f(\sigma)}{h}}\mathbf{i}_{n}\psi_{k}^{(1)}
    (\sigma) d\sigma}{\left(\int_{\Omega_{+}}
e^{-\frac{2f(x)}{h}}~dx\right)^{1/2}}$$ where  
$d\sigma$ is the infinitesimal volume on $\partial \Omega_{+}$
and $n(\sigma)$ the outward normal vector at $\sigma\in \partial
\Omega_{+}$.
\item[3)] For $j\in \left\{2,\ldots, m_{0}^{D}(\Omega_{+})\right\}$ and $k\in
  \left\{1,\ldots, m_{1}^{N}(\Omega_{-})\right\}$:
$$\langle  v_{k}^{(1)}\,,\,
d_{f,h}v_{j}^{(0)}\rangle_{L^2(\Omega_+)}=\sqrt{\mu_{j}^{(0)} (\Omega_-)}
\left(\delta_{k,j-1}+\mathcal{O}(e^{-\frac{c}{h}}) \right).$$
\item[4)] For $j\in \left\{2,\ldots, m_{0}^{D}(\Omega_{+})\right\}$ and $k\in
  \left\{m_{1}^{N}(\Omega_{-})+1,\ldots,
    m_{1}^{D}(\Omega_{+})\right\}$:
$\langle v_{k}^{(1)}\,,\, d_{f,h}v_{j}^{(0)}\rangle_{L^2(\Omega_+)}=0$.
\end{itemize}
\end{proposition}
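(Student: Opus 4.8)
The plan is to handle the four cases separately. Cases~1), 3) and~4) reduce to elementary support and orthogonality bookkeeping, while case~2) is the only one requiring a genuine computation, namely a Green formula producing the boundary term on $\partial\Omega_{+}$. Throughout I will use two elementary facts. First, since $\tilde{\psi}_{1}^{(0)}$ is proportional to $e^{-f/h}$ one has $d_{f,h}\tilde{\psi}_{1}^{(0)}=0$, so that $d_{f,h}v_{1}^{(0)}=d_{f,h}(\chi_{0}\tilde{\psi}_{1}^{(0)})=h(d\chi_{0})\,\tilde{\psi}_{1}^{(0)}$, a one-form supported in the collar $\{d(\cdot,\partial\Omega_{+})\leq\delta_{+}\}$. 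Second, by the labelling of the $\psi_{k}^{(1)}$ recalled in Section~\ref{se.treig} together with~\eqref{eq:vp_p_p+1}, $d_{f,h}\psi_{j}^{(0)}=\sqrt{\mu_{j}^{(0)}(\Omega_{-})}\,\psi_{j-1}^{(1)}$ for every $j\in\{2,\ldots,m_{0}^{D}(\Omega_{+})\}$, the index $j-1$ then belonging to $\{1,\ldots,m_{1}^{N}(\Omega_{-})\}$.

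Cases~1) and~4) follow at once from disjointness of supports, so that the inner product vanishes exactly: in case~1), $d_{f,h}v_{1}^{(0)}=h(d\chi_{0})\tilde{\psi}_{1}^{(0)}$ vanishes on a neighbourhood of $\overline{\Omega_{-}}$ while $v_{k}^{(1)}=\chi_{-}^{(1)}\psi_{k}^{(1)}$ is supported in $\supp\chi_{-}^{(1)}$, which is relatively compact in $\Omega_{-}$; in case~4), $d_{f,h}v_{j}^{(0)}$ is supported in $\supp\chi_{-}^{(0)}\subset\Omega_{-}$ while $v_{k}^{(1)}=\chi_{+}\psi_{k}^{(1)}$ vanishes on a neighbourhood of $\overline{\Omega_{-}}$. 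For case~3), write $d_{f,h}v_{j}^{(0)}=h(d\chi_{-}^{(0)})\psi_{j}^{(0)}+\chi_{-}^{(0)}d_{f,h}\psi_{j}^{(0)}$. Since $\chi_{-}^{(0)}\equiv1$ on a neighbourhood of $\supp\chi_{-}^{(1)}$, which contains $\supp v_{k}^{(1)}$, the first term contributes $0$ and the factor $\chi_{-}^{(0)}$ may be dropped against $v_{k}^{(1)}$. Using the intertwining relation above and then $v_{k}^{(1)}=\chi_{-}^{(1)}\psi_{k}^{(1)}$ with $\|\psi_{k}^{(1)}-v_{k}^{(1)}\|_{L^{2}(\Omega_{+})}=\mathcal{O}(e^{-c/h})$ (Proposition~\ref{pr.almostorhtO-}) together with the orthonormality of the $\psi_{k}^{(1)}$, one gets $\langle v_{k}^{(1)},d_{f,h}v_{j}^{(0)}\rangle_{L^{2}(\Omega_{+})}=\sqrt{\mu_{j}^{(0)}(\Omega_{-})}\,\langle\chi_{-}^{(1)}\psi_{k}^{(1)},\psi_{j-1}^{(1)}\rangle=\sqrt{\mu_{j}^{(0)}(\Omega_{-})}\bigl(\delta_{k,j-1}+\mathcal{O}(e^{-c/h})\bigr)$.

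Case~2) is the main point. Here $v_{k}^{(1)}=\chi_{+}\psi_{k}^{(1)}$ with $\psi_{k}^{(1)}$ an eigenvector of $\Delta_{f,h}^{D,(1)}(\Omega_{+}\setminus\overline{\Omega_{-}})$, for which $d_{f,h}^{*}\psi_{k}^{(1)}\equiv0$ on the shell by~\eqref{eq:d*_zero}. Since $\delta_{+}<c_{+}$, one has $\chi_{+}\equiv1$ on $\supp(d\chi_{0})$, hence $\langle v_{k}^{(1)},d_{f,h}v_{1}^{(0)}\rangle_{L^{2}(\Omega_{+})}=\langle\psi_{k}^{(1)},h(d\chi_{0})\tilde{\psi}_{1}^{(0)}\rangle$. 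Now rewrite $h(d\chi_{0})\tilde{\psi}_{1}^{(0)}=-d_{f,h}\bigl((1-\chi_{0})\tilde{\psi}_{1}^{(0)}\bigr)$, using again $d_{f,h}\tilde{\psi}_{1}^{(0)}=0$; the function $g:=(1-\chi_{0})\tilde{\psi}_{1}^{(0)}$ is supported in the same collar, hence in $\Omega_{+}\setminus\overline{\Omega_{-}}$, it equals $\tilde{\psi}_{1}^{(0)}$ near $\partial\Omega_{+}$ and vanishes near $\partial\Omega_{-}$. Applying the Green formula for $d_{f,h}$ on the shell (see Appendix~\ref{se.formulas}), $\langle\psi_{k}^{(1)},d_{f,h}g\rangle_{L^{2}(\Omega_{+}\setminus\overline{\Omega_{-}})}=\langle d_{f,h}^{*}\psi_{k}^{(1)},g\rangle+h\int_{\partial(\Omega_{+}\setminus\overline{\Omega_{-}})}(\mathbf{i}_{n}\psi_{k}^{(1)})\,g\,d\sigma$, with $n$ the outward normal: the interior term vanishes since $d_{f,h}^{*}\psi_{k}^{(1)}\equiv0$, the $\partial\Omega_{-}$ part of the boundary term vanishes since $g\equiv0$ there, and on $\partial\Omega_{+}$ one has $g=\tilde{\psi}_{1}^{(0)}=e^{-f/h}/(\int_{\Omega_{+}}e^{-2f/h}\,dx)^{1/2}$ and $n$ is the outward normal to $\Omega_{+}$. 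Collecting the overall sign yields exactly the claimed expression.

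The main obstacle is case~2): one has to justify the precise form and sign of the boundary term in the Green formula for $d_{f,h}$ (the interior product $\mathbf{i}_{n}\psi_{k}^{(1)}$ paired with the trace of $g$), and to keep track of the several cut-offs, making sure that $\chi_{+}\equiv1$ on $\supp(d\chi_{0})$ thanks to $\delta_{+}<c_{+}$, that $(1-\chi_{0})\tilde{\psi}_{1}^{(0)}$ is supported in the shell, and that its trace is $\tilde{\psi}_{1}^{(0)}$ on $\partial\Omega_{+}$ and $0$ on $\partial\Omega_{-}$. The remaining cases are pure support/orthogonality arguments and present no difficulty.
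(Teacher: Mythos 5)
Your proof is correct and follows essentially the same route as the paper: cases 1), 3) and 4) are handled by the same support/orthogonality bookkeeping, and case 2) rests on the same ingredients ($d_{f,h}\tilde{\psi}_{1}^{(0)}=0$, $d_{f,h}^{*}\psi_{k}^{(1)}=0$ on the shell, and an integration by parts producing the boundary term on $\partial\Omega_{+}$). The only difference is presentational: you invoke the Green formula for $d_{f,h}$ applied to $(1-\chi_{0})\tilde{\psi}_{1}^{(0)}$ on the shell, whereas the paper carries out the equivalent computation via the Hodge $\star$ operator, the identity $d\bigl[\star(e^{-f/h}\psi_{k}^{(1)})\bigr]=0$ and Stokes' formula.
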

\begin{proof}
The cases 1) and 4) are due to  the disjoint supports of $d_{f,h}v_{j}^{(0)}$
and $v_{k}^{(1)}$ (see Figure~\ref{fig:cutoff}).

The case 3) comes from the computation
\begin{align*}
d_{f,h}v_{j}^{(0)}
&=
d_{f,h}(\chi_{-}^{(0)}\psi_{j}^{(0)})=\chi_{-}^{(0)}d_{f,h}\psi_{j}^{(0)}+(hd\chi_{-}^{(0)})\wedge
\psi_{j}^{(0)}
\\
&= \sqrt{\mu_{j}^{(0)} (\Omega_-)}\chi_{-}^{(0)}\psi_{j-1}^{(1)}+
\psi_{j}^{(0)}
hd\chi_{-}^{(0)}.
\end{align*}
The condition $\chi_{-}^{(0)}\equiv 1$ in a neighborhood of $\supp
\chi_{-}^{(1)}$ then leads to
\begin{align*}
\langle v_{k}^{(1)}\,,\, d_{f,h}v_{j}^{(0)}\rangle_{L^2(\Omega_+)}
&=\left\langle \chi_{-}^{(1)}\psi_{k}^{(1)}\,,\,
\sqrt{\mu_{j}^{(0)} (\Omega_-)}\psi_{j-1}^{(1)}\right\rangle_{L^2(\Omega_-)}\\
&=
\sqrt{\mu_{j}^{(0)} (\Omega_-)}\delta_{k, j-1}+
\sqrt{\mu_{j}^{(0)} (\Omega_-)}\|(1-\chi_{-}^{(1)})\psi_{k}^{(1)}\|_{L^{2}(\Omega_{-})}\,,
\end{align*}
and we conclude with the exponential decay of $\psi_{k}^{(1)}$ given
by \eqref{eq.expdecpsiO-}
in the proof Proposition~\ref{pr.almostorhtO-}.

For the case 2), we use first
$$
d_{f,h}v_{1}^{(0)}=d_{f,h}(\chi_{0}\tilde{\psi}_{1}^{(0)})=\frac{e^{-\frac{f}{h}}}{\left(\int_{\Omega_{+}}
e^{-\frac{2f(x)}{h}}~dx\right)^{1/2}}hd\chi_{0}\,.
$$
The assumption on the supports of $\chi_{0}$ and $\chi_{+}$ (see Figure~\ref{fig:cutoff}) implies that
$d\chi_{0}$ is supported in the interior of 
$\left\{x\in
  \overline{\Omega_{+}},~\chi_{+}(x)=1\right\}$\,, so that
$$
\left(\int_{\Omega_{+}}
e^{-\frac{2f(x)}{h}}~dx\right)^{1/2}
\langle v_{k}^{(1)}\,,\, d_{f,h}v_{j}^{(0)}\rangle=
\langle
\chi_{+}\psi_{k}^{(1)}\,,\, e^{-\frac{f}{h}}\,hd\chi_{0}\rangle
= 
\langle \psi_{k}^{(1)}\,,\, e^{-\frac{f}{h}} \, hd\chi_{0}\rangle\,.
$$
The definition of the Hodge $\star$ operation, gives
$$
\langle \psi_{k}^{(1)}\,,\, e^{-\frac{f}{h}} \, hd\chi_{0}\rangle
=
h\int_{\Omega_{+}}d\chi_{0}\wedge 
\left[\star\left(e^{-\frac{f}{h}}\psi_{k}^{(1)}\right)\right]
=-h\int_{\Omega_{+}\setminus \overline{\Omega_{-}}}d(1-\chi_{0})\wedge \left[\star\left(e^{-\frac{f}{h}}\psi_{k}^{(1)}\right)\right]\,.
$$
We recall (see~\eqref{eq:d*_zero} in the proof of Proposition~\ref{pr.decayO+-}) that
$d_{f,h}^{*}\psi_{k}^{(1)}=0$ in $\Omega_{+}\setminus \overline{\Omega_{-}}$\,, which means
$$
d\left[\star\left(e^{-\frac{f}{h}}\psi_{k}^{(1)}\right)\right]
=(-1)^{1+1}\star
\left[\frac{e^{-\frac{f}{h}}}{h}d_{f,h}^{*}\psi_{k}^{(1)}\right]=0
\quad\text{in}~\Omega_{+}\setminus \overline{\Omega_{-}}\,.
$$
Hence we get
$$
d(1-\chi_{0})\wedge
\left[\star\left(e^{-\frac{f}{h}}\psi_{k}^{(1)}\right)\right]
=d\left[
(1-\chi_{0})\wedge
\left[\star\left(e^{-\frac{f}{h}}\psi_{k}^{(1)}\right)\right]
\right]
$$
and Stokes' formula yields
$$
\langle \psi_{k}^{(1)}\,,\, e^{-\frac{f}{h}}hd\chi_{0}\rangle
=
-h\int_{\partial\Omega_{+}}e^{-\frac{f}{h}}\star \psi_{k}^{(1)}=
-h\int_{\partial\Omega_{+}}e^{-\frac{f}{h}}\mathbf{t}(\star \psi_{k}^{(1)})\,.
$$
Using the relations \eqref{eq.tn2} $\mathbf{t}\star=\star\mathbf{n}$\,,
and \eqref{eq.tn4} $ \omega_{1}\wedge
(\star \mathbf{n}\omega_{2})=\langle \omega_{1}\,,\,
\mathbf{i}_{n}\omega_{2}\rangle_{\bigwedge^{p-1}T^{*}_{\sigma}\Omega_{+}}~d\sigma$
along $\partial \Omega_{+}$ (where $d\sigma$ is the infinitesimal volume on $\partial \Omega_{+}$
and $n(\sigma)$ the outward normal vector at $\sigma\in \partial
\Omega_{+}$) with $p=1$,  $\omega_{1}=1$ and
$\omega_{2}=\psi_{k}^{(1)}$, we get:
$$
\langle \psi_{k}^{(1)}\,,\, e^{-\frac{f}{h}}hd\chi_{0}\rangle
=-h\int_{\partial
  \Omega_{+}}e^{-\frac{f(\sigma)}{h}}\mathbf{i}_{n}\psi_{k}^{(1)} (\sigma)~d\sigma\,.
$$
This concludes the proof of case 2), and of Proposition~\ref{pr.dfv}.
\end{proof}
\section{Analysis of  the restricted differential $\beta$}
\label{se.matrix}

It is in this section that the assumption~\eqref{eq.decayhyp} is
used. In the following, we assume that the open subset $U$ of
$\Omega_{-}$ which has been used to build the cut-off functions in the
previous section satisfies (in addition to~\eqref{eq:U}):
\begin{equation}\label{eq:U'}
U\cup \mathcal{V}_{-} = \overline{\Omega_{-}},
\end{equation}
where $\mathcal{V}_{-}$ is the neighborhood of $\partial \Omega_-$
introduced in  the assumption~\eqref{eq.decayhyp}.

The main result of this section is the following:
\begin{proposition}
\label{pr.singbeta}
  The singular values of $\beta=d_{f,h}\big|_{F^{(0)}}:F^{(0)}\to
  F^{(1)}$, labelled in decreasing order, are given by
  \begin{align*}
s_{j}(\beta)&=\sqrt{\mu_{m_{0}^{D}(\Omega_{+})+1-j}^{(0)}(\Omega_-) } \, (1+\mathcal{O}(e^{-\frac{c}{h}}))\,,
\quad \text{for}~j \in \{1, \ldots,m_{0}^{D}(\Omega_{+})-1\}\\
s_{m_{0}^{D}(\Omega_{+})}(\beta)&=\frac{h\sqrt{
\sum_{k=m_{1}^{N}(\Omega_{-})+1}^{m_{1}^{D}(\Omega_{+})}\left|\int_{\partial
  \Omega_{+}}e^{-\frac{f(\sigma)}{h}}\mathbf{i}_{n}\psi_{k}^{(1)}(\sigma) d\sigma\right|^{2}}}{
\sqrt{\int_{\Omega_{+}}e^{-2\frac{f(x)}{h}}~dx}
} \, (1+\mathcal{O}(e^{-\frac{c}{h}}))\,,
  \end{align*}
for some $c>0$.
\end{proposition}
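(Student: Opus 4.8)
The plan is to turn the computation of the singular values of $\beta$ into a finite-dimensional linear algebra problem, using the quasimode families of Definition~\ref{de.quasimodes}. By Proposition~\ref{pr.number} the families $(\Pi^{(0)}v_j^{(0)})_{1\le j\le m_0^D(\Omega_+)}$ and $(\Pi^{(1)}v_k^{(1)})_{1\le k\le m_1^D(\Omega_+)}$ have the correct cardinalities, and by Proposition~\ref{pr.quasi} they are almost orthonormal bases of $F^{(0)}$ and $F^{(1)}$, so the singular values of $\beta$ coincide, up to multiplicative factors $(1+\mathcal{O}(e^{-c/h}))$ coming from the Gram matrices (via the standard congruence identity $s_k(\beta)=s_k(H^{1/2}AG^{-1/2})$, with $G,H=\mathrm{Id}+\mathcal{O}(e^{-c/h})$), with those of the matrix $\tilde M=\big(\langle \Pi^{(1)}v_k^{(1)},\beta\,\Pi^{(0)}v_j^{(0)}\rangle\big)_{k,j}$. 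Using the commutation identity \eqref{eq.betaproj}, i.e. $\beta\,\Pi^{(0)}=d_{f,h}\Pi^{(0)}=\Pi^{(1)}d_{f,h}$, and moving the projection onto the first factor, one gets $\tilde M_{kj}=\langle v_k^{(1)},d_{f,h}v_j^{(0)}\rangle - r_{kj}$ with $r_{kj}=\langle(1-\Pi^{(1)})v_k^{(1)},d_{f,h}v_j^{(0)}\rangle$, hence $|r_{kj}|\le \|(1-\Pi^{(1)})v_k^{(1)}\|\,\|d_{f,h}v_j^{(0)}\|$, the first factor being $\mathcal{O}(e^{-c/h})$ by Proposition~\ref{pr.quasi}.

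The entries $\langle v_k^{(1)},d_{f,h}v_j^{(0)}\rangle$ are exactly the ones listed in Proposition~\ref{pr.dfv}: they vanish unless ($j=1$ and $k>m_1^N(\Omega_-)$) or ($j\ge 2$ and $k\le m_1^N(\Omega_-)$). Thus, up to the remainder $r$, the column $j=1$ of $\tilde M$ is supported on the shell rows and equals the vector $b$ with $b_k=-h\int_{\partial\Omega_+}e^{-f(\sigma)/h}\mathbf{i}_{n}\psi_k^{(1)}(\sigma)\,d\sigma\big/(\int_{\Omega_+}e^{-2f/h})^{1/2}$ (an \emph{exact} identity, Proposition~\ref{pr.dfv}, case 2), whereas the columns $j\ge 2$ are supported on the Neumann rows and form the $(m_0^D(\Omega_+)-1)\times(m_0^D(\Omega_+)-1)$ matrix $\Sigma(\mathrm{Id}+E)$ with $\Sigma=\mathrm{diag}(\sqrt{\mu_j^{(0)}(\Omega_-)})_{2\le j\le m_0^D(\Omega_+)}$ and $E=\mathcal{O}(e^{-c/h})$, the off-diagonal entries being controlled by the exponential decay \eqref{eq.expdecpsiO-} of the Neumann eigenvectors. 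Since column $1$ and the columns $j\ge 2$ are supported on disjoint sets of rows, $\tilde M^*\tilde M$ is block diagonal up to cross terms of relative size $\mathcal{O}(e^{-c/h})$, so its spectrum is $\{\|b\|^2(1+\mathcal{O}(e^{-c/h}))\}$ together with the eigenvalues of $(\mathrm{Id}+E^*)\Sigma^2(\mathrm{Id}+E)$. For the latter I would use min-max after the substitution $y=(\mathrm{Id}+E)x$: then $\langle(\mathrm{Id}+E^*)\Sigma^2(\mathrm{Id}+E)x,x\rangle/\|x\|^2=(\|\Sigma y\|^2/\|y\|^2)(1+\mathcal{O}(e^{-c/h}))$, which gives eigenvalues $\mu_j^{(0)}(\Omega_-)(1+\mathcal{O}(e^{-c/h}))$, $j=2,\dots,m_0^D(\Omega_+)$ — note that no separation of the $\mu_j^{(0)}(\Omega_-)$ is needed. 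Finally, by Proposition~\ref{pr.gapO-} (and Hypothesis~\ref{hyp.3}) one has $\|b\|^2\le h^{-d/2}e^{-2\kappa_f/h}\ll \mu_2^{(0)}(\Omega_-)$ for $h$ small, so $\|b\|$ is the \emph{smallest} singular value; this both orders the list as claimed and identifies $s_{m_0^D(\Omega_+)}(\beta)$, while the block-diagonal (Schur complement) structure with these two well-separated scales propagates the relative $\mathcal{O}(e^{-c/h})$ errors to the eigenvalues of $\tilde M^*\tilde M$.

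The delicate point — and where the boundary decay \eqref{eq.decayhyp} of Hypothesis~\ref{hyp.1} together with the choice \eqref{eq:U'} of $U$ enters — is the bookkeeping of the exponential weights, which forces the refinements just invoked. For column $j=1$ one needs $r_{k1}$ negligible in front of $\|b\|\sim e^{-\kappa_f/h}$: since $d_{f,h}v_1^{(0)}=(hd\chi_0)\tilde\psi_1^{(0)}$, Lemma~\ref{le.intminmaj} gives $\|d_{f,h}v_1^{(0)}\|=\mathcal{O}(e^{-(\kappa_f-\eta(\delta_+))/h})$ with $\eta(\delta_+)\to 0$; choosing $\delta_+$ in Definition~\ref{de.quasimodes} small enough that $\eta(\delta_+)<c$ (the constant $c$ being independent of $\delta_+$, by Proposition~\ref{pr.quasi}) makes $r_{k1}=\mathcal{O}(e^{-(\kappa_f+c')/h})$. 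For columns $j\ge 2$ one needs the genuine relative bound $\|d_{f,h}v_j^{(0)}\|=\mathcal{O}(\sqrt{\mu_j^{(0)}(\Omega_-)})$, not merely $\mathcal{O}(\sqrt{\mu_j^{(0)}(\Omega_-)}+h)$: writing $d_{f,h}v_j^{(0)}=\sqrt{\mu_j^{(0)}(\Omega_-)}\,\chi_-^{(0)}\psi_{j-1}^{(1)}+(hd\chi_-^{(0)})\psi_j^{(0)}$ and noting that \eqref{eq:U'} forces $\supp d\chi_-^{(0)}\subset\mathcal{V}_-$, the extra term is bounded by $hC\|\psi_j^{(0)}\|_{L^2(\mathcal{V}_-)}=\tilde{\mathcal{O}}(h\sqrt{\mu_j^{(0)}(\Omega_-)})$ precisely by \eqref{eq.decayhyp}. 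With these two refinements, all additive errors in $\tilde M$ are $\mathcal{O}(e^{-c/h})$ relative to the corresponding model entries, the perturbation arguments of the previous paragraph apply, and the proposition follows. The main obstacle is thus not any single estimate but this consistent propagation of the correct exponents, which is exactly what dictates the elaborate design of the cut-off functions and of the auxiliary parameter $\delta_+$.
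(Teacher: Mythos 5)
Your overall strategy is the same as the paper's: pass to the matrix of $\beta$ in the almost orthonormal bases $(\Pi^{(0)}v_j^{(0)})_j$ and $(\Pi^{(1)}v_k^{(1)})_k$ (so that singular values are preserved up to factors $1+\mathcal{O}(e^{-\frac{c}{h}})$ by the Ky~Fan inequalities~\eqref{eq.Fan2}), estimate the entries through Proposition~\ref{pr.dfv} corrected by $\|(1-\Pi^{(1)})v_k^{(1)}\|=\mathcal{O}(e^{-\frac{c}{h}})$, and invoke exactly the two refinements the paper needs: the relative bound $\|d_{f,h}v_j^{(0)}\|=\tilde{\mathcal{O}}(\sqrt{\mu_j^{(0)}(\Omega_-)})$ coming from~\eqref{eq.decayhyp} together with~\eqref{eq:U'}, and the choice of $\delta_+$ small enough that the first-column errors are $\mathcal{O}(e^{-\frac{\kappa_f+c}{h}})$. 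The only methodological difference is the last linear-algebra step: the paper performs a Gaussian elimination $(\mathrm{Id}+R)B=\tilde B$ with $\|R\|=\mathcal{O}(e^{-\frac{c}{h}})$ and reads the singular values off the block structure of $\tilde B$, whereas you analyze $\tilde M^*\tilde M$ directly by a congruence/min--max argument. Both work here; to make yours airtight you should state the cross-term bound in the precise form $|(\tilde M^*\tilde M)_{1,j'}|\le e^{-\frac{c}{h}}\|b\|\sqrt{\mu_{j'}^{(0)}(\Omega_-)}$, i.e.\ small relative to the \emph{geometric mean} of the two diagonal entries, which is what lets you write $\tilde M^*\tilde M=\Lambda^{1/2}(\mathrm{Id}+\mathcal{O}(e^{-\frac{c}{h}}))\Lambda^{1/2}$ and conclude via~\eqref{eq.Fan2}. (Also, the block $B_{1,2}$ is $m_1^N(\Omega_-)\times(m_0^D(\Omega_+)-1)$ and need not be square, but this does not affect $\tilde M^*\tilde M$.)

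The one genuine gap is the unproved assertion $\|b\|\sim e^{-\frac{\kappa_f}{h}}$. The upper bound $\limsup_{h\to 0}h\log\|b\|^2\le -2\kappa_f$ is elementary (Cauchy--Schwarz, the trace theorem and Lemma~\ref{le.intminmaj}), but the lower bound $\liminf_{h\to 0}h\log\|b\|^2\ge -2\kappa_f$ is not: a priori the boundary integrals $\int_{\partial\Omega_+}e^{-\frac{f}{h}}\mathbf{i}_n\psi_k^{(1)}\,d\sigma$ could all be anomalously small through cancellations, and nothing in Proposition~\ref{pr.dfv} excludes this. Yet your argument uses this lower bound in three places: to make the additive errors $r_{k1}=\mathcal{O}(e^{-\frac{\kappa_f+c}{h}})$ \emph{relatively} small, so that the claimed formula for $s_{m_0^D(\Omega_+)}(\beta)$ holds with a multiplicative $1+\mathcal{O}(e^{-\frac{c}{h}})$; to control the cross terms of $\tilde M^*\tilde M$ relative to $\|b\|\sqrt{\mu_{j'}^{(0)}(\Omega_-)}$; and to place $\|b\|^2$ below $\mu_2^{(0)}(\Omega_-)$ with a genuine exponential gap. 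In the paper this is~\eqref{eq.minlog} of Proposition~\ref{pr.M21}, which is reduced to the lower bound on $\min\sigma(\Delta_{f,h}^{D,(0)}(\Omega_+))$ of Lemma~\ref{le.infsigD0} --- itself a substantial argument by contradiction combining Agmon estimates, the Poincar\'e--Wirtinger inequality and the trace theorem with the Dirichlet boundary condition. Your proof must import that lemma (or an equivalent) explicitly; without it the relative-error statement for the smallest singular value is unjustified.
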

According to the notation of Section~\ref{se.treig},
$(\mu_{j}^{(0)}(\Omega_-))_{1\le j \le m_0^D(\Omega_+)}$ are the eigenvalues of
$\Delta_{f,h}^{N,(0)}(\Omega_{-})$ and
$(\psi_{k}^{(1)})_{m_{1}^{N}(\Omega_{-})+1 \le k \le {m_{1}^{D}(\Omega_{+})}}$ are the
eigenvectors of
$\Delta_{f,h}^{D,(1)}(\Omega_{+}\setminus\overline{\Omega_{-}})$.
Notice that, contrary to the eigenvalues of the operators considered
in the previous sections which were labelled in increasing order, the
singular values are naturally labelled in decreasing
order.
 Of course, the singular values of
$\beta$ are related to the small eigenvalues of $\Delta_{f,h}^{D,(0)}(\Omega_+)$
through the relation:
\begin{equation}\label{eq:VS_VP}
\sigma(\Delta_{f,h}^{D,(0)}(\Omega_{+}))\cap[0,\nu(h)]=\left\{s_{k}(\beta)^{2}\,,~1\leq
k\leq m_{0}^{D}(\Omega_{+})\right\},
\end{equation}
since $\Delta_{f,h}^{D,(0)}|_{F^{(0)}}= \beta^* \beta$. 
Proposition~\ref{pr.singbeta} will thus be instrumental in proving Theorem~\ref{th.main}.

The idea of the proof of Proposition~\ref{pr.singbeta} follows the linear algebra argument used in
\cite{HKN,HeNi,Lep3,LNV} and well summarized in
\cite{Lep1}. Notice that
$\beta=d_{f,h}\big|_{F^{(0)}}$ is a {\em finite dimensional} linear
operator. The proof then relies on the following fundamental property
for singular values of matrices. Let us denote
$s_{k}(B)$, $k\in \left\{1,\ldots,\max(n_{0},n_{1})\right\}$, the
singular values of a matrix $B\in
\mathcal{M}_{n_{1},n_{0}}(\cz)$. Then, for any matrices $C_{0} \in
\mathcal{M}_{n_{0}}(\cz)$ and $C_1  \in \mathcal{M}_{n_{1}}(\cz)$,
\begin{equation}\label{eq.Fan1}
s_{k}(B \, C_{0})\leq s_{k}(B)\|C_{0}\| \, ,
 \quad s_{k}(C_{1} \, B)\leq \|C_{1}\|s_{k}(B)\,,
\end{equation}
and for any matrices $C_0 \in GL_{n_{0}}(\cz)$ and $C_1 \in GL_{n_{1}}(\cz)$,
\begin{equation}\label{eq.Fan2}
\frac{1}{\|C_{0}^{-1}\|\|C_{1}^{-1}\|}s_{k}(B)\leq
s_{k}(C_{1}B C_{0})
\leq \|C_{0}\|\|C_{1}\|s_{k}(B)\, ,
\end{equation}
where $\|A\|=\left(\max \sigma(AA^T) \right)^{1/2}$ denotes the
spectral radius of a matrix $A$. The inequalities \eqref{eq.Fan1}
are specific and simple cases of the Ky~Fan inequalities (see for example \cite{Sim} for
a generalization). In particular, when
$C_{p}^{*}C_{p}=\Id_{n_{p}}+\mathcal{O}(\varepsilon)$ ($p=0,1$), the
$k$-th singular value of $B$ is close to the $k$-th singular value of
$C_{1}BC_{0}$: $s_k(C_1 B C_0)=s_k(B)(1+\mathcal{O}(\varepsilon))$. In particular computing the singular values of $\beta$ in almost
orthonormal bases (according to the Definition~\ref{de.almost}), changes
every $s_{k}(\beta)$ into
$s_{k}(\beta)(1+\mathcal{O}(e^{-\frac{c}{h}}))$. To analyze the
singular values of $\beta$, we will use the almost orthonormal bases
built in the previous section.
\begin{remarque}
Our approach, which emphasizes the differential $d_{f,h}$
and allows almost
orthonormal changes of bases, is very close to the work~\cite{BiZh}  by Bismut
and Zhang (see in particular the Section~6) where an
  isomorphism between
the Thom-Smale complex and the Witten complex is constructed\footnote{The second author thanks
  J.M.~Bismut for mentioning this point.}. The interest of our
technique, following~\cite{HeNi,Lep3,LNV}, is that
 the hierarchy of  long range tunnel effects can be analyzed
 accurately, using a Gauss elimination algorithm (see \cite{Lep1}). 
This makes more explicit the inductive process which was used in the former works~\cite{BEGK,BGK} of
Bovier, Eckhoff, Gayrard and Klein.  Actually, the present analysis
shows that the Thom-Smale transversality condition and the Morse
condition are not necessary: introducing the suitable block structure
associated with the assumed geometry of the tunnel effect (see in
particular Hypothesis~\ref{hyp.3}) suffices.
\end{remarque}

\subsection{Structure of  $\beta$}
\label{se.strucbeta}
The estimates
$\|v_{j}^{(0)}-\Pi^{(0)}v_{j}^{(0)}\|_{L^{2}(\Omega_{+})}=\mathcal{O}(e^{-\frac{c}{h}})$
and
$\|v_{k}^{(1)}-\Pi^{(1)}v_{k}^{(1)}\|_{L^{2}(\Omega_{+})}=\mathcal{O}(e^{-\frac{c}{h}})$
of Proposition~\ref{pr.quasi}, together with the results stated in Proposition~\ref{pr.quasi}-1) ensure that 
$$
\mathcal{B}^{(0)}=\left(\Pi^{(0)}v_{j}^{(0)}\right)_{1\leq j\leq m_{0}^{D}(\Omega_{+})}
\quad\text{and}\quad 
\mathcal{B}^{(1)}= \left(\Pi^{(1)}v_{k}^{(1)}\right)_{1\leq k\leq m_{1}^{D}(\Omega_{+})}
$$
are almost orthonormal bases of $F^{(0)}$ and $F^{(1)}$. The same
holds for their dual bases (in $L^2(\Omega_+)$) denoted by $\mathcal{B}^{(0),*}$ and
$\mathcal{B}^{(1),*}$. The matrix of $\beta=d_{f,h}\big|_{F^{(0)}}:F^{(0)}\to F^{(1)}$ in the
bases $\mathcal{B}^{(0)}$, $\mathcal{B}^{(1), *}$ is given by
$$
M(\beta,\mathcal{B}^{(0)},\mathcal{B}^{(1),*})=B=
\begin{pmatrix}
 b_{k,j}
\end{pmatrix}_{\scriptsize
  \begin{array}[l]{l}
    1\leq k\leq m_{1}^{D}(\Omega_{+})\\
    1\leq j\leq m_{0}^{D}(\Omega_{+})
  \end{array}
}\text{ with }
b_{k,j}= \langle \Pi^{(1)}v_{k}^{(1)}\,,\, \beta \Pi^{(0)}v_{j}^{(0)}\rangle_{L^2(\Omega_+)}.
$$
Remember that the coefficients are equivalently written, by using \eqref{eq.betaproj},
\begin{equation}\label{eq:bkj}
b_{k,j}=\langle \Pi^{(1)}v_{k}^{(1)}\,,\, \beta \Pi^{(0)}v_{j}^{(0)}\rangle_{L^2(\Omega_+)}=
\langle \Pi^{(1)}v_{k}^{(1)}\,,\, d_{f,h}v_{j}^{(0)}\rangle_{L^2(\Omega_+)}
=\langle v_{k}^{(1)}\,,\,
d_{f,h}\Pi^{(0)}v_{j}^{(0)}\rangle_{L^2(\Omega_+)}.
\end{equation}
Following the various cases discussed in Proposition~\ref{pr.dfv}
where  the scalar products $\langle
v_{k}^{(1)},d_{f,h}v_{j}^{(0)}\rangle_{L^2(\Omega_+)}$ were studied,
we shall write the matrix $B$ in  block
form:
\begin{align*}
B=
\begin{pmatrix}
  B_{1,1}& B_{1,2}\\
  B_{2,1}& B_{2,2}
\end{pmatrix}
\text{where }&
B_{1,1}=
\begin{pmatrix}
\langle \Pi^{(1)}v_{k}^{(1)}\,,\, d_{f,h}v_{1}^{(0)}\rangle_{L^2(\Omega_+)}  
\end{pmatrix}_{1\leq k\leq m_{1}^{N}(\Omega_{-})}\,,\\
&
B_{1,2}=
\begin{pmatrix}
\langle \Pi^{(1)}v_{k}^{(1)}\,,\, d_{f,h}v_{j}^{(0)}\rangle_{L^2(\Omega_+)}  
\end{pmatrix}_{\scriptsize
  \begin{array}[c]{c}
2\leq j\leq m_{0}^{D}(\Omega_{+})\\1\leq k\leq m_{1}^{N}(\Omega_{-})
\end{array}
}\,,
\\
&
B_{2,1}=
\begin{pmatrix}
 \langle \Pi^{(1)}v_{k}^{(1)}\,,\, d_{f,h}v_{1}^{(0)}\rangle_{L^2(\Omega_+)} 
\end{pmatrix}_{m_{1}^{N}(\Omega_{-})+1\leq k\leq
  m_{1}^{D}(\Omega_{+})}\,,
\\
&
B_{2,2}=
\begin{pmatrix}
\langle \Pi^{(1)}v_{k}^{(1)}\,,\, d_{f,h}v_{j}^{(0)}\rangle_{L^2(\Omega_+)}  
\end{pmatrix}_{\scriptsize
  \begin{array}[c]{c}
2\leq j\leq m_{0}^{D}(\Omega_{+})\\ m_{1}^{N}(\Omega_{-})+1\leq k\leq m_{1}^{D}(\Omega_{+})
\end{array}
}\,.
\end{align*}
In the following, we will give some estimates of each of these blocks
in the asymptotic regime $h \to 0$.  In the following, we denote
\begin{equation}\label{eq:C0}
C_0=2 \|\nabla f\|_{L^\infty(\supp(\nabla \chi_0))}.
\end{equation}
Notice that $C_0>0$. We assume that $\delta_+>0$ is chosen such that
\begin{equation}\label{eq:hyp_delta+}
\delta_+<\frac{\kappa_f}{C_0}.
\end{equation}
 The assumption~\eqref{eq.decayhyp} will be useful to study the blocks $B_{1,2}$ and $B_{2,2}$
and the parameter $\delta_{+}>0$ (see Figure~\ref{fig:cutoff}) will be
further adjusted when
considering the blocks $B_{1,1}$ and $B_{2,1}$.

\subsection{The blocks $B_{1,2}$ and $B_{2,2}$}
\label{se.M12}
Estimates for both blocks rely on the assumption~\eqref{eq.decayhyp}.
Let us start with $B_{1,2}$.
\begin{lemme}
\label{le.M12}
The coefficients of
$B_{1,2}$ satisfy:
$$
b_{k,j}=\langle \Pi^{(1)}v_{k}^{(1)}\,,\, d_{f,h}v_{j}^{(0)} \rangle_{L^2(\Omega_+)}
= \sqrt{\mu_{j}^{(0)} (\Omega_-)}\left(\delta_{k, j-1}+ \mathcal{O}(e^{-\frac{c}{h}})\right)
$$
for $j\in\left\{2,\ldots, m_{0}^{D}(\Omega_{+})\right\}$ and $k\in
\left\{1,\ldots, m_{1}^{N}(\Omega_{-})\right\}$\,.
\end{lemme}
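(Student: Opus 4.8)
The plan is to follow closely the computation in the proof of Proposition~\ref{pr.dfv}, case 3), the only new point being that $b_{k,j}$ involves the spectral projection $\Pi^{(1)}$ rather than $v_k^{(1)}$ itself, so the sharp localization of $v_k^{(1)}$ is partially lost and has to be recovered through the decay estimate~\eqref{eq.decayhyp}. First I would record that $m_0^D(\Omega_+)=m_0^N(\Omega_-)$ by Proposition~\ref{pr.number}, so for $j\in\{2,\ldots,m_0^D(\Omega_+)\}$ one has $2\le j\le m_0^N(\Omega_-)$, hence, with the labelling of Section~\ref{se.treig}, $d_{f,h}\psi_j^{(0)}=\sqrt{\mu_j^{(0)}(\Omega_-)}\,\psi_{j-1}^{(1)}$, where moreover $j-1\le m_0^N(\Omega_-)-1\le m_1^N(\Omega_-)$, so that $\psi_{j-1}^{(1)}$ is one of the orthonormal eigenvectors $(\psi_k^{(1)})_{1\le k\le m_1^N(\Omega_-)}$. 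Since $v_j^{(0)}=\chi_-^{(0)}\psi_j^{(0)}$, the Leibniz rule gives
$$d_{f,h}v_j^{(0)}=\sqrt{\mu_j^{(0)}(\Omega_-)}\,\chi_-^{(0)}\psi_{j-1}^{(1)}+(hd\chi_-^{(0)})\wedge\psi_j^{(0)},$$
and, using~\eqref{eq:bkj}, the quantity $b_{k,j}=\langle\Pi^{(1)}v_k^{(1)}\,,\,d_{f,h}v_j^{(0)}\rangle_{L^2(\Omega_+)}$ splits along this sum into a leading term and a cut-off term.

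For the leading term $\sqrt{\mu_j^{(0)}(\Omega_-)}\,\langle\Pi^{(1)}v_k^{(1)}\,,\,\chi_-^{(0)}\psi_{j-1}^{(1)}\rangle_{L^2(\Omega_+)}$, I would first replace $\Pi^{(1)}v_k^{(1)}$ by $v_k^{(1)}$, at the cost of $\|\Pi^{(1)}v_k^{(1)}-v_k^{(1)}\|_{L^2(\Omega_+)}\|\chi_-^{(0)}\psi_{j-1}^{(1)}\|_{L^2(\Omega_+)}=\mathcal{O}(e^{-c/h})$ by Proposition~\ref{pr.quasi}-3) and $0\le\chi_-^{(0)}\le1$; then, since $\chi_-^{(0)}\equiv1$ on a neighborhood of $\supp\chi_-^{(1)}$, the inner product equals $\langle\chi_-^{(1)}\psi_k^{(1)}\,,\,\psi_{j-1}^{(1)}\rangle_{L^2(\Omega_-)}=\langle\psi_k^{(1)}\,,\,\psi_{j-1}^{(1)}\rangle_{L^2(\Omega_-)}+\langle(\chi_-^{(1)}-1)\psi_k^{(1)}\,,\,\psi_{j-1}^{(1)}\rangle_{L^2(\Omega_-)}=\delta_{k,j-1}+\mathcal{O}(e^{-c/h})$, the last bound coming from the exponential decay~\eqref{eq.expdecpsiO-}. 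Thus the leading term contributes $\sqrt{\mu_j^{(0)}(\Omega_-)}\,(\delta_{k,j-1}+\mathcal{O}(e^{-c/h}))$.

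The cut-off term $\langle\Pi^{(1)}v_k^{(1)}\,,\,(hd\chi_-^{(0)})\wedge\psi_j^{(0)}\rangle_{L^2(\Omega_+)}$ is where Hypothesis~\ref{hyp.1} enters. Writing $\Pi^{(1)}v_k^{(1)}=v_k^{(1)}+(\Pi^{(1)}v_k^{(1)}-v_k^{(1)})$, the contribution of $v_k^{(1)}=\chi_-^{(1)}\psi_k^{(1)}$ vanishes exactly, since $v_k^{(1)}$ is supported where $\chi_-^{(0)}\equiv1$, i.e. where $d\chi_-^{(0)}=0$. For the remaining contribution, Cauchy--Schwarz gives the bound $\|\Pi^{(1)}v_k^{(1)}-v_k^{(1)}\|_{L^2(\Omega_+)}\,h\|d\chi_-^{(0)}\|_{L^\infty}\,\|\psi_j^{(0)}\|_{L^2(\supp d\chi_-^{(0)})}$; the first factor is $\mathcal{O}(e^{-c/h})$ by Proposition~\ref{pr.quasi}-3), and for the last factor I would observe that $\supp(d\chi_-^{(0)})\subset\Omega_-\setminus U\subset\mathcal{V}_-$, using $U\cup\mathcal{V}_-=\overline{\Omega_-}$ (that is,~\eqref{eq:U'}), so that~\eqref{eq.decayhyp} applied to the eigenfunction $\psi_j^{(0)}$ (which has a nonzero eigenvalue $\mu_j^{(0)}(\Omega_-)\in(0,\nu(h)]$ since $j\ge2$) gives $\|\psi_j^{(0)}\|_{L^2(\mathcal{V}_-)}=\tilde{\mathcal{O}}(\sqrt{\mu_j^{(0)}(\Omega_-)})$. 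Hence this contribution is $\mathcal{O}(e^{-c/h})\,\tilde{\mathcal{O}}(\sqrt{\mu_j^{(0)}(\Omega_-)})=\sqrt{\mu_j^{(0)}(\Omega_-)}\,\mathcal{O}(e^{-c'/h})$ for any fixed $0<c'<c$. Adding the two terms yields $b_{k,j}=\sqrt{\mu_j^{(0)}(\Omega_-)}(\delta_{k,j-1}+\mathcal{O}(e^{-c'/h}))$.

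\textbf{Main obstacle.} The delicate point is precisely the cut-off term $(hd\chi_-^{(0)})\wedge\psi_j^{(0)}$: in Proposition~\ref{pr.dfv} it disappeared outright by a disjoint-support argument, but here it survives when paired against $\Pi^{(1)}v_k^{(1)}$ because $\Pi^{(1)}$ destroys the sharp localization. A purely Agmon-type estimate would only give that this term is $\mathcal{O}(e^{-c/h})$ in absolute value, which is \emph{not} enough: the statement demands an error that is exponentially small \emph{relative to} $\sqrt{\mu_j^{(0)}(\Omega_-)}$, and since $\mu_j^{(0)}(\Omega_-)$ can itself be arbitrarily small this is a genuinely stronger requirement. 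The quantitative decay~\eqref{eq.decayhyp} near $\partial\Omega_-$, which forces $\|\psi_j^{(0)}\|$ there to be $\tilde{\mathcal{O}}(\sqrt{\mu_j^{(0)}(\Omega_-)})$ rather than merely $\mathcal{O}(e^{-c/h})$, is exactly what closes the gap, and keeping careful track of which error terms carry the $\sqrt{\mu_j^{(0)}(\Omega_-)}$ prefactor is the bookkeeping heart of the argument.
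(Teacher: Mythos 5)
Your proof is correct and follows essentially the same route as the paper's: the same Leibniz decomposition of $d_{f,h}v_j^{(0)}$, the same use of $\supp d\chi_-^{(0)}\subset\mathcal{V}_-$ together with~\eqref{eq.decayhyp} to gain the $\sqrt{\mu_j^{(0)}(\Omega_-)}$ prefactor on the cut-off contribution, and the same appeal to Proposition~\ref{pr.quasi}-3) to control $\Pi^{(1)}v_k^{(1)}-v_k^{(1)}$. The only (immaterial) difference is organizational: the paper first bounds $\|d_{f,h}v_j^{(0)}\|_{L^2}=\tilde{\mathcal{O}}(\sqrt{\mu_j^{(0)}(\Omega_-)})$ and applies one Cauchy--Schwarz before citing Proposition~\ref{pr.dfv}-3), whereas you split the inner product into two terms and re-derive that case in passing; your closing remark correctly identifies~\eqref{eq.decayhyp} as the indispensable ingredient.
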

\begin{proof}
Let us first estimate $\|d_{f,h}v_{j}^{(0)}\|_{L^{2}(\Omega_{+})}$ by writing:
$$
d_{f,h}v_{j}^{(0)}= d_{f,h}(\chi_{-}^{(0)}\psi_{j}^{(0)})=\chi_{-}^{(0)}d_{f,h}\psi_{j}^{(0)}+
h\psi_{j}^{(0)}d\chi_{-}^{(0)}=
\chi_{-}^{(0)}\sqrt{\mu_{j}^{(0)} (\Omega_-)}\psi_{j-1}^{(1)}+
h\psi_{j}^{(0)}d\chi_{-}^{(0)}\,.
$$
Since $\supp d\chi_{-}^{(0)}\subset \Omega_{-}\setminus U\subset
\mathcal{V}_{-}$ (see~\eqref{eq:U'}), the assumption~\eqref{eq.decayhyp} implies
$\|d_{f,h}v_{j}^{(0)}\|_{L^{2}(\Omega_{+})}=\tilde{\mathcal{O}}\left(\sqrt{\mu_{j}^{(0)}(\Omega_-)}\right)$\,.
The difference 
$$
\left| \langle \Pi^{(1)}v_{k}^{(1)}\,,\,
d_{f,h}v_{j}^{(0)}\rangle_{L^2(\Omega_+)}-\langle v_{k}^{(1)}\,,\,
d_{f,h}v_{j}^{(0)}\rangle_{L^2(\Omega_+)} \right|
$$
is thus bounded from above by
$$
\|\Pi^{(1)}v_{k}^{(1)}-v_{k}^{(1)}\|_{L^{2}(\Omega_{+})}
\tilde{\mathcal{O}} \left(\sqrt{\mu_{j}^{(0)}(\Omega_-) } \right)
\leq Ce^{-\frac{c'}{2h}}\sqrt{\mu_{j}^{(0)}(\Omega_-)}\,,
$$
owing to the estimate
$\|\Pi^{(1)}v_{k}^{(1)}-v_{k}^{(1)}\|=\mathcal{O}(e^{-\frac{c'}{h}})$
obtained in Proposition~\ref{pr.quasi}-3). 
The result then comes from the expression of $\langle v_{k}^{(1)}\,,\,
d_{f,h}v_{j}^{(0)}\rangle_{L^2(\Omega_+)}$ given in Proposition~\ref{pr.dfv}-3).
\end{proof}
The estimate of the block $B_{2,2}$ follows the same lines.
\begin{lemme}
  \label{le.M22}
The coefficients of $B_{2,2}$ satisfy:
$$
b_{k,j}=\langle \Pi^{(1)}v_{k}^{(1)}\,,\, d_{f,h}v_{j}^{(0)} \rangle_{L^2(\Omega_+)}
= \mathcal{O}\left(\sqrt{\mu_{j}^{(0)}(\Omega_-) }\,e^{-\frac{c}{h}}\right)
$$
for $j\in\left\{2,\ldots, m_{0}^{D}(\Omega_{+})\right\}$ and $k\in
\left\{m_{1}^{N}(\Omega_{-})+1,\ldots, m_{1}^{D}(\Omega_{+})\right\}$.
\end{lemme}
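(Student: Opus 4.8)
The plan is to follow the proof of Lemma~\ref{le.M12} almost verbatim, replacing its use of Proposition~\ref{pr.dfv}-3) by Proposition~\ref{pr.dfv}-4). First I would reuse the identity already established there, $d_{f,h}v_{j}^{(0)}=\sqrt{\mu_{j}^{(0)}(\Omega_-)}\,\chi_{-}^{(0)}\psi_{j-1}^{(1)}+h\,\psi_{j}^{(0)}\,d\chi_{-}^{(0)}$ for $j\in\{2,\ldots,m_{0}^{D}(\Omega_{+})\}$. Since $\supp d\chi_{-}^{(0)}\subset\Omega_{-}\setminus U\subset\mathcal{V}_{-}$ by~\eqref{eq:U'}, the decay assumption~\eqref{eq.decayhyp} applied to $\psi_{j}^{(0)}$ bounds the second term in $L^{2}(\Omega_{+})$ by $\tilde{\mathcal{O}}(\sqrt{\mu_{j}^{(0)}(\Omega_-)})$, while the first term is $\mathcal{O}(\sqrt{\mu_{j}^{(0)}(\Omega_-)})$; hence $\|d_{f,h}v_{j}^{(0)}\|_{L^{2}(\Omega_{+})}=\tilde{\mathcal{O}}(\sqrt{\mu_{j}^{(0)}(\Omega_-)})$.

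Next I would observe that, for $k\in\{m_{1}^{N}(\Omega_{-})+1,\ldots,m_{1}^{D}(\Omega_{+})\}$, the one-form $v_{k}^{(1)}=\chi_{+}\psi_{k}^{(1)}$ has support outside a neighborhood of $\overline{\Omega_{-}}$, whereas $d_{f,h}v_{j}^{(0)}$ is supported in $\supp\chi_{-}^{(0)}\subset\Omega_{-}$; the two supports being disjoint, $\langle v_{k}^{(1)}\,,\,d_{f,h}v_{j}^{(0)}\rangle_{L^{2}(\Omega_{+})}=0$, which is precisely Proposition~\ref{pr.dfv}-4). Then, using~\eqref{eq:bkj} and inserting the spectral projector, I would write $b_{k,j}=\langle \Pi^{(1)}v_{k}^{(1)}-v_{k}^{(1)}\,,\,d_{f,h}v_{j}^{(0)}\rangle_{L^{2}(\Omega_{+})}$ and bound it by Cauchy--Schwarz, $|b_{k,j}|\le\|\Pi^{(1)}v_{k}^{(1)}-v_{k}^{(1)}\|_{L^{2}(\Omega_{+})}\,\|d_{f,h}v_{j}^{(0)}\|_{L^{2}(\Omega_{+})}$. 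By Proposition~\ref{pr.quasi}-3) the first factor is $\mathcal{O}(e^{-c'/h})$ for some $c'>0$; combining this with the previous paragraph gives $|b_{k,j}|=\mathcal{O}(e^{-c'/h})\,\tilde{\mathcal{O}}(\sqrt{\mu_{j}^{(0)}(\Omega_-)})=\mathcal{O}(\sqrt{\mu_{j}^{(0)}(\Omega_-)}\,e^{-c/h})$ for any $0<c<c'$, by the $\mathcal{O}$--$\tilde{\mathcal{O}}$ absorption recorded just after Definition~\ref{de.O}.

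I do not expect any genuine obstacle here: the estimate is a routine variant of Lemma~\ref{le.M12}, its substance being entirely contained in the a priori exponential decay (Propositions~\ref{pr.decayO+-} and~\ref{pr.quasi}) and in the disjoint-support structure of the quasimodes (see Figure~\ref{fig:cutoff}). The only point needing mild care is the bookkeeping of the $\tilde{\mathcal{O}}$ produced by~\eqref{eq.decayhyp}, which gets swallowed by the genuinely exponentially small factor $e^{-c'/h}$ at the price of an arbitrarily small loss in the exponent; retaining the explicit factor $\sqrt{\mu_{j}^{(0)}(\Omega_-)}$ is precisely what will make $B_{2,2}$ negligible against $B_{1,2}$ in the subsequent Gauss-elimination analysis of $\beta$.
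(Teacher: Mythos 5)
Your proof is correct and follows essentially the same route as the paper: the bound $\|d_{f,h}v_{j}^{(0)}\|_{L^{2}(\Omega_{+})}=\tilde{\mathcal{O}}(\sqrt{\mu_{j}^{(0)}(\Omega_-)})$ from the proof of Lemma~\ref{le.M12}, the vanishing $\langle v_{k}^{(1)},d_{f,h}v_{j}^{(0)}\rangle=0$ by disjoint supports (Proposition~\ref{pr.dfv}-4)), and Cauchy--Schwarz with $\|\Pi^{(1)}v_{k}^{(1)}-v_{k}^{(1)}\|=\mathcal{O}(e^{-c'/h})$ are exactly the three ingredients the paper combines. The bookkeeping of the $\tilde{\mathcal{O}}$ being absorbed by the exponential factor is also handled as in the paper.
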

\begin{proof}
Using again
$\|d_{f,h}v_{j}^{(0)}\|=\tilde{\mathcal{O}}
\left(\sqrt{\mu_{j}^{(0)}(\Omega_-) } \right)$,
$\|\Pi^{(1)}v_{k}^{(1)}-v_{k}^{(1)}\|=\mathcal{O}(e^{-\frac{c'}{h}})$
and, according to Proposition~\ref{pr.dfv}-4), $\langle v_{k}^{(1)}\,,\, d_{f,h}v_{j}^{(0)}\rangle=0$
we get
$
|b_{k,j}|\leq Ce^{-\frac{c'}{2h}}\sqrt{\mu_{j}^{(0)} (\Omega_-)}
$.
\end{proof}

\subsection{The block $B_{1,1}$}
\label{se.M11}
In this section, the value of the parameter $\delta_{+}$ is
adjusted. This value will be again possibly changed three other times:
for the estimate of the block $B_{2,1}$ and in the final proof of
Theorem~\ref{th.main}, see Sections~\ref{se.appu10} and \ref{se.appdfu}.
Remember that the constant $c$ occurring in the remainders
$\mathcal{O}(e^{-\frac{c}{h}})$ introduced in Proposition~\ref{pr.quasi} do
not depend on $\delta_{+}>0$.
\begin{lemme}
\label{le.M11} For any $k\in \left\{1,\ldots,
  m_{1}^{N}(\Omega_{-})\right\}$, the matrix element $b_{k,1}$
satisfies 
$$
b_{k,1}=\langle \Pi^{(1)}v_{k}^{1}\,,\,
  d_{f,h}v_{1}^{(0)}\rangle_{L^2(\Omega_+)}
=\mathcal{O} \left(e^{-\frac{\kappa_{f}+c-C_0\delta_{+}}{h}} \right)
$$
where $\kappa_{f}=\min_{\partial \Omega_{+}}f-\min_{\Omega_{+}}f$, and
the constants $c>0$ and $C_0>0$ (defined by~\eqref{eq:C0}) are independent of $\delta_{+}>0$. In
particular
when $\delta_{+}>0$ is chosen smaller than $\frac{c}{C_0}$, one gets
$$
b_{k,1}=\mathcal{O} \left(e^{-\frac{\kappa_{f}+c}{h}} \right),
$$
for a positive constant $c$, which depends on $\delta_{+}$.
\end{lemme}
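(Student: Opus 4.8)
The plan is to turn the apparent leading contribution into zero by a disjoint–support argument, so that only an exponentially small remainder survives. Write
$$
b_{k,1}=\langle \Pi^{(1)}v_{k}^{(1)}\,,\,d_{f,h}v_{1}^{(0)}\rangle_{L^2(\Omega_+)}
=\langle v_{k}^{(1)}\,,\,d_{f,h}v_{1}^{(0)}\rangle_{L^2(\Omega_+)}
+\langle \Pi^{(1)}v_{k}^{(1)}-v_{k}^{(1)}\,,\,d_{f,h}v_{1}^{(0)}\rangle_{L^2(\Omega_+)}.
$$
The first term vanishes by Proposition~\ref{pr.dfv}-1) (here $v_{k}^{(1)}=\chi_{-}^{(1)}\psi_{k}^{(1)}$ is supported in $\Omega_{-}$, away from $\supp(\nabla\chi_{0})$). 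For the second term I would use Cauchy--Schwarz together with the estimate $\|\Pi^{(1)}v_{k}^{(1)}-v_{k}^{(1)}\|_{L^{2}(\Omega_{+})}=\mathcal{O}(e^{-c/h})$ from Proposition~\ref{pr.quasi}-3), the constant $c$ being independent of $\delta_{+}$, and then all that remains is a sharp bound on $\|d_{f,h}v_{1}^{(0)}\|_{L^{2}(\Omega_{+})}$.

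To bound $\|d_{f,h}v_{1}^{(0)}\|_{L^{2}(\Omega_{+})}$, I would first compute $d_{f,h}v_{1}^{(0)}$ explicitly. Since $d_{f,h}e^{-f/h}=e^{-f/h}(hd)(1)=0$, one gets
$$
d_{f,h}v_{1}^{(0)}=d_{f,h}\bigl(\chi_{0}\tilde{\psi}_{1}^{(0)}\bigr)
=\frac{h\,e^{-\frac{f}{h}}\,d\chi_{0}}{\bigl(\int_{\Omega_{+}}e^{-\frac{2f(x)}{h}}\,dx\bigr)^{1/2}},
$$
which is supported in $\supp(\nabla\chi_{0})\subset\{x\in\Omega_{+}:\ d(x,\partial\Omega_{+})\le\delta_{+}\}\subset\Omega_{+}\setminus\overline{\Omega_{-}}$. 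On this collar, any point is joined to $\partial\Omega_{+}$ by a path of length at most $\delta_{+}$, so $f\ge\min_{\partial\Omega_{+}}f-\frac{C_{0}}{2}\delta_{+}$ with $\frac{C_{0}}{2}=\|\nabla f\|_{L^{\infty}(\supp(\nabla\chi_{0}))}$ (see~\eqref{eq:C0}). Hence the numerator is bounded by $C_{\chi_{0}}h^{2}e^{-2(\min_{\partial\Omega_{+}}f-C_{0}\delta_{+}/2)/h}$, while Lemma~\ref{le.intminmaj} bounds the denominator below by $C_{f}^{-1}h^{d/2}e^{-2\min_{\Omega_{+}}f/h}$. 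This gives
$$
\|d_{f,h}v_{1}^{(0)}\|_{L^{2}(\Omega_{+})}
\le C\,h^{1-d/4}\,e^{-(\kappa_{f}-C_{0}\delta_{+}/2)/h},
\qquad \kappa_{f}=\min_{\partial\Omega_{+}}f-\min_{\Omega_{+}}f.
$$

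Combining the two bounds yields $|b_{k,1}|\le C\,h^{1-d/4}e^{-(\kappa_{f}+c-C_{0}\delta_{+}/2)/h}$. Absorbing the polynomial prefactor into the exponential (shrinking $c$ by an arbitrarily small amount) and using $C_{0}\delta_{+}/2\le C_{0}\delta_{+}$ gives $b_{k,1}=\mathcal{O}(e^{-(\kappa_{f}+c-C_{0}\delta_{+})/h})$ with $c>0$ inherited from Proposition~\ref{pr.quasi} and therefore independent of $\delta_{+}$; the ``in particular'' assertion then follows by taking $\delta_{+}<c/C_{0}$ and renaming the positive number $c-C_{0}\delta_{+}$. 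I do not anticipate a real obstacle here: the only delicate point is the bookkeeping of the three competing exponential scales --- the gain $\kappa_{f}$ from $f$ being large near $\partial\Omega_{+}$, the gain $c$ from the spectral-projection defect, and the small, controllable loss $C_{0}\delta_{+}$ from the width of the collar carrying $\nabla\chi_{0}$ --- and making sure that the constant $c$ one keeps is the one from Proposition~\ref{pr.quasi} and hence does not secretly depend on $\delta_{+}$.
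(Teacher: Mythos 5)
Your proposal is correct and follows essentially the same route as the paper's proof: the same splitting of $b_{k,1}$ into the vanishing term $\langle v_{k}^{(1)},d_{f,h}v_{1}^{(0)}\rangle=0$ (disjoint supports, Proposition~\ref{pr.dfv}-1)) plus a projection-defect term controlled by Cauchy--Schwarz, and the same explicit computation and collar estimate for $\|d_{f,h}v_{1}^{(0)}\|_{L^{2}(\Omega_{+})}$ via Lemma~\ref{le.intminmaj}. The only cosmetic difference is how the polynomial prefactor $h^{-d/2}$ is absorbed (the paper uses the slack between $C_{0}\delta_{+}/2$ and $C_{0}\delta_{+}$, you shrink $c$ slightly), which is immaterial.
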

\begin{proof}
Remember that
$v_{1}^{(0)}=\chi_{0}\tilde{\psi}_{1}^{(0)}=\frac{\chi_{0}e^{-\frac{f}{h}}}{\left(\int_{\Omega_{+}}e^{-\frac{2f(x)}{h}}~dx\right)^{1/2}}$
where $\nabla\chi_{0}$ is supported in $\left\{x\in
  \Omega_{+},~d(x,\partial \Omega_{+})< \delta_{+}\right\}$ (see Figure~\ref{fig:cutoff}).
The Witten differential of $v_{1}^{(0)}$ satisfies
$$
d_{f,h}v_{1}^{(0)}= \frac{e^{-\frac{f}{h}}}{\left(\int_{\Omega_{+}}e^{-\frac{2f(x)}{h}}~dx\right)^{1/2}}(hd\chi_{0})
$$
and its $L^{2}$-norm can be estimated by:
$$
\|d_{f,h}v_{1}^{(0)}\|_{L^{2}(\Omega_{+})}^{2}\leq
C_{\chi_{0}}\frac{\int_{\supp (\nabla\chi_{0})}e^{-\frac{2f(x)}{h}}~dx}{\int_{\Omega_{+}}e^{-2\frac{f(x)}{h}}~dx}\,.
$$
With $f(x)\geq \min_{\partial\Omega_{+}}f  - \frac{C_0}{2}\delta_{+}$ for $x\in
\supp (\nabla\chi_{0})$ (where $C_0$ is defined by~\eqref{eq:C0} and
does not depend on~$\delta_+$) and the lower bound
$\int_{\Omega_{+}}e^{-2\frac{f(x)}{h}}~dx \geq
\frac{h^{d/2}e^{-2\frac{\min_{\Omega_{+}}f}{h}}}{C_1}$ of Lemma~\ref{le.intminmaj}, we get
\begin{equation}\label{eq.estim_delta_v10}
\|d_{f,h}v_{1}^{(0)}\|_{L^{2}(\Omega_{+})}^{2}
\leq C_1 h^{-d/2}e^{-2\frac{\kappa_{f}-(C_0\delta_{+}/2)}{h}}\leq C_2e^{-2\frac{\kappa_{f}-C_0\delta_{+}}{h}}\,,
\end{equation}
provided that $h$ is small enough. Then using like in Lemma~\ref{le.M12}
$$
|b_{k,1}-\langle v_{k}^{(1)}\,,\,
d_{f,h}v_{1}^{(0)}\rangle_{L^2(\Omega_+)}|
\leq \|\Pi^{(1)}v_{k}^{(1)}-v_{k}^{(1)}\|_{L^{2}(\Omega_{+})}\|d_{f,h}v_{1}^{(0)}\|_{L^{2}(\Omega_{+})}
\leq C_{3}e^{-\frac{c'}{h}}e^{-\frac{\kappa_{f}-C_0\delta_{+}}{h}}\,,
$$
the equality $\langle v_{k}^{(1)}\,,\,
d_{f,h}v_{1}^{(0)}\rangle=0$ (see Proposition~\ref{pr.dfv}-1)) yields
the result.
\end{proof}
\begin{remarque}\label{re.m1D}
If $m_1^D(\Omega_+ \setminus \Omega_-)=0$ (and thus $m_1^N(\Omega_-)=m_1^D(\Omega_+)$), the previous lemma shows
that:
\begin{align*}
\langle \Pi^{(0)} v_1^{(0)}, \beta^* \beta \Pi^{(0)} v_1^{(0)}
\rangle_{F^{(0)}}
&= \|  \beta \Pi^{(0)} v_1^{(0)} \|^2_{F^{(0)}} \\
&= \sum_{k=1}^{m_1^N(\Omega_-)} |b_{k,1}|^2 \left( 1 +
  \mathcal{O}(e^{-c/h}) \right)\\
&={\mathcal O} \left(e^{-\frac{\kappa_f+c}{h}} \right).
\end{align*}
This implies that $\beta^*\beta$ (and therefore
$\Delta_{f,h}^{D,(0)}(\Omega_+)$) has an eigenvalue of the order
${\mathcal O}(e^{-\frac{\kappa_f+c}{h}})$, which contradicts the
Lemma~\ref{le.infsigD0} below. Therefore, $m_1^D(\Omega_+ \setminus \Omega_-)$ is
not zero.
\end{remarque}

\subsection{The block $B_{2,1}$}
\label{se.M21}
We shall first provide an approximate expression for the coefficients
of the column $B_{2,1}$.
\begin{proposition}
\label{pr.M21}
For any $k\in \left\{m_{1}^{N}(\Omega_{-})+1,\ldots, m_{1}^{D}(\Omega_{+})\right\}$, the
matrix element $b_{k,1}=\langle \Pi^{(1)}v_{k}^{1}\,,\, d_{f,h}v_{1}^{(0)}\rangle_{L^2(\Omega_+)}$ satisfies
\begin{equation}\label{eq.M21}
b_{k,1}=
-\frac{h\int_{\partial\Omega_{+}}e^{-\frac{f(\sigma)}{h}}\mathbf{i}_{n}
    \psi_{k}^{(1)} (\sigma)~d\sigma}{\left(\int_{\Omega_{+}}e^{-2\frac{f(x)}{h}}~dx\right)^{1/2}}
+\mathcal{O} \left(e^{-\frac{\kappa_{f}+c}{h}} \right)\,,
\end{equation}
where $c$ is a positive constant which depends on $\delta_{+}>0$
chosen sufficiently small, and
$\kappa_{f}=\min_{\partial\Omega_{+}}f-\min_{\Omega_{+}}f$\,.
Moreover, these coefficients $b_{k,1}$ satisfy
\begin{equation}
  \label{eq.minlog}
\lim_{h\to 0}
h\log\left[\sum_{k=m_{1}^{N}(\Omega_{-})+1}^{m_{1}^{D}(\Omega_{+})}|b_{k,1}|^{2}\right]
= -2\kappa_{f}\,.
\end{equation}
\end{proposition}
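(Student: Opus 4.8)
The plan is to establish the two assertions separately: first the approximate identity~\eqref{eq.M21}, then the logarithmic asymptotics~\eqref{eq.minlog} (split into matching upper and lower bounds). For~\eqref{eq.M21} I would start from Proposition~\ref{pr.dfv}-2), which evaluates $\langle v_{k}^{(1)},d_{f,h}v_{1}^{(0)}\rangle_{L^{2}(\Omega_{+})}$ exactly and yields precisely the boundary-integral term of~\eqref{eq.M21}. Since $b_{k,1}=\langle \Pi^{(1)}v_{k}^{(1)},d_{f,h}v_{1}^{(0)}\rangle_{L^{2}(\Omega_{+})}$ by~\eqref{eq:bkj}, the error is $\langle \Pi^{(1)}v_{k}^{(1)}-v_{k}^{(1)},d_{f,h}v_{1}^{(0)}\rangle_{L^{2}(\Omega_{+})}$, which I would bound by Cauchy--Schwarz using $\|\Pi^{(1)}v_{k}^{(1)}-v_{k}^{(1)}\|_{L^{2}(\Omega_{+})}=\mathcal{O}(e^{-c'/h})$ from Proposition~\ref{pr.quasi}-3) (with $c'>0$ independent of $\delta_{+}$) and $\|d_{f,h}v_{1}^{(0)}\|_{L^{2}(\Omega_{+})}\le C\,e^{-(\kappa_{f}-C_{0}\delta_{+})/h}$ from~\eqref{eq.estim_delta_v10}. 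The product is $\mathcal{O}(e^{-(\kappa_{f}+c'-C_{0}\delta_{+})/h})$, so fixing $\delta_{+}<c'/C_{0}$ gives~\eqref{eq.M21} with $c=c'-C_{0}\delta_{+}>0$.

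For the upper bound in~\eqref{eq.minlog} I would bound each boundary term of~\eqref{eq.M21} directly: $\bigl|h\int_{\partial\Omega_{+}}e^{-f(\sigma)/h}\mathbf{i}_{n}\psi_{k}^{(1)}(\sigma)\,d\sigma\bigr|\le h\,e^{-\min_{\partial\Omega_{+}}f/h}\,\|\mathbf{i}_{n}\psi_{k}^{(1)}\|_{L^{1}(\partial\Omega_{+})}$, and then control $\|\mathbf{i}_{n}\psi_{k}^{(1)}\|_{L^{1}(\partial\Omega_{+})}\le C h^{-1/2}$ using the trace inequality from the proof of Proposition~\ref{prop:exp_decay} together with $\|\psi_{k}^{(1)}\|_{W^{1,2}(\Omega_{+}\setminus\overline{\Omega_{-}})}\le C/h$ (a consequence of Proposition~\ref{prop:exp_decay}, the Agmon weight being nonnegative). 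Combined with $\int_{\Omega_{+}}e^{-2f/h}\,dx\ge C^{-1}h^{d/2}e^{-2\min_{\Omega_{+}}f/h}$ from Lemma~\ref{le.intminmaj}, each term is $\le C h^{(1-d/2)/2}e^{-\kappa_{f}/h}$. Since the sum runs over finitely many indices and the remainder in~\eqref{eq.M21} is exponentially smaller than $e^{-\kappa_{f}/h}$, this gives $\sum_{k}|b_{k,1}|^{2}\le C h^{1-d/2}e^{-2\kappa_{f}/h}$, hence $\limsup_{h\to 0}h\log\bigl[\sum_{k}|b_{k,1}|^{2}\bigr]\le -2\kappa_{f}$.

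For the lower bound I would compare $\sum_{k}|b_{k,1}|^{2}$ with $\lambda_{1}^{(0)}(\Omega_{+})$: since $\Delta_{f,h}^{D,(0)}(\Omega_{+})|_{F^{(0)}}=\beta^{*}\beta$ is $\ge \lambda_{1}^{(0)}(\Omega_{+})$ on $F^{(0)}$ and $\|\Pi^{(0)}v_{1}^{(0)}\|\ge 1/2$ for $h$ small by Proposition~\ref{pr.quasi}, one has $\|\beta\Pi^{(0)}v_{1}^{(0)}\|_{F^{(1)}}^{2}=\langle \Pi^{(0)}v_{1}^{(0)},\Delta_{f,h}^{D,(0)}(\Omega_{+})\Pi^{(0)}v_{1}^{(0)}\rangle\ge \frac{1}{4}\lambda_{1}^{(0)}(\Omega_{+})$. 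Expanding $\beta\Pi^{(0)}v_{1}^{(0)}\in F^{(1)}$ in the almost orthonormal basis $(\Pi^{(1)}v_{k}^{(1)})_{k}$ identifies $\|\beta\Pi^{(0)}v_{1}^{(0)}\|^{2}$ with $\sum_{k=1}^{m_{1}^{D}(\Omega_{+})}|b_{k,1}|^{2}$ up to a factor $1+\mathcal{O}(e^{-c/h})$ (through the Gram matrix), and Lemma~\ref{le.M11} shows the indices $k\le m_{1}^{N}(\Omega_{-})$ contribute only $\mathcal{O}(e^{-2(\kappa_{f}+c)/h})$; hence $\sum_{k>m_{1}^{N}(\Omega_{-})}|b_{k,1}|^{2}\ge \frac{1}{4}\lambda_{1}^{(0)}(\Omega_{+})-Ce^{-2(\kappa_{f}+c)/h}$. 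Finally, the a priori bound $\lambda_{1}^{(0)}(\Omega_{+})\ge C_{\varepsilon}^{-1}e^{-2(\kappa_{f}+\varepsilon)/h}$ for all $\varepsilon>0$ from Lemma~\ref{le.infsigD0} (with $\varepsilon<c$) makes the first term dominate, so $\liminf_{h\to 0}h\log\bigl[\sum_{k>m_{1}^{N}(\Omega_{-})}|b_{k,1}|^{2}\bigr]\ge -2\kappa_{f}$, which together with the upper bound gives~\eqref{eq.minlog}. The routine parts are the trace and Laplace-type estimates; the genuinely delicate point is precisely this lower bound, which relies on the independent a priori lower bound for $\lambda_{1}^{(0)}(\Omega_{+})$ (Lemma~\ref{le.infsigD0}) and on the fact, from Proposition~\ref{pr.quasi}, that the quasimode $v_{1}^{(0)}$ retains an $L^{2}$-mass of order one after projection onto $F^{(0)}$.
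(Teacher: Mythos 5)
Your proposal is correct and follows essentially the same route as the paper: \eqref{eq.M21} via Proposition~\ref{pr.dfv}-2) plus the Cauchy--Schwarz error bound of Lemma~\ref{le.M11} with $\delta_{+}$ small, the upper bound in \eqref{eq.minlog} via the trace estimate of Proposition~\ref{prop:exp_decay} and Lemma~\ref{le.intminmaj}, and the lower bound by identifying $\sum_{k}|b_{k,1}|^{2}$ with $\|d_{f,h}\Pi^{(0)}v_{1}^{(0)}\|^{2}$ in the almost orthonormal basis and invoking the a priori bound of Lemma~\ref{le.infsigD0}. The only differences are cosmetic (an $L^{1}$ rather than $L^{2}$ bound on the boundary trace, and an explicit factor $\tfrac14$ in place of $1+\mathcal{O}(e^{-c/h})$).
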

The estimate~\eqref{eq.minlog} shows that the
approximation~\eqref{eq.M21} is meaningful, in the sense that some of
the coefficients $b_{k,1}$ are indeed larger than the error term $\mathcal{O} \left(e^{-\frac{\kappa_{f}+c}{h}} \right)$. In
particular, we have:
\begin{equation}\label{eq:expansion_bk1}
\sum_{k=m_{1}^{N}(\Omega_{-})+1}^{m_{1}^{D}(\Omega_{+})}|b_{k,1}|^{2}=\frac{
h^{2}\sum_{k=m_{1}^{N}(\Omega_{-})+1}^{m_{1}^{D}(\Omega_{+})}
\left(\int_{\partial\Omega_{+}}e^{-\frac{f(\sigma)}{h}}\mathbf{i}_{n}
\psi_{k}^{(1)}(\sigma) \,d\sigma\right)^{2}}{
\int_{\Omega_{+}}e^{-2\frac{f(x)}{h}}\,dx}
(1+\mathcal{O}(e^{-\frac{c}{h}})).
\end{equation}
\begin{proof}
The first statement is proved like in Lemma~\ref{le.M11} after recalling 
$$
\langle v_{k}^{(1)}\,,\, d_{f,h}v_{1}^{(0)}\rangle=
-
\frac{h\int_{\partial\Omega_{+}}e^{-\frac{f(\sigma)}{h}}\mathbf{i}_{n}
    \psi_{k}^{(1)}(\sigma)~d\sigma}{\left(\int_{\Omega_{+}}e^{-2\frac{f(x)}{h}}~dx\right)^{1/2}}\,,
$$
according to Proposition~\ref{pr.dfv}-2).

For the equality \eqref{eq.minlog}, the upper bound
$$
\limsup_{h\to
  0}h\log\left[\sum_{k=m_{1}^{N}(\Omega_{-})+1}^{m_{1}^{D}(\Omega_{+})}|b_{k,1}|^{2}\right]\leq -2\kappa_{f}\,,
$$
is a consequence of
\begin{equation*}
 \left| \frac{\int_{\partial \Omega_{+}}
e^{-\frac{f(\sigma)}{h}}\mathbf{i}_{n}
    \psi_{k}^{(1)}(\sigma)~d\sigma}{\left(\int_{\Omega_{+}}e^{-2\frac{f(x)}{h}}~dx\right)^{1/2}}
\right|
\leq C\frac{\left(\int_{\partial
      \Omega_{+}}|\mathbf{i}_{n}\psi_{k}^{(1)}(\sigma)|^{2}~d\sigma\right)^{1/2}}{\left(\int_{\Omega_{+}}e^{-2\frac{f(x)-\min_{\Omega_{+}}f}{h}}dx\right)^{1/2}}
e^{-\frac{\kappa_{f}}{h}},
\end{equation*}
where the denominator is bounded from below by
Lemma~\ref{le.intminmaj}. The numerator is estimated by 
$\|\psi_{k}^{(1)}\big|_{\partial \Omega_{+}}\|_{L^{2}(\partial
  \Omega_{+})}\leq
C\|\psi_{k}^{(1)}\|_{W^{1,2}(\mathcal{V})}=\mathcal{O}(h^{-1})=\tilde{\mathcal{O}}(1)$
owing to Proposition~\ref{prop:exp_decay} since $d_{Ag}(x,\mathcal{V})=0$
for $x \in \mathcal{V}$.
Using Lemma~\ref{le.M11}, the lower bound for \eqref{eq.minlog} is
equivalent to 
\begin{equation}\label{eq:LB}
\liminf_{h\to
  0}h\log\left[\sum_{k=1}^{m_{1}^{D}(\Omega_{+})}|b_{k,1}|^{2}\right]\geq -2\kappa_{f}\,.
\end{equation}
Since $b_{k,1}=\langle \Pi^{(1)}v_{k}^{(1)}\,,\,
d_{f,h}\Pi^{(0)}v_{1}^{(0)}\rangle_{L^2(\Omega_+)}$ is the $k$-th component of
$d_{f,h}\Pi^{(0)}v_{1}^{(0)}\in F^{(1)}$ in the almost orthonormal
basis $\mathcal{B}^{(1),*}$ of $F^{(1)}$, the inequality
\eqref{eq:LB} is equivalent to
$$
\liminf_{h\to
  0}h\log\left(\|d_{f,h}\Pi^{(0)}v_{1}^{(0)}\|_{L^2(\Omega_+)}^{2}\right)
=\liminf_{h\to
  0}h\log\left(\langle \Pi^{(0)}v_{1}^{(0)}\,,\, \Delta_{f,h}^{D,(0)}(\Omega_{+})\Pi^{(0)}v_{1}^{(0)}\rangle_{L^2(\Omega_+)}\right)
\geq -2\kappa_{f}\,.
$$
With 
$\|\Pi^{(0)}v_{1}^{(0)}\|_{L^{2}(\Omega_{+})}=1+\mathcal{O}(e^{-\frac{c}{h}})$,
the last inequality is a consequence of 
$$
\liminf_{h\to 0}h\log\left[\min
  \sigma(\Delta_{f,h}^{D,(0)}(\Omega_{+}))\right]\geq -2\kappa_{f}\,,
$$
which is proved in the next lemma.
\end{proof}
\begin{remarque}
Using Lemma~\ref{le.M11},  the asymptotic result~\eqref{eq.minlog} is
actually equivalent to
$$\lim_{h\to 0}
h\log\left[\sum_{k=1}^{m_{1}^{D}(\Omega_{+})}|b_{k,1}|^{2}\right]
= -2\kappa_{f}.
$$
\end{remarque}
We end this section with an estimate on the bottom of the spectrum of
$\Delta_{f,h}^{D,(0)}(\Omega_{+})$, which was used to conclude the proof of
Proposition~\ref{pr.M21} above.
\begin{lemme}
\label{le.infsigD0}
The bottom of the spectrum of $\Delta_{f,h}^{D,(0)}(\Omega_{+})$ satisfies
$$
\lim_{h\to 0}h\log\left[\min
  \sigma(\Delta_{f,h}^{D,(0)}(\Omega_{+}))\right]= -2\kappa_{f}.
$$
In particular, we have:
$$
\forall \varepsilon>0\,, \exists C_{\varepsilon}>1\,,\exists h_{\varepsilon}>0\,,\forall h\in (0,h_{\varepsilon}],\quad
\min
  \sigma(\Delta_{f,h}^{D,(0)}(\Omega_{+}))\geq \frac{1}{C_{\varepsilon}}
e^{-2\frac{\kappa_{f}+\varepsilon}{h}}.
$$
\end{lemme}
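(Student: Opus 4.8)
The plan is to prove the two matching bounds $\limsup_{h\to0}h\log\big[\min\sigma(\Delta_{f,h}^{D,(0)}(\Omega_{+}))\big]\le -2\kappa_{f}$ and $\liminf_{h\to0}h\log\big[\min\sigma(\Delta_{f,h}^{D,(0)}(\Omega_{+}))\big]\ge -2\kappa_{f}$; the quantitative consequence is then just a reformulation of the $\liminf$ bound. The upper bound is the easy test‑function direction, essentially contained in \eqref{eq.estim_delta_v10}: for any cut‑off $\chi\in\mathcal{C}_{0}^{\infty}(\Omega_{+})$ equal to $1$ near $\overline{\Omega_{-}}$ with $\nabla\chi$ supported in $\{d(\cdot,\partial\Omega_{+})<\delta\}$, put $u=\chi e^{-f/h}$. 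Since $\Delta_{f,h}^{D,(0)}(\Omega_{+})=d_{f,h}^{*}d_{f,h}$ on $0$‑forms and $d_{f,h}(e^{-f/h})=0$, one gets $\langle u,\Delta_{f,h}^{D,(0)}(\Omega_{+})u\rangle=h^{2}\int_{\Omega_{+}}|\nabla\chi|^{2}e^{-2f/h}\le C h^{2}e^{-2(\min_{\partial\Omega_{+}}f-L\delta)/h}$ with $L=\|\nabla f\|_{L^{\infty}}$, while $\|u\|^{2}_{L^{2}(\Omega_{+})}\ge\int_{\{\chi=1\}}e^{-2f/h}\ge c\,h^{d/2}e^{-2\min_{\Omega_{+}}f/h}$ by Lemma~\ref{le.intminmaj}; the min–max principle gives $\min\sigma(\Delta_{f,h}^{D,(0)}(\Omega_{+}))\le C'h^{2-d/2}e^{-2(\kappa_{f}-L\delta)/h}$, hence $\limsup_{h\to0}h\log[\min\sigma]\le-2(\kappa_{f}-L\delta)$, and letting $\delta\to0$ concludes.

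For the lower bound, conjugate by $e^{f/h}$: writing $w=e^{f/h}u$, the identity $\Delta_{f,h}^{D,(0)}(\Omega_{+})=d_{f,h}^{*}d_{f,h}$ gives
\[
\min\sigma(\Delta_{f,h}^{D,(0)}(\Omega_{+}))=h^{2}\inf\Big\{\textstyle\frac{\int_{\Omega_{+}}e^{-2f/h}|\nabla w|^{2}\,dx}{\int_{\Omega_{+}}e^{-2f/h}w^{2}\,dx}\ :\ w\in W_{0}^{1,2}(\Omega_{+})\setminus\{0\}\Big\}.
\]
It therefore suffices to prove the weighted Poincaré inequality: for every $\varepsilon>0$ there is $C_{\varepsilon}>0$ such that $\int_{\Omega_{+}}e^{-2f/h}w^{2}\le C_{\varepsilon}e^{2(\kappa_{f}+\varepsilon)/h}\int_{\Omega_{+}}e^{-2f/h}|\nabla w|^{2}$ for all small $h$ and all $w\in W_{0}^{1,2}(\Omega_{+})$. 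The plan is to prove this by a transport argument. Fix, for each $x\in\Omega_{+}$, a Lipschitz path $\gamma_{x}$ joining $x$ to $\partial\Omega_{+}$ inside $\overline{\Omega_{+}}$, of length $\le L$ and with $\max_{\gamma_{x}}f\le\max\{f(x),\min_{\partial\Omega_{+}}f\}+\varepsilon$, the family $(\gamma_{x})_{x}$ being measurable in $x$ with the occupation bound $\int_{\Omega_{+}}(\text{arclength measure of }\gamma_{x})\,dx\le C_{0}\,dx$. Since $w=0$ on $\partial\Omega_{+}$, we have $w(x)=-\int_{\gamma_{x}}\nabla w\cdot d\ell$, and Cauchy–Schwarz with the splitting $1=e^{-f/h}\cdot e^{f/h}$ gives $w(x)^{2}\le\big(\int_{\gamma_{x}}e^{2f/h}\,d\ell\big)\big(\int_{\gamma_{x}}e^{-2f/h}|\nabla w|^{2}\,d\ell\big)\le L\,e^{2(\max\{f(x),\min_{\partial\Omega_{+}}f\}+\varepsilon)/h}\int_{\gamma_{x}}e^{-2f/h}|\nabla w|^{2}\,d\ell$. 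Multiplying by $e^{-2f(x)/h}$ and using $\max\{f(x),\min_{\partial\Omega_{+}}f\}-f(x)\le\kappa_{f}$ yields $e^{-2f(x)/h}w(x)^{2}\le L\,e^{2(\kappa_{f}+\varepsilon)/h}\int_{\gamma_{x}}e^{-2f/h}|\nabla w|^{2}\,d\ell$, and integrating in $x$ and applying Fubini with the occupation bound gives the inequality with $C_{\varepsilon}=LC_{0}$.

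The construction of the path family $(\gamma_{x})_{x}$ is the only genuinely geometric ingredient and I expect it to be the main obstacle; it rests entirely on Hypotheses~\ref{hyp.0} and~\ref{hyp.3}. Since $|\nabla f|>0$ on $\overline{\Omega_{+}}\setminus\Omega_{-}$, $\partial_{n}f>0$ on $\partial\Omega_{-}$ and $\min_{\partial\Omega_{+}}f\ge\min_{\partial\Omega_{-}}f$, every trajectory of $-\nabla f$ issued from a point where $f<\min_{\partial\Omega_{+}}f$ stays in $\Omega_{+}$ and enters $\Omega_{-}$, converging to the critical set (this is the Lemma following Hypothesis~\ref{hyp.0}); and since all critical values of $f$ in $\Omega_{+}$ lie below $\min_{\partial\Omega_{+}}f-c_{0}$, the sublevel set $\{f<\min_{\partial\Omega_{+}}f+\varepsilon\}$ contains no critical point of $f$ at levels $\ge\min_{\partial\Omega_{+}}f-c_{0}$, so its geometry is that of a collar near $\partial\Omega_{+}$ glued to the basins sitting inside $\Omega_{-}$. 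One then builds $\gamma_{x}$ by following $-\nabla f$ from $x$ (along which $f$ only decreases, so staying below $\max\{f(x),\min_{\partial\Omega_{+}}f\}$) into a fixed basin reaching $\arg\min_{\Omega_{+}}f$, and, conversely, linking a small half-ball of $\Omega_{+}$ at an argmin $z^{*}$ of $f|_{\partial\Omega_{+}}$ to that same basin by a $-\nabla f$ trajectory contained in $\{f<\min_{\partial\Omega_{+}}f\}$; splicing these two pieces with a detour whose length is bounded uniformly in $x$ (using compactness of $\overline{\Omega_{+}}$ and the absence of critical points outside $\Omega_{-}$ to avoid the stable manifolds and to keep the occupation density finite, for instance by realising the $\gamma_{x}$ as integral curves of one fixed vector field of bounded divergence) gives a family with the four required properties. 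The exponent $e^{2\kappa_{f}/h}$ is of course the reflection of the identity $d_{Ag}(\arg\min_{\Omega_{+}}f,\partial\Omega_{+})=\kappa_{f}$, which follows from the same gradient‑flow facts together with \eqref{eq:fx-fy}.
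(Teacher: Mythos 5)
Your upper bound is the same test--function computation as the paper's (cf.\ \eqref{eq.estim_delta_v10}), and your reduction of the lower bound to the weighted Poincar\'e inequality
$\int_{\Omega_{+}}e^{-2f/h}w^{2}\le C_{\varepsilon}e^{2(\kappa_{f}+\varepsilon)/h}\int_{\Omega_{+}}e^{-2f/h}|\nabla w|^{2}$ for $w\in W^{1,2}_{0}(\Omega_{+})$, via the substitution $u=e^{-f/h}w$, is correct and is a genuinely different route from the paper's: the paper argues by contradiction, using Agmon estimates to localize a putative too-small eigenfunction in $\Omega_{t}=\{f<\min_{\partial\Omega_{+}}f+t\}$, the Poincar\'e--Wirtinger inequality there to show that $e^{(f-\min f)/h}v_{h}$ is nearly constant, and the trace theorem together with the Dirichlet condition to kill that constant. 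A completed canonical-path argument would be a more quantitative and self-contained alternative.

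The gap is exactly where you flag it: the path family $(\gamma_{x})_{x}$ is never constructed, and the sketch you give does not go through as stated. First, the $-\nabla f$ trajectory issued from $x$ converges to \emph{some} connected component of the critical set, not to ``a fixed basin reaching $\arg\min_{\Omega_{+}}f$''; to splice it onto the descending trajectory issued from near $z^{*}\in\arg\min_{\partial\Omega_{+}}f$ you must connect two a priori different components of $\{f<\min_{\partial\Omega_{+}}f+\varepsilon\}$ while keeping $f$ below that level, and Hypotheses~\ref{hyp.0} and~\ref{hyp.3} do not obviously guarantee that this sublevel set is connected --- a mountain-pass/deformation argument exploiting the absence of critical values above $\min_{\partial\Omega_{+}}f-c_{0}$ is required here and is missing (the paper's own proof quietly relies on the same connectivity when it invokes Poincar\'e--Wirtinger on $\Omega_{\varepsilon_{0}/4}$, so this is a shared subtlety, but your argument cannot avoid it either). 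Second, the occupation bound $\int_{\Omega_{+}}\mu_{\gamma_{x}}\,dx\le C_{0}\,dx$ carries essentially all the difficulty: every path funnels through the exit channel near $z^{*}$, so the superposed arclength measures concentrate there, and the remedy you invoke (realizing the $\gamma_{x}$ as integral curves of one fixed vector field with bounded divergence, uniformly bounded exit time, and a flow compatible with the level constraint $f\le\max\{f(x),\min_{\partial\Omega_{+}}f\}+\varepsilon$) is precisely the construction that would have to be carried out. As written, the lower-bound half of the lemma --- and hence the quantitative statement --- is not proved.
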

\begin{proof}
Let us introduce a function  $w_1^{(0)}$ defined similarly to
$v_1^{(0)}$ by $w_1^{(0)}=\tilde\chi_0 \tilde\psi_1^{(0)}$
where
$\tilde{\chi}_0$ is a $\mathcal{C}_0^\infty(\Omega_+)$ function, equal
  to $1$ in a neighborhood of $\Omega_-$ and such that $d\tilde{\chi}_0$ is
  supported in $\{x \in \Omega_+, d(x, \partial \Omega_+) \le \delta \}$.
The estimate $\limsup_{h\to 0}h\log\left[\min
  \sigma(\Delta_{f,h}^{D,(0)}(\Omega_{+}))\right] \le
-2\kappa_{f}$ is then a consequence of the computation:
\begin{equation}\label{eq:estim_w10}
\langle w_1^{(0)}, \Delta_{f,h} w_1^{(0)}\rangle_{L^2(\Omega_+)}
= \| d_{f,h} w_1^{(0)}\|_{L^2(\Omega_+)}^2=\tilde{\mathcal O}
\left(e^{-\frac{2(\kappa_f - C_0 \delta)}{h}} \right)
\end{equation}
by considering $\delta$ arbitrarily small. The last
equality  is proved like~\eqref{eq.estim_delta_v10} above. 

It remains to prove that $\liminf_{h\to 0}h\log\left[\min
  \sigma(\Delta_{f,h}^{D,(0)}(\Omega_{+}))\right] \ge
-2\kappa_{f}$. The proof is very similar to the one of Proposition~\ref{pr.gapO-}. Assume on the contrary that there exists
$\varepsilon_{0}>0$ and a sequence $h_{n}$
such that $\lim_{n \to \infty} h_n=0$ and
$$
\min
  \sigma(\Delta_{f,h_{n}}^{D,(0)}(\Omega_{+}))\leq C
e^{-2\frac{\kappa_{f}+\varepsilon_{0}}{h_{n}}}\,.
$$
To simplify the notation, let us drop the subscript
$_{n}$ in $h_{n}$. The previous inequality means that there exists
$v_{h}\in L^{2}(\Omega_{+})$ and $\lambda_{h}\geq 0$ such that
\begin{align}
\label{eq.interieur}
&\Delta_{f,h}^{D,(0)}v_{h}=\lambda_{h}v_{h}~\text{in}~\Omega_{+}
,\quad v_{h}\big|_{\partial\Omega_{+}}=0, \quad \|v_{h}\|_{L^{2}(\Omega_{+})}=1,
\\
\label{eq.dfhmaj}
&\lambda_{h}=\langle v_{h}\,,\,
\Delta_{f,h}^{D,(0)}(\Omega_{+})v_{h}\rangle _{L^{2}(\Omega_{+})}=
\|d_{f,h}v_{h}\|_{L^{2}(\Omega_{+})}^{2}
\leq Ce^{-2\frac{\kappa_{f}+\varepsilon_{0}}{h}}.
\end{align}
For a small $t>0$, let us consider the domain
$$
\Omega_{t}=\left\{x\in \Omega_{+},\, f(x)<
  \min_{\partial \Omega_{+}}f +t\right\}.
$$
With $d_{f,h}=e^{-\frac{f-\min_{\Omega_{+}} f}{h}}(hd)e^{\frac{f-\min_{\Omega_{+}} f}{h}}$\,,
the estimate~\eqref{eq.dfhmaj} implies
\begin{equation}\label{eq:gradient_petit}
\left\|d \left(e^{\frac{f-\min_{\Omega_{+}} f}{h}}v_{h}\right)
\right\|_{L^{2}(\Omega_{t})}
\leq h^{-1} \max_{x \in \Omega_t} e^{{\frac{ f(x)- \min_{\Omega_{+}} f}{h}}} \|d_{f,h}v_{h}\|_{L^{2}(\Omega_{t})}
\leq 
Ch^{-1}e^{-\frac{\varepsilon_{0}-t}{h}}
=\mathcal{O}(e^{-\frac{\varepsilon_{0}}{2h}})
\end{equation}
as soon as $t<\frac{\varepsilon_{0}}{2}$.

For a given $t\in (0,\frac{\varepsilon_{0}}{2})$, let us now prove
that $\|v_{h}\|_{L^{2}(\Omega_{t})}$ is close to one, using the same
reasoning as in the proof of Proposition~\ref{pr.decayO-}. There
exists is an open neighborhood $\mathcal{V}$  of $\left\{x\in \Omega_{-}\,,\;\nabla
  f(x)=0\right\}$ such that $\mathcal{V} \subset \Omega_t$ and 
\begin{equation}\label{eq:min_par_c}
d_{Ag}(\Omega_{+}\setminus \Omega_{t},
\mathcal{V})\geq c>0,
\end{equation} where $c$ can be chosen independently of $t$
and $\varepsilon_0$ and is positive according to Hypothesis~\ref{hyp.3}.
 Applying
Lemma~\ref{le.Agmon} with $\Omega=\Omega_{+}$ and
$\varphi=(1-\alpha h)d_{Ag}(.,\mathcal{V})$, one gets for $h < 1/\alpha$ (similarly to~\eqref{eq:ineq1}):
$$0 \ge h^2 \|d(e^{\frac{\varphi}{h}} v_h)\|_{L^2(\Omega_+)}^2 + h
\left[ \alpha \langle e^{\frac{\varphi}{h}} v_h, |\nabla f|^2 e^{\frac{\varphi}{h}} v_h \rangle_{L^2(\Omega_+)}
- C_f \|e^{\frac{\varphi}{h}} v_h\|^2_{L^2(\Omega_+)} \right].$$
By choosing $\alpha$ sufficiently large so that $\alpha \min_{\Omega_+
  \setminus \mathcal{V}} |\nabla f|^2  \ge 2 C_f$, we get
$$0 \ge h^2 \|d(e^{\frac{\varphi}{h}} v_h)\|_{L^2(\Omega_+)}^2 + h
\left[ C_f \|e^{\frac{\varphi}{h}} v_h\|^2_{L^2( \Omega_+ \setminus {\mathcal V})} - C_f
  \|e^{\frac{\varphi}{h}} v_h\|^2_{L^2( {\mathcal
      V})} \right].$$
Using the fact that $ \|e^{\frac{\varphi}{h}} v_h\|^2_{L^2( {\mathcal
      V})} =  \| v_h\|^2_{L^2( {\mathcal
      V})}\le 1$, we obtain, by adding $2 C_f h \| v_h\|^2_{L^2( {\mathcal
      V})}$ on both sides of the previous inequality,
$$2 C_f h \ge 2 C_f h \| v_h\|^2_{L^2( {\mathcal
      V})} \ge h^2 \|d(e^{\frac{\varphi}{h}} v_h)\|_{L^2(\Omega_+)}^2 + h
C_f \|e^{\frac{\varphi}{h}} v_h\|^2_{L^2( \Omega_+) }.$$
This implies in particular
$$ \|e^{\frac{ d_{Ag}(.,\mathcal{V})}{h}} v_h\|^2_{L^2(
  \Omega_+) } \le 2$$
and thus, using~\eqref{eq:min_par_c},
$$ \| v_h\|^2_{L^2(
  \Omega_+ \setminus \Omega_t) } \le Ce^{-\frac{c}{h}}.$$
This implies
\begin{equation}\label{eq:proche_de_un}
\|e^{\frac{f-\min_{\Omega_{+}} f}{h}}v_{h}\|_{L^{2}(\Omega_{t})}\geq
\|v_{h}\|_{L^{2}(\Omega_{t})}
\geq 1-Ce^{-\frac{c}{h}}
\end{equation}
where, we recall, $c$ is independent of $t$ and $\varepsilon_{0}$, supposed to be small
enough.

The two estimates~\eqref{eq:gradient_petit} and~\eqref{eq:proche_de_un} lead to a contradiction. Indeed, let us now set $t=\frac{\varepsilon_{0}}{4}$. 
The Poincar{\'e}-Wirtinger inequality, or equivalently the spectral gap
estimate for the Neumann Laplacian in
$\Omega_{\frac{\varepsilon_{0}}{4}}$,  implies that there exists a constant
$C_{h}$ such that
$$
\|(e^{\frac{f-\min_{\Omega_{+}} f}{h}}v_{h})-C_{h}\|_{L^{2}(\Omega_{\frac{\varepsilon_{0}}{4}})}=\mathcal{O}(e^{-\frac{\varepsilon_{0}}{2h}})\,,
$$
and therefore 
$$
\|(e^{\frac{f-\min_{\Omega_{+}} f}{h}}v_{h})-C_{h}\|_{W^{1,2}(\Omega_{\frac{\varepsilon_{0}}{4}})}=\mathcal{O}(e^{-\frac{\varepsilon_{0}}{2h}})\,.
$$
Since $\Omega_{\frac{\varepsilon_{0}}{4}}\cap \partial\Omega_{+}$ has
a non empty interior $U_{\varepsilon_{0}}$, the trace theorem implies
$$
\|(e^{\frac{f-\min_{\Omega_{+}} f}{h}}v_{h})-C_{h}\|_{L^{2}(U_{\varepsilon_{0}})}=
\mathcal{O}(e^{-\frac{\varepsilon_{0}}{2h}})\,.
$$
Since $v_{h}\big|_{\partial \Omega_{+}}\equiv 0$ and since
$U_{\varepsilon_{0}}$ is fixed by $\varepsilon_{0}$ and
independent of $h$, this implies
$C_{h}=\mathcal{O}(e^{-\frac{\varepsilon_{0}}{2h}})$.
We are led to
$$
1-Ce^{-\frac{c}{h}}\leq
\|v_{h}\|_{L^{2}(\Omega_{\frac{\varepsilon_{0}}{4}})}
\leq 
\|e^{\frac{f-\min_{\Omega_{+}} f}{h}}v_{h}\|_{L^{2}(\Omega_{\frac{\varepsilon_{0}}{4}})}
\leq
\|C_{h}\|_{L^{2}(\Omega_{\frac{\varepsilon_{0}}{4}})}+Ce^{-\frac{\varepsilon_{0}}{2h}}\leq 
C'e^{-\frac{\varepsilon_{0}}{2h}}\,,
$$
which is impossible when $h$ is small enough.
\end{proof}
This Lemma shows the equality~\eqref{eq.applambda10} stated in Theorem~\ref{th.main}.

\subsection{Singular values of $\beta$}
\label{se.singbet}
We are now in position to complete the proof of
Proposition~\ref{pr.singbeta}.

\begin{proof}[Proof of Proposition~\ref{pr.singbeta}]
Let $e^{(0)}=(e^{(0)}_1,\ldots,e^{(0)}_{m_0^D(\Omega_+)})$
(resp. $e^{(1)}= (e^{(1)}_1,\ldots,e^{(1)}_{m_1^D(\Omega_+)})$) denote an
orthonormal basis of $F^{(0)}$ (resp. of $F^{(1)}$) and let $C_{0}$
(resp. $C_{1}$) be the matrix of the change of basis from
$e^{(0)}$
(resp. from $\mathcal{B}^{(1),*}$) to
$\mathcal{B}^{(0)}$ (resp. to $e^{(1)}$). Let $A=M(\beta,e^{(0)},e^{(1)})$ denote the
matrix of $\beta$ in the bases $e^{(0)}$ and $e^{(1)}$, so that
$$A= C_1 B C_0$$
where, we recall, $B=M(\beta,\mathcal{B}^{(0)},\mathcal{B}^{(1),*})$.
Using the fact that $\mathcal{B}^{(0)}$ and $\mathcal{B}^{(1)}$ are almost
orthonormal basis, the matrices $C_0$ and $C_1$ satisfy
$C_p^*C_p=\Id+\mathcal{O}(\varepsilon)$ so that (according to~\eqref{eq.Fan2}):
$$
s_{j}(\beta)=s_{j}(A) =s_{j}(C_1 B C_0)=s_{j}(B) (1+\mathcal{O}(e^{-\frac{c}{h}}))\,.
$$
The singular values of $\beta$ can be understood from those of $B$, up
to exponentially small relative errors.

Now Lemma~\ref{le.M12}, \ref{le.M22}, \ref{le.M11} and
Proposition~\ref{pr.M21} can be gathered into (using the block
structure of $B$ introduced in Section~\ref{se.strucbeta}): in the
asymptotic regime $h \to 0$,
$$
B=
\left(
\begin{array}{c|cccc}
  \mathcal{O}(b_{k_{0},1}e^{-\frac{c}{h}}) &
  b_{1,2}&\mathcal{O}(b_{2,3}e^{-\frac{c}{h}})&\ldots&
  \mathcal{O}(b_{m_{0}^{D}-1,m_{0}^{D}}e^{-\frac{c}{h}})\\ 
\vdots&\mathcal{O}(b_{1,2}e^{-\frac{c}{h}})& b_{2,3}&
\ldots&\vdots\\
\vdots&\vdots&\vdots&\ddots&\mathcal{O}(b_{m_{0}^{D}-1,m_{0}^{D}}e^{-\frac{c}{h}})
\\
\mathcal{O}(b_{k_{0},1}e^{-\frac{c}{h}}) &
 \mathcal{O}(b_{1,2}e^{-\frac{c}{h}})&
  \mathcal{O}(b_{2,3}e^{-\frac{c}{h}})&\ldots
&b_{m_{0}^{D}-1,m_{0}^{D}}\\
 \mathcal{O}(b_{k_{0},1}e^{-\frac{c}{h}}) &
 \mathcal{O}(b_{1,2}e^{-\frac{c}{h}})&
  \mathcal{O}(b_{2,3}e^{-\frac{c}{h}})&\ldots
&\mathcal{O}(b_{m_{0}^{D}-1,m_{0}^{D}}e^{-\frac{c}{h}})
\\
\vdots&\vdots&\vdots&\vdots&\vdots
\\
\mathcal{O}(b_{k_{0},1}e^{-\frac{c}{h}}) &
 \mathcal{O}(b_{1,2}e^{-\frac{c}{h}})&
  \mathcal{O}(b_{2,3}e^{-\frac{c}{h}})&\ldots 
&\mathcal{O}(b_{m_{0}^{D}-1,m_{0}^{D}}e^{-\frac{c}{h}})\\[3pt]
\hline b_{m_{1}^{N}+1,1}&\mathcal{O}(b_{1,2}e^{-\frac{c}{h}})&
  \mathcal{O}(b_{2,3}e^{-\frac{c}{h}})&\ldots
&\mathcal{O}(b_{m_{0}^{D}-1,m_{0}^{D}}e^{-\frac{c}{h}})\\
\vdots& \vdots&\vdots&\vdots&\vdots\\
b_{m_{1}^{D},1}&\mathcal{O}(b_{1,2}e^{-\frac{c}{h}})&
 \mathcal{O}(b_{2,3}e^{-\frac{c}{h}})&\ldots
&\mathcal{O}(b_{m_{0}^{D}-1,m_{0}^{D}}e^{-\frac{c}{h}})\\
\end{array}
\right)
$$
where we used $m_{0}^{D}$ (resp $m_{1}^{N}$, $m_{1}^{D}$) instead of
$m_{0}^{D}(\Omega_{+})$ (resp. $m_{1}^{N}(\Omega_{-})$,
$m_{1}^{D}(\Omega_{+})$)  and where
$k_{0}$ is a (possibly $h$-dependent) index such that
$|b_{k_{0},1}|=\max_{m_{1}^{N}+1\leq k\leq m_{1}^{D}}|b_{k,1}|$.
%
%
%
 By Gaussian elimination (see~\cite{Lep1} for more details), one can
find a matrix $R\in \mathcal{M}_{m_{1}^{D}}(\R)$ with $\|R\|=\mathcal{O}(e^{-\frac{c}{h}})$
such that
\begin{align*}
&(\Id_{m_{1}^{D}}+R) B=\tilde{B}=
\left(
\begin{array}{c|c}
 0(m_{0}^{D}-1,1) &\tilde{B}_{1,2}\\
0(m_{1}^{N}-m_{0}^{D}+1,1)&0(m_{1}^{N}-m_{0}^{D}+1, m_{0}^{D}-1)\\[3pt]
\hline\vspace{0.1cm}
\tilde{B}_{3,1}&0(m_{1}^{D}-m_{1}^{N},m_{0}^{D}-1)
\end{array}
\right)\\
\text{with }&
\tilde{B}_{3,1}=
\begin{pmatrix}
  b_{m_{1}^{N}+1,1}\\
\vdots\\
b_{m_{1}^{D},1}
\end{pmatrix}\text{ and }
\tilde{B}_{1,2}=
\begin{pmatrix}
  b_{1,2}(1+\mathcal{O}(e^{-\frac{c}{h}}))&0&\ldots&0\\
0&\ddots&&\vdots\\
\vdots&&\ddots&0\\
0&\ldots&0&b_{m_{0}^{D}-1,m_{0}^{D}}(1+\mathcal{O}(e^{-\frac{c}{h}}))
\end{pmatrix}\,,
\end{align*}
where $0(i,j)$ is the null matrix in $\mathcal{M}_{i,j}(\R)$\,.
We deduce that the singular values of $B$ are approximated (up to
exponentially small relative error terms) by the ones
of $\tilde{B}$ which are given by its block structure. We find (recall
that the singular values are labelled in decreasing order):
\begin{align*}
&
s_{j}(B)=|b_{m_{0}^{D}-j,m_{0}^{D}-j+1}|(1+\mathcal{O}(e^{-\frac{c}{h}}))\quad\text{for
}j\in 
  \left\{1,\ldots, m_{0}^{D}-1\right\}\\
\text{and }
&
s_{m_{0}^{D}}(B)^{2}=\left[\sum_{k=m_{1}^{N}+1}^{m_{1}^{D}}|b_{k,1}|^{2}\right]
(1+\mathcal{O}(e^{-\frac{c}{h}}))
\,.  
\end{align*}
We conclude the proof of Proposition~\ref{pr.singbeta} using the
approximate values of $b_{k,k+1}$ ($k \in \{1,\ldots,m_0^D-1\}$) and
$b_{k,1}$ ($k \in \{m_1^N+1,\ldots,m_1^D \}$)
given in Lemma~\ref{le.M12} and Proposition~\ref{pr.M21}:
$$|b_{m_{0}^{D}-j,m_{0}^{D}-j+1}|=\sqrt{\mu^{(0)}_{m_{0}^{D}-j+1}(\Omega_-)}(1+\mathcal{O}(e^{-\frac{c}{h}}))
\quad\text{for
}j\in 
  \left\{1,\ldots, m_{0}^{D}-1\right\},$$
$$\sum_{k=m_{1}^{N}+1}^{m_{1}^{D}}|b_{k,1}|^{2}=\frac{h^2 \sum_{k=m_{1}^{N}+1}^{m_{1}^{D}}\left(\int_{\partial\Omega_{+}}e^{-\frac{f(\sigma)}{h}}\mathbf{i}_{n}
    \psi_{k}^{(1)}(\sigma)~d\sigma\right)^2}{\int_{\Omega_{+}}e^{-2\frac{f(x)}{h}}~dx}
+\mathcal{O} \left(e^{-\frac{2\kappa_{f}+c}{h}} \right).$$
In particular, for $h$ small enough, we indeed have:
$$|b_{m_0^D-1,m_0^D}|^2 \ge \ldots \ge |b_{1,2}|^2 \ge \sum_{k=m_{1}^{N}+1}^{m_{1}^{D}}|b_{k,1}|^{2}$$
the last inequality being a consequence of~\eqref{eq.minlog} and $|b_{1,2}|^{2}=\mu_{2}^{(0)}(\Omega_-)(1+\mathcal{O}(e^{-\frac{c}{h}}))\geq
C_{\varepsilon}e^{-2\frac{\kappa_f - c_0}{h}}$
using Proposition~\ref{pr.gapO-}.
\end{proof}

\section{Proof of Theorem~\ref{th.main} and two corollaries}
\label{se.proof}
Proposition~\ref{pr.singbeta} already provides a precise asymptotic
result on the exponentially small eigenvalues of
$\Delta_{f,h}^{D,(0)}(\Omega_{+})$, using~\eqref{eq:VS_VP}:
\begin{align}
 \lambda_{j}^{(0)}(\Omega_{+})&=s_{m_{0}^{D}(\Omega_{+})+1-j}(\beta)^{2}=\mu_{j}^{(0)}(\Omega_-)
(1+\mathcal{O}(e^{-\frac{c}{h}}))\,,\quad \text{for}~j\in
\left\{2,\ldots, m_{0}^{D}(\Omega_{+})\right\}, 
\label{eq:expansion_lambdaj0} \\ 
\lambda_{1}^{(0)}(\Omega_{+})&=s_{m_{0}^{D}(\Omega_{+})}(\beta)^{2}=
\frac{
h^{2}\sum_{k=m_{1}^{N}(\Omega_{-})+1}^{m_{1}^{D}(\Omega_{+})}
\left(\int_{\partial\Omega_{+}}e^{-\frac{f(\sigma)}{h}}\mathbf{i}_{n}
\psi_{k}^{(1)}(\sigma) \,d\sigma\right)^{2}}{
\int_{\Omega_{+}}e^{-2\frac{f(x)}{h}}\,dx}
(1+\mathcal{O}(e^{-\frac{c}{h}})),\label{eq:expansion_lambda10_bis}
\end{align}
the second estimate being a consequence of Proposition~\ref{pr.M21} (see~\eqref{eq:expansion_bk1}).
This is essentially the result of Theorem~\ref{th.main} about
$\lambda_{1}^{(0)}(\Omega_{+})$
(see~Equation~\eqref{eq:expansion_lambda10}): it remains to show that the basis
$(\psi_{k}^{(1)})_{m_{1}^{N}(\Omega_{-})+1\leq k \leq
    m_{1}^{D}(\Omega_{+})}$ in~\eqref{eq:expansion_lambda10_bis}
  (which was introduced in
Section~\ref{se.treig}) can be replaced by {\em any} orthonormal basis
$(u_k^{(1)})_{1\leq k \leq
    m_{1}^{D}(\Omega_{+} \setminus \Omega_{-})}$ of $\Ran
1_{[0,\nu(h)]}(\Delta_{f,h}^{D,(1)}(\Omega_{+}\setminus
\overline{\Omega_{-}}))$. This will be done in Section~\ref{se.chanba}.

In addition, it also remains to prove the estimates~\eqref{eq.appu10} and~\eqref{eq.appdu10} on the eigenvector $u^{(0)}_1$
associated with the smallest eigenvalue
$\lambda_1^{(0)}(\Omega_+)$. This will be the subject of
Sections~\ref{se.appu10} and~\ref{se.appdfu}. We
recall that the spectral subspace associated with $\lambda_{1}^{(0)}(\Omega_{+})$
is one dimensional (since $\lambda_{2}^{(0)}(\Omega_{+})\geq
\lambda_{1}^{(0)}(\Omega_{+}) \, e^{\frac{c}{h}}$). We thus have
\begin{equation}
  \label{eq.defu10}
 u_{1}^{(0)}=\frac{\Pi_{0}v_{1}^{(0)}}{\|\Pi_{0}v_{1}^{(0)}\|_{L^2(\Omega_+)}},
\end{equation}
where $\Pi_0$ denotes the spectral projection associated with $\lambda_{1}^{(0)}(\Omega_{+})$:
\begin{equation}
  \label{eq.defpi0}
\Pi_{0}=1_{\{\lambda_{1}^{(0)}(\Omega_{+})\}}(\Delta_{f,h}^{D,(0)}(\Omega_{+})).
\end{equation}
The fact that $\Pi_{0}v_{1}^{(0)} \neq 0$ follows from the fact that
$\Pi_0 \Pi^{(0)} = \Pi_0$ and the estimate: for small $h$,
\begin{align}
\frac{\langle \Pi^{(0)} v_1^{(0)}, \Delta_{f,h}^{D,(0)}\Pi^{(0)} v_1^{(0)}  \rangle_{L^2(\Omega_+)}}{\|\Pi^{(0)} v_1^{(0)}\|_{L^2(\Omega_+)}^2}&=\frac{\|d_{f,h}\Pi^{(0)} v_1^{(0)}\|_{L^2(\Omega_+)}^2}{\|\Pi^{(0)}
  v_1^{(0)}\|_{L^2(\Omega_+)}^2}=\|\beta \Pi^{(0)}
v_1^{(0)}\|_{L^2(\Omega_+)}^2 (1+{\mathcal O}(e^{-\frac{c}{h}}))
\nonumber \\
&=\sum_{k=m_1^N(\Omega_-)+1}^{m_1^D(\Omega_+)} |b_{k,1}|^2
(1+{\mathcal O}(e^{-\frac{c}{h}}))\nonumber \\
&=\lambda_1^{(0)}(\Omega_+) (1+{\mathcal O}(e^{-\frac{c}{h}}))
\le \lambda_2^{(0)}(\Omega_+)  e^{-c/h} \label{eq:qr_dfhpiv}
\end{align}
for some positive constant $c$. The second and third equalities are consequences
of the almost orthonormality of the bases ${\mathcal B}^{(0)}$ and
${\mathcal B}^{(1),*}$ (see Proposition~\ref{pr.quasi}). The third one
comes from~\eqref{eq:expansion_lambda10_bis}
and~\eqref{eq:expansion_bk1}. The last inequality is a consequence of Equation~\eqref{eq:expansion_lambdaj0} and Proposition~\ref{pr.gapO-}.

Finally, Section~\ref{se.corol} is devoted to two corollaries of Theorem~\ref{th.main}.

\subsection{Approximation of $u_{1}^{(0)}$}
\label{se.appu10}

Let us first prove the estimate~\eqref{eq.appu10} on $u_{1}^{(0)}$.
\begin{proposition}\label{prop:u10}
  There exists $c>0$ such that
$$
\left\|u_{1}^{(0)}-\frac{e^{-\frac{f}{h}}}{\left(\int_{\Omega_{+}}e^{-2\frac{f(x)}{h}}~dx\right)^{1/2}}\right\|_{W^{2,2}(\Omega_{+})}
=\mathcal{O}(e^{-\frac{c}{h}})\,.  
$$
\end{proposition}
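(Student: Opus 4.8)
The plan is to first establish the $L^2(\Omega_+)$ analogue of the estimate, and then bootstrap it to $W^{2,2}(\Omega_+)$ by elliptic regularity, exploiting that $w:=u_1^{(0)}-\tilde\psi_1^{(0)}$ (with $\tilde\psi_1^{(0)}$ the function \eqref{eq.expo}) solves an elliptic equation with exponentially small source and exponentially small boundary trace.

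For the $L^2$ step I would combine three exponentially small errors, using $u_1^{(0)}=\Pi_0 v_1^{(0)}/\|\Pi_0 v_1^{(0)}\|_{L^2(\Omega_+)}$ from \eqref{eq.defu10} with $v_1^{(0)}=\chi_0\tilde\psi_1^{(0)}$ and $\Pi_0=\Pi_0\Pi^{(0)}$. First, $\|v_1^{(0)}-\tilde\psi_1^{(0)}\|_{L^2(\Omega_+)}=\|(1-\chi_0)\tilde\psi_1^{(0)}\|_{L^2(\Omega_+)}=\mathcal{O}(e^{-c/h})$, since $\supp(1-\chi_0)$ sits in a neighborhood of $\partial\Omega_+$ on which, for $\delta_+$ small (cf. \eqref{eq:hyp_delta+}), $f\geq\min_{\Omega_+}f+\tfrac{\kappa_f}{2}$, while the normalizing integral $\int_{\Omega_+}e^{-2f/h}$ is bounded below by Lemma~\ref{le.intminmaj}. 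Second, $\|v_1^{(0)}-\Pi^{(0)}v_1^{(0)}\|_{L^2(\Omega_+)}=\mathcal{O}(e^{-c/h})$ by Proposition~\ref{pr.quasi}. Third, $(1-\Pi_0)\Pi^{(0)}v_1^{(0)}$ lies in the spectral subspace of $\Delta_{f,h}^{D,(0)}(\Omega_+)$ associated with eigenvalues in $(\lambda_1^{(0)}(\Omega_+),\nu(h)]$, whence
$$\lambda_2^{(0)}(\Omega_+)\,\|(1-\Pi_0)\Pi^{(0)}v_1^{(0)}\|_{L^2(\Omega_+)}^2\leq\langle\Pi^{(0)}v_1^{(0)},\Delta_{f,h}^{D,(0)}(\Omega_+)\Pi^{(0)}v_1^{(0)}\rangle_{L^2(\Omega_+)}\leq\lambda_2^{(0)}(\Omega_+)\,e^{-c/h}\big(1+\mathcal{O}(e^{-c/h})\big)$$
by \eqref{eq:qr_dfhpiv} and $\|\Pi^{(0)}v_1^{(0)}\|_{L^2(\Omega_+)}=1+\mathcal{O}(e^{-c/h})$, so $\|(1-\Pi_0)\Pi^{(0)}v_1^{(0)}\|_{L^2(\Omega_+)}=\mathcal{O}(e^{-c/h})$. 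Adding up yields $\|\Pi_0 v_1^{(0)}-\tilde\psi_1^{(0)}\|_{L^2(\Omega_+)}=\mathcal{O}(e^{-c/h})$, hence $\|\Pi_0 v_1^{(0)}\|_{L^2(\Omega_+)}=1+\mathcal{O}(e^{-c/h})$ (as $\tilde\psi_1^{(0)}$ is $L^2(\Omega_+)$-normalized) and therefore $\|u_1^{(0)}-\tilde\psi_1^{(0)}\|_{L^2(\Omega_+)}=\mathcal{O}(e^{-c/h})$.

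For the bootstrap, note that by \eqref{eq:lapl_0_form} the operator $\Delta_{f,h}^{D,(0)}(\Omega_+)$ acts as $-h^2\Delta+V_h$ with $V_h:=|\nabla f|^2-h\Delta f$ bounded on $\overline{\Omega_+}$ uniformly in $h\in(0,h_0]$, and $(-h^2\Delta+V_h)\tilde\psi_1^{(0)}=0$ in $\Omega_+$ because $(h\nabla+\nabla f)e^{-f/h}=0$. Hence $w=u_1^{(0)}-\tilde\psi_1^{(0)}\in W^{2,2}(\Omega_+)$ satisfies, in $\Omega_+$,
$$-h^2\Delta w+V_h w=\lambda_1^{(0)}(\Omega_+)\,u_1^{(0)},\qquad w\big|_{\partial\Omega_+}=-\tilde\psi_1^{(0)}\big|_{\partial\Omega_+},$$
the boundary identity because $u_1^{(0)}\in D(\Delta_{f,h}^{D,(0)}(\Omega_+))$. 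Here $\|\lambda_1^{(0)}(\Omega_+)u_1^{(0)}\|_{L^2(\Omega_+)}=\lambda_1^{(0)}(\Omega_+)=\mathcal{O}(e^{-c/h})$ by \eqref{eq.applambda10}, and, from \eqref{eq.expo}, the lower bound of Lemma~\ref{le.intminmaj}, $f|_{\partial\Omega_+}\geq\min_{\partial\Omega_+}f=\min_{\Omega_+}f+\kappa_f$, and the boundedness of the derivatives of $f$ on $\overline{\Omega_+}$, every $W^{s,2}(\partial\Omega_+)$-norm of $\tilde\psi_1^{(0)}|_{\partial\Omega_+}$ is $\mathcal{O}(e^{-c/h})$ (losing only polynomial factors in $1/h$). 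Rewriting the equation as $-\Delta w=h^{-2}\big(\lambda_1^{(0)}(\Omega_+)u_1^{(0)}-V_h w\big)$ and invoking the standard $W^{2,2}$ elliptic estimate for the Dirichlet problem on the smooth bounded domain $\Omega_+$ (with an $h$-independent constant $C$),
$$\|w\|_{W^{2,2}(\Omega_+)}\leq C\big(\|\Delta w\|_{L^2(\Omega_+)}+\|w\|_{W^{3/2,2}(\partial\Omega_+)}+\|w\|_{L^2(\Omega_+)}\big)\leq Ch^{-2}\big(\lambda_1^{(0)}(\Omega_+)+\|w\|_{L^2(\Omega_+)}\big)+\mathcal{O}(e^{-c/h}),$$
and the $L^2$ bound $\|w\|_{L^2(\Omega_+)}=\mathcal{O}(e^{-c/h})$ already proved makes the right-hand side $\mathcal{O}(e^{-c'/h})$ for any $c'<c$, which is the claim.

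The step I expect to require the most care is the bootstrap: one must check that the only $h$-loss in the elliptic estimate is polynomial (hence harmless), and, more to the point, that $w$ does \emph{not} satisfy the homogeneous Dirichlet condition — its trace on $\partial\Omega_+$ is the small but nonzero quantity $-\tilde\psi_1^{(0)}|_{\partial\Omega_+}$, which is exactly where the scale $\kappa_f=\min_{\partial\Omega_+}f-\min_{\Omega_+}f$ enters and which must be handled through the $W^{3/2,2}(\partial\Omega_+)$ boundary norm.
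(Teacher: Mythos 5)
Your proof is correct, and its core is the paper's own argument. The $L^2$ step is identical to the paper's: the same three-term decomposition through $v_{1}^{(0)}$, $\Pi^{(0)}v_{1}^{(0)}$ and $\Pi_{0}v_{1}^{(0)}$, with the spectral-gap inequality driven by \eqref{eq:qr_dfhpiv}. The bootstrap differs in a small but genuine way. The paper compares $u_{1}^{(0)}$ with the truncated quasimode $v_{1}^{(0)}=\chi_{0}\tilde{\psi}_{1}^{(0)}$, which satisfies the \emph{homogeneous} Dirichlet condition, so the entire exponentially small error is pushed into the source term $g_{h}=\Delta_{f,h}^{(0)}v_{1}^{(0)}$ (supported where $\nabla\chi_{0}\neq 0$), and the remaining difference $v_{1}^{(0)}-\tilde{\psi}_{1}^{(0)}$ is disposed of separately and trivially in $W^{2,2}$ via Lemma~\ref{le.intminmaj}. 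You instead compare $u_{1}^{(0)}$ directly with $\tilde{\psi}_{1}^{(0)}$, exploiting that $\Delta_{f,h}^{(0)}\tilde{\psi}_{1}^{(0)}=0$ exactly, at the price of a nonzero (but exponentially small) boundary trace, which you correctly route through the $W^{3/2,2}(\partial\Omega_{+})$ norm in the inhomogeneous Dirichlet elliptic estimate; your justification of that trace bound via Lemma~\ref{le.intminmaj} and $f\big|_{\partial\Omega_{+}}\geq \min_{\Omega_{+}}f+\kappa_{f}$ is sound, and in both versions the only $h$-losses are polynomial and hence absorbed into the exponential. Either route is acceptable; the paper's choice of a compactly supported quasimode simply keeps the boundary data homogeneous, which is why it never needs a boundary-norm term in the elliptic estimate.
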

\begin{proof}
Since $\left\|v_{1}^{(0)}-\frac{e^{-\frac{f}{h}}}{\left(\int_{\Omega_{+}}e^{-2\frac{f(x)}{h}}~dx\right)^{1/2}}
\right\|_{W^{2,2}(\Omega_{+})}=\mathcal{O}(e^{-\frac{c}{h}})$ (which
is a simple consequence of Lemma~\ref{le.intminmaj}),
it suffices to prove
$\|u_{1}^{(0)}-v_{1}^{(0)}\|_{W^{2,2}(\Omega_{+})}=\mathcal{O}(e^{-\frac{c}{h}})$.

Let us first prove the result in the $L^2(\Omega_+)$-norm. From~\eqref{eq:qr_dfhpiv}, we have $\|d_{f,h}\Pi^{(0)}
v_1^{(0)}\|_{L^2(\Omega_+)}^2 \le \lambda_2^{(0)}(\Omega_+) e^{-c/h}$
and thus
\begin{align*}
\lambda_{2}^{(0)}(\Omega_+)
\left\|1_{[\lambda_{2}^{(0)}(\Omega_{+}),+\infty)}(\Delta_{f,h}^{D,(0)}(\Omega_{+}))
  \Pi^{(0)}  v_{1}^{(0)}\right\|_{L^{2}(\Omega_{+})}^{2}&\leq \langle \Pi^{(0)}v_{1}^{(0)}\,,\,
\Delta_{f,h}^{D,(0)}(\Omega_{+}) \Pi^{(0)}v_{1}^{(0)}\rangle_{L^2(\Omega_+)}\\
&\le \lambda_2^{(0)}(\Omega_+) e^{-c/h}.
\end{align*}
Since $\Pi_0=\Pi_0 \Pi^{(0)}$, we deduce
$$
 \left\|\Pi_{0}v_{1}^{(0)}-\Pi^{(0)}  v_{1}^{(0)}\right\|_{L^{2}(\Omega_{+})}
=\left\|1_{[\lambda_{2}^{(0)}(\Omega_{+}),+\infty)}(\Delta_{f,h}^{D,(0)}(\Omega_{+})) \Pi^{(0)} v_{1}^{(0)}\right\|_{L^{2}(\Omega_{+})}
=\mathcal{O}(e^{-\frac{c}{h}}).
$$
Using in addition the fact that $\left\|\Pi^{(0)}v_{1}^{(0)}-
  v_{1}^{(0)}\right\|_{L^{2}(\Omega_{+})}=
\mathcal{O}(e^{-\frac{c}{h}})$ and $\left\|
  v_{1}^{(0)}\right\|_{L^{2}(\Omega_{+})} = 1 + \mathcal{O}(e^{-\frac{c}{h}})$ (see Proposition~\ref{pr.quasi}), this proves
\begin{equation}\label{eq:L2}
\|u_{1}^{(0)}-v_{1}^{(0)}\|_{L^{2}(\Omega_{+})}=\mathcal{O}(e^{-\frac{c'}{h}})\,.
\end{equation}


The estimate in the $W^{2,2}(\Omega_+)$-norm is then obtained by a
 bootstrap argument, that will be used many times again
below.  The following equations hold: 
$$
\left\{
  \begin{array}[c]{l}
    \Delta_{f,h}^{(0)}u_{1}^{(0)}=\lambda_{1}^{(0)}(\Omega_{+})\,u_{1}^{(0)},\\
   u_{1}^{(0)}\big|_{\partial\Omega_{+}}=0,
  \end{array}
\right.
\text{ and }
\left\{
  \begin{array}[c]{l}
    \Delta_{f,h}^{(0)}v_{1}^{(0)}=g_h,\\
   v_{1}^{(0)}\big|_{\partial\Omega_{+}}=0,
  \end{array}
\right.
$$
where $g_h$ is {\em defined} by the equation $g_h=\Delta_{f,h}^{(0)}v_{1}^{(0)} $ and, using the same arguments
as in the proof of~\eqref{eq.estim_delta_v10},
$\|g_h\|_{L^2(\Omega_+)}=\mathcal{O}(e^{-\frac{\kappa_{f}-C_0\delta_{+}}{h}})$.  Recall that by the assumption~\eqref{eq:hyp_delta+}, $\delta_+$ is small enough so that $C_0 \delta_+
  < \kappa_f$, and thus $\|g_h\|_{L^2(\Omega_+)}=\mathcal{O}(e^{-\frac{c}{h}})$. 
 We then deduce that $(u_{1}^{(0)}-v_{1}^{(0)})$ solves, $\Delta_{\textrm{H}}$ denoting the Hodge Laplacian~\eqref{eq:Hodge}:
$$
\left\{
\begin{array}[c]{l}
 \Delta_{\textrm{H}}^{(0)}(u_{1}^{(0)}-v_{1}^{(0)})=\tilde{g}_h,\\
(u_{1}^{(0)}-v_{1}^{(0)})\big|_{\partial\Omega_{+}}=0.
\end{array}
\right.
$$
Again, $\tilde{g}_h$ is defined by the first equation. Using the
 formula~\eqref{eq:hodge_witten} which relates the Hodge and the
 Witten Laplacians and the estimate~\eqref{eq:L2}, it holds
$\|\tilde{g}_h\|_{L^2(\Omega_+)}=\mathcal{O}(e^{-\frac{c'}{h}})$.
The elliptic regularity of the Dirichlet Hodge Laplacian then implies
$\|u_{1}^{(0)}-v_{1}^{(0)}\|_{W^{2,2}(\Omega_{+})}={\mathcal{O}}(e^{-\frac{c'}{h}})$.

\end{proof}

\subsection{Approximation of $d_{f,h}u_{1}^{(0)}$}
\label{se.appdfu}

We now consider $d_{f,h}u_{1}^{(0)}$. In this section, we will first
prove~\eqref{eq.appdu10} using for the $u_k^{(1)}$'s the special basis
considered in Section~\ref{se.matrix}. This will be generalized to any
orthonormal basis of $\Ran
1_{[0,\nu(h)]}(\Delta_{f,h}^{D,(1)}(\Omega_{+}\setminus
\overline{\Omega_{-}}))$ in the next section.

Let us start with an estimate in the $L^2(\Omega_+)$-norm.
\begin{proposition}
\label{pr.dfL2}
Let $\mathcal{B}_{1}^{*}=(w_{k})_{1\leq k\leq m_{1}^{D}(\Omega_{+})}$
be the basis of $F^{(1)}=\Ran
1_{[0,\nu(h)]}(\Delta_{f,h}^{D,(1)}(\Omega_{+}))$, dual (in $L^2(\Omega_+)$) to
$\mathcal{B}_{1}=(\Pi^{(1)}v_{k}^{(1)})_{1\leq k\leq
  m_{1}^{D}(\Omega_{+})}$\,.
Then the eigenvector
$u_{1}^{(0)}$ of $\Delta_{f,h}^{D,(0)}(\Omega_{+})$ given  by \eqref{eq.defu10} satisfies
\begin{equation}\label{eq.dfL2}
\left\|d_{f,h}u_{1}^{(0)}-\sum_{k=m_{1}^{N}(\Omega_{-})+1}^{m_{1}^{D}(\Omega_{+})}b_{k,1}w_{k}
  \right\|_{L^{2}(\Omega_{+})}
= \mathcal{O}(e^{-\frac{\kappa_{f}+c}{h}})\,,
\end{equation}
for some $c>0$ and where the
coefficients $b_{k,1}$'s are defined by~\eqref{eq:bkj}.
\end{proposition}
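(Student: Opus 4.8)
The plan is to expand $d_{f,h}u_{1}^{(0)}=\beta u_{1}^{(0)}\in F^{(1)}$ in the basis $\mathcal{B}^{(1),*}=(w_{k})_{1\le k\le m_{1}^{D}(\Omega_{+})}$ and compare it with $\sum_{k=m_{1}^{N}(\Omega_{-})+1}^{m_{1}^{D}(\Omega_{+})}b_{k,1}w_{k}$. Since $(w_{k})$ is dual to $(\Pi^{(1)}v_{k}^{(1)})$, the $k$-th coefficient of $d_{f,h}u_{1}^{(0)}$ is $\langle \Pi^{(1)}v_{k}^{(1)},\beta u_{1}^{(0)}\rangle_{L^{2}(\Omega_{+})}$, whereas by \eqref{eq:bkj} and \eqref{eq.betaproj} one has $\beta\Pi^{(0)}v_{1}^{(0)}=\sum_{k=1}^{m_{1}^{D}(\Omega_{+})}b_{k,1}w_{k}$. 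Because $\mathcal{B}^{(1),*}$ is almost orthonormal and Lemma~\ref{le.M11} gives $\sum_{k=1}^{m_{1}^{N}(\Omega_{-})}|b_{k,1}|^{2}=\mathcal{O}(e^{-2\frac{\kappa_{f}+c}{h}})$, the term $\sum_{k\le m_{1}^{N}(\Omega_{-})}b_{k,1}w_{k}$ is $\mathcal{O}(e^{-\frac{\kappa_{f}+c}{h}})$, so it suffices to prove
$$
\bigl\|\,d_{f,h}u_{1}^{(0)}-\beta\Pi^{(0)}v_{1}^{(0)}\,\bigr\|_{L^{2}(\Omega_{+})}
=\bigl\|\,\beta\bigl(u_{1}^{(0)}-\Pi^{(0)}v_{1}^{(0)}\bigr)\bigr\|_{L^{2}(\Omega_{+})}
=\mathcal{O}\bigl(e^{-\frac{\kappa_{f}+c}{h}}\bigr).
$$

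Next I would use \eqref{eq.defu10} to write $u_{1}^{(0)}=\Pi_{0}v_{1}^{(0)}/\alpha$ with $\alpha=\|\Pi_{0}v_{1}^{(0)}\|_{L^{2}(\Omega_{+})}=1+\mathcal{O}(e^{-c/h})$ (by Proposition~\ref{pr.quasi} and \eqref{eq:qr_dfhpiv}), and set $P=1_{[\lambda_{2}^{(0)}(\Omega_{+}),\nu(h)]}(\Delta_{f,h}^{D,(0)}(\Omega_{+}))$, so that $\Pi^{(0)}v_{1}^{(0)}-\Pi_{0}v_{1}^{(0)}=Pv_{1}^{(0)}$ and
$$
\beta\bigl(u_{1}^{(0)}-\Pi^{(0)}v_{1}^{(0)}\bigr)=\Bigl(\tfrac1\alpha-1\Bigr)\beta\Pi_{0}v_{1}^{(0)}-\beta Pv_{1}^{(0)}.
$$
Since $\beta\Pi_{0}v_{1}^{(0)}=\alpha\,d_{f,h}u_{1}^{(0)}$ has norm $\alpha\sqrt{\lambda_{1}^{(0)}(\Omega_{+})}=\tilde{\mathcal{O}}(e^{-\kappa_{f}/h})$ by \eqref{eq.applambda10} (Lemma~\ref{le.infsigD0}), the first term is $\mathcal{O}(e^{-\frac{\kappa_{f}+c}{h}})$, and the whole proof is reduced to the estimate $\|\beta Pv_{1}^{(0)}\|_{L^{2}(\Omega_{+})}=\mathcal{O}(e^{-\frac{\kappa_{f}+c}{h}})$.

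For this last point, since $\beta^{*}\beta=\Delta_{f,h}^{D,(0)}(\Omega_{+})\big|_{F^{(0)}}$, $P$ commutes with $\Delta_{f,h}^{D,(0)}(\Omega_{+})$, and $\Delta_{f,h}^{D,(0)}(\Omega_{+})v_{1}^{(0)}=d_{f,h}^{*}d_{f,h}v_{1}^{(0)}$, an integration by parts — legitimate because $Pv_{1}^{(0)}$ satisfies the Dirichlet condition on $\partial\Omega_{+}$ — gives
$$
\|\beta Pv_{1}^{(0)}\|_{L^{2}(\Omega_{+})}^{2}=\langle Pv_{1}^{(0)},\Delta_{f,h}^{D,(0)}(\Omega_{+})v_{1}^{(0)}\rangle_{L^{2}(\Omega_{+})}=\langle d_{f,h}Pv_{1}^{(0)},d_{f,h}v_{1}^{(0)}\rangle_{L^{2}(\Omega_{+})}.
$$
Using $d_{f,h}\tilde\psi_{1}^{(0)}=0$, one has $d_{f,h}v_{1}^{(0)}=(hd\chi_{0})\tilde\psi_{1}^{(0)}$, which is supported in $\supp(\nabla\chi_{0})\subset\{d(\cdot,\partial\Omega_{+})\le\delta_{+}\}$ with $\|d_{f,h}v_{1}^{(0)}\|_{L^{2}(\Omega_{+})}=\mathcal{O}(e^{-\frac{\kappa_{f}-C_{0}\delta_{+}}{h}})$ by \eqref{eq.estim_delta_v10}. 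Hence it remains to bound $\|d_{f,h}Pv_{1}^{(0)}\|_{L^{2}(\supp(\nabla\chi_{0}))}$ by an exponentially small factor with decay rate exceeding $C_{0}\delta_{+}$. Decomposing $Pv_{1}^{(0)}$ on the eigenbasis $(e_{j}^{(0)})_{2\le j\le m_{0}^{D}(\Omega_{+})}$ of $\Delta_{f,h}^{D,(0)}(\Omega_{+})$ for the eigenvalues in $[\lambda_{2}^{(0)}(\Omega_{+}),\nu(h)]$, the forms $d_{f,h}e_{j}^{(0)}$ are concentrated in $\Omega_{-}$ — up to $\mathcal{O}(e^{-c/h})$ they agree with the $d_{f,h}(\chi_{-}^{(0)}\psi_{j}^{(0)})$ of Definition~\ref{de.quasimodes}, which vanish near $\partial\Omega_{+}$ — and the exponential decay estimates of Section~\ref{se.expdecay}, combined with Hypothesis~\ref{hyp.3} (which keeps $d_{Ag}(\cdot,U)$ bounded below near $\partial\Omega_{+}$) and the spectral gap $\lambda_{2}^{(0)}(\Omega_{+})\gtrsim e^{-2\frac{\mathrm{cvmax}-\min_{\Omega_{-}}f}{h}}$ from \eqref{eq:expansion_lambdaj0} and Proposition~\ref{pr.gapO-}, give the required control once $\delta_{+}$ is taken small enough (a further adjustment, as announced in Section~\ref{se.M11}). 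I expect this last step — pinning down how small $d_{f,h}Pv_{1}^{(0)}$ is near $\partial\Omega_{+}$ relative to the size $e^{-\kappa_{f}/h}$ of $d_{f,h}v_{1}^{(0)}$, through a sufficiently sharp Agmon-type estimate for the higher eigenvectors — to be the delicate point; it is vacuous when $m_{0}^{D}(\Omega_{+})=1$, i.e. when the bottom eigenvalue of $\Delta_{f,h}^{N,(0)}(\Omega_{-})$ is simple, in which case $P=0$.
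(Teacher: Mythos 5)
Your reduction is exactly the paper's: you correctly discard the block $\sum_{k\le m_{1}^{N}(\Omega_{-})}b_{k,1}w_{k}$ via Lemma~\ref{le.M11} and almost orthonormality, reduce to $\|\beta(u_{1}^{(0)}-\Pi^{(0)}v_{1}^{(0)})\|_{L^{2}(\Omega_{+})}=\mathcal{O}(e^{-\frac{\kappa_{f}+c}{h}})$, split off the normalization factor $\alpha$, and isolate the term $\|\beta Pv_{1}^{(0)}\|$ with $P=\Pi^{(0)}-\Pi_{0}$. Up to that point everything is correct and matches the paper's proof of Proposition~\ref{pr.dfL2}.

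The gap is in the last step. The paper bounds $\|\beta Pv_{1}^{(0)}\|^{2}$ by the exact Pythagorean identity
\begin{equation*}
\|d_{f,h}(Pv_{1}^{(0)})\|_{L^{2}(\Omega_{+})}^{2}
=\|\beta \Pi^{(0)}v_{1}^{(0)}\|_{L^{2}(\Omega_{+})}^{2}-\lambda_{1}^{(0)}(\Omega_{+})\,\|\Pi_{0}v_{1}^{(0)}\|_{L^{2}(\Omega_{+})}^{2}\,,
\end{equation*}
and then observes that \emph{both} terms on the right equal $\lambda_{1}^{(0)}(\Omega_{+})(1+\mathcal{O}(e^{-\frac{c}{h}}))$ by \eqref{eq:qr_dfhpiv} and \eqref{eq:expansion_lambda10_bis}, so their difference is $\lambda_{1}^{(0)}(\Omega_{+})\,\mathcal{O}(e^{-\frac{c}{h}})=\mathcal{O}(e^{-\frac{2\kappa_{f}+c'}{h}})$ by \eqref{eq.applambda10}. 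This cancellation costs nothing beyond what Sections~\ref{se.matrix} and \ref{se.M21} already provide. Your route instead writes $\|\beta Pv_{1}^{(0)}\|^{2}=\langle d_{f,h}Pv_{1}^{(0)},d_{f,h}v_{1}^{(0)}\rangle$ (a correct identity, but note it is just $\|\beta Pv_{1}^{(0)}\|^{2}$ rewritten, since the cross terms with $(1-\Pi^{(0)})v_{1}^{(0)}$ and $\Pi_{0}v_{1}^{(0)}$ vanish) and then tries to win the missing factor by localizing: $d_{f,h}v_{1}^{(0)}$ lives on $\supp(\nabla\chi_{0})$, so you need $\|d_{f,h}Pv_{1}^{(0)}\|_{L^{2}(\supp\nabla\chi_{0})}\le e^{-\frac{\kappa_{f}+C_{0}\delta_{+}+2c}{h}}$. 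This is where the argument does not close. The available bounds are $\|Pv_{1}^{(0)}\|=\mathcal{O}(e^{-\frac{\kappa_{f}-(\mathrm{cvmax}-\min f)}{h}})$ (from $\lambda_{1}^{(0)}/\lambda_{2}^{(0)}$) and $\|d_{f,h}e_{j}^{(0)}\|_{L^{2}(\Omega_{+})}=\sqrt{\lambda_{j}^{(0)}}\le\sqrt{\nu(h)}$; their product is far larger than $e^{-\kappa_{f}/h}$ in general, so you genuinely need the one-forms $d_{f,h}e_{j}^{(0)}$, $j\ge 2$, to be exponentially small near $\partial\Omega_{+}$ \emph{with a quantified rate}. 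But these one-forms are eigenforms of $\Delta_{f,h}^{D,(1)}(\Omega_{+})$, whose low-lying eigenforms include modes concentrated precisely at $\partial\Omega_{+}$; separating the $j\ge2$ modes from those boundary modes with the required exponential precision is exactly the content of the block analysis of $B$ (Lemmas~\ref{le.M12}, \ref{le.M22}, \ref{le.M11}), and even invoking those lemmas one must control the change of basis between the $e_{j}^{(0)}$ and the $\Pi^{(0)}v_{j}^{(0)}$, whose error terms $\mathcal{O}(e^{-c/h})$ carry unquantified constants $c$ that need not exceed $\kappa_{f}$. As written, the "sufficiently sharp Agmon-type estimate" you defer to is not available off the shelf, and the step is a genuine gap; the paper's cancellation identity is the way to avoid it.
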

\begin{proof}
By definition of the matrix
$B=M(\beta,\mathcal{B}^{(0)},\mathcal{B}^{(1)*})$,
$$
d_{f,h} (\Pi^{(0)}v_{1}^{(0)})=\beta
(\Pi^{(0)}v_{1}^{(0)})=\sum_{k=1}^{m_{1}^{D}(\Omega_{+})}b_{k,1}w_{k}=\sum_{k=m_1^N(\Omega_-)+1}^{m_{1}^{D}(\Omega_{+})}b_{k,1}w_{k}
+ r_h,
$$
with
$\|r_h\|_{L^2(\Omega_+)}=\mathcal{O}(e^{-\frac{\kappa_{f}+c}{h}})$,
this estimate being a consequence of the almost orthonormality of the
one-forms $w_k$, and of Lemma~\ref{le.M11}. Equation~\eqref{eq.dfL2}
is thus equivalent to:
$$\left\|d_{f,h}\left( u_{1}^{(0)}- \Pi^{(0)}v_{1}^{(0)}\right)
  \right\|_{L^{2}(\Omega_{+})}
= \mathcal{O}(e^{-\frac{\kappa_{f}+c}{h}}).$$
Notice that
$$ u_{1}^{(0)}-
\Pi^{(0)}v_{1}^{(0)}=\|\Pi_0v_1^{(0)}\|_{L^2(\Omega_+)}^{-1} \left(
  \Pi_0 - \Pi^{(0)} \right) v_{1}^{(0)} 
+\left(\|\Pi_0v_1^{(0)}\|_{L^2(\Omega_+)}^{-1}-1\right) \Pi^{(0)}v_{1}^{(0)}.$$
We recall that
$\|\Pi_{0}v_{1}^{(0)}\|_{L^2(\Omega_+)}=1+\mathcal{O}(e^{-\frac{c}{h}})$
and $\| d_{f,h}  \Pi^{(0)}v_{1}^{(0)}\|_{L^2(\Omega_+)}=\| \beta
\Pi^{(0)}v_{1}^{(0)}\|_{L^2(\Omega_+)}=\tilde{\mathcal
  O}(e^{-\frac{\kappa_f}{h}})$ (see~\eqref{eq:qr_dfhpiv}). This implies that
$$\left\|d_{f,h}\left( u_{1}^{(0)}- \Pi^{(0)}v_{1}^{(0)}\right)
  \right\|_{L^{2}(\Omega_{+})}
= \left\|d_{f,h}\left(  (\Pi_0-\Pi^{(0)})v_{1}^{(0)} \right)
  \right\|_{L^{2}(\Omega_{+})} (1+\mathcal{O}(e^{-\frac{c}{h}})) +  \mathcal{O}(e^{-\frac{\kappa_{f}+c}{h}}).$$
Moreover, 
using the fact that $\Pi_0 \Pi^{(0)}=\Pi_0$ and $\Pi^{(0)}
-\Pi_0=1_{[\lambda_{2}^{(0)}(\Omega_{+}),+\infty)}(\Delta_{f,h}^{D,(0)}(\Omega_{+}))$
commutes with $\Delta_{f,h}^{D,(0)}(\Omega_{+})$,
\begin{align*}
  \|d_{f,h}( (\Pi_0-\Pi^{(0)})v_{1}^{(0)})\|_{L^{2}(\Omega_{+})}^{2}&= \langle (\Pi^{(0)}-\Pi_{0})v_{1}^{(0)}, \Delta_{f,h}^{D,(0)}(\Omega_{+})(\Pi^{(0)}-\Pi_{0})v_{1}^{(0)}\rangle_{L^2(\Omega_+)}
\\
&=\|\beta
\Pi^{(0)}v_{1}^{(0)}\|_{L^{2}(\Omega_{+})}^{2}-\lambda_{1}^{(0)}(\Omega_+)\|\Pi_0
v_{1}^{(0)}\|^{2}_{
    L^{2}(\Omega_+)}\\
& = \lambda_{1}^{(0)}(\Omega_+) (1+\mathcal{O}(e^{-\frac{c}{h}})) -\lambda_1^{(0)}(\Omega_+) (1+\mathcal{O}(e^{-\frac{c}{h}}))\\
&= \mathcal{O}(e^{-2\frac{\kappa_{f}+c'}{h}}).
\end{align*}
The third equality is obtained from~\eqref{eq:qr_dfhpiv} and the last
one from the estimate on the bottom of the spectrum in
Lemma~\ref{le.infsigD0}. This concludes the proof of~\eqref{eq.dfL2}.
\end{proof}
To perform a bootstrap argument to extend the previous result to
stronger norms, we need the intermediate lemma:
\begin{lemme}
  \label{le.L2W}
For any $n\in \nz$, there exists $C_{n}>0$ and $N_{n}\in\nz$, such
that
$$
\forall u\in F^{(1)}=\Ran 1_{[0,\nu(h)]}(\Delta_{f,h}^{D,(1)}(\Omega_{+}))\,,\quad
\|u\|_{W^{n,2}(\Omega_{+})}\leq C_{n}h^{-N_{n}}\|u\|_{L^{2}(\Omega_{+})}\,.
$$
\end{lemme}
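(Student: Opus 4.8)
The plan is to combine the spectral localisation of $u$ with elliptic regularity for the relative boundary value problem of the Hodge Laplacian, the power of $h^{-1}$ being tracked along a finite bootstrap.

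First I would treat $n=1$ directly from the quadratic form. For $u\in F^{(1)}$ one has $\|d_{f,h}u\|_{L^{2}(\Omega_{+})}^{2}+\|d_{f,h}^{*}u\|_{L^{2}(\Omega_{+})}^{2}=\langle u,\Delta_{f,h}^{D,(1)}(\Omega_{+})u\rangle_{L^{2}(\Omega_{+})}\le \nu(h)\|u\|_{L^{2}(\Omega_{+})}^{2}\le h\|u\|_{L^{2}(\Omega_{+})}^{2}$. Writing $d_{f,h}=(df\wedge\,\cdot\,)+hd$ and $d_{f,h}^{*}=\mathbf{i}_{\nabla f}+hd^{*}$, this gives $\|hdu\|_{L^{2}(\Omega_{+})}+\|hd^{*}u\|_{L^{2}(\Omega_{+})}\le C\|u\|_{L^{2}(\Omega_{+})}$ for $h\le h_{0}$, hence $\|du\|_{L^{2}(\Omega_{+})}+\|d^{*}u\|_{L^{2}(\Omega_{+})}\le Ch^{-1}\|u\|_{L^{2}(\Omega_{+})}$, and the equivalence of $\sqrt{\|u\|^{2}+\|du\|^{2}+\|d^{*}u\|^{2}}$ with the $W^{1,2}(\Omega_{+})$-norm recalled in Section~\ref{se.assresult} yields the statement with $N_{1}=1$.

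For $n\ge 2$ I would argue by induction on $n$, using at each step the equation satisfied by $u$. Since $F^{(1)}=\Ran 1_{[0,\nu(h)]}(\Delta_{f,h}^{D,(1)}(\Omega_{+}))$ is finite dimensional (Proposition~\ref{pr.number}), invariant under $\Delta_{f,h}^{D,(1)}(\Omega_{+})$, and consists of finite combinations of smooth eigenforms, $u$ is smooth up to $\partial\Omega_{+}$ and $g:=\Delta_{f,h}^{D,(1)}(\Omega_{+})u$ belongs to $F^{(1)}$ with $\|g\|_{L^{2}(\Omega_{+})}\le \nu(h)\|u\|_{L^{2}(\Omega_{+})}\le h\|u\|_{L^{2}(\Omega_{+})}$. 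Using~\eqref{eq:hodge_witten} to write $\Delta_{f,h}^{D,(1)}(\Omega_{+})=h^{2}\Delta_{\textrm{H}}^{(1)}+L$, where $L$ has order $\le 1$, its order-one part carrying a coefficient $\mathcal{O}(h)$ and its order-zero part a coefficient $\mathcal{O}(1)$, one gets $h^{2}\Delta_{\textrm{H}}^{(1)}u=g-Lu$, so that for every $k\ge 0$
\[ \|\Delta_{\textrm{H}}^{(1)}u\|_{W^{k,2}(\Omega_{+})}\le h^{-2}\Bigl(\|g\|_{W^{k,2}(\Omega_{+})}+Ch\|u\|_{W^{k+1,2}(\Omega_{+})}+C\|u\|_{W^{k,2}(\Omega_{+})}\Bigr), \]
where $\|g\|_{W^{k,2}(\Omega_{+})}$ is estimated by applying the inductive hypothesis to $g\in F^{(1)}$ at level $k$. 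On the boundary $\mathbf{t}u|_{\partial\Omega_{+}}=0$ and, since $\mathbf{t}d_{f,h}^{*}u|_{\partial\Omega_{+}}=0$ and $d_{f,h}^{*}=\mathbf{i}_{\nabla f}+hd^{*}$, one has $\mathbf{t}d^{*}u|_{\partial\Omega_{+}}=-h^{-1}\mathbf{t}(\mathbf{i}_{\nabla f}u)|_{\partial\Omega_{+}}$. The relative boundary conditions $\mathbf{t}u=0$, $\mathbf{t}d^{*}u=0$ satisfy the Lopatinski--Shapiro condition for $\Delta_{\textrm{H}}^{(1)}$, so the elliptic a priori estimate for this boundary value problem on the smooth bounded domain $\Omega_{+}$ gives, with $k=n-2$,
\[ \|u\|_{W^{n,2}(\Omega_{+})}\le C\Bigl(\|\Delta_{\textrm{H}}^{(1)}u\|_{W^{n-2,2}(\Omega_{+})}+\|\mathbf{t}u\|_{W^{n-1/2,2}(\partial\Omega_{+})}+\|\mathbf{t}d^{*}u\|_{W^{n-3/2,2}(\partial\Omega_{+})}+\|u\|_{W^{n-1,2}(\Omega_{+})}\Bigr). \]
Using $\mathbf{t}u=0$, the trace theorem (bounding $\|\mathbf{t}(\mathbf{i}_{\nabla f}u)\|_{W^{n-3/2,2}(\partial\Omega_{+})}$ by $C\|u\|_{W^{n-1,2}(\Omega_{+})}$), the previous display, and the inductive hypothesis at levels $n-1$ and $n-2$, one obtains $\|u\|_{W^{n,2}(\Omega_{+})}\le C_{n}h^{-N_{n}}\|u\|_{L^{2}(\Omega_{+})}$ for a suitable integer $N_{n}$, which closes the induction.

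The only point requiring care is the bookkeeping of the powers of $h^{-1}$ coming from the factor $h^{-2}$ in front of $g-Lu$ and from $\mathbf{t}d^{*}u=-h^{-1}\mathbf{t}(\mathbf{i}_{\nabla f}u)$: each inductive step raises the exponent $N_{n}$ by a bounded amount, so after finitely many steps $N_{n}$ is still a finite integer, which is all that is claimed. One could bypass the Hodge Laplacian and invoke elliptic regularity directly for the semiclassical boundary value problem defining $\Delta_{f,h}^{D,(1)}(\Omega_{+})$; the computation is the same, but passing through $\Delta_{\textrm{H}}$ keeps the argument parallel to the bootstrap already used in the proof of Proposition~\ref{prop:u10}.
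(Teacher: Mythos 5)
Your proof is correct and follows essentially the same route as the paper's: the $W^{1,2}$ bound from the quadratic form via $d_{f,h}=hd+df\wedge$, $d_{f,h}^{*}=hd^{*}+\mathbf{i}_{\nabla f}$, then a bootstrap through the elliptic regularity of the Hodge Laplacian with relative boundary conditions, using $\mathbf{t}d^{*}u=-h^{-1}\mathbf{t}(\mathbf{i}_{\nabla f}u)$. The only cosmetic difference is that the paper first reduces to an orthonormal eigenbasis of $F^{(1)}$ and bootstraps each eigenvector, whereas you work directly with a general $u\in F^{(1)}$ via the invariance of $F^{(1)}$ under $\Delta_{f,h}^{D,(1)}(\Omega_{+})$; both are fine.
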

\begin{proof}
Let us introduce an orthonormal basis $(e_{k})_{1\leq k\leq
  m_{1}^{D}(\Omega_{+})}$ of eigenvectors of $\Delta_{f,h}^{D,(1)} (\Omega_+)$
associated with the small eigenvalues $\lambda_{k}^{(1)}(\Omega_+)\leq \nu(h)$:
$\Delta_{f,h}^{D,(1)}e_{k}=\lambda_{k}^{(1)}e_{k}$. We have 
$\|d_{f,h}e_{k}\|^{2}_{L^{2}(\Omega_+)}+\|d_{f,h}^{*}e_{k}\|_{L^{2}(\Omega_{+})}^{2}
= \lambda_{k}^{(1)} \le \nu(h)$.
For any $u \in
F^{(1)}$, there exists some reals $(u_k)_{1 \le k \le m_1^D(\Omega_+)}$
  such that
$$
u=\sum_{k=1}^{m_{1}^{D}(\Omega_{+})}u_{k}e_{k}\quad \text{with}\quad
\sum_{k=1}^{m_{1}^{D}(\Omega_{+})}|u_{k}|^{2}=\|u\|_{L^{2}(\Omega_{+})}^{2}.
$$
Lemma~\ref{le.L2W} will be proven if one can show that for all $n \in
\nz$,  there exists $C_{n}>0$ and $N_{n}\in\nz$, such
that for all $k \in \{1, \ldots,  m_1^D(\Omega_+)\}$, $\|e_{k}\|_{W^{n,2}(\Omega_{+})}\leq
C_{n}h^{-N_{n}}$.
From 
$$
4\||\nabla f|\,e_{k}\|^{2}_{L^{2}(\Omega_{+})}+2\|d_{f,h}e_{k}\|^{2}_{L^{2}(\Omega_+)}+2\|d_{f,h}^{*}e_{k}\|_{L^{2}(\Omega_{+})}^{2}\geq h^{2}
\left[\|d e_{k}\|_{L^{2}(\Omega_{+})}^{2}+\|d^{*}
e_{k}\|_{L^{2}(\Omega_{+})}^{2}
\right]
$$
(which is obtained from the formulas~\eqref{eq:dfh_d} and~\eqref{eq:dfh*_d*} which
relate $d_{f,h}$ to $d$ and $d_{f,h}^*$ to $d^*$)
we deduce $\|e_{k}\|_{W^{1,2}(\Omega_{+})}\leq C h^{-1}$\,. 
Then the equation $\Delta_{f,h}^{D,(1)}(\Omega_{+})e_{k}=\lambda_{k}^{(1)}e_{k}$
can be written
$$
\left\{
\begin{array}[c]{l}
\Delta^{(1)}_H e_{k}=r_{k}(h)\\
\mathbf{t}e_{k}\big|_{\partial \Omega_{+}}=0,\,
\mathbf{t}d^{*}e_{k}\big|_{\partial \Omega_{+}}= \rho_{k}(h), 
\end{array}
\right.
$$
with $\|r_{k}(h)\|_{L^{2}(\Omega_{+})}
+\|\rho_{k}(h)\|_{W^{1/2,2}(\partial \Omega_{+})}=
\mathcal{O}(h^{-2})$. The estimate on $\rho_k(h)$ is a consequence of
$0= \mathbf{t}d^*_{f,h} e_k = h \mathbf{t} d^* e_k +\mathbf{i}_{\nabla
  f} e_k$ so that $ \|\rho_{k}(h)\|_{W^{1/2,2}(\partial \Omega_{+})}=h^{-1}\|\mathbf{i}_{\nabla
  f} e_k
\|_{W^{1/2,2}(\partial \Omega_{+})}\le C h^{-1}
\|e_k\|_{W^{1,2}(\Omega_+)} \le C' h^{-2}$. The estimate on $r_k(h)$
comes from the relation~\eqref{eq:hodge_witten} between the Hodge and the
 Witten Laplacians. The
elliptic regularity of the above system (see for example \cite[Theorem 2.2.6]{Schz}) 
implies $\|e_{k}\|_{W^{2,2}(\Omega_{+})}=\mathcal{O}(h^{-2})$. Finally, the
result for a general $n\in\nz$ is obtained by a bootstrap argument.
\end{proof}
We are now in position to restate the result of Proposition~\ref{pr.dfL2}
in terms of the $W^{n,2}(\mathcal{V})$-norm.
\begin{proposition}
  \label{pr.dfpsi}
Let $(\psi_{k}^{(1)})_{m_{1}^{N}(\Omega_{-})+1\leq k\leq
  m_{1}^{D}(\Omega_{+})}$ be the orthonormal basis of eigenvectors
chosen in Section~\ref{se.treig} and let $\chi_{+}$ be the cut-off
function of Definition~\ref{de.quasimodes}.   For any
$n\in\nz$\,, there exists a constant $C_{n}>0$ such that
$$
\left\|d_{f,h}u_{1}^{(0)}-\sum_{k=m_{1}^{N}(\Omega_{-})+1}^{m_{1}^{D}(\Omega_{+})}
b_{k,1}\psi_{k}^{(1)} \right\|_{W^{n,2}(\mathcal{V})}\leq C_{n}e^{-\frac{\kappa_{f}+c}{h}}\,.
$$
where $\mathcal{V}$ is any neighborhood of
$\partial \Omega_{+}$ contained in $\left\{\chi_{+}=1\right\}$, $c$ is
positive constant and, we recall, the
coefficients $b_{k,1}$'s defined by~\eqref{eq:bkj} satisfy (see Proposition~\ref{pr.M21}):
$$
b_{k,1}=-\frac{h\int_{\partial\Omega_{+}} e^{-\frac{f(\sigma)}{h}}\mathbf{i}_{n}
  \psi_{k}^{(1)}(\sigma)~d\sigma}{\left(\int_{\Omega_{+}}e^{-2\frac{f(x)}{h}}~dx\right)^{1/2}}
+ \mathcal{O}(e^{-\frac{\kappa_f+c}{h}}).
$$
\end{proposition}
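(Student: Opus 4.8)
The plan is to bootstrap the $L^{2}$-estimate of Proposition~\ref{pr.dfL2} up to every Sobolev norm and then to trade the dual basis $(w_{k})_{k}$ for the eigenfunctions $(\psi_{k}^{(1)})_{k}$, which coincide on $\mathcal{V}$ with the truncated one-forms $v_{k}^{(1)}=\chi_{+}\psi_{k}^{(1)}$. The starting point is that $\omega:=d_{f,h}u_{1}^{(0)}$ lies in $F^{(1)}$: since $u_{1}^{(0)}\in F^{(0)}$, the identity~\eqref{eq.betaproj} gives $\omega=\beta u_{1}^{(0)}\in F^{(1)}$, and the one-form $\sum_{k=m_{1}^{N}(\Omega_{-})+1}^{m_{1}^{D}(\Omega_{+})}b_{k,1}w_{k}$ also lies in $F^{(1)}$ because each $w_{k}$ does. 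Hence Proposition~\ref{pr.dfL2} together with Lemma~\ref{le.L2W} (which costs only a power of $h$) upgrades~\eqref{eq.dfL2} to $\bigl\|\,d_{f,h}u_{1}^{(0)}-\sum_{k=m_{1}^{N}(\Omega_{-})+1}^{m_{1}^{D}(\Omega_{+})}b_{k,1}w_{k}\bigr\|_{W^{n,2}(\Omega_{+})}=\mathcal{O}(e^{-(\kappa_{f}+c)/h})$ for every $n\in\nz$.

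Next I would replace each $w_{k}$ by $v_{k}^{(1)}$, for $k\in\left\{m_{1}^{N}(\Omega_{-})+1,\dots,m_{1}^{D}(\Omega_{+})\right\}$. By the almost orthonormality of $\mathcal{B}_{1}$, the difference $w_{k}-\Pi^{(1)}v_{k}^{(1)}$ lies in $F^{(1)}$ and is $\mathcal{O}(e^{-c/h})$ in $L^{2}(\Omega_{+})$, so Lemma~\ref{le.L2W} makes it $\mathcal{O}(e^{-c/h})$ in $W^{n,2}(\Omega_{+})$. The decisive estimate is then $\|v_{k}^{(1)}-\Pi^{(1)}v_{k}^{(1)}\|_{W^{n,2}(\Omega_{+})}=\mathcal{O}(e^{-c/h})$ for every $n$. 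To prove it, set $r_{k}=v_{k}^{(1)}-\Pi^{(1)}v_{k}^{(1)}=1_{(\nu(h),+\infty)}(\Delta_{f,h}^{D,(1)}(\Omega_{+}))v_{k}^{(1)}$, which lies in $D(\Delta_{f,h}^{D,(1)}(\Omega_{+}))$ and satisfies $\|r_{k}\|_{L^{2}(\Omega_{+})}=\mathcal{O}(e^{-c/h})$ by Proposition~\ref{pr.quasi}-3). Since $v_{k}^{(1)}=\chi_{+}\psi_{k}^{(1)}$ and $\psi_{k}^{(1)}$ solves $\Delta_{f,h}^{D,(1)}(\Omega_{+}\setminus\overline{\Omega_{-}})\psi_{k}^{(1)}=\lambda_{k}\psi_{k}^{(1)}$, one has $\Delta_{f,h}^{D,(1)}(\Omega_{+})v_{k}^{(1)}=\lambda_{k}v_{k}^{(1)}+g_{k}$ with $g_{k}=[\Delta_{f,h}^{(1)},\chi_{+}]\psi_{k}^{(1)}$ supported in $\supp(d\chi_{+})$, a compact subset of $\Omega_{+}\setminus\overline{\Omega_{-}}$ at positive Agmon distance from $\mathcal{V}$; Proposition~\ref{prop:exp_decay} combined with interior elliptic regularity for the eigenvalue equation then gives $\|g_{k}\|_{W^{n,2}(\Omega_{+})}=\mathcal{O}(e^{-c/h})$ for every $n$. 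Therefore $\Delta_{f,h}^{D,(1)}(\Omega_{+})r_{k}=\lambda_{k}r_{k}+g_{k}-1_{[0,\nu(h)]}(\Delta_{f,h}^{D,(1)}(\Omega_{+}))g_{k}$ is $\mathcal{O}(e^{-c/h})$ in every $W^{n,2}(\Omega_{+})$-norm (the last term by Lemma~\ref{le.L2W} applied to the element $1_{[0,\nu(h)]}(\Delta_{f,h}^{D,(1)}(\Omega_{+}))g_{k}$ of $F^{(1)}$), and a bootstrap — starting from the $W^{1,2}$-bound furnished by $\|d_{f,h}r_{k}\|_{L^{2}(\Omega_{+})}^{2}+\|d_{f,h}^{*}r_{k}\|_{L^{2}(\Omega_{+})}^{2}=\langle r_{k}\,,\,\Delta_{f,h}^{D,(1)}(\Omega_{+})r_{k}\rangle_{L^{2}(\Omega_{+})}$ and then invoking repeatedly the elliptic regularity of the Dirichlet Hodge Laplacian exactly as in the proof of Lemma~\ref{le.L2W} — yields $\|r_{k}\|_{W^{n,2}(\Omega_{+})}=\mathcal{O}(e^{-c/h})$.

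To conclude, I would combine the previous two steps with the bound $|b_{k,1}|=\mathcal{O}(h^{-N}e^{-\kappa_{f}/h})$, which follows from the expression for $b_{k,1}$ in Proposition~\ref{pr.M21} and the lower bound of Lemma~\ref{le.intminmaj} on $\int_{\Omega_{+}}e^{-2f/h}$; this gives $\bigl\|\,d_{f,h}u_{1}^{(0)}-\sum_{k=m_{1}^{N}(\Omega_{-})+1}^{m_{1}^{D}(\Omega_{+})}b_{k,1}v_{k}^{(1)}\bigr\|_{W^{n,2}(\Omega_{+})}=\mathcal{O}(e^{-(\kappa_{f}+c)/h})$. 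On $\mathcal{V}\subset\left\{\chi_{+}=1\right\}$ one has $v_{k}^{(1)}=\psi_{k}^{(1)}$ for every $k\in\left\{m_{1}^{N}(\Omega_{-})+1,\dots,m_{1}^{D}(\Omega_{+})\right\}$, so restricting this estimate to $\mathcal{V}$ yields the claimed inequality, with $b_{k,1}$ expanded as in Proposition~\ref{pr.M21}.

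The main obstacle is the Sobolev bound on $r_{k}=v_{k}^{(1)}-\Pi^{(1)}v_{k}^{(1)}$: being a spectral tail rather than an eigenfunction, $r_{k}$ cannot be bootstrapped on directly, so one must show that its source term $\Delta_{f,h}^{D,(1)}(\Omega_{+})r_{k}$ is exponentially small in \emph{every} $W^{n,2}$-norm. This forces one to combine the Agmon decay of $\psi_{k}^{(1)}$ away from $\partial\Omega_{+}$ (Proposition~\ref{prop:exp_decay}) with interior elliptic regularity in order to control the commutator $g_{k}$ in high norms, and to use Lemma~\ref{le.L2W} on the spectrally truncated part; ensuring that the accumulated negative powers of $h$ and of $\nu(h)$ do not spoil the final $\mathcal{O}(e^{-(\kappa_{f}+c)/h})$ — which is legitimate since $h\log\nu(h)\to 0$ — is the delicate bookkeeping.
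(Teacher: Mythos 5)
Your proof is correct and follows essentially the same route as the paper's: upgrade the $L^{2}$ estimate of Proposition~\ref{pr.dfL2} to $W^{n,2}$ via Lemma~\ref{le.L2W}, replace $w_{k}$ by $\Pi^{(1)}v_{k}^{(1)}$, establish $\|v_{k}^{(1)}-\Pi^{(1)}v_{k}^{(1)}\|_{W^{n,2}(\Omega_{+})}=\mathcal{O}(e^{-\frac{c}{h}})$ by an elliptic bootstrap for the non-homogeneous Dirichlet Hodge problem, and restrict to $\mathcal{V}$ where $v_{k}^{(1)}=\psi_{k}^{(1)}$. The only (harmless) variation is that you control the source term of the bootstrap through the commutator $[\Delta_{f,h}^{(1)},\chi_{+}]\psi_{k}^{(1)}$ together with interior elliptic regularity on $\supp(d\chi_{+})$, whereas the paper invokes the global smooth-weight decay of Lemma~\ref{lem:dec_exp_Cinfty}; both yield the same $\tilde{\mathcal{O}}(e^{-\frac{c}{h}})$ bounds in every Sobolev norm.
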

\begin{proof}
From Proposition~\ref{pr.dfL2} and Lemma~\ref{le.L2W}, we deduce 
$$
\left\|d_{f,h}u_{1}^{(0)}-\sum_{k=m_{1}^{N}(\Omega_{-})+1}^{m_{1}^{D}(\Omega_{+})}b_{k,1}w_{k}
\right\|_{W^{n,2}(\Omega_{+})}\leq 
C_{n}h^{-N_{n}}e^{-\frac{\kappa_{f}+c}{h}}\leq C_{n}'e^{-\frac{\kappa_{f}+c/2}{h}}\,.
$$
Since, by the almost orthonormality of the family
$(\Pi^{(1)}v_{k}^{(1)})_{1 \le k \le m_1^D(\Omega_+)}$,  $\|w_{k}-\Pi^{(1)}v_{k}^{(1)}\|_{L^{2}(\Omega_{+})}=\mathcal{O}(e^{-\frac{c}{h}})$ and
$\max\left\{|b_{k,1}|, m_{1}^{N}(\Omega_{-})+1 \le k \le
  m_1^D(\Omega_+)\right\}=\tilde{\mathcal{O}}(e^{-\frac{\kappa_{f}}{h}})$
(see Proposition~\ref{pr.M21}),
Lemma~\ref{le.L2W} also leads to
$$
\left\|d_{f,h}u_{1}^{(0)}-\sum_{k=m_{1}^{N}(\Omega_{-})+1}^{m_{1}^{D}(\Omega_{+})}b_{k,1}\Pi^{(1)}v_{k}^{(1)}
\right\|_{W^{n,2}(\Omega_{+})}\leq  C_{n}''e^{-\frac{\kappa_{f}+c/2}{h}}\,.
$$
By recalling the definition of
$v_{k}^{(1)}=\chi_{+}\psi_{k}^{(1)}$\,, it suffices now to check that
$\|v_{k}^{(1)}-\Pi^{(1)}v_{k}^{(1)}\|_{W^{n,2}(\Omega_{+})}$ is of order
$\mathcal{O}(e^{-\frac{c'}{h}})$ for some $c'>0$. We already know
$$
\|v_{k}^{(1)}-\Pi^{(1)}v_{k}^{(1)}\|_{L^{2}(\Omega_{+})}=\mathcal{O}(e^{-\frac{c}{h}})\,,
$$
from Proposition~\ref{pr.quasi}. 

For the $W^{1,2}(\Omega_+)$ estimates, notice that
$\|d_{f,h}v_{k}^{(1)}\|_{L^{2}(\Omega_{+})}^{2}
+\|d_{f,h}^{*}v_{k}^{(1)}\|^{2}_{L^{2}(\Omega_{+})}=\langle v_k^{1},
\Delta_{f,h}^{D,(1)}(\Omega_+) v_k^{(1)} \rangle_{L^2(\Omega_+)}= \mathcal{O}(e^{-\frac{c}{h}})$
(again from Proposition~\ref{pr.quasi}) while $\Pi^{(1)}v_{k}^{(1)}\in
F^{(1)} = \Ran 1_{[0,\nu(h)]}(\Delta_{f,h}^{D,(1)}(\Omega_{+}))$ implies
$$
\|d_{f,h}\Pi^{(1)}v_{k}^{(1)}\|_{L^{2}(\Omega_{+})}^{2}
+\|d_{f,h}^{*}\Pi^{(1)}v_{k}^{(1)}\|^{2}_{L^{2}(\Omega_{+})}=\langle \Pi^{(1)}v_k^{1},
\Delta_{f,h}^{D,(1)}(\Omega_+) \Pi^{(1)}v_k^{(1)} \rangle_{L^2(\Omega_+)}
=\mathcal{O}(e^{-\frac{c}{h}}).
$$
We deduce
\begin{align*}
&\|d(v_{k}^{(1)}-\Pi^{(1)}v_{k}^{(1)})\|_{L^{2}(\Omega_{+})}^{2}+ 
\|d^{*}(v_{k}^{(1)}-\Pi^{(1)}v_{k}^{(1)})\|_{L^{2}(\Omega_{+})}^{2}
\\
&\leq
\frac{2}{h^{2}}\left[\|d_{f,h}(v_{k}^{(1)}-\Pi^{(1)}v_{k}^{(1)})\|_{L^{2}(\Omega_{+})}^{2}
+\|d_{f,h}^{*}(v_{k}^{(1)}-\Pi^{(1)}v_{k}^{(1)})\|^{2}_{L^{2}(\Omega_{+})}+
2\||\nabla f|\, (v_{k}^{(1)}-\Pi^{(1)}v_{k}^{(1)})\|_{L^{2}(\Omega_{+})}^{2}\right]\\
&\leq \frac{Ce^{-2\frac{c}{h}}}{h^{2}}\,.
\end{align*}
This gives the $W^{1,2}$-estimate
$\|v_{k}^{(1)}-\Pi^{(1)}v_{k}^{(1)}\|_{W^{1,2}(\Omega_{+})}=\tilde{\mathcal{O}}(e^{-\frac{c}{h}})$.

The $W^{n,2}$-estimates ($n \ge 2$) are then obtained by an argument
based on the elliptic regularity of the (non-homogeneous) Dirichlet Hodge Laplacian.
On the one hand,
$\|\Pi^{(1)}v_{k}^{(1)}\|_{L^2(\Omega_+)}=1 + {\mathcal
  O}(e^{-\frac{c}{h}})$, $\Pi^{(1)}v_{k}^{(1)} \in F^{(1)}$ and
$\|\Delta_{f,h}^{D,(1)}|_{F^{(1)}}\| = {\mathcal O}(e^{-\frac{c}{h}})$
(see Proposition~\ref{pr.number}) imply
$\|\Delta_{f,h}^{D,(1)}\Pi^{(1)}v_{k}^{(1)}\|_{L^2(\Omega_+)}=\mathcal{O}(e^{-\frac{c}{h}})$.
 Lemma~\ref{le.L2W} can then be used to obtain 
$\|\Delta_{f,h}^{D,(1)}\Pi^{(1)}v_{k}^{(1)}\|_{W^{n,2}(\Omega_+)}=\tilde{\mathcal{O}}(e^{-\frac{c}{h}})$
for any integer~$n$. Here, $\|\Delta_{f,h}^{D,(1)}|_{F^{(1)}}\|=\sup_{
  u \in F^{(1)}} \frac{\|\Delta_{f,h}^{D,(1)} u\|_{L^2(\Omega_+)}}{\| u\|_{L^2(\Omega_+)}}$ is simply the spectral radius of the
finite-dimensional operator $\Delta_{f,h}^{D,(1)}: F^{(1)} \to F^{(1)}$.
On the other hand, Lemma~\ref{lem:dec_exp_Cinfty} below implies
$\|\Delta_{f,h}^{D,(1)}v_{k}^{(1)}\|_{W^{n,2}(\Omega_+)}=\|\Delta_{f,h}^{D,(1)}(\chi_{+}\psi_{k}^{(1)}) \|_{W^{n,2}(\Omega_+)}=\mathcal{O}(e^{-\frac{c}{h}})$
for any integer~$n$, using the arguments of the proofs of
Proposition~\ref{pr.almostorhtO-} or~\ref{pr.decayO+-} to get the estimate on the truncated eigenvector from the
exponential decay of the eigenvector. Thus, for $n \ge 1$, if
$\|(v_{k}^{(1)}-\Pi^{(1)}v_{k}^{(1)})\|_{W^{n,2}(\Omega_{+})}=\tilde{\mathcal{O}}(e^{-\frac{c}{h}})$,
then the difference
$v_{k}^{(1)}-\Pi^{(1)}v_{k}^{(1)}$ satisfies:
$$
\left\{
\begin{array}[c]{l}
\Delta_H^{(1)} (v_{k}^{(1)}-\Pi^{(1)}v_{k}^{(1)})=r_{k}(h)\,,\\
\mathbf{t}(v_{k}^{(1)}-\Pi^{(1)}v_{k}^{(1)})=0, \, \mathbf{t}d^{*}(v_{k}^{(1)}-\Pi_{k}^{(1)}v_{k}^{(1)})=\varrho_{k}(h),
\end{array}
\right.
$$
with $\|r_{k}(h)\|_{W^{n,2}(\Omega_{+})}=\tilde{\mathcal{O}}(e^{-\frac{c}{h}})$ and
$\|\varrho_{k}(h)\|_{W^{n-1/2,2}(\Omega_{+})}=\tilde{\mathcal{O}}(e^{-\frac{c}{h}})$.\\
This
implies $\|(v_{k}^{(1)}-\Pi^{(1)}v_{k}^{(1)})\|_{W^{n+2,2}(\Omega_{+})}=\tilde{\mathcal{O}}(e^{-\frac{c}{h}})$.
A bootstrap argument (induction on $n$) thus shows that for any $n$,
$\|v_{k}^{(1)}-\Pi^{(1)}v_{k}^{(1)}\|_{W^{n,2}(\Omega_{+})}=\tilde{\mathcal{O}}(e^{-\frac{c}{h}})\leq
\mathcal{O}(e^{-\frac{c'}{h}})$ for any $c'<c$.
\end{proof}
We end this section with an estimate on the exponential decay (in a
neighborhood of $\partial \Omega_-$) of the
eigenvectors of $\Delta^{D,(1)}_{f,h}(\Omega_+ \setminus \overline{\Omega_-})$, in ${\mathcal C}^\infty$
norm. This is a refinement of Proposition~\ref{prop:exp_decay}, which
was needed in the previous proof.
\begin{lemme}\label{lem:dec_exp_Cinfty}
For every $\varepsilon\in (0,1)$, there exists a function
$\varphi_{\varepsilon}\in
\mathcal{C}^{\infty}_{0}(\Omega_{+}\setminus\overline{\Omega}_{-})$
such that for all $x\in \overline{\Omega_{+}}\setminus\Omega_{-}$,
\begin{align*}
&
  \quad
  |\nabla \varphi_{\varepsilon}(x)|\leq (1-\varepsilon)|\nabla
  f(x)|\,,\\
& \left(d(x,\partial \Omega_{+}\cup \partial \Omega_{-})\leq
\frac{\varepsilon}{2}\right)\Rightarrow (\varphi_{\varepsilon}(x)=0)\,,\\
&\varphi_{\varepsilon}(x)\geq 0 \text{ and } d_{Ag}(x,\partial
\Omega_{+}\cup \partial \Omega_{-})-C\varepsilon\leq
\varphi_{\varepsilon}(x)\,,
\end{align*}
where $C>0$ is a constant independent of $\varepsilon$\,.
For every $n\in \nz$\,, and once $\varphi_{\varepsilon}$ is fixed, 
 there exists $C_{\varepsilon,n}>0$ and $N_{n}>0$ 
 independent of $h \in [0,h_0]$ such that
every normalized eigenvector $\psi$ of
$\Delta_{f,h}^{D,(1)}(\Omega_{+}\setminus\overline{\Omega_{-}})$
associated with an eigenvalue $\lambda \in [0,\nu(h)]$ satisfies
$$
\|e^{\frac{\varphi_{\varepsilon}}{h}}\psi\|_{W^{n,2}(\Omega_{+}\setminus\overline{\Omega_{-}})}
\leq C_{\varepsilon,n} h^{-N_n}.
$$
\end{lemme}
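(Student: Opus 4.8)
The plan is to separate the construction of the weight $\varphi_{\varepsilon}$ from the weighted elliptic estimates, and to obtain the $W^{n,2}$-bounds by bootstrapping from an $L^{2}$/$W^{1,2}$ estimate. For the construction, write $\Gamma=\partial\Omega_{+}\cup\partial\Omega_{-}=\partial(\Omega_{+}\setminus\overline{\Omega_{-}})$ and $d(x)=d_{Ag}(x,\Gamma)$, which by Lemma~\ref{le.dist} is Lipschitz with $|\nabla d|\le|\nabla f|$ almost everywhere. By Hypothesis~\ref{hyp.0}, $|\nabla f|$ is bounded and bounded away from $0$ on the compact set $\overline{\Omega_{+}}\setminus\Omega_{-}$; set $c_{1}:=\min_{\overline{\Omega_{+}}\setminus\Omega_{-}}|\nabla f|^{2}>0$. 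I would mollify $d$ at a small scale, apply a positive-part truncation $g\mapsto[g-a]_{+}$ with $a$ of order $\varepsilon$ (which kills the function near $\Gamma$ while keeping the pointwise bound $|\nabla[g-a]_{+}|\le|\nabla g|$ a.e.), re-mollify, and finally rescale by a constant slightly below $1$. Choosing the mollification scales small enough in terms of $\varepsilon$ and of the modulus of continuity of $\nabla f$, this produces $\varphi_{\varepsilon}\in\mathcal C^{\infty}_{0}(\Omega_{+}\setminus\overline{\Omega_{-}})$ with $|\nabla\varphi_{\varepsilon}|\le(1-\varepsilon)|\nabla f|$, with $\varphi_{\varepsilon}\equiv0$ on $\{d\le\varepsilon/2\}$, and with $0\le\varphi_{\varepsilon}(x)$ and $d(x)-C\varepsilon\le\varphi_{\varepsilon}(x)$. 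For later use I also record $\varphi_{\varepsilon}(x)\le(1-\varepsilon)d(x)+C\varepsilon\le d_{Ag}(x,\partial\Omega_{+})+C\varepsilon$ and that $\|\varphi_{\varepsilon}\|_{L^{\infty}}$ is bounded independently of $\varepsilon$.

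\textbf{$L^{2}$ and $W^{1,2}$ estimates.} I would apply the Dirichlet case of Lemma~\ref{le.Agmon} on $\Omega=\Omega_{+}\setminus\overline{\Omega_{-}}$ with $\varphi=(1-\alpha h)\varphi_{\varepsilon}$ and $\omega=\psi$. The eigenvalue satisfies $\lambda\le\nu(h)\le h$. On $\partial\Omega_{-}$, the outward normal of the shell is $-n$, so $\partial f/\partial n=-\partial_{n}f<0$ there by Hypothesis~\ref{hyp.0}, and since $\varphi_{\varepsilon}\equiv0$ near $\partial\Omega_{-}$ the corresponding boundary term equals $h\int_{\partial\Omega_{-}}\langle\psi,\psi\rangle\,\partial_{n}f\,d\sigma\ge0$ and may be dropped. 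Near $\partial\Omega_{+}$ one also has $\varphi_{\varepsilon}\equiv0$, so the $\partial\Omega_{+}$ boundary term is bounded by $Ch\|\psi\|_{W^{1,2}(\Omega_{+}\setminus\overline{\Omega_{-}})}^{2}$ via the trace inequality; the Lie-derivative terms contribute $\mathcal O(h)\|e^{\varphi/h}\psi\|_{L^{2}}^{2}$; and $|\nabla f|^{2}-|\nabla\varphi|^{2}\ge\varepsilon(2-\varepsilon)|\nabla f|^{2}\ge\varepsilon c_{1}$. For $h$ small this gives
\[
\tfrac{\varepsilon c_{1}}{2}\|e^{\varphi/h}\psi\|_{L^{2}}^{2}+h^{2}\big(\|de^{\varphi/h}\psi\|_{L^{2}}^{2}+\|d^{*}e^{\varphi/h}\psi\|_{L^{2}}^{2}\big)\le Ch\,\|\psi\|_{W^{1,2}(\Omega_{+}\setminus\overline{\Omega_{-}})}^{2}.
\]
By Proposition~\ref{prop:exp_decay} (or directly from $\lambda\le h$ and the relations between $d_{f,h},d_{f,h}^{*}$ and $d,d^{*}$), $\|\psi\|_{W^{1,2}(\Omega_{+}\setminus\overline{\Omega_{-}})}=\mathcal O(h^{-1})$, hence $\|e^{\varphi/h}\psi\|_{L^{2}}=\mathcal O(h^{-1/2})$ and, by the equivalence of $\sqrt{\|u\|_{L^{2}}^{2}+\|du\|_{L^{2}}^{2}+\|d^{*}u\|_{L^{2}}^{2}}$ with the $W^{1,2}$-norm, $\|e^{\varphi/h}\psi\|_{W^{1,2}}=\mathcal O(h^{-3/2})$. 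Since $e^{\varphi_{\varepsilon}/h}\le e^{\alpha\|\varphi_{\varepsilon}\|_{L^{\infty}}}e^{\varphi/h}$, this already gives the lemma for $n=0,1$, with a constant depending on $\varepsilon$ but not on $h$. Running the identity with the genuine weight $\varphi_{\varepsilon}$ — rather than with the weight $d_{Ag}(\cdot,\mathcal V)$ of Proposition~\ref{prop:exp_decay}, which vanishes only near $\partial\Omega_{+}$ — is what avoids an unwanted $e^{c\varepsilon/h}$ factor.

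\textbf{Bootstrap to order $n$.} Set $w=e^{\varphi_{\varepsilon}/h}\psi$. On the open shell, $w$ solves $L_{h}w=\lambda w$ with $L_{h}=e^{\varphi_{\varepsilon}/h}\Delta_{f,h}^{(1)}e^{-\varphi_{\varepsilon}/h}$; this is a second order operator with principal part $-h^{2}\Delta_{\mathrm H}^{(1)}$ (elliptic for $h>0$) whose lower-order coefficients are polynomials in $h^{-1}$ with $\mathcal C^{k}$-norms bounded uniformly in $h\in(0,h_{0}]$ (depending on $\varepsilon$ and $k$ through the derivatives of $\varphi_{\varepsilon}$). Interior elliptic regularity, iterated over a decreasing family of compact subsets of $\Omega_{+}\setminus\overline{\Omega_{-}}$, upgrades $\|w\|_{W^{1,2}}=\mathcal O_{\varepsilon}(h^{-3/2})$ into $\|w\|_{W^{n,2}(K)}\le C_{\varepsilon,n}h^{-N_{n}}$ for any fixed compact $K$ of the open shell, each step costing only a fixed power of $h^{-1}$ (from the ellipticity constant $h^{2}$ and a Sobolev interpolation), so that $N_{n}$ depends on $n$ alone. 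On a neighborhood of $\Gamma$ one has $\varphi_{\varepsilon}\equiv0$, hence $w=\psi$, so it remains to bound $\|\psi\|_{W^{n,2}(\Omega_{+}\setminus\overline{\Omega_{-}})}$; this is exactly the Hodge elliptic-regularity argument used in the proof of Lemma~\ref{le.L2W}, transplanted to $\Delta_{f,h}^{D,(1)}(\Omega_{+}\setminus\overline{\Omega_{-}})$: from $\|d_{f,h}\psi\|_{L^{2}}^{2}+\|d_{f,h}^{*}\psi\|_{L^{2}}^{2}=\lambda\le h$ one gets $\|\psi\|_{W^{1,2}}=\mathcal O(h^{-1})$, then writing $\Delta_{f,h}^{D,(1)}\psi=\lambda\psi$ as a Dirichlet boundary-value problem for $\Delta_{\mathrm H}^{(1)}$ with data of size $\mathcal O(h^{-2})$ and iterating gives $\|\psi\|_{W^{n,2}(\Omega_{+}\setminus\overline{\Omega_{-}})}=\mathcal O(h^{-N_{n}})$. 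A fixed partition of unity subordinate to (interior of the shell, neighborhood of $\Gamma$) then glues the two estimates into the claimed bound on $\|e^{\varphi_{\varepsilon}/h}\psi\|_{W^{n,2}(\Omega_{+}\setminus\overline{\Omega_{-}})}$.

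The routine part is the mollification–truncation construction of $\varphi_{\varepsilon}$. The delicate point is the uniformity of the constants with respect to $h$: one must run the Agmon identity with $\varphi_{\varepsilon}$ itself (and use the favorable sign of $\partial_{n}f$ on $\partial\Omega_{-}$ to discard that boundary term), and one must check that each elliptic-regularity step in the bootstrap only loses a power of $h^{-1}$ depending on the Sobolev order and not on $\varepsilon$.
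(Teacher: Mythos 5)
Your proof is correct and follows essentially the same route as the paper: a mollified Agmon-distance weight vanishing near $\partial\Omega_{+}\cup\partial\Omega_{-}$ with $|\nabla \varphi_{\varepsilon}|\leq (1-\varepsilon)|\nabla f|$ (exploiting $|\nabla f|\geq c>0$ on the compact shell and the uniform continuity of $\nabla f$), a weighted Agmon identity for the $W^{1,2}$ bound in which the $\partial\Omega_{-}$ boundary term is discarded by the sign of $\partial_{n}f$, and an elliptic-regularity bootstrap that only loses powers of $h^{-1}$. The one difference is organizational rather than substantive: the paper bootstraps globally on the weighted form $v=e^{\varphi_{\varepsilon}/h}\psi$ viewed as a non-homogeneous Dirichlet Hodge boundary-value problem (using $\nabla\varphi_{\varepsilon}=0$ on the boundary to get $\mathbf{t}d^{*}v=-h^{-1}\mathbf{i}_{\nabla f}v$), whereas you combine interior elliptic regularity for the conjugated operator with the unweighted global estimate near the boundary via a partition of unity — both are valid.
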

As explained in the proof, we cannot state this result with $\varphi_\varepsilon$
 equals to the Agmon distance to a neighborhood of $\partial \Omega_+$ as in
 Proposition~\ref{prop:exp_decay} because
 the Agmon distance is not a sufficiently regular function.
\begin{proof}
The function $\varphi_{\varepsilon}\in {\mathcal
  C}^{\infty}_{0}(\Omega_{+}\setminus\overline{\Omega}_{-})$  is built as an accurate enough mollified version of 
$\theta_{\varepsilon}(x)=(1-2\varepsilon)d_{Ag}(x,\mathcal{V}_{+}^{\varepsilon}\cup
\mathcal{V}_{-}^{\varepsilon})$ where 
$$
\mathcal{V}_{\pm}^{\varepsilon}=\left\{x\in
  \overline{\Omega_{+}}\setminus\Omega_{-}\,,\quad d(x,\partial
  \Omega_{\pm})\leq \varepsilon\right\}\,.
$$
Indeed, the function $\theta_{\varepsilon}$ is a Lipschitz function such that
\begin{align*}
  & |\nabla \theta_{\varepsilon}(x)|\leq (1-2\varepsilon)|\nabla
  f(x)|\quad\text{a.e.}~\,,\\
& \left(d(x,\partial \Omega_{+}\cup \partial \Omega_{-})\leq
\varepsilon\right)\Rightarrow (\theta_{\varepsilon}(x)=0)\,,\\
& d(x,\partial \Omega_{+}\cup \partial\Omega_{-})-C_{1}\varepsilon
\leq \theta_{\varepsilon}(x)\leq d(x,\partial \Omega_{+}\cup \partial \Omega_{-})\,,
\end{align*}
hold in $\overline{\Omega_{+}}\setminus \Omega_{-}$, with $C_{1}\geq
0$ independent of $\varepsilon$\,. Since
$\theta_{\varepsilon}$ fulfills uniform Lipschitz estimates and
$|\nabla f(x)|\geq c>0$  on $\overline{\Omega_{+}}\setminus
\Omega_{-}$\,, all the properties of $\varphi_{\varepsilon}$ are obtained
by considering the convolution of $\theta_{\varepsilon}$ with a
mollifier with a sufficiently small compact support.
We cannot simply take $\varphi_\varepsilon=d_{Ag}(x,\partial
\Omega_{+}\cup \partial \Omega_{-})$ or even $\varphi_\varepsilon=d_{Ag}(.,
  \mathcal{V}_{+}^{\varepsilon}\cup \mathcal{V}_{-}^{\varepsilon})$ because
  the argument requires to consider high order
  derivatives of
  $\varphi_\varepsilon$\,.

 Let $\psi$ be a normalized eigenvector of
$\Delta_{f,h}^{D,(1)}(\Omega_{+}\setminus\overline{\Omega_{-}})$
associated with an eigenvalue $\lambda \in [0,\nu(h)]$. 
We already know from
  Proposition~\ref{prop:exp_decay} that
\begin{equation}\label{eq:W12estim}
\|e^{\frac{\varphi_{\varepsilon}}{h}}\psi\|_{W^{1,2}(\Omega_{+}\setminus\overline{\Omega_{-}})}\leq C_{\varepsilon} h^{-1}.
\end{equation}
The argument to obtain the estimates in
$W^{n,2}(\Omega_{+}\setminus\overline{\Omega_{-}})$-norms is based on a
bootstrap argument, using the elliptic regularity of 
non-homogeneous Dirichlet boundary problems for the Hodge Laplacian.

Indeed, we have:
\begin{align*}
e^{-\frac{\varphi_{\varepsilon}}{h}} \Delta_{f,h} e^{\frac{\varphi_{\varepsilon}}{h}} &=
\Delta_{f,h} - h {\mathcal L}_{\nabla \varphi_{\varepsilon}} + h  {\mathcal L}_{\nabla
  \varphi_{\varepsilon}}^* - |\nabla \varphi_{\varepsilon}|^2
\end{align*}
and thus
\begin{align*}
\Delta_{f,h} (e^{\frac{\varphi_{\varepsilon}}{h}} \psi) &=
\lambda e^{\frac{\varphi_{\varepsilon}}{h}} \psi - h e^{\frac{\varphi_{\varepsilon}}{h}} {\mathcal
  L}_{\nabla \varphi_{\varepsilon}} \psi + h  e^{\frac{\varphi_{\varepsilon}}{h}} {\mathcal L}_{\nabla
  \varphi_{\varepsilon}}^*  \psi - |\nabla \varphi_{\varepsilon}|^2 e^{\frac{\varphi_{\varepsilon}}{h}}\psi. 
\end{align*}
Using the fact that $\Delta_{f,h}= h^2 (d d^*+d^*d) + h( {\mathcal
  L}_{\nabla f} + {\mathcal L}_{\nabla f}^* ) + |\nabla f|^2$, we obtain
\begin{equation}\label{eq:deltaHv}
\Delta_H v=h^{-2} \left( \lambda v - h e^{\frac{\varphi_{\varepsilon}}{h}} {\mathcal
  L}_{\nabla \varphi_{\varepsilon}} e^{-\frac{\varphi_{\varepsilon}}{h}} v + h  e^{\frac{\varphi_{\varepsilon}}{h}} {\mathcal L}_{\nabla
  \varphi_{\varepsilon}}^*  e^{-\frac{\varphi_{\varepsilon}}{h}} v - |\nabla \varphi_{\varepsilon}|^2 v -  h  {\mathcal
  L}_{\nabla f} v - h {\mathcal L}_{\nabla f}^*v - |\nabla f|^2 v
 \right)
\end{equation}
where 
$$v=e^{\frac{\varphi_{\varepsilon}}{h}} \psi.$$ 
For the boundary conditions, we have
of course
\begin{equation}\label{eq:BC1}
\mathbf{t} v = 0,
\end{equation}
and
\begin{align*}
0=\mathbf{t} d^*_{f,h} \psi= e^{\frac{\varphi_{\varepsilon}}{h}} \mathbf{t} d^*_{f,h} \psi= \mathbf{t} d^*_{f,h} e^{\frac{\varphi_{\varepsilon}}{h}}  \psi +  e^{\frac{\varphi_{\varepsilon}}{h}}\mathbf{t}
\mathbf{i}_{\nabla \varphi_{\varepsilon}} \psi.
\end{align*}
The condition $\varphi_{\varepsilon}=0$ in a neighborhood of $\partial
\Omega_{+}\cup \partial \Omega_{-}$ implies $\nabla \varphi_{\varepsilon} = 0$ on
$\partial \Omega_+ \cup \partial \Omega_-$ and thus $\mathbf{t}
\mathbf{i}_{\nabla \varphi_{\varepsilon}} \psi=0$.
Since $d^*_{f,h}=hd^*+\mathbf{i}_{\nabla f}$, we thus obtain
\begin{equation}\label{eq:BC2}
\mathbf{t}d^* v  =  -\frac{1}{h}\mathbf{i}_{\nabla f} v.
\end{equation}
By considering the boundary value
problem~\eqref{eq:deltaHv}--\eqref{eq:BC1}--\eqref{eq:BC2} and using
the $W^{1,2}(\Omega_+ \setminus \overline{\Omega_-})$
estimate~\eqref{eq:W12estim}, we
thus obtain by the elliptic regularity of the Dirichlet Hodge
Laplacian:
$$
\|e^{\frac{\varphi_{\varepsilon}}{h}}\psi\|_{W^{2,2}(\Omega_{+}\setminus\overline{\Omega_{-}})}\leq C_{2,\varepsilon} h^{-3}.
$$
This is due to the fact that the right-hand side in~\eqref{eq:deltaHv}
(resp. in~\eqref{eq:BC2}) is
a differential operator of order~1 (resp. of order~0).
The $W^{n,2}(\Omega_+ \setminus \overline{\Omega_-})$ estimates for $n
\ge 3$ are
then obtain by induction on $n$.
\end{proof}

\subsection{Change of basis in $F^{(1)}$}
\label{se.chanba}

In the previous sections, the estimates~\eqref{eq:expansion_lambda10} and~\eqref{eq.appdu10} of the eigenvalue
$\lambda_{1}^{(0)}$ and of
$d_{f,h}u_{1}^{(0)}$ in a neighborhood of $\partial \Omega_{+}$ have been
proven with the specific basis
$(\psi_{k}^{(1)})_{m_{1}^{N}(\Omega_{-})+1\leq k\leq
  m_{1}^{D}(\Omega_{+})}$ of $\Ran
1_{[0,\nu(h)]}(\Delta_{f,h}^{D,(1)}(\Omega_{+}\setminus\overline{\Omega_{-}}))$. The aim of this section is to show that
the estimates~\eqref{eq:expansion_lambda10} and~\eqref{eq.appdu10} are
valid for any almost orthonormal basis (according to Definition~\ref{de.almost}) $(u_{k}^{(1)})_{1\leq k\leq
  m_{1}^{D}(\Omega_{+}\setminus\overline{\Omega_{-}})}$ of
$\Ran
1_{[0,\nu(h)]}(\Delta_{f,h}^{D,(1)}(\Omega_{+}\setminus\overline{\Omega_{-}}))$. The next proposition thus concludes the proof of Theorem~\ref{th.main}.

\begin{remarque}\label{rem:almost_orthonormal}
We thus actually prove a slightly more general result than the one stated in
Theorem~\ref{th.main}, since it is only required that  the $(u_{k}^{(1)})_{1\leq k\leq
  m_{1}^{D}(\Omega_{+}\setminus\overline{\Omega_{-}})}$ is an {\em
  almost} orthonormal basis of
$\Ran
1_{[0,\nu(h)]}(\Delta_{f,h}^{D,(1)}(\Omega_{+}\setminus\overline{\Omega_{-}}))$.
\end{remarque}
\begin{remarque}
All the results below extend to complex valued eigenbases, by simply
replacing the real scalar product by the hermitian scalar product.
\end{remarque}

\begin{proposition}
\label{pr.expralmostorth}
Let $\lambda_{1}^{(0)}$ be the first eigenvalue of
$\Delta_{f,h}^{D,(0)}(\Omega_{+})$
and $u_{1}^{(0)}$ the associated $L^2(\Omega_+)$-normalized non negative eigenfunction. For any almost orthonormal basis
$(u_{k}^{(1)})_{1\leq k \leq
  m_{1}^{D}(\Omega_{+}\setminus\overline{\Omega_{-}})}$
 of 
$\Ran
1_{[0,\nu(h)]}(\Delta_{f,h}^{D,(1)}(\Omega_{+}\setminus\overline{\Omega_{-}}))$,
the approximate expressions \eqref{eq:expansion_lambda10} and
\eqref{eq.appdu10} for  $\lambda_{1}^{(0)}$ and
$d_{f,h}u_{1}^{(0)}$  hold true.
\end{proposition}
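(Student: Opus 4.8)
The plan is to reduce the statement for an arbitrary almost orthonormal basis $(u_k^{(1)})_{1\le k\le m_1^D(\Omega_+\setminus\overline{\Omega_-})}$ to the already-proven statements for the specific basis $(\psi_k^{(1)})_{m_1^N(\Omega_-)+1\le k\le m_1^D(\Omega_+)}$ of Section~\ref{se.treig}, by means of the unitary (up to exponentially small terms) change of basis. First I would introduce the matrix $P=(p_{k\ell})$, $p_{k\ell}=\langle \psi^{(1)}_{m_1^N(\Omega_-)+k}\,,\,u_\ell^{(1)}\rangle_{L^2(\Omega_+\setminus\overline{\Omega_-})}$, so that $u_\ell^{(1)}=\sum_k p_{k\ell}\,\psi^{(1)}_{m_1^N(\Omega_-)+k} $ (both families span the same spectral subspace $\Ran 1_{[0,\nu(h)]}(\Delta_{f,h}^{D,(1)}(\Omega_+\setminus\overline{\Omega_-}))$). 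Since both bases are almost orthonormal, $P^*P=\Id+\mathcal{O}(e^{-c/h})$; in particular $P$ is invertible with $\|P\|,\|P^{-1}\|\le 1+\mathcal{O}(e^{-c/h})$, and $PP^*=\Id+\mathcal{O}(e^{-c/h})$ as well.

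Next, for the eigenvalue formula~\eqref{eq:expansion_lambda10}, I would observe that the quantity appearing there is a Hilbert--Schmidt-type norm of the linear functional $w\mapsto h\int_{\partial\Omega_+}e^{-f/h}\,\mathbf{i}_n w(\sigma)\,d\sigma$ restricted to the spectral subspace, evaluated in the basis $(u_k^{(1)})$. Writing $\alpha_k=h\int_{\partial\Omega_+}e^{-f(\sigma)/h}\mathbf{i}_n\psi^{(1)}_{m_1^N(\Omega_-)+k}(\sigma)\,d\sigma$ and $\tilde\alpha_\ell=h\int_{\partial\Omega_+}e^{-f(\sigma)/h}\mathbf{i}_n u_\ell^{(1)}(\sigma)\,d\sigma$, linearity of the boundary integral gives $\tilde\alpha_\ell=\sum_k p_{k\ell}\,\alpha_k$, i.e. $\tilde\alpha=P^T\alpha$ as column vectors. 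Hence
$$
\sum_\ell|\tilde\alpha_\ell|^2=\|P^T\alpha\|^2=\langle \alpha\,,\,\overline{P}P^T\alpha\rangle=\|\alpha\|^2\bigl(1+\mathcal{O}(e^{-c/h})\bigr),
$$
using $\overline P P^T=\overline{PP^*}=\Id+\mathcal{O}(e^{-c/h})$ and the fact (from~\eqref{eq.minlog}) that $\|\alpha\|^2$ is bounded below by $C_\varepsilon e^{-(2\kappa_f+\varepsilon)/h}$ so the relative error is genuinely $\mathcal O(e^{-c/h})$. Dividing by $\int_{\Omega_+}e^{-2f/h}\,dx$ and combining with~\eqref{eq:expansion_lambda10_bis} yields~\eqref{eq:expansion_lambda10} for the new basis.

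Finally, for the eigenfunction estimate~\eqref{eq.appdu10}, I would start from Proposition~\ref{pr.dfpsi}, which gives, for every $n$,
$$
\Bigl\|d_{f,h}u_1^{(0)}-\sum_{k=m_1^N(\Omega_-)+1}^{m_1^D(\Omega_+)}b_{k,1}\psi_k^{(1)}\Bigr\|_{W^{n,2}(\mathcal V)}\le C_n e^{-\frac{\kappa_f+c}{h}},
$$
with $b_{m_1^N(\Omega_-)+k,1}=-\alpha_k/(\int_{\Omega_+}e^{-2f/h})^{1/2}+\mathcal{O}(e^{-(\kappa_f+c)/h})$. On $\mathcal V\subset\Omega_+\setminus\overline{\Omega_-}$ the forms $\psi^{(1)}_{m_1^N(\Omega_-)+k}$ are exactly the restrictions of the eigenvectors of $\Delta_{f,h}^{D,(1)}(\Omega_+\setminus\overline{\Omega_-})$, so the change of basis $\psi^{(1)}_{m_1^N(\Omega_-)+k}=\sum_\ell (P^{-1})_{k\ell}\cdots$ — more precisely $(P^{-1})^T$ acting the other way — rewrites the sum $\sum_k b_{k,1}\psi_k^{(1)}=\sum_\ell c_\ell u_\ell^{(1)}$ with $c_\ell=\sum_k b_{m_1^N(\Omega_-)+k,1}\,(P^{-1})_{\ell k}$ (or the appropriate index placement dictated by $u_\ell^{(1)}=\sum_k p_{k\ell}\psi^{(1)}_{m_1^N(\Omega_-)+k}$). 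A short computation using $\tilde\alpha=P^T\alpha$ shows that $c_\ell=-\tilde\alpha_\ell/(\int_{\Omega_+}e^{-2f/h})^{1/2}+\mathcal{O}(e^{-(\kappa_f+c)/h})$, i.e. exactly the coefficients appearing in~\eqref{eq.appdu10}; the extra error created by replacing $b_{k,1}$ by the leading term and by the change of basis is controlled because $\|P^{-1}\|=1+\mathcal O(e^{-c/h})$ and the $W^{n,2}(\mathcal V)$-norms of the $u_\ell^{(1)}$ are polynomially bounded in $h^{-1}$ (Lemma~\ref{le.L2W}, or directly Lemma~\ref{lem:dec_exp_Cinfty} and the trace argument). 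Thus~\eqref{eq.appdu10} holds for the new basis with the same type of remainder $\mathcal{O}(e^{-(\kappa_f+c_{\mathcal V})/h})$.

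The main obstacle I anticipate is purely bookkeeping: keeping the index ranges and the transpose/conjugate placement of $P$ consistent between the function and one-form sides, and making sure the error terms are genuinely \emph{relative} errors $\mathcal O(e^{-c/h})$ rather than absolute ones — this is where the lower bound~\eqref{eq.minlog} on $\sum|b_{k,1}|^2$ (equivalently Lemma~\ref{le.infsigD0}) is essential, since without it one could not conclude that the $\mathcal O(e^{-c/h})$ factor multiplying $\|\alpha\|^2$ is absorbed.
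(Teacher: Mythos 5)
Your proposal is correct and follows essentially the same route as the paper: the paper also reduces to the specific basis $(\psi_k^{(1)})$ via a near-unitary change-of-basis matrix $C(h)$ (the adjoint of your $P$) satisfying $C C^*=\Id+\mathcal{O}(e^{-c/h})$, and then transfers both the sum of squared boundary integrals and the linear combination in \eqref{eq.appdu10} through the abstract identity $\sum_k L_1(\psi_k^{(1)})L_2(\psi_k^{(1)})=\sum_{k'}L_1(u_{k'}^{(1)})L_2(u_{k'}^{(1)})+\mathcal{O}(\|L_1\|\|L_2\|e^{-c/h})$, with $L_1$ the boundary functional of norm $\tilde{\mathcal{O}}(e^{-\kappa_f/h})$ and $L_2$ either $L_1$ again or the restriction map to $W^{n,2}(\mathcal V)$ controlled by Lemma~\ref{le.L2W}. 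Your explicit matrix computation and the paper's functional formulation are the same argument, and your appeal to the lower bound \eqref{eq.minlog} to make the errors relative matches the paper's use of $\lambda_1^{(0)}(\Omega_+)=\tilde{\mathcal{O}}(e^{-2\kappa_f/h})$.
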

\begin{proof}
Let $(u_{k}^{(1)})_{1\leq k \leq
  m_{1}^{D}(\Omega_{+}\setminus\overline{\Omega_{-}})}$ be an almost
orthonormal basis of $\Ran
1_{[0,\nu(h)]}(\Delta_{f,h}^{D,(1)}(\Omega_{+}\setminus\overline{\Omega_{-}}))$. Then,
there exists a matrix $C(h)=(c_{k,k'})_{1\leq k,k' \leq
  m_{1}^{D}(\Omega_{+}\setminus\overline{\Omega_{-}})}$ such that
\begin{align}
&C(h)C(h)^{*}=\Id_{m_{1}^{D}(\Omega_{+}\setminus\overline{\Omega_{-}})}+\mathcal{O}(e^{-\frac{c}{h}})
\quad,\quad
C(h)^{*}C(h)=\Id_{m_{1}^{D}(\Omega_{+}\setminus\overline{\Omega_{-}})}+\mathcal{O}(e^{-\frac{c}{h}})\,,
\label{eq.CC*}
\\
\text{and }&
\psi_{k+m_{1}^{N}(\Omega_{-})}^{(1)}
=\sum_{k'=1}^{m_{1}^{D}(\Omega_{+}\setminus\overline{\Omega_{-}})}c_{k,k'}u_{k'}^{(1)}\,,\quad
\forall k \in
\left\{1,\ldots, m_{1}^{D}(\Omega_{+}\setminus\overline{\Omega_{-}})\right\}
\,.\nonumber
\end{align}
Here, $C(h)^{*}$ denotes the transpose of the matrix $C(h)$.

Let $L_{1}$ (resp. $L_{2}$) be a continuous linear mapping from the
finite dimensional space $\Ran
1_{[0,\nu(h)]}(\Delta_{f,h}^{D,(1)}(\Omega_{+}\setminus\overline{\Omega_{-}}))$
(endowed with the scalar product of $L^2(\Omega_{+}\setminus\overline{\Omega_{-}})$)
to $\R$ (resp. to some vector space~$E$). Then, the following estimate
holds, using~\eqref{eq.CC*}:
\begin{align}
\sum_{k=m_{1}^{N}(\Omega_{-})+1}^{m_{1}^{D}(\Omega_{+})} L_{1}(\psi_{k}^{(1)})L_{2}(\psi_{k}^{(1)})&=
\sum_{k,k_1,k_2=1}^{m_{1}^{D}(\Omega_{+}\setminus\overline{\Omega_{-}})} 
c_{k,k_1} c_{k,k_2} L_{1}(u_{k_1}^{(1)}) L_{2}(u_{k_2}^{(1)}) \nonumber \\
&=
\sum_{k'=1}^{m_{1}^{D}(\Omega_{+}\setminus
\overline{\Omega_{-}})}L_{1}(u_{k'}^{(1)}) L_{2}(u_{k'}^{(1)})+
\mathcal{O}(\|L_{1}\|\|L_{2}\|e^{-\frac{c}{h}})\,, \label{eq.estimL1L2}
\end{align}
where $\|L_{1}\|$ and  $\|L_{2}\|$ denote the operator norms of the
linear mappings $L_1$ and $L_2$. 

The estimate~\eqref{eq:expansion_lambda10} is then a consequence
of~\eqref{eq:expansion_lambda10_bis} and~\eqref{eq.estimL1L2} with
$$
L_{1}=L_{2}~: \Ran
1_{[0,\nu(h)]}(\Delta_{f,h}^{D,(1)}(\Omega_{+}\setminus\overline{\Omega_{-}}))
\ni u\mapsto -\frac{\int_{\partial
    \Omega_{+}}e^{-\frac{f(\sigma)}{h}}\mathbf{i}_{n}u (\sigma)~d\sigma}{
\left(\int_{\Omega_{+}}e^{-\frac{2f(x)}{h}}~dx\right)^{1/2}}\in
\R\,,
$$
with
$\|L_{1}\|=\|L_{2}\|=\tilde{\mathcal{O}}(e^{-\frac{\kappa_{f}}{h}})$
due to $\lambda_{1}^{(0)}(\Omega_{+})=\tilde{O}(e^{-\frac{2\kappa_{f}}{h}})$ (see~\eqref{eq:expansion_lambda10_bis}) and
the orthonormality of the basis
$(\psi_{k+m_{1}^{N}(\Omega_{-})}^{(1)})_{1 \le k \le m_1^D(\Omega_+
  \setminus \Omega_-)}$\,.

The estimate~\eqref{eq.appdu10} is a consequence of Proposition~\ref{pr.dfpsi}
and of~\eqref{eq.estimL1L2} with $L_{1}$ like before and 
$$
L_{2}~: \Ran
1_{[0,\nu(h)]}(\Delta_{f,h}^{D,(1)}(\Omega_{+}\setminus\overline{\Omega_{-}}))
\ni u  \mapsto u\Big|_{\mathcal V}  \in \bigwedge^{1}W^{n,2}(\mathcal{V})
$$
with $\|L_{2}\|=\tilde{\mathcal{O}}(1)$ according to
Lemma~\ref{le.L2W} applied with $\Delta_{f,h}^{D,(1)}(\Omega_{+}\setminus
\overline{\Omega_{-}})$ instead of $\Delta_{f,h}^{D,(1)}(\Omega_{+})$\,.
\end{proof}

\subsection{Corollaries}
\label{se.corol}
The estimate \eqref{eq.appdu10} contains an accurate information about
the trace $\partial_{n}u_{1}^{(0)}\big|_{\partial \Omega_{+}}$\,.
\begin{corollaire}
\label{co.trace}   Let $n: \sigma\mapsto n (\sigma)$ be the outward normal vector field on
$\partial \Omega_{+}$ and let $\partial_{n}=\mathbf{i}_{n}d$ be the
outward normal derivative for functions\,. For any almost orthonormal basis
$(u_{k}^{(1)})_{1\leq k\leq
  m_{1}^{D}(\Omega_{+}\setminus\overline{\Omega_{-}})}$ of $\Ran
1_{[0,\nu(h)]}(\Delta_{f,h}^{D,(1)}(\Omega_{+}\setminus \overline{\Omega_{-}}))$\,, the normal
derivative of the non negative and normalized first eigenfunction $u_{1}^{(0)}$ of
$\Delta_{f,h}^{D,(0)}(\Omega_{+})$ satisfies
\begin{align*}
  &\forall \sigma\in \partial \Omega_{+}\,,\quad \partial_{n}u_{1}^{(0)}(\sigma)\leq 0\,,
\\
\text{and }&\forall n \in \nz, \, 
\left\| \partial_{n}u_{1}^{(0)}\big|_{\partial\Omega_{+}}+ \sum_{k=1}^{m_{1}^{D}(\Omega_{+}\setminus\overline{\Omega_{-}})}\frac{\int_{\partial
    \Omega_{+}}e^{-\frac{f(\sigma)}{h}}\mathbf{i}_{n}
  {u}_{k}^{(1)}(\sigma)~d\sigma}{\left(\int_{\Omega_{+}}e^{-\frac{2f(x)}{h}}~dx\right)^{1/2}}\mathbf{i}_{n}u_{k}^{(1)}\right\|_{W^{n,2}(\partial \Omega_{+})}
=\mathcal{O}(e^{-\frac{\kappa_{f}+c}{h}})
\end{align*}
for some $c>0$ independent on $n$.
\end{corollaire}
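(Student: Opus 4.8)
The plan is to deduce the corollary from results already established, with essentially no new analysis: the pointwise sign statement comes from Hopf's boundary point lemma, and the asymptotic formula is obtained by taking the trace on $\partial\Omega_{+}$ of estimate~\eqref{eq.appdu10}.

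\emph{Sign of $\partial_{n}u_{1}^{(0)}$.} First I would record that $u_{1}^{(0)}$, being the $L^{2}(\Omega_{+})$-normalized non negative ground state of $\Delta_{f,h}^{D,(0)}(\Omega_{+})=-h^{2}\Delta+(|\nabla f|^{2}-h\Delta f)$ with Dirichlet conditions, is positive in the connected open set $\Omega_{+}$ by the standard positivity of ground states of Schr\"odinger operators. The eigenvalue equation then reads $h^{2}\Delta u_{1}^{(0)}+c\,u_{1}^{(0)}=0$ with $c=-(|\nabla f|^{2}-h\Delta f-\lambda_{1}^{(0)}(\Omega_{+}))$. By Hypothesis~\ref{hyp.0}, $|\nabla f|^{2}$ is bounded below by a positive constant on a one-sided neighborhood of $\partial\Omega_{+}$, while $h\Delta f=\mathcal{O}(h)$ and $\lambda_{1}^{(0)}(\Omega_{+})$ is exponentially small; hence $c\le 0$ near $\partial\Omega_{+}$ for $h$ small enough. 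Since $\Omega_{+}$ is a regular domain (interior ball condition at every boundary point) and $u_{1}^{(0)}>0$ in $\Omega_{+}$ with $u_{1}^{(0)}=0$ on $\partial\Omega_{+}$, Hopf's lemma applies on a small interior ball tangent to $\partial\Omega_{+}$ at each $\sigma$ and yields $\partial_{n}u_{1}^{(0)}(\sigma)<0$, in particular $\le 0$.

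\emph{The expansion.} By Proposition~\ref{pr.expralmostorth} (see Remark~\ref{rem:almost_orthonormal}), estimate~\eqref{eq.appdu10} is valid with the given almost orthonormal basis $(u_{k}^{(1)})_{k}$ and with $\mathcal{V}$ any neighborhood of $\partial\Omega_{+}$ lying in $\Omega_{+}\setminus\overline{\Omega_{-}}$. I would fix such a $\mathcal{V}$ and set
\[
R_{h}:=d_{f,h}u_{1}^{(0)}+\sum_{k=1}^{m_{1}^{D}(\Omega_{+}\setminus\overline{\Omega_{-}})}\frac{h\int_{\partial\Omega_{+}}e^{-f(\sigma)/h}\,\mathbf{i}_{n}u_{k}^{(1)}(\sigma)\,d\sigma}{\left(\int_{\Omega_{+}}e^{-2f(x)/h}\,dx\right)^{1/2}}\,u_{k}^{(1)},
\]
so that $\|R_{h}\|_{W^{n,2}(\mathcal{V})}=\mathcal{O}(e^{-(\kappa_{f}+c_{\mathcal{V}})/h})$ for every $n\in\nz$, with $c_{\mathcal{V}}>0$ independent of $n$. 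Since $d_{f,h}=h\,d+df\wedge$ and $u_{1}^{(0)}$ vanishes on $\partial\Omega_{+}$, one has $\mathbf{i}_{n}(d_{f,h}u_{1}^{(0)})\big|_{\partial\Omega_{+}}=h\,\mathbf{i}_{n}(du_{1}^{(0)})\big|_{\partial\Omega_{+}}=h\,\partial_{n}u_{1}^{(0)}\big|_{\partial\Omega_{+}}$, and $\mathbf{i}_{n}u_{k}^{(1)}\big|_{\partial\Omega_{+}}$ is exactly the function in the statement. Applying the trace operator $W^{n+1,2}(\mathcal{V})\to W^{n+1/2,2}(\partial\Omega_{+})$, then the smooth bundle contraction $\mathbf{i}_{n}$ (bounded on $W^{s,2}(\partial\Omega_{+})$ for every $s$), dividing by $h$, and relabelling $n$, I obtain
\begin{align*}
\Bigl\|\partial_{n}u_{1}^{(0)}\big|_{\partial\Omega_{+}}&+\sum_{k=1}^{m_{1}^{D}(\Omega_{+}\setminus\overline{\Omega_{-}})}\frac{\int_{\partial\Omega_{+}}e^{-f(\sigma)/h}\,\mathbf{i}_{n}u_{k}^{(1)}(\sigma)\,d\sigma}{\left(\int_{\Omega_{+}}e^{-2f(x)/h}\,dx\right)^{1/2}}\,\mathbf{i}_{n}u_{k}^{(1)}\Bigr\|_{W^{n,2}(\partial\Omega_{+})}\\
&=\frac{1}{h}\,\mathcal{O}\!\left(e^{-(\kappa_{f}+c_{\mathcal{V}})/h}\right)=\mathcal{O}\!\left(e^{-(\kappa_{f}+c)/h}\right)
\end{align*}
for any fixed $c\in(0,c_{\mathcal{V}})$, which is the claimed estimate, with $c$ independent of $n$.

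I do not expect a genuine obstacle in this argument: the only points needing a little care are the half-derivative loss in the trace theorem (harmless, as~\eqref{eq.appdu10} is available in every Sobolev order so one works one order higher) and the absorption of the factor $h^{-1}$ into the exponential (harmless, since $c_{\mathcal{V}}>0$), together with the standard but slightly delicate localization near $\partial\Omega_{+}$ that makes the zeroth-order coefficient non positive so that Hopf's lemma applies.
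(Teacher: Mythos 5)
Your argument for the expansion is exactly the paper's: apply the trace theorem to~\eqref{eq.appdu10} (valid for any almost orthonormal basis by Proposition~\ref{pr.expralmostorth}), use $d_{f,h}u_{1}^{(0)}=h\,du_{1}^{(0)}+u_{1}^{(0)}df$ together with $u_{1}^{(0)}\big|_{\partial\Omega_{+}}=0$, and absorb the factor $h^{-1}$ into the exponentially small remainder. The only (harmless) difference is the sign condition, where the paper simply observes that $u_{1}^{(0)}\geq 0$ in $\Omega_{+}$ and $u_{1}^{(0)}\big|_{\partial\Omega_{+}}=0$ already force $\partial_{n}u_{1}^{(0)}\leq 0$ via a one-sided difference quotient, so your detour through Hopf's lemma (with the verification that the zeroth-order coefficient has the right sign near $\partial\Omega_{+}$) is correct but unnecessary for the stated weak inequality.
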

\begin{proof}
The sign condition for $\partial_{n}u_{1}^{(0)}(\sigma)$ is a
consequence of $u_{1}^{(0)}\geq 0$ in $\Omega_{+}$ and
$u_{1}^{(0)}\big|_{\partial \Omega_{+}}=0$\,.

The trace theorem with \eqref{eq.appdu10} implies
$$
d_{f,h}u_{1}^{(0)}\big|_{\partial \Omega_{+}}=
-h\sum_{k=1}^{m_{1}^{D}(\Omega_{+}\setminus\overline{\Omega_{-}})}\frac{\int_{\partial
    \Omega_{+}}e^{-\frac{f(\sigma)}{h}}\mathbf{i}_{n}
 {u}_{k}^{(1)}(\sigma)~d\sigma}{\left(\int_{\Omega_{+}}e^{-\frac{2f(x)}{h}}~dx\right)^{1/2}}u_{k}^{(1)}+ \mathcal{O}(e^{-\frac{\kappa_{f}+c}{h}})
$$
in any Sobolev space $W^{n,2}(\partial \Omega_{+})$.
Recalling
$$
d_{f,h}u_{1}^{(0)}= hdu_{1}^{(0)}+ u_{1}^{(0)}df\quad\text{and}\quad
u_{1}^{(0)}\big|_{\partial \Omega_{+}}=0
$$
yields the result.
\end{proof}
Note that the numerators in the estimates~\eqref{eq:expansion_lambda10} and~\eqref{eq.appdu10} of the eigenvalue
$\lambda_{1}^{(0)}(\Omega_+)$ and of
$d_{f,h}u_{1}^{(0)}$  depend only on the values of $f$ and the geometry
of $\Omega_+$ around $\partial\Omega_{+}$\,. More precisely, they do not
change when $f$ is modified inside $\Omega_{-}$\,. This allows to
understand the variations of $\lambda_{1}^{(0)}(\Omega_+)$ and
$\partial_{n}u_{1}^{(0)}\big|_{\partial \Omega_{+}}$ with respect to
$f$, which is needed in the hyperdynamics algorithm (see the introduction).
\begin{corollaire}
\label{co.variaf} Let $f_{1}$ and $f_{2}$ be two functions which fulfill the
Hypotheses~\ref{hyp.0}, \ref{hyp.3}, \ref{hyp.1}, \ref{hyp.2} and let $\lambda_{1}^{(0)}(f_{1})$
(resp. $\lambda_{1}^{(0)}(f_{2})$) be the first eigenvalue of
$\Delta_{f_{1},h}^{D,(0)}(\Omega_{+})$ associated with the non
negative normalized eigenvector
$u_{1}^{(0)}(f_{1})$ (resp. of $\Delta_{f_{2},h}^{D,(0)}(\Omega_{+})$
associated with the
eigenvector $u_{1}^{(0)}(f_{2})$).
Assume additionally $f_{1}=f_{2}$ in $\Omega_{+}\setminus\overline{\Omega_{-}}$\,.
The quantities $\lambda_{1}^{(0)}(f_{1,2})$ and
$\partial_{n}\left[e^{-\frac{f_{1,2}}{h}}u_{1}^{(0)}(f_{1,2})\right]\big|_{\partial
    \Omega_{+}}= e^{-\frac{f_{1,2}}{h}}\left[\partial_{n}u_{1}^{(0)}(f_{1,2})\right]\big|_{\partial
    \Omega_{+}}$ satisfy
\begin{align}
  \label{eq.complam}
&
\frac{\lambda_{1}^{(0)}(f_{2})}{\lambda_{1}^{(0)}(f_{1})}=\frac{\int_{\Omega_{+}}
  e^{-2\frac{f_{1}(x)}{h}}~dx}{\int_{\Omega_{+}}e^{-2\frac{f_{2}(x)}{h}}~dx}
  (1+\mathcal{O}(e^{-\frac{c}{h}}))\,,\\
\label{eq.comppau}
&\frac{\partial_{n}\left[e^{-\frac{f_{2}}{h}}u_{1}^{(0)}(f_{2})\right]\big|_{\partial \Omega_{+}}}{ 
\|\partial_{n}\left[e^{-\frac{f_{2}}{h}}u_{1}^{(0)}(f_{2})\right]\|_{L^{1}(\partial
  \Omega_{+})}}
= 
\frac{\partial_{n}\left[e^{-\frac{f_{1}}{h}}u_{1}^{(0)}(f_{1})\right]\big|_{\partial \Omega_{+}}}{ 
\|\partial_{n}\left[e^{-\frac{f_{1}}{h}}u_{1}^{(0)}(f_{1})\right]\|_{L^{1}(\partial
  \Omega_{+})}}
+
\mathcal{O}(e^{-\frac{c}{h}})\quad \text{in}~L^{1}(\partial \Omega_{+})\,.
\end{align}
\end{corollaire}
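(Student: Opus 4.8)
The plan is to deduce Corollary~\ref{co.variaf} directly from the asymptotic formulas~\eqref{eq:expansion_lambda10} and~\eqref{eq.appdu10} (equivalently Corollary~\ref{co.trace}) of Theorem~\ref{th.main}, exploiting the crucial observation that the numerators appearing there only involve $f$ and the geometry near $\partial\Omega_+$. First I would record that, since $f_1 = f_2$ on $\Omega_+\setminus\overline{\Omega_-}$, Hypothesis~\ref{hyp.2} concerns the operator $\Delta_{f,h}^{D,(1)}(\Omega_+\setminus\overline{\Omega_-})$ which depends only on $f$ restricted to $\Omega_+\setminus\overline{\Omega_-}$; hence $m_1^D(\Omega_+\setminus\overline{\Omega_-})$ is the same for $f_1$ and $f_2$, and, more importantly, the space $\Ran 1_{[0,\nu(h)]}(\Delta_{f,h}^{D,(1)}(\Omega_+\setminus\overline{\Omega_-}))$ and any orthonormal (or almost orthonormal) basis $(u_k^{(1)})_{1\le k\le m_1^D(\Omega_+\setminus\overline{\Omega_-})}$ of it may be chosen to be \emph{identical} for $f_1$ and $f_2$. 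Fix such a common basis.

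For~\eqref{eq.complam}, I would apply~\eqref{eq:expansion_lambda10} to $f_1$ and to $f_2$ with this common basis. The numerator $h^2\sum_k\bigl|\int_{\partial\Omega_+}e^{-f/h}u_k^{(1)}(n)(\sigma)\,d\sigma\bigr|^2$ is then exactly the same for both (because $f_1=f_2$ on a neighborhood of $\partial\Omega_+$ inside $\Omega_+\setminus\overline{\Omega_-}$, and the $u_k^{(1)}$ are the same), while the denominators $\int_{\Omega_+}e^{-2f_j(x)/h}\,dx$ differ. Taking the ratio of the two expansions and absorbing the two factors $(1+\mathcal{O}(e^{-c/h}))$ into a single such factor yields~\eqref{eq.complam}. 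One small point to check here: one must note that $\lambda_1^{(0)}(f_j) = \tilde{\mathcal{O}}(e^{-2\kappa_{f_j}/h})$ with $\kappa_{f_1}=\kappa_{f_2}$ (again because $\kappa_f = \min_{\partial\Omega_+}f - \min_{\Omega_+}f$ and the two minima agree: $\min_{\partial\Omega_+}f_1 = \min_{\partial\Omega_+}f_2$, and $\min_{\Omega_+}f_j = \min_{\Omega_-}f_j \le \mathrm{cvmax}_j$, but in fact one only needs the ratio to make sense, which is automatic from the non-vanishing of both sides).

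For~\eqref{eq.comppau}, I would use Corollary~\ref{co.trace}, which gives, in any $W^{n,2}(\partial\Omega_+)$ (hence by Sobolev embedding in $L^1(\partial\Omega_+)$ after choosing $n$ large, or more simply already in $L^2(\partial\Omega_+)\hookrightarrow L^1(\partial\Omega_+)$ since $\partial\Omega_+$ has finite measure),
$$
\partial_n u_1^{(0)}(f_j)\big|_{\partial\Omega_+} = -\sum_{k=1}^{m_1^D(\Omega_+\setminus\overline{\Omega_-})}\frac{\int_{\partial\Omega_+}e^{-f_j(\sigma)/h}\mathbf{i}_n u_k^{(1)}(\sigma)\,d\sigma}{\bigl(\int_{\Omega_+}e^{-2f_j(x)/h}\,dx\bigr)^{1/2}}\,\mathbf{i}_n u_k^{(1)} + \mathcal{O}(e^{-(\kappa_{f_j}+c)/h}).
$$
Multiplying by $e^{-f_j/h}\big|_{\partial\Omega_+}$ (which equals $e^{-f_1/h}\big|_{\partial\Omega_+} = e^{-f_2/h}\big|_{\partial\Omega_+}$ since $f_1=f_2$ near $\partial\Omega_+$) gives $\partial_n[e^{-f_j/h}u_1^{(0)}(f_j)]\big|_{\partial\Omega_+}$ equal to the common vector field $W(\sigma) := -e^{-f(\sigma)/h}\sum_k \bigl(\int_{\partial\Omega_+}e^{-f/h}\mathbf{i}_n u_k^{(1)}\,d\sigma\bigr)\mathbf{i}_n u_k^{(1)}(\sigma)$ divided by $\bigl(\int_{\Omega_+}e^{-2f_j}\bigr)^{1/2}$, plus an error $\mathcal{O}(e^{-(\kappa_{f_j}+c)/h})$. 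Crucially, the only $j$-dependence of the main term is the scalar normalizing constant $\bigl(\int_{\Omega_+}e^{-2f_j}\bigr)^{1/2}$, which cancels upon normalizing in $L^1(\partial\Omega_+)$. Then I would argue: writing $g_j := \partial_n[e^{-f_j/h}u_1^{(0)}(f_j)]\big|_{\partial\Omega_+}$, both $g_1$ and $g_2$ are of the form $c_j W + e_j$ with $c_j>0$ scalars, $W$ a fixed $h$-dependent vector field, and $\|e_j\|_{L^1} = \mathcal{O}(e^{-(\kappa_f+c)/h})$, while $\|g_j\|_{L^1} \ge \tfrac12 c_j\|W\|_{L^1} \gtrsim e^{-\kappa_f/h}$ (using~\eqref{eq.minlog}/\eqref{eq.appu10}, i.e. $\|W\|_{L^1}$ has exponential rate exactly $-\kappa_f$, hence dominates the remainder). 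An elementary estimate — for $g_j = c_j W + e_j$ with $c_j W/\|c_j W\|$ independent of $j$ and $\|e_j\| \le \varepsilon \|c_j W\|$ one has $\bigl\|g_1/\|g_1\| - g_2/\|g_2\|\bigr\| \le C\varepsilon$ — then yields~\eqref{eq.comppau} with $\varepsilon = \mathcal{O}(e^{-c/h})$.

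The main obstacle, and the only genuinely non-formal point, is verifying that one is entitled to use the \emph{same} basis $(u_k^{(1)})$ of $\Ran 1_{[0,\nu(h)]}(\Delta_{f,h}^{D,(1)}(\Omega_+\setminus\overline{\Omega_-}))$ for $f_1$ and $f_2$, and that Theorem~\ref{th.main} applies with an \emph{almost} orthonormal basis (not just an orthonormal one) — this is exactly Remark~\ref{rem:almost_orthonormal} and Proposition~\ref{pr.expralmostorth}, so it is already granted. The rest is bookkeeping: checking that $f_1=f_2$ on $\Omega_+\setminus\overline{\Omega_-}$ forces equality of all the geometric and spectral data entering the numerators (the operator $\Delta_{f,h}^{D,(1)}(\Omega_+\setminus\overline{\Omega_-})$, the integers $m_1^D$, the traces $e^{-f/h}\big|_{\partial\Omega_+}$, the constant $\kappa_f$), and confirming that the remainders $\mathcal{O}(e^{-c/h})$ behave well under the ratio and the $L^1$-normalization thanks to the sharp lower bound on the main terms from~\eqref{eq.minlog} and Lemma~\ref{le.intminmaj}. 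I would present this as a short proof, treating~\eqref{eq.complam} first (two lines from~\eqref{eq:expansion_lambda10}) and then~\eqref{eq.comppau} with the elementary normalization lemma stated and proved inline.
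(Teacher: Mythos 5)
Your proposal is correct and follows essentially the same route as the paper: \eqref{eq.complam} by applying \eqref{eq:expansion_lambda10} with a common basis of $\Ran 1_{[0,\nu(h)]}(\Delta_{f,h}^{D,(1)}(\Omega_{+}\setminus\overline{\Omega_{-}}))$ (legitimate since $f_1=f_2$ there), and \eqref{eq.comppau} by observing that after multiplication by $e^{-f/h}\big|_{\partial\Omega_+}$ the only $j$-dependence of the leading term is the scalar $\bigl(\int_{\Omega_+}e^{-2f_j/h}\bigr)^{-1/2}$, which drops out under $L^1$-normalization, the remainder being controlled via the lower bound on the leading term coming from Lemma~\ref{le.infsigD0} and Lemma~\ref{le.intminmaj}. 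The only cosmetic caveat is that the exponential rate of $\|g_j\|_{L^1(\partial\Omega_+)}$ is $-(\kappa_f+\min_{\Omega_+}f)$ rather than $-\kappa_f$ unless one normalizes $\min_{\Omega_+}f=0$, but this does not affect the relative smallness of the error, which is all the argument needs.
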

\begin{proof}
First, note that the equality
$$
\partial_{n}\left[e^{-\frac{f_{1,2}}{h}}u_{1}^{(0)}(f_{1,2})\right]\big|_{\partial
    \Omega_{+}}= e^{-\frac{f_{1,2}}{h}}\left[\partial_{n}u_{1}^{(0)}(f_{1,2})\right]\big|_{\partial
    \Omega_{+}}
$$
is simply due to the Dirichlet boundary condition $u_{1}^{(0)}\big|_{\partial \Omega_{+}}=0$\,.
The identity \eqref{eq.complam} is then a direct consequence of
\eqref{eq:expansion_lambda10}, since the same basis $(u_k^{(1)})_{1 \le k
  \le m_1^D(\Omega_{+}\setminus\overline{\Omega_{-}})}$ can be picked for $f_1$ and $f_2$, since these two functions coincide on $\Omega_{+}\setminus\overline{\Omega_{-}}$.

Second, for \eqref{eq.comppau}, it is more convenient to write
\eqref{eq.appdu10} with $f_{j}$, $j=1,2$, in the form
$$\left(\int_{\Omega_{+}}e^{-\frac{2f_{j}(x)}{h}}~dx\right)^{1/2} \, d_{f_{j},h}u_{1}^{(0)}(f_{j})
=-h\sum_{k=1}^{m_{1}^{D}(\Omega_{+}\setminus\overline{\Omega_{-}})} \left(\int_{\partial
\Omega_{+}}e^{-\frac{f_{j}(\sigma)}{h}}\mathbf{i}_{n} {u_{k}^{(1)}}(\sigma)~d\sigma\right)u_{k}^{(1)}
+{\mathcal{O}}(e^{-\frac{\min_{\partial
      \Omega_{+}}f_{j}+c}{h}})\,,
$$the estimate being true in any Sobolev space $\bigwedge^1  W^{n,2}(\mathcal{V})$.
Using the fact that
 $f_{1}\equiv f_{2}\equiv f$ in
 $\Omega_{+}\setminus\overline{\Omega_{-}}$\,,  taking the trace along
 $\partial \Omega_{+}$ and multiplying by
 $e^{-\frac{f-\min_{\partial\Omega_{+}}f}{h}}$ which is less than $1$
 on $\partial \Omega_{+}$ and then by
 $e^{\frac{\min_{\partial\Omega_{+}}f}{h}}$, lead to 
\begin{align*}
&\left(\int_{\Omega_{+}}e^{-2\frac{f_{j}(x)}{h}}~dx\right)^{1/2}
e^{-\frac{f-2\min_{\partial\Omega_{+}}f}{h}}
\partial_{n}u_{1}^{(0)}(f_{j})\big|_{\partial \Omega_{+}}
\\
&=
-
\sum_{k=1}^{m_{1}^{D}(\Omega_{+}\setminus\overline{\Omega_{-}})} \left(\int_{\partial
\Omega_{+}}e^{-\frac{f(\sigma)-\min_{\partial \Omega_{+}f}}{h}}\mathbf{i}_{n}
{u_{k}^{(1)}} (\sigma)~d\sigma\right) \, e^{-\frac{f-\min_{\partial \Omega_{+}}f}{h}}\mathbf{i}_{n}u_{k}^{(1)}
+{\mathcal{O}}(e^{-\frac{c}{h}})\, ,
\end{align*}
the estimate being true in $L^{1}(\partial \Omega_{+})$.
The left-hand side is negative and its $L^{1}$-norm is
thus given by the absolute value of its integral. Let us estimate this
norm, using Lemma~\ref{le.intminmaj}  and Lemma~\ref{le.infsigD0}:
for any positive $\varepsilon$,
\begin{align*}
-\left(e^{-\frac{2f_{j}(x)}{h}}~dx\right)^{1/2} &
\int_{\partial \Omega_+}  e^{-\frac{f-2\min_{\partial\Omega_{+}}f}{h}}
\partial_{n}u_{1}^{(0)}(f_{j}) (\sigma) d \sigma
\\
&=
\sum_{k=1}^{m_{1}^{D}(\Omega_{+}\setminus\overline{\Omega_{-}})} \left(\int_{\partial
\Omega_{+}}e^{-\frac{f(\sigma)-\min_{\partial \Omega_{+}f}}{h}}\mathbf{i}_{n}
{u_{k}^{(1)}}(\sigma)~d\sigma \right)^2
+{\mathcal{O}}(e^{-\frac{c}{h}})\\
&= e^{\frac{2\min_{\partial \Omega_{+}f}}{h}} \lambda_1^{(0)}(f_1)
h^{-2} \int_{\Omega_+} e^{-2\frac{f_1(x)}{h}} \, dx
+{\mathcal{O}}(e^{-\frac{c}{h}})\\
&\ge  C_\varepsilon e^{\frac{2\min_{\partial \Omega_{+}f}}{h}} e^{-2
  \frac{\kappa_f +\varepsilon}{h}}
h^{-2} \frac{1}{C_{f_1}} h^{d/2} e^{-\frac{2\min_{\Omega_+} f_1}{h}}
+{\mathcal{O}}(e^{-\frac{c}{h}})\\
&= C_\varepsilon e^{-
  \frac{2\varepsilon}{h}}
\frac{h^{-2+d/2}}{C_{f_1}}
+{\mathcal{O}}(e^{-\frac{c}{h}}) \ge C e^{-\frac{c}{2h}}.
\end{align*}
Thus,
\begin{align*}
&- \frac{ e^{-\frac{f_j}{h}}\partial_{n}u_{1}^{(0)}(f_{j})\big|_{\partial \Omega_{+}}}
{ \| e^{-\frac{f_j}{h}}\partial_{n}u_{1}^{(0)}(f_{j})\big|_{\partial
    \Omega_{+}} \|_{L^1(\partial \Omega_+)}}\\
&=\frac{\sum_{k=1}^{m_{1}^{D}(\Omega_{+}\setminus\overline{\Omega_{-}})}
  \left(\int_{\partial
\Omega_{+}}e^{-\frac{f(\sigma)-\min_{\partial \Omega_{+}f}}{h}}\mathbf{i}_{n}
{u_{k}^{(1)}} (\sigma)~d\sigma\right) e^{-\frac{f-\min_{\partial \Omega_{+}}f}{h}}\mathbf{i}_{n}u_{k}^{(1)}}{\sum_{k=1}^{m_{1}^{D}(\Omega_{+}\setminus\overline{\Omega_{-}})} \left(\int_{\partial
\Omega_{+}}e^{-\frac{f(\sigma)-\min_{\partial \Omega_{+}f}}{h}}\mathbf{i}_{n}
{u_{k}^{(1)}}(\sigma)~d\sigma \right)^2}+{\mathcal{O}}(e^{-\frac{c}{2h}}).
\end{align*}
This concludes the proof since the right-hand side does not depend on $f_j$.
\end{proof}

\section{About the Hypotheses~\ref{hyp.1} and~\ref{hyp.2}}
\label{se.ass}

We have chosen to set the Hypotheses~\ref{hyp.1} and~\ref{hyp.2} in terms of some spectral properties of the Witten Laplacians
$\Delta_{f,h}^{N}(\Omega_{-})$ and 
$\Delta_{f,h}^{D}(\Omega_{-}\setminus\overline{\Omega_{-}})$ in order
to be general enough and to cover possible further advances about the low
spectrum of Witten Laplacians. These hypotheses can actually be translated in very
explicit and simple geometric conditions on the function $f$ when $f$
is a Morse function such that $f\big|_{\partial
  \Omega_{+}}$is a Morse function. We recall that a Morse
function is a
$\mathcal{C}^{\infty}$ function whose all critical points are
non degenerate. Section~\ref{se.Morse} is devoted to a verification of
the Hypotheses~\ref{hyp.1} and~\ref{hyp.2}   when $f$  and $f\big|_{\partial \Omega_{+}}$ are
Morse functions, using the results of \cite{HeNi} and \cite{Lep3}. 
 Theorem~\ref{th.main2} is then obtained as a consequence of the
accurate results under the Morse conditions and the estimates stated
in Corollary~\ref{co.variaf}.

Finally, Section~\ref{se.nonMorse} is devoted to a discussion about
potentials which are not Morse functions. In particular, examples of
functions $f$ which are not Morse functions and for which
Hypotheses~\ref{hyp.1} and~\ref{hyp.2} hold 
are presented.

\subsection{The case of a Morse function $f$}
\label{se.Morse}

\subsubsection{Verifying the Hypotheses~\ref{hyp.1} and~\ref{hyp.2}}
\label{se.assmorse}

Let us first specify the assumptions which allow to use the results of
\cite{HeNi} and \cite{Lep3}, in
addition to the Hypotheses~\ref{hyp.0} and~\ref{hyp.3} which
were already explicitly formulated in terms of the function~$f$:
\begin{hypothese}
\label{hyp.4}
The functions $f$ and $f\big|_{\partial\Omega_{+}}$  are Morse functions.
\end{hypothese}
\begin{hypothese}
\label{hyp.5}
  The critical values of $f$ 
are all distinct and the differences $f(U^{(1)})-f(U^{(0)})$, where
$U^{(0)}$ ranges over the local minima of $f$ and
$U^{(1)}$ ranges over the critical points of $f$
with index $1$, 
are all distinct.
\end{hypothese}
Although $f|_{\partial \Omega_-}$ is not assumed to be a Morse
function (see the discussion below), Hypotheses~\ref{hyp.0},~\ref{hyp.4} and~\ref{hyp.5} ensure that the
results of \cite{HeNi} and \cite{Lep3} on 
small eigenvalues of $\Delta_{f,h}^{D}(\Omega_{+})$,
$\Delta_{f,h}^{N}(\Omega_{-})$ and $\Delta_{f,h}^{D}(\Omega_{+}
\setminus \overline{\Omega_-})$ apply. Following~\cite{Lep3}, the Hypothesis~\ref{hyp.5} is useful to get accurate
scaling rates for the small eigenvalues of
$\Delta_{f,h}^{N,(0)}(\Omega_{-})$. In particular, the information on
the size of the second eigenvalue
$\mu_{2}^{(0)}(\Omega_-)> \mu_{1}^{(0)}(\Omega_-)=0$ of
$\Delta_{f,h}^{N,(0)}(\Omega_{-})$ is important to
prove~\eqref{eq.decayhyp} in Hypothesis~\ref{hyp.1}. Hypothesis~\ref{hyp.5} also implies that $f$ has a
unique global minimum. Hypothesis~\ref{hyp.5} could certainly be relaxed.

Let us recall the general results of~\cite{HeNi,Lep3}, on the
number and the scaling of small eigenvalues for
boundary Witten Laplacians in a regular domain $\Omega$ (see
also~\cite{ChLi,Lau} for related results). The potential
$f$ is assumed to be a Morse
function $f$ on $\Omega$ such that $|\nabla f|\neq 0$ on $\partial
\Omega$ and $f|_{\partial \Omega}$ is also a
Morse function. The notion of critical points with
index $p$ for $f$ has to be extended as follows, in order to take into
account points on the  boundary $\partial \Omega$.
\begin{description}
\item[In the interior $\Omega$:] A generalized critical point with
index $p$ is as usual a critical point at which the Hessian of $f$ 
has $p$ negative eigenvalues. It is a local minimum for $p=0$, a
saddle point for $p=1$ and a local maximum for $p=\dim M=d$\,.
\item[Along the boundary $\partial \Omega$ in the Dirichlet case:] 
A generalized critical point with index $p \ge 1$ is a critical point $\sigma$ of $f|_{\partial
  \Omega}$ with index $p-1$ such that the outward normal derivative is positive
 ($\partial_{n}f(\sigma)>0$). Therefore, along the boundary, there is no
 generalized critical point with index $0$ and critical points with
 index $1$
 coincide with the local minima $\sigma$ of $f\big|_{\partial
   \Omega}$ such that $\partial_{n}f(\sigma)>0$.
Intuitively, this definition can be understood by interpreting
the homogeneous Dirichlet boundary
 conditions as an extension of the  potential by $-\infty$ outside~$\Omega$\,.
\item[Along the boundary $\partial \Omega$ in the Neumann case:] 
A generalized critical point with index $p$ is a critical point
$\sigma$ of $f|_{\partial
  \Omega}$ with index $p$ such that the outward normal derivative is
negative ($\partial_{n}f(\sigma)<0$). Therefore, along the boundary,
a generalized critical point with index $0$
is a local minimum of $f|_{\overline{\Omega}}$ 
and a critical point with index $1$ is a saddle point $\sigma$ of $f|_{\partial \Omega}$ such that
 $\partial_{n}f(\sigma)<0$.
Intuitively, this definition can be understood by interpreting
 the homogeneous Neumann boundary
 conditions as an extension of the  potential by $+\infty$ outside~$\Omega$\,.
\end{description}
  The number of generalized critical points in $\Omega$ with index $p$
  is denoted   $\tilde{m}_{p}^{D}(\Omega)$ or $\tilde{m}_{p}^{N}(\Omega)$, depending on whether the boundary Witten
  Laplacian on $\Omega$
  with Dirichlet or Neumann boundary conditions is considered.

One result of \cite{HeNi,Lep3} says
that for $\nu(h)=h^{6/5}$, one has for the Dirichlet Witten Laplacian
  \begin{align*}
&\sharp\left[\sigma(\Delta_{f,h}^{D,(p)}(\Omega))
\cap
[0,\nu(h)]\right]=\tilde{m}_{p}^{D}(\Omega)\,,\\
&
\sigma(\Delta_{f,h}^{D,(p)}(\Omega))
\cap
[0,\nu(h)]\subset \left[0,e^{-\frac{c_{0}}{h}}\right]\,,
  \end{align*}
and for the Neumann boundary Witten Laplacian:
  \begin{align*}
&\sharp\left[\sigma(\Delta_{f,h}^{N,(p)}(\Omega))
\cap
[0,\nu(h)]\right]=\tilde{m}_{p}^{N}(\Omega)\,,\\
&
\sigma(\Delta_{f,h}^{N,(p)}(\Omega))
\cap
[0,\nu(h)]\subset \left[0,e^{-\frac{c_{0}}{h}}\right]\,,
  \end{align*}
for some positive constant $c_0$.
These results rely, like in \cite{CFKS} for the
boundaryless case, on the
introduction of an $h$-dependent
partition of unity and a rough analysis of boundary local models.

Let us now apply these general results in our context.
Under Hypotheses~\ref{hyp.0} and~\ref{hyp.4}, it holds: 
\begin{itemize}
\item $\tilde{m}_{p}^{N}(\Omega_{-})$ is the number of critical points with
  index $p$ in the interior of $\Omega_{-}$\,;
\item $\tilde{m}_{p}^{D}(\Omega_{+}\setminus\overline{\Omega_{-}})$ is the number of
  critical points $\sigma$ with index $p-1$ of $f\big|_{\partial
    \Omega_{+}}$ such that $\partial_n f(\sigma) > 0$. In particular $\tilde{m}_{0}^{D}(\Omega_{+}\setminus
  \overline{\Omega_{-}})=0$ and $\tilde{m}_{1}^{D}(\Omega_{+}\setminus
  \overline{\Omega_{-}})$ is  the number of local minima of
  $f\big|_{\partial \Omega_{+}}$ with positive normal derivatives\,;
\item $\tilde{m}_{p}^{D}(\Omega_{+})$ is the number of critical points with
  index $p$ in the interior of $\Omega_{-}$ plus the number of
  critical points $\sigma$ of $f\big|_{\partial\Omega_{+}}$ with index $p-1$  such that $\partial_n f(\sigma) > 0$\,. For
  $p=0$, $\tilde{m}_{0}^{D}(\Omega_{+})$ equals $\tilde{m}_{0}^{N}(\Omega_{-})$ while
  $\tilde{m}_{1}^{D}(\Omega_{+})$ is $m_{1}^{N}(\Omega_{-})$ augmented by the
  number of local minima of $f\big|_{\partial \Omega_{+}}$ with
  positive normal derivatives.
\end{itemize}
As already mentioned above, we can use the results
of~\cite{HeNi,Lep3} without assuming that $f|_{\partial \Omega_-}$ is
a Morse function. The reason is that $\partial_n f>0$ on
$\partial \Omega_-$ and thus, there is no generalized critical point
on $\partial \Omega_-$ associated with $\Delta_{f,h}^{N,(p)}(\Omega_-)$ and
$\Delta_{f,h}^{D,(p)}(\Omega_+ \setminus \overline{\Omega_-})$.

In summary, using these results, conditions
\eqref{eq.nombN}, \eqref{eq.tailleN}, \eqref{eq.nombD} and~\eqref{eq.tailleD}
are fulfilled with $\nu(h)=h^{6/5}$, some $c_{0}>0$ and
$m_{p}^{N,D}(\Omega)=\tilde{m}_{p}^{N,D}(\Omega)$, $p\in\{0,1\}$ and
$\Omega=\Omega_{-}$ or $\Omega=\Omega_{+}\setminus\overline{\Omega_{-}}$\,. 
Hence all the conditions of Hypotheses~\ref{hyp.1}
and~\ref{hyp.2} are satisfied except \eqref{eq.decayhyp}. Note in particular that the
two following results
in Theorem~\ref{th.main}:
$$  m_{0}^{D}(\Omega_{+})=m_{0}^{N}(\Omega_{-}) \text{ and }
m_{1}^{D}(\Omega_{+})=m_{1}^{N}(\Omega_{-})+m_{1}^{D}(\Omega_{+}\setminus\overline{\Omega_{-}})$$
are consistent with the relations on the numbers of generalized
critical points:
$$  \tilde{m}_{0}^{D}(\Omega_{+})=\tilde{m}_{0}^{N}(\Omega_{-}) \text{
  and }
\tilde{m}_{1}^{D}(\Omega_{+})=\tilde{m}_{1}^{N}(\Omega_{-})+\tilde{m}_{1}^{D}(\Omega_{+}\setminus\overline{\Omega_{-}}).$$
As explained in the proof below, the Hypothesis~\ref{hyp.5} is particularly useful to verify the
condition \eqref{eq.decayhyp} in Hypothesis~\ref{hyp.1}.

The following proposition thus
yields a simple set of assumptions on $f$ such that
Theorem~\ref{th.main} holds.
\begin{proposition}
\label{pr.assMorse}
Assume Hypotheses~\ref{hyp.0},~\ref{hyp.4}, \ref{hyp.5}
and let $\mathcal{U}^{(0)}$ (resp. $\mathcal{U}^{(1)}$) denote the set
of critical points with index $0$ (resp. $1$) of $f\big|_{\Omega_{-}}$\,.
Let us consider the Agmon distance $d_{Ag}$ introduced in Lemma~\ref{le.dist}.
Then the inequality
\begin{equation}
  \label{eq.conddag}
d_{Ag}(\partial \Omega_{-},\mathcal{U}^{(0)})>\max_{U^{(1)}\in
  \mathcal{U}^{(1)}\,,\, U^{(0)}\in \mathcal{U}^{(0)}} f(U^{(1)})-f(U^{(0)})
\end{equation}
implies~\eqref{eq.decayhyp}. As a consequence, the
 inequality~\eqref{eq.conddag} together with the
 Hypotheses~\ref{hyp.0},~\ref{hyp.3},~\ref{hyp.4} and \ref{hyp.5}  are sufficient
 conditions for the results of
Theorem~\ref{th.main} and its corollaries to hold.
\end{proposition}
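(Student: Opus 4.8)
The plan is to reduce everything to a single weighted estimate. As recalled in the discussion preceding the statement, under Hypotheses~\ref{hyp.0} and~\ref{hyp.4} the results of~\cite{HeNi,Lep3} already supply~\eqref{eq.nombN}, \eqref{eq.tailleN}, \eqref{eq.nombD} and~\eqref{eq.tailleD} with $\nu(h)=h^{6/5}$, with $m_{p}^{N}(\Omega_{-})=\tilde m_{p}^{N}(\Omega_{-})$ and $m_{1}^{D}(\Omega_{+}\setminus\overline{\Omega_{-}})=\tilde m_{1}^{D}(\Omega_{+}\setminus\overline{\Omega_{-}})$; hence the only part of Hypotheses~\ref{hyp.1}--\ref{hyp.2} still to be checked is~\eqref{eq.decayhyp}, and one must exhibit $\mathcal{V}_{-}$. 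Write $d_{0}=d_{Ag}(\partial\Omega_{-},\mathcal{U}^{(0)})$ and $D=\max_{U^{(1)}\in\mathcal{U}^{(1)},\,U^{(0)}\in\mathcal{U}^{(0)}}\big(f(U^{(1)})-f(U^{(0)})\big)$, so that~\eqref{eq.conddag} reads $d_{0}>D$; fix $\varepsilon>0$ with $d_{0}>D+3\varepsilon$. Since $x\mapsto d_{Ag}(x,\mathcal{U}^{(0)})$ is Lipschitz (Lemma~\ref{le.dist}) and is $\geq d_{0}$ on $\partial\Omega_{-}$, it is $\geq d_{0}-\varepsilon$ on some neighborhood $\mathcal{V}_{-}$ of $\partial\Omega_{-}$ in $\overline{\Omega_{-}}$, which, after enlarging $U$, we may further assume satisfies $U\cup\mathcal{V}_{-}=\overline{\Omega_{-}}$ as in~\eqref{eq:U'}.

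With these choices, \eqref{eq.decayhyp} follows from the two facts
\[
\big\|e^{d_{Ag}(\cdot,\mathcal{U}^{(0)})/h}\,\psi\big\|_{L^{2}(\Omega_{-})}=\mathcal{O}(h^{-N})
\qquad\text{and}\qquad
\mu(h)\geq c_{\varepsilon}\,e^{-2(D+\varepsilon)/h}\quad(h\ \text{small}),
\]
valid for every $L^{2}$-normalized eigenfunction $\psi$ of $\Delta_{f,h}^{N,(0)}(\Omega_{-})$ with eigenvalue $\mu(h)\in(0,\nu(h)]$: the first gives $\|\psi\|_{L^{2}(\mathcal{V}_{-})}\leq\mathcal{O}(h^{-N})\,e^{-(d_{0}-\varepsilon)/h}$, the second gives $\sqrt{\mu(h)}\geq\sqrt{c_{\varepsilon}}\,e^{-(D+\varepsilon)/h}$, and their quotient is $\mathcal{O}(h^{-N})\,e^{-(d_{0}-D-2\varepsilon)/h}=\mathcal{O}(1)$ since $d_{0}-D-2\varepsilon>\varepsilon>0$; in fact this yields $\|\psi\|_{L^{2}(\mathcal{V}_{-})}=\mathcal{O}(\sqrt{\mu(h)})$, which is even stronger than the required $\tilde{\mathcal{O}}$.

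The eigenvalue lower bound is the easier point: Hypothesis~\ref{hyp.5} makes the separating saddle values of $f\big|_{\Omega_{-}}$ and the differences $f(U^{(1)})-f(U^{(0)})$ pairwise distinct, so the asymptotics of~\cite{Lep3} give, for each nonzero small eigenvalue, $\lim_{h\to0}h\log\mu_{k}^{(0)}(\Omega_{-})=-2S_{k}$ with $S_{k}$ of the form $f(\widehat U^{(1)})-f(\widehat U^{(0)})\leq D$, whence $\liminf_{h\to0}h\log\mu(h)\geq-2D$ -- finer than, but consistent with, Proposition~\ref{pr.gapO-}. The first (weighted) estimate is the hard part and the main obstacle: it is precisely the refinement of Proposition~\ref{pr.decayO-} in which the Agmon weight is the distance to the \emph{wells} $\mathcal{U}^{(0)}$ alone, not to a neighborhood of \emph{all} critical points. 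I would obtain it by running the Agmon identity of Lemma~\ref{le.Agmon} for $\Delta_{f,h}^{N}(\Omega_{-})$ with $\varphi=(1-\alpha h)d_{Ag}(\cdot,\mathcal{U}^{(0)})$ -- admissible because $|\nabla\varphi|\leq|\nabla f|$ a.e.\ (Lemma~\ref{le.dist}) and because $\partial_{n}f>0$ on $\partial\Omega_{-}$ makes the Neumann boundary term have the favorable sign; using $\mu(h)\leq\nu(h)\leq h$ and absorbing the Lie-derivative terms exactly as in the proof of Proposition~\ref{pr.decayO-}, one reduces $\|e^{\varphi/h}\psi\|_{L^{2}(\Omega_{-})}$ to $\|e^{\varphi/h}\psi\|_{L^{2}(U_{0})}$ over an arbitrarily small neighborhood $U_{0}$ of the critical set. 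Near a minimum $\varphi=\mathcal{O}(\mathrm{diam}\,U_{0})$ and $\|\psi\|_{\infty}=\mathcal{O}(h^{-N})$. Near a critical point $U^{(j)}$ of index $j\geq1$ one has, by Lemma~\ref{le.dist}, $d_{Ag}(U^{(j)},\mathcal{U}^{(0)})=f(U^{(j)})-\max_{m}f(m)$ over the minima $m$ reached along the downhill $-\nabla f$-flow, while the finite-dimensional reduction and local models of~\cite{HeNi,Lep3} -- here Hypotheses~\ref{hyp.4} and~\ref{hyp.5} are genuinely used, the exponentially separated diagonal of $\Delta_{f,h}^{N,(0)}(\Omega_{-})\big|_{F}$ in the quasimode basis forcing each eigenfunction to be exponentially close to a quasimode $\propto\theta_{m}e^{-(f-f(m))/h}$ supported in the sublevel component of the relevant minimum -- give $|\psi|\lesssim h^{-N}e^{-(f-\max_{m}f(m))/h}$ there; hence $e^{\varphi/h}|\psi|=\mathcal{O}(h^{-N})$ on all of $U_{0}$.

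Once~\eqref{eq.decayhyp} is in hand, Hypotheses~\ref{hyp.1} and~\ref{hyp.2} hold, so Theorem~\ref{th.main} and its corollaries apply, and together with Hypotheses~\ref{hyp.0} and~\ref{hyp.3} this gives the asserted sufficiency. The whole difficulty is concentrated in the sharp bound $\|e^{d_{Ag}(\cdot,\mathcal{U}^{(0)})/h}\psi\|_{L^{2}(\Omega_{-})}=\mathcal{O}(h^{-N})$, i.e.\ in localizing the small-eigenvalue eigenfunctions of $\Delta_{f,h}^{N,(0)}(\Omega_{-})$ by the Agmon distance to the wells; everything else is either a direct quotation of~\cite{HeNi,Lep3} or a manipulation of the Agmon identity of the kind already carried out several times in Section~\ref{se.expdecay}.
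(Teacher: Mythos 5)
Your proposal follows the same route as the paper's proof: reduce everything to~\eqref{eq.decayhyp}, obtain the lower bound $\liminf_{h\to 0}h\log\mu(h)\geq -2D$ for the nonzero small eigenvalues from the asymptotics of \cite{Lep3} under Hypothesis~\ref{hyp.5}, and pair it with a decay estimate for $\psi$ near $\partial\Omega_{-}$ governed by the Agmon distance to the \emph{minima} $\mathcal{U}^{(0)}$ alone; the quotient argument and the choice of $\mathcal{V}_{-}$ are identical in substance. The only divergence is in how that refined decay estimate is justified, and this is where your sketch has a soft spot. The paper does not re-derive it: it notes that the critical points of index $\geq 1$ are \emph{non-resonant} wells (their local Dirichlet models have spectrum in $[h/C,+\infty)$) and quotes Corollary~2.2.7 of Helffer--Sj\"ostrand \cite{HeSj2}, which directly gives the pointwise bound $|\psi(x)|\leq C_{\varepsilon}e^{-(d_{Ag}(x,\mathcal{U}^{(0)})-\varepsilon)/h}$. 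Your Agmon-identity scaffolding with $\varphi=(1-\alpha h)d_{Ag}(\cdot,\mathcal{U}^{(0)})$ correctly propagates a bound away from the critical set, but the input you feed it near the index-$\geq 1$ critical points, namely $|\psi|\lesssim h^{-N}e^{-(f-\max_{m}f(m))/h}$, is precisely the localized form of the estimate you are trying to prove, and the justification you offer for it (exponential $L^{2}$-closeness of $\psi$ to a quasimode supported near a single minimum) does not by itself yield a pointwise bound with that exponential rate at the saddles --- that step genuinely requires the resolvent comparison with the non-resonant local Dirichlet problems, i.e.\ the \cite{HeSj2} machinery. Once that step is replaced by (or recognized as) the citation the paper uses, your proof is complete and coincides with the paper's.
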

Figures~\ref{fig:2D} and~\ref{fig:1D_OK} give examples of functions
$f$ for which the inequality~\eqref{eq.conddag} together with the
 Hypotheses~\ref{hyp.0},~\ref{hyp.3},~\ref{hyp.4} and~\ref{hyp.5} are
 fulfilled.  Figure~\ref{fig:1D_pas_OK} is an example of a function
 $f$ which satisfies Hypotheses~\ref{hyp.0},~\ref{hyp.3},~\ref{hyp.4}
 and \ref{hyp.5}, but not the inequality~\eqref{eq.conddag}.
\begin{remarque}
\label{re.assMorse}
Since $d_{Ag}(x,y)\geq |f(x)-f(y)|$ (see~\eqref{eq:fx-fy}), the condition
\eqref{eq.condcvmax} given in the introduction is a sufficient
condition for \eqref{eq.conddag}. The condition~\eqref{eq.condcvmax}
also implies Hypothesis~\ref{hyp.3}.
Thus, a set of sufficient conditions for Theorem~\ref{th.main} to
hold is Hypotheses~\ref{hyp.0},~\ref{hyp.4} and \ref{hyp.5} together
with~\eqref{eq.condcvmax}. This is indeed the simple setting presented
in the introduction (see the four assumptions stated in Section~\ref{sec.res}).
\end{remarque}
\begin{remarque}
It may happen that ${\mathcal U}^{(1)}=\emptyset$. In this case,
the inequality~\eqref{eq.conddag} is automatically satisfied, and
there are no exponentially small nonzero eigenvalue for
$\Delta_{f,h}^{N,(0)}(\Omega_-)$. Consistently,~\eqref{eq.decayhyp} is
a void condition in this case.
\end{remarque}
\begin{proof}
According to the previous discussion, it only remains to prove that
Hypotheses~\ref{hyp.0},~\ref{hyp.4}, \ref{hyp.5} together
with~\eqref{eq.conddag} imply~\eqref{eq.decayhyp} for the proposition
to hold. According to
\cite{Lep3}, the smallest nonzero eigenvalue of
$\Delta_{f,h}^{N,(0)}(\Omega_-)$ (namely $\mu_2^{(0)}(\Omega_-)$), satisfies under Hypotheses~\ref{hyp.4} and
\ref{hyp.5} the inequality
$$
\lim_{h\to
  0}h\log(\mu_2^{(0)}(\Omega_-))=-2
\left(f(U_{j_{1}}^{(1)})-f(U_{j_{0}}^{(0)}) \right)
\ge  -2 \max_{U^{(1)}\in\mathcal{U}^{(1)}\,,\, U^{(0)}\in
        \mathcal{U}^{(0)}}f(U^{(1)})-f(U^{(0)}),
$$
where $U^{(0)}_{j_0}$ and $U^{(1)}_{j_1}$ are two critical point of
index $0$ and $1$ respectively.

Let us now consider the exponential decay near $\partial \Omega_{-}$ of an eigenfunction of
$\Delta_{f,h}^{N,(0)}(\Omega_{-})$ associated with a non-zero
exponentially small eigenvalue. 
A stronger version of Proposition~\ref{pr.decayO-} can be given
because
 under Hypotheses~\ref{hyp.0},~\ref{hyp.4} and~\ref{hyp.5} the critical
 points of $f\big|_{\Omega_{-}}$ which are not local minima, are not
 associated with small eigenvalues of $\Delta_{f,h}^{N,(0)}(\Omega_-)$  (they
 are so-called {\em non
 resonant} wells,  see~\cite{HeSj2}). Indeed, when~$U$ is a critical
 point of $f\big|_{\Omega_{-}}$ with $U\not\in \mathcal{U}^{(0)}$, the
 local model of $\Delta_{f,h}^{D,(0)}(B(U,r))$
has his spectrum included
 in~$[h/C(U,r),+\infty)$, for $r>0$ small enough, (see for example \cite{CFKS}).
 Then, the Corollary~2.2.7 of \cite{HeSj2} implies that any normalized
 eigenfunction  $\psi(h)$ of $\Delta_{f,h}^{N,(0)}(\Omega_-)$  associated with an eigenvalue
 $\mu(h)\in [0,e^{-\frac{c_{0}}{h}}]$ satisfies: $\forall \varepsilon
 > 0$, $\exists C_\varepsilon >0$, $\forall x \in \Omega_-$,
$$
\quad |\psi_{h}(x)| \leq C_\varepsilon
\left(e^{-\frac{d_{Ag}(x,\mathcal{U}_{0}) +\varepsilon}{h}}\right)
$$
(compare with the result of Proposition~\ref{pr.decayO-}).
Hence the condition~\eqref{eq.conddag} implies that in a small 
 neighborhood $\mathcal{V}_{-}$ of $\partial\Omega_{-}$, the eigenfunction
$\psi(h)$ is estimated by
\begin{align*}
\|\psi(h)\|_{L^{2}(\mathcal{V}_{-})}
&=
\tilde{\mathcal{O}}\left(e^{-\frac{d_{Ag}( \mathcal{V}_{-},\mathcal{U}^{(0)})}{h}}\right)
\leq 
C\exp \left(-\frac{\displaystyle{\max_{U^{(1)}\in\mathcal{U}^{(1)}\,,\, U^{(0)}\in
        \mathcal{U}^{(0)}}f(U^{(1)})-f(U^{(0)}) + c}}{h} \right)\\
&\leq \tilde{\mathcal{O}} \left(\sqrt{\mu_{2}^{(0)}(\Omega_-) } \right)\leq \tilde{\mathcal{O}}(\sqrt{\mu(h)})\,,
\end{align*}
provided that $\mu(h)\neq \mu_{1}^{(0)}(\Omega_-) =0$. This is exactly~\eqref{eq.decayhyp}.
\end{proof}

\begin{figure}[htbp]
\centerline{\includegraphics[width=0.5\textwidth]{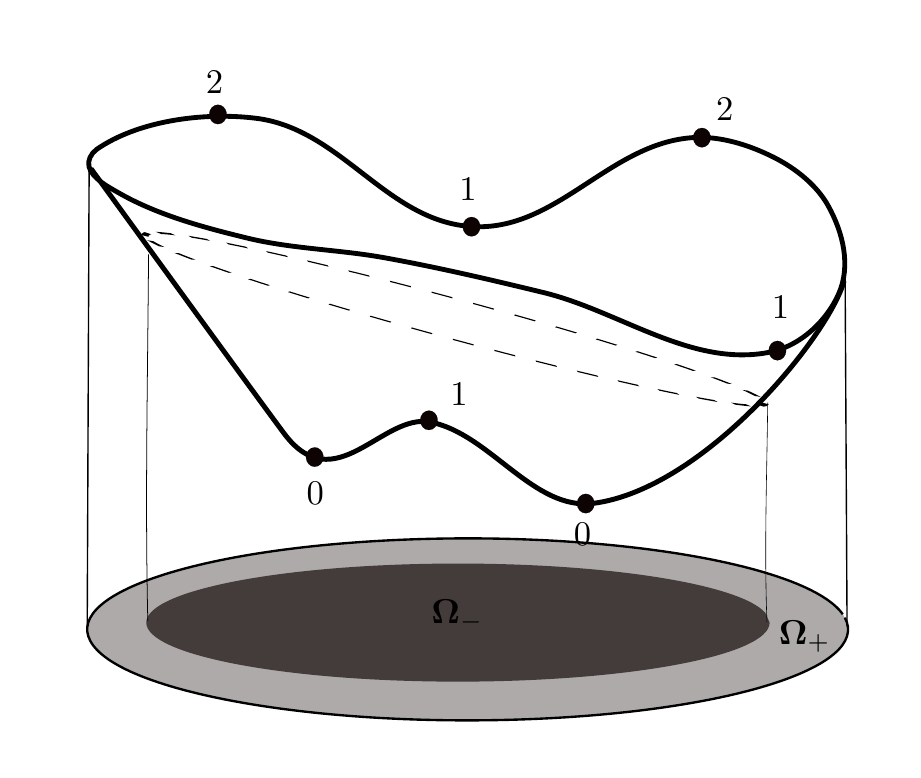}}
\caption{A two-dimensional example where
the
 inequality~\eqref{eq.conddag} together with the
 Hypotheses~\ref{hyp.0},~\ref{hyp.3},~\ref{hyp.4} and~\ref{hyp.5} are fulfilled. The index of the generalized critical points are labelled.}\label{fig:2D}
\end{figure}



\begin{figure}[htbp]
\centerline{\includegraphics[width=0.5\textwidth]{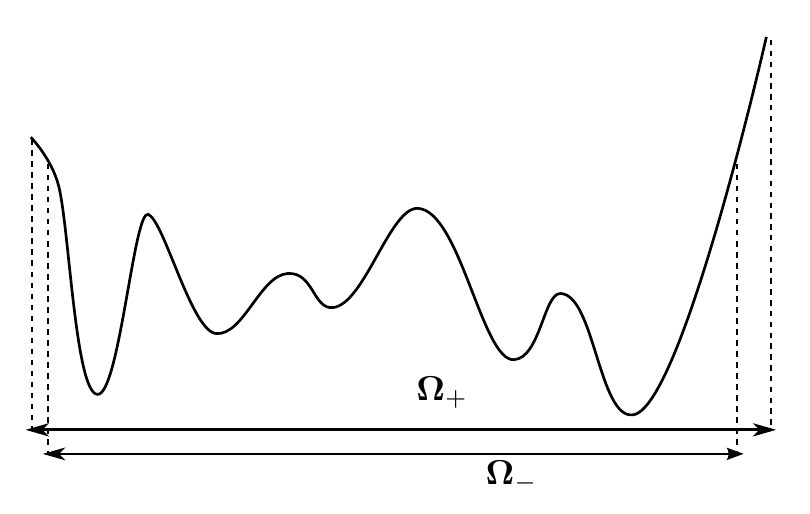}}
\caption{A one-dimensional example where the
 inequality~\eqref{eq.conddag} together with the
 Hypotheses~\ref{hyp.0},~\ref{hyp.3},~\ref{hyp.4} and~\ref{hyp.5} are
fulfilled.}\label{fig:1D_OK}
\end{figure}



\begin{figure}[htbp]
\centerline{\includegraphics[width=0.5\textwidth]{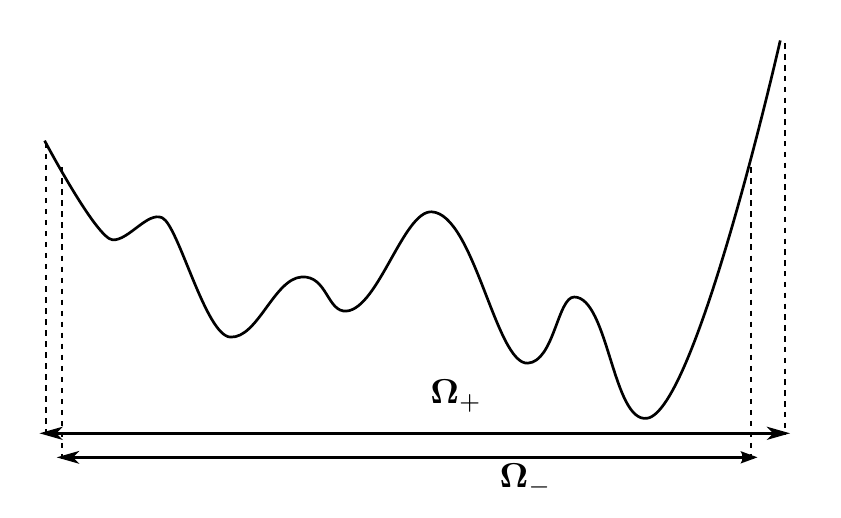}}
\caption{A one-dimensional example where the
 Hypotheses~\ref{hyp.0},~\ref{hyp.3},~\ref{hyp.4} and \ref{hyp.5} are
 fulfilled, but the 
 inequality~\eqref{eq.conddag} is not satisfied. The
condition~\eqref{eq.conddag} would be fulfilled with a lower local
minimum on the left-hand side for example (see Figure~\ref{fig:1D_OK}).}\label{fig:1D_pas_OK}
\end{figure}

\begin{remarque}[Assumptions in terms of  $\Omega_{+}$ only]
\label{se.assmor+}
Let us assume that Hypotheses~\ref{hyp.3},~\ref{hyp.4} and~\ref{hyp.5} hold. Then, it is easy to check that if
\begin{equation}\label{eq.dnf+}
\partial_{n}f\big|_{\partial\Omega_{+}}>0,
\end{equation}
and
\begin{equation}\label{eq.conddag+}
d_{Ag}(\partial\Omega_{+}, \mathcal{U}^{(0)})
> 
\max_{U^{(1)}\in
  \mathcal{U}^{(1)}\,,\, U^{(0)}\in \mathcal{U}^{(0)}} f(U^{(1)})-f(U^{(0)})\,,
\end{equation}
 then there exists a regular open domain $\Omega_{-}$ such that $\overline{\Omega_-}\subset
\Omega_{+}$ and the Hypothesis~\ref{hyp.0}
and the condition~\eqref{eq.conddag} hold.  Indeed, the 
  conditions~\eqref{eq.dnf+} and~\eqref{eq.conddag+} are open and allow small deformation
  from $\Omega_{+}$ to some subset $\Omega_{-}$\,. Note that 
the condition~\eqref{eq.dnf+} implies
that this small deformation can be chosen such that all the
critical points of $f$ are indeed in $\Omega_{-}$: this is exactly Hypothesis~\ref{hyp.0}.
 As a consequence, under the Hypotheses~\ref{hyp.3},~\ref{hyp.4} and~\ref{hyp.5} and the two
 assumptions~\eqref{eq.dnf+} and~\eqref{eq.conddag+},
 the results of Theorem~\ref{th.main} hold for a well chosen domain
 $\Omega_-$ such that $\overline{\Omega_-} \subset \Omega_+$.

In addition, following
Remark~\ref{re.assMorse} above, it is easy to check that
the inequality
\begin{equation}\label{eq:1.9bis}
\min_{\partial \Omega_{+}}f-\mathrm{cvmax}>
\mathrm{cvmax}-\min_{\Omega_{+}}f
\end{equation}
is a sufficient condition for \eqref{eq.conddag+}. It also implies
Hypothesis~\ref{hyp.3}. Thus, under the Hypotheses~\ref{hyp.4},~\ref{hyp.5} and the two
 assumptions~\eqref{eq.dnf+} and~\eqref{eq:1.9bis},
 the results of Theorem~\ref{th.main} hold for a well chosen domain
 $\Omega_-$ such that $\overline{\Omega_-} \subset \Omega_+$.
\end{remarque}

\subsubsection{Proof of Theorem~\ref{th.main2}}
\label{se.resultmorse}

In this section, more explicit formulas for
$\lambda_1^{(0)}(\Omega_+)$ and
$\partial_{n}\left(e^{-\frac{f}{h}}u_{1}^{(0)}\right)$ are given,
 under
the Morse
assumption on $f$ and $f|_{\partial \Omega_+}$.  We shall prove
\begin{proposition}
\label{pr.morseresult}
Assume Hypotheses~\ref{hyp.0},~\ref{hyp.3},~\ref{hyp.4}, \ref{hyp.5}, the
condition~\eqref{eq.conddag} 
and, moreover, 
\begin{equation}\label{eq:dnf_omega+}
\partial_n f >0 \text{ on } \partial \Omega_+.
\end{equation}
Then the first eigenvalue $\lambda_{1}^{(0)}(\Omega_{+})$ of
$\Delta_{f,h}^{D,(0)}(\Omega_{+})$ satisfies
\begin{align}
\label{eq.l10Hess}
\lambda_{1}^{(0)}(\Omega_{+})
&=
\sum_{k=1}^{m_{1}^{D}(\Omega_{+}\setminus\overline{\Omega_{-}})}
\sqrt{\frac{h\det(\Hess
  f)(U_{0})}{\pi\det(\Hess f\big|_{\partial
    \Omega_{+}})(U_{k}^{(1)})}}
\, 2\partial_{n}f(U_{k}^{(1)}) \, 
e^{-2\frac{f(U_{k}^{(1)})-f(U_{0})}{h}}
(1+\mathcal{O}(h))
\\
\label{eq.l10int}
&=
\frac{\int_{\partial \Omega_{+}} 2\partial_{n}f(\sigma)
  e^{-2\frac{f(\sigma)}{h}}\,d\sigma}{
\int_{\Omega_{+}}e^{-2\frac{f(x)}{h}}\,dx} (1+\mathcal{O}(h))\,,
\end{align}
where $U_{0}$ is the (unique) global minimum of $f$ in $\Omega_+$ and the
$U_{k}^{(1)}$'s are the local minima of $f\big|_{\partial
  \Omega_{+}}$\,. Moreover,
 the normalized non negative eigenfunction  $u_{1}^{(0)}$ of $\Delta_{f,h}^{D,(0)}(\Omega_{+})$
associated with $\lambda_{1}^{(0)}(\Omega_{+})$ satisfies
\begin{equation}
  \label{eq.denssortpfn}
- \frac{\partial_{n}\left[e^{-\frac{f}{h}}u_{1}^{(0)}\right]\big|_{\partial \Omega_{+}}}{ 
\left\|\partial_{n}\left[e^{-\frac{f}{h}}u_{1}^{(0)}\right]\right\|_{L^{1}(\partial
  \Omega_{+})}}
=
\frac{(2\partial_{n}f) e^{-\frac{2f}{h}}\big|_{\partial \Omega_{+}}
}{
\left\|
(2\partial_{n}f)
  e^{-\frac{2f}{h}}\right\|_{L^{1}(\partial \Omega_{+})}
}
 + \mathcal{O}(h)\quad \text{in}~L^{1}(\partial \Omega_{+})\,.
\end{equation}
\end{proposition}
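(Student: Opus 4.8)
The plan is to specialize the already-established expansions \eqref{eq:expansion_lambda10} and \eqref{eq.appdu10} to the Morse situation by feeding in the precise WKB description of the low-lying eigen-one-forms of $\Delta_{f,h}^{D,(1)}(\Omega_{+}\setminus\overline{\Omega_{-}})$ from \cite{HeNi,Lep3}, and then to carry out the resulting Laplace integrals. Recall that, by Proposition~\ref{pr.expralmostorth} together with Remark~\ref{rem:almost_orthonormal}, the formulas \eqref{eq:expansion_lambda10}--\eqref{eq.appdu10} hold for \emph{any} almost orthonormal basis $(u_{k}^{(1)})_{1\le k\le m_{1}^{D}(\Omega_{+}\setminus\overline{\Omega_{-}})}$ of $\Ran 1_{[0,\nu(h)]}(\Delta_{f,h}^{D,(1)}(\Omega_{+}\setminus\overline{\Omega_{-}}))$. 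Hence the whole problem reduces to choosing a convenient such basis and computing, for it, the numbers $\ell_{k}:=\int_{\partial\Omega_{+}}e^{-f(\sigma)/h}\,\mathbf{i}_{n}u_{k}^{(1)}(\sigma)\,d\sigma$ and the boundary traces $\mathbf{i}_{n}u_{k}^{(1)}\big|_{\partial\Omega_{+}}$, up to a relative $\mathcal{O}(h)$ error.

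First I would record, as in Section~\ref{se.assmorse}, that under Hypotheses~\ref{hyp.0} and~\ref{hyp.4} together with \eqref{eq:dnf_omega+}, the generalized critical points of index $1$ attached to $\Delta_{f,h}^{D,(1)}(\Omega_{+}\setminus\overline{\Omega_{-}})$ are exactly the local minima $U_{1}^{(1)},\dots,U_{m_{1}^{D}(\Omega_{+}\setminus\overline{\Omega_{-}})}^{(1)}$ of $f\big|_{\partial\Omega_{+}}$ (there is no interior contribution since $|\nabla f|>0$ on $\overline{\Omega_{+}}\setminus\Omega_{-}$, and none on $\partial\Omega_{-}$ since $\partial_{n}f>0$ there). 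The results of \cite{HeNi,Lep3} then furnish, near each $U_{k}^{(1)}$, an explicit WKB quasimode for $\Delta_{f,h}^{D,(1)}(\Omega_{+}\setminus\overline{\Omega_{-}})$; after the usual Gram--Schmidt almost-orthonormalization this yields an admissible basis $(u_{k}^{(1)})$, with $u_{k}^{(1)}$ concentrated near $U_{k}^{(1)}$ and exponentially small pairwise overlaps. The crucial input is the shape of the boundary trace: using that $u_{k}^{(1)}$ is Witten-coclosed ($d_{f,h}^{*}u_{k}^{(1)}=0$, see~\eqref{eq:d*_zero}) and that $\mathbf{t}u_{k}^{(1)}\big|_{\partial\Omega_{+}}=0$, the half-space boundary model (with $f$ quadratic along $\partial\Omega_{+}$ and linear in the normal direction, with normal slope $\partial_{n}f(U_{k}^{(1)})>0$) together with the eikonal equation $|\nabla\Phi|^{2}=|\nabla f|^{2}$ for the relevant Agmon phase $\Phi$ forces $\mathbf{i}_{n}u_{k}^{(1)}\big|_{\partial\Omega_{+}}$, near $U_{k}^{(1)}$, to be (up to relative $\mathcal{O}(h)$) a Gaussian bump of height proportional to $\sqrt{2\partial_{n}f(U_{k}^{(1)})/h}$ (with the $L^{2}$-normalization over the $(d-1)$ tangential directions and the $O(h)$ normal boundary layer accounted for) and of width $\sqrt{h}$ governed by $\Hess(f\big|_{\partial\Omega_{+}})(U_{k}^{(1)})$; this is exactly where the factor $2\partial_{n}f(U_{k}^{(1)})$ enters. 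A stationary-phase (Laplace) evaluation then gives $\ell_{k}$, hence $h^{2}|\ell_{k}|^{2}$, with relative error $\mathcal{O}(h)$.

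Then I would assemble. Inserting these values into \eqref{eq:expansion_lambda10}, and in addition expanding the bulk integral $\int_{\Omega_{+}}e^{-2f/h}\,dx$ by Laplace's method at the unique nondegenerate global minimum $U_{0}$ of $f$ (which exists by Hypotheses~\ref{hyp.4}--\ref{hyp.5}; this replaces the crude two-sided bound of Lemma~\ref{le.intminmaj} by the sharp Gaussian asymptotics), produces the Eyring--Kramers formula \eqref{eq.l10Hess}; recognizing the sum over $k$ of the Gaussian boundary contributions as the Laplace approximation to $\int_{\partial\Omega_{+}}2\partial_{n}f(\sigma)e^{-2f(\sigma)/h}\,d\sigma$ and the residual $h$-power as the one coming from $\int_{\Omega_{+}}e^{-2f/h}\,dx$ then gives the integral form \eqref{eq.l10int} (equivalently \eqref{eq.l10th2}). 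For \eqref{eq.denssortpfn}/\eqref{eq.denssortth2} I would feed the same WKB expression for $\mathbf{i}_{n}u_{k}^{(1)}\big|_{\partial\Omega_{+}}$ into \eqref{eq.appdu10} and Corollary~\ref{co.trace}: taking the trace on $\partial\Omega_{+}$, using $d_{f,h}u_{1}^{(0)}=h\,du_{1}^{(0)}+u_{1}^{(0)}df$ and $u_{1}^{(0)}\big|_{\partial\Omega_{+}}=0$, one gets $e^{-f/h}\partial_{n}u_{1}^{(0)}\big|_{\partial\Omega_{+}}$ as $-(\int_{\Omega_{+}}e^{-2f/h})^{-1/2}\sum_{k}\ell_{k}\,e^{-f/h}\mathbf{i}_{n}u_{k}^{(1)}\big|_{\partial\Omega_{+}}$ up to negligible terms; a short computation shows that near every $U_{k}^{(1)}$ the bump $\ell_{k}\,\mathbf{i}_{n}u_{k}^{(1)}(\sigma)$ equals, to leading order, the $h^{-1}$-multiple of $2\partial_{n}f(\sigma)e^{-f(\sigma)/h}$ (all the $\Hess(f\big|_{\partial\Omega_{+}})(U_{k}^{(1)})$-dependent normalizations cancelling), so that $\sum_{k}\ell_{k}e^{-f/h}\mathbf{i}_{n}u_{k}^{(1)}\big|_{\partial\Omega_{+}}$ reconstructs $h^{-1}\,2\partial_{n}f\,e^{-2f/h}\big|_{\partial\Omega_{+}}$ in $L^{1}(\partial\Omega_{+})$ up to $\mathcal{O}(h)$; dividing by the $L^{1}(\partial\Omega_{+})$-norm makes the $k$-independent prefactor $h^{-1}(\int_{\Omega_{+}}e^{-2f/h})^{-1/2}$ cancel, leaving \eqref{eq.denssortpfn} (the sign $\partial_{n}u_{1}^{(0)}\le 0$ being already contained in Corollary~\ref{co.trace}). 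Finally, the statements \eqref{eq.l10th2}--\eqref{eq.denssortth2} of Theorem~\ref{th.main2} are precisely \eqref{eq.l10int} and \eqref{eq.denssortpfn}, whose proof only uses the boundary WKB analysis and \eqref{eq:expansion_lambda10}--\eqref{eq.appdu10}, all of which are available under the hypotheses of Theorem~\ref{th.main2} as well (Hypotheses~\ref{hyp.1}--\ref{hyp.2} being assumed there directly, and $f\big|_{\partial\Omega_{+}}$ Morse with $\partial_{n}f>0$ on $\partial\Omega_{+}$); the passage from the hypotheses of Proposition~\ref{pr.morseresult} to Hypotheses~\ref{hyp.1}--\ref{hyp.2} goes through Proposition~\ref{pr.assMorse} and Remarks~\ref{re.assMorse}--\ref{se.assmor+}.

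The main obstacle is the local WKB/boundary-layer analysis of the second paragraph: pinning down, near each generalized index-$1$ critical point of $\Delta_{f,h}^{D,(1)}(\Omega_{+}\setminus\overline{\Omega_{-}})$ lying on $\partial\Omega_{+}$, both the exact normalization of the quasimode (including the tangential Gaussian volume and the $O(h)$ normal layer) and the effective normal rate $2\partial_{n}f(U_{k}^{(1)})$, so as to obtain the constant in \eqref{eq.l10Hess} — and the delicate cancellation of the Hessian normalizations needed for \eqref{eq.denssortpfn} — with only a relative $\mathcal{O}(h)$ error. This is exactly the half-space boundary model treated in \cite{HeNi,Lep3}; the work here is to quote and combine those local results with the global Laplace asymptotics, and to check that the $\mathcal{O}(e^{-(\kappa_{f}+c)/h})$ remainders in \eqref{eq:expansion_lambda10}--\eqref{eq.appdu10} and the exponentially small basis overlaps are indeed negligible against the leading $\mathcal{O}(h)$-relative terms.
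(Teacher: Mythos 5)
Your proposal follows essentially the same route as the paper: reduce via Proposition~\ref{pr.expralmostorth} (and Remark~\ref{rem:almost_orthonormal}) to a well-chosen almost orthonormal basis of $\Ran 1_{[0,\nu(h)]}(\Delta_{f,h}^{D,(1)}(\Omega_{+}\setminus\overline{\Omega_{-}}))$ built from the boundary WKB quasimodes of \cite{HeNi,Lep3} near each $U_{k}^{(1)}$ (the paper uses the spectral projection of the truncated local eigenvectors rather than Gram--Schmidt, an immaterial difference), then evaluate all quantities by Laplace's method to get \eqref{eq.l10Hess}--\eqref{eq.denssortHess} and finally \eqref{eq.l10int}--\eqref{eq.denssortpfn}. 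One caveat outside the scope of the stated proposition: Theorem~\ref{th.main2} does \emph{not} assume $f$ Morse in the interior, so it does not follow ``precisely'' from Proposition~\ref{pr.morseresult} as you suggest at the end --- the paper needs an extra perturbation argument (replacing $f$ by a Morse function $\tilde f$ coinciding with $f$ on $\Omega_{+}\setminus\Omega_{-}$ and invoking Corollary~\ref{co.variaf}).
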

\begin{remarque}
The hypothesis $\partial_n f >0$ on $\partial \Omega_+$ 
ensures that the set of all the local minima $U_k^{(1)}$ of $f|_{\partial \Omega_+}$
coincides with the set of generalized critical points with index $1$
for $\Delta_{f,h}^{D}(\Omega_+ \setminus \overline{\Omega_-})$. The
results of Proposition~\ref{pr.morseresult} also hold under the more
general assumption $\partial_{n}f(\sigma)>0$ when $\sigma\in \partial
\Omega_{+}$ is such that $f(\sigma)\leq \min_{\partial \Omega_{+}}f
+\varepsilon_{0}$ for some $\varepsilon_{0}>0$, by adapting the
arguments below.
\end{remarque}
\begin{remarque}
\label{re.densorthpfn}
It is possible to write explicitly a first order approximation for the
probability density $- \frac{\partial_{n}\left(e^{-\frac{f}{h}}u_{1}^{(0)}\right)\big|_{\partial \Omega_{+}}}{ 
\left\|\partial_{n}\left(e^{-\frac{f}{h}}u_{1}^{(0)}\right)\right\|_{L^{1}(\partial
  \Omega_{+})}}$, in the spirit of the
approximation~\eqref{eq.l10Hess} for $\lambda_1^{(0)}(\Omega_+)$. This
approximation uses second order Taylor expansions of $f$ around the local minima
  $U_{k}^{(1)}$, see Equation~\eqref{eq.denssortHess} below. More
  precisely, this approximation writes:
  \begin{equation}
    \label{eq.combconvgaus}
- \frac{\partial_{n}\left[e^{-\frac{f}{h}}u_{1}^{(0)}\right]\big|_{\partial \Omega_{+}}}{ 
\left\|\partial_{n}\left[e^{-\frac{f}{h}}u_{1}^{(0)}\right]\right\|_{L^{1}(\partial
  \Omega_{+})}}= \frac{\sum_{k=1}^{m_{1}^{D}(\Omega_{+}\setminus\overline{\Omega_{-}})}t_{k}(h)G_{k}(h)}{\sum_{k=1}^{m_{1}^{D}(\Omega_{+}\setminus\overline{\Omega_{-}})} t_{k}(h)
  }    + {\mathcal O}(h)
  \end{equation}
where the $G_{k}(h)$are Gaussian densities centered at the
$U_{k}^{(1)}$'s and the weights
  $t_{k}(h)$ are such that  $\lim_{h\to
    0}h\log t_{k}(h)=-f(U_{k}^{(1)})$. When
  $f\big|_{\partial_{\Omega_{+}}}$ has a unique global minimum, the
  sums in
  \eqref{eq.l10Hess} and \eqref{eq.combconvgaus} reduce to a
  single term. 
\end{remarque}
\begin{remarque}
As explained in Remark~\ref{se.assmor+} above, it is again possible to write a
set of assumptions in terms of $\Omega_+$ only. 
In particular, the
results of Proposition~\ref{pr.morseresult} hold under the Hypotheses~\ref{hyp.3},~\ref{hyp.4},~\ref{hyp.5} and the three
 assumptions~\eqref{eq.dnf+},~\eqref{eq.conddag+}
 and~\eqref{eq:dnf_omega+}. 
\end{remarque}
\begin{remarque}
It is possible to extend our analysis to the case of an
$h$-dependent function $f=f_{h}$ such that our assumptions are
verified with uniform constants. For example, the results hold if the values
$f(U_{k}^{(1)})$ of $f$ at the local minima $U_{k}^{(1)}$ are moved in a $\mathcal{O}(h)$ range
without changing $f-f(U_{k}^{(1)})$ locally. This would change the
coefficients $t_{k}(h)$ in~\eqref{eq.combconvgaus} accordingly by 
$\mathcal{O}(1)$ factors.
\end{remarque}
All this section will be devoted to the proof of
Proposition~\ref{pr.morseresult}. Let us first conclude the proof of
Theorem~\ref{th.main2} using the result of Proposition~\ref{pr.morseresult}.
\begin{proof}[Proof of Theorem~\ref{th.main2}]
Let $f$ be a function such that  Hypotheses~\ref{hyp.0}, \ref{hyp.3},
\ref{hyp.1} and~\ref{hyp.2} are satisfied. Let us assume moreover that 
$f\big|_{\partial\Omega_{+}}$ is a Morse function  and $\partial_n f >
0$ on $\partial \Omega_{+}$. It is possible to build a $\mathcal{C}^{\infty}$ function
$\tilde{f}$ such that $\tilde{f}=f$ on $\Omega_{+}\setminus
\Omega_{-}$, and the Hypotheses~\ref{hyp.0},~\ref{hyp.3},~\ref{hyp.4},
\ref{hyp.5} and the condition~\eqref{eq.conddag} are
satisfied by $\tilde{f}$. This relies in particular on the fact that
 Morse functions are dense in $\mathcal{C}^{\infty}$ functions. The
 condition~\eqref{eq.conddag} may require to slightly change the
 local minimal values of the Morse function $\tilde{f}$. 

The function $\tilde{f}$ now fulfills all the requirements of
Proposition~\ref{pr.morseresult} and thus, with obvious notation,
$$\tilde\lambda_{1}^{(0)}(\Omega_{+})
=
\frac{\int_{\partial \Omega_{+}} 2\partial_{n} f(\sigma)
  e^{-2\frac{ f(\sigma)}{h}}\,d\sigma}{
\int_{\Omega_{+}}e^{-2\frac{\tilde f(x)}{h}}\,dx} (1+\mathcal{O}(h))$$
and
$$
- \frac{\partial_{n}\left[e^{-\frac{\tilde f}{h}} \tilde u_{1}^{(0)}\right]\big|_{\partial \Omega_{+}}}{ 
\left\|\partial_{n}\left[e^{-\frac{\tilde f}{h}} \tilde u_{1}^{(0)}\right]\right\|_{L^{1}(\partial
  \Omega_{+})}}
=
\frac{(2\partial_{n} f) e^{-\frac{2 f}{h}}\big|_{\partial \Omega_{+}}
}{
\left\|
(2\partial_{n} f)
  e^{-\frac{2f}{h}}\right\|_{L^{1}(\partial \Omega_{+})}
}
 + \mathcal{O}(h)\quad \text{in}~L^{1}(\partial \Omega_{+})\,.
$$
Here, we have used the fact that $\tilde{f}=f$ on $\Omega_{+}\setminus
\Omega_{-}$. Notice that the function $\tilde{f}$ satisfies the Hypotheses~\ref{hyp.0}, \ref{hyp.3},
\ref{hyp.1} and~\ref{hyp.2}, by the results of the previous section. We thus
conclude the proof by referring to Corollary~\ref{co.variaf}.
\end{proof}

The proof of Proposition~\ref{pr.morseresult} is done in two steps: we
first apply Theorem~\ref{th.main} using a very specific basis of $\Ran
1_{[0,\nu(h)]}(\Delta_{f,h}^{D,(1)}(\Omega_{+}\setminus\overline{\Omega_{-}}))$
to get estimates of $\lambda_1^{(0)}(\Omega_+)$ and $- \frac{\partial_{n}\left(e^{-\frac{f}{h}}u_{1}^{(0)}\right)\big|_{\partial \Omega_{+}}}{ 
\left\|\partial_{n}\left(e^{-\frac{f}{h}}u_{1}^{(0)}\right)\right\|_{L^{1}(\partial
  \Omega_{+})}}$ in terms of second order Taylor expansions of $f$
around the local minima $U_k^{(1)}$ (see Equations~\eqref{eq.l10Hess}
and~\eqref{eq.denssortHess}). We then show that these expansions coincide with the
formula~\eqref{eq.l10int} and~\eqref{eq.denssortpfn}.

In a preliminary step, let us explain how to build the almost orthonormal basis of $\Ran
1_{[0,\nu(h)]}(\Delta_{f,h}^{D,(1)}(\Omega_{+}\setminus\overline{\Omega_{-}}))$
that is needed to prove our results. This construction heavily relies
on the Morse assumption on $f$ and $f|_{\partial \Omega_+}$ (see Hypothesis~\ref{hyp.4}).
We need the results of \cite[Chapter 4]{HeNi} on approximate formulas for a basis of the
eigenspace of
$\Delta_{f,h}^{D,(1)}(\Omega_{+}\setminus\overline{\Omega_{-}})$
associated with $\mathcal{O}(e^{-\frac{c_{0}}{h}})$ eigenvalues (see also \cite{Lep2}
for a more general analysis). In all what follows, it is assumed that
Hypotheses~\ref{hyp.0},~\ref{hyp.4},~\ref{hyp.5} and the condition~\eqref{eq:dnf_omega+} hold. 
The one-forms of that basis are constructed via a WKB expansion around each local
minimum $U_{k}^{(1)}$ of $f\big|_{\partial \Omega_{+}}$ ($1\leq k\leq
m_{1}^{D}(\Omega_{+}\setminus\overline{\Omega_{-}})$).
In a neighborhood ${\mathcal V}_k$ of $U_{k}^{(1)}$, consider the
Agmon distance $\varphi_k$ to
$U_{k}^{(1)}$ (see Lemma~\ref{le.dist}). We assume that all the
${\mathcal V}_k$'s are disjoint subsets of $\overline{\Omega_+}
\setminus \overline{\Omega_-}$.  The function $\varphi_k$ satisfies the eikonal equation
$$
|\nabla \varphi_{k}|^{2}=|\nabla f|^{2}\,, \quad
\varphi_{k}\big|_{\partial \Omega_{+}}=(f-f(U_{k}^{(1)}))\big|_{\partial
  \Omega_{+}}\,,\quad
\partial_{n} \varphi_{k}\big|_{\partial \Omega_{+}}=-\partial_{n}
f\big|_{\partial \Omega_{+}}\,.
$$
In the neighborhood $\mathcal{V}_{k}$, one can build coordinates
$(x',x^{d})=(x^{1},\ldots, x^{d-1},x^{d})$ such that
\begin{itemize}
\item The open set $\Omega_{+}$ looks like a half-space
\begin{align*}
\Omega_{+}\cap \mathcal{V}_{k}&=\left\{(x',x^{d})\,,\, |x'|\leq
    r\,,\, x^{d}<0\right\}\,,\\
  \partial\Omega_{+}\cap \mathcal{V}_{k}&=\left\{(x',x^{d})\,,\, |x'|\leq
    r\,,\, x^{d}=0\right\}\,;
\end{align*}
\item The metric has the form 
$g_{d,d}(x)(dx^{d})^{2}+\sum_{i,j=1}^{d-1}g_{i,j}(x)dx^{i}dx^{j}$ with 
 $g_{i,j}(0)=\delta_{i,j}$  (notice that a different normalization of $g_{d,d}(0)$ was used in
  \cite{HeNi})\,;
\item The coordinates $(x',x^{d})$ are  Morse coordinates both for $f$
  and $\varphi_{k}$:
\begin{equation}
    \label{eq.fphi}
f(x)-f(U_{k}^{(1)})=\partial_{n}f(U_{k}^{(1)})x^{d}+\frac{1}{2}\sum_{j=1}^{d-1}\lambda_{j}(x^{j})^{2}\,,
\quad \varphi_{k}(x)=-\partial_{n}f(U_{k}^{(1)})x^{d}+\frac{1}{2}\sum_{j=1}^{d-1}\lambda_{j}(x^{j})^{2}
\end{equation}
where the $\lambda_{j}$ are the eigenvalues of $\Hess
(f\big|_{\partial \Omega_{+}})(U_{k}^{(1)})$\,.
\end{itemize}
In~\cite{HeNi} a local self-adjoint realization of
$\Delta_{f,h}^{(1)}$ around $U_{k}^{(1)}$ is introduced with the same
boundary conditions along $\partial\Omega_{+}$ as for $\Delta^{D,(1)}_{f,h}(\Omega_+)$, with
a unique exponentially small eigenvalue
$\zeta_{k}(h)=\mathcal{O}(e^{-\frac{c_{k}}{h}})$\,.
A corresponding approximate eigenvector is
given by the WKB expansion (in the limit of small $h$)
\begin{equation}
  \label{eq.wkb}
z_{k}^{wkb,(1)}(x,h)=a_{k}(x,h)e^{-\frac{\varphi_{k}(x)}{h}}\text{
  where } a_{k}(x,h)\sim
a_{k,0}(x)dx^{d}+
\sum_{\ell=1}^{\infty}b_{k,\ell}h^{\ell}
\end{equation}
with $b_{k,\ell}=\sum_{j=1}^{d}a_{k,\ell,j}(x)dx^{j}$ and
$a_{k,0}(0)=1$\,. The sign $\sim$ stands for the equality of
asymptotic expansions.  Let $z_{k}^{(1)}$ be the eigenvector
of the self-adjoint realization of $\Delta_{f,h}^{(1)}$ around
$U_{k}^{(1)}$ introduced above, associated with $\zeta_{k}(h)$ and normalized by
 $\mathbf{i}_{\partial_{x^{d}}}z_{k}^{(1)}(0)=
\mathbf{i}_{\partial_{x^{d}}}z_{k}^{wkb,(1)}(0)$. It is
shown in~\cite[Proposition~4.3.2-b)d)]{HeNi} that the estimates
\begin{align}
  \forall \alpha\in\nz^{d}\,,\,\exists C_{\alpha}>0, \exists
  N_{\alpha}\in\nz\,,\quad
&
|\partial_{x}^{\alpha}z^{(1)}(x)|\leq C_{\alpha}h^{-N_{\alpha}}e^{-\frac{\varphi_{k}(x)}{h}}\,,\label{eq.estimzk}
\\
\forall N\in\nz,
\forall \alpha\in\nz^{d}\,,
\exists C_{\alpha,N}>0\,,\quad
&
|\partial_{x}^{\alpha}(z_{k}^{wkb,(1)}-z_{k}^{(1)})(x)|\leq 
C_{N,\alpha}h^{N}e^{-\frac{\varphi_{k}(x)}{h}}\label{eq.estimzkwkb}
\end{align}
hold for all $x$ in a neighborhood
$\mathcal{V}_{k}'\subset \mathcal{V}_{k}$ of $U_{k}^{(1)}$.  Notice
that the one-forms $z_{k}^{wkb,(1)}$ 
and $z_{k}^{(1)}$ are real-valued.
By taking a cut-off function $\chi_{k}\in
\mathcal{C}^{\infty}_{0}(\mathcal{V}_{k}')$ with $\chi_{k}\equiv 1$ in
a neighborhood of $U_{k}^{(1)}$ a normalized quasimode for
$\Delta^{D,(1)}_{f,h}(\Omega_+\setminus \Omega_-)$ is given
by
$$
w_{k}^{(1)}=\frac{\chi_{k}z_{k}^{(1)}}{\|\chi_{k}z_{k}^{(1)}\|_{L^{2}(\mathcal{V}_{k}')}}.
$$
The set of functions $(w_k^{(1)})_{k\in \left\{1,\ldots,
    m_{1}^{D}(\Omega_{+}\setminus\overline{\Omega_{-}})\right\}}$ is
orthonormal, owing to the disjoint supports of the functions $(\chi_{k})_{k\in \left\{1,\ldots,
    m_{1}^{D}(\Omega_{+}\setminus\overline{\Omega_{-}})\right\}}$.
According to \cite[Proposition~6.6]{HeNi}, those quasimodes belong to
the form domain of $\Delta_{f,h}^{D,(1)}(\Omega_{+} \setminus \overline{\Omega_-})$ and there exist
two constants $C,c>0$ such that
\begin{equation}
  \label{eq.estimwkform}
\|d_{f,h}w_{k}^{(1)}\|_{L^{2}(\Omega_{+})}^{2}
+\|d_{f,h}^{*}w_{k}^{(1)}\|_{L^{2}(\Omega_{+})}^{2}\leq Ce^{-\frac{c}{h}}
\end{equation}
holds for all $k\in \left\{1,\ldots,
  m_{1}^{D}(\Omega_{+}\setminus\overline{\Omega_{-}})\right\}$\,. In addition, the estimates \eqref{eq.estimzk} and \eqref{eq.estimzkwkb} with
$\zeta_{k}(h)=\mathcal{O}(e^{-\frac{c_{k}}{h}})$ imply that
the $w_{k}$'s solve
\begin{equation}
  \label{eq.eqwk}
  \left\{
    \begin{array}[c]{l}
\Delta_{f,h}^{(1)}w_{k}^{(1)}=r_{k} \text{ on } \Omega_+\setminus
\overline{\Omega_-},\\
  \mathbf{t} w_{k}^{(1)}\big|_{\partial \Omega_{+}\cup \partial \Omega_{-}}=0\,,\quad
  \mathbf{t}d_{f,h}^{*}w_{k}^{(1)}\big|_{\partial \Omega_{-}}=0\,,\quad
  \mathbf{t}d_{f,h}^{*}w_{k}^{(1)}\big|_{\partial \Omega_{+}}=\rho_{k}\,,
\end{array}
\right.
\end{equation}
where $r_k$ and $\rho_k$ satisfy:
\begin{equation}\label{eq.estimrestewk}
\forall n\in\nz\,,\,\exists C_{n}>0\,,\,\forall k\in \left\{1,\ldots,
  m_{1}^{D}(\Omega_{+}\setminus \overline{\Omega_{-}})\right\}\,,
\|r_{k}\|_{W^{n,2}(\overline{\Omega_{+}}\setminus \Omega_{-})}+\|\rho_{k}\|_{W^{n+1/2,2}(\partial\Omega_{+})}\leq C_{n}e^{-\frac{c'}{h}}\,,
\end{equation} 
for some $c'>0$. The construction of the almost orthonormal basis of $\Ran
1_{[0,\nu(h)]}(\Delta_{f,h}^{D,(1)}(\Omega_{+}\setminus\overline{\Omega_{-}}))$
is completed with the next lemma.
\begin{lemme}
\label{pr.wk} Assume Hypotheses~\ref{hyp.0}, \ref{hyp.4},
\ref{hyp.5} and the condition~\eqref{eq:dnf_omega+}
and set for any $k\in \left\{1,\ldots,
  m_{1}^{D}(\Omega_{+}\setminus\overline{\Omega_{-}})\right\}$ 
$$
u_{k}^{(1)}=1_{[0,\nu(h)]}(\Delta_{f,h}^{D,(1)}(\Omega_{+}\setminus\overline{\Omega_{-}}))w_{k}^{(1)}
\,.
$$
Then $(u_{k}^{(1)})_{k\in \left\{1,\ldots,  m_{1}^{D}(\Omega_{+}\setminus\overline{\Omega_{-}})\right\}}$ is an
almost orthonormal basis of $\Ran
1_{[0,\nu(h)]}(\Delta_{f,h}^{D,(1)}(\Omega_{+}\setminus\overline{\Omega_{-}}))$\,.\\
Moreover, it holds: $\exists c>0, \, \forall n\in \nz, \, \exists
C_{n}>0, \, \forall 
k\in \left\{1,\ldots,
  m_{1}^{D}(\Omega_{+}\setminus\overline{\Omega_{-}})\right\},
$
\begin{equation}\label{eq:wn2}
\|u_{k}^{(1)}-w_{k}^{(1)}\|_{W^{n,2}(\Omega_{+}\setminus\overline{\Omega_{-}})}\leq C_{n}e^{-\frac{c}{h}},
\end{equation}
for all sufficiently small $h$.
\end{lemme}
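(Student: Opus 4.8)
\textbf{Proof plan for Lemma~\ref{pr.wk}.}
The plan is to first establish that the quasimodes $w_k^{(1)}$ are exponentially close to their spectral projections $u_k^{(1)}=1_{[0,\nu(h)]}(\Delta_{f,h}^{D,(1)}(\Omega_{+}\setminus\overline{\Omega_{-}}))w_{k}^{(1)}$ in $L^{2}$, and then to upgrade this to all Sobolev norms by a bootstrap argument on the elliptic boundary value problem satisfied by the difference. For the $L^{2}$ estimate, I would use the spectral gap: by Hypothesis~\ref{hyp.2} (see~\eqref{eq.tailleD}) the spectrum of $\Delta_{f,h}^{D,(1)}(\Omega_{+}\setminus\overline{\Omega_{-}})$ in $[0,\nu(h)]$ is contained in $[0,e^{-c_{0}/h}]$, while $\lim_{h\to0}h\log\nu(h)=0$, so $\nu(h)$ separates the exponentially small eigenvalues from the rest. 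The estimate~\eqref{eq.estimwkform}, namely $\langle w_{k}^{(1)},\Delta_{f,h}^{D,(1)}(\Omega_{+}\setminus\overline{\Omega_{-}})w_{k}^{(1)}\rangle=\|d_{f,h}w_{k}^{(1)}\|^{2}+\|d_{f,h}^{*}w_{k}^{(1)}\|^{2}\leq Ce^{-c/h}$, combined with $\nu(h)\|(1-1_{[0,\nu(h)]})w_{k}^{(1)}\|^{2}\leq\langle w_{k}^{(1)},\Delta_{f,h}^{D,(1)}w_{k}^{(1)}\rangle$, yields $\|u_{k}^{(1)}-w_{k}^{(1)}\|_{L^{2}(\Omega_{+}\setminus\overline{\Omega_{-}})}=\mathcal{O}(e^{-c'/h})$ for some $c'>0$ (absorbing the subexponential $\nu(h)^{-1/2}$). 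The almost orthonormality of $(u_{k}^{(1)})$ then follows since the $(w_{k}^{(1)})$ are exactly orthonormal (disjoint supports of the $\chi_{k}$) and the Gram matrix is perturbed only by $\mathcal{O}(e^{-c'/h})$; moreover the family spans $\Ran 1_{[0,\nu(h)]}(\Delta_{f,h}^{D,(1)}(\Omega_{+}\setminus\overline{\Omega_{-}}))$ because its cardinality is $m_{1}^{D}(\Omega_{+}\setminus\overline{\Omega_{-}})$ and, for small $h$, the almost orthonormal vectors are linearly independent.

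For~\eqref{eq:wn2}, the strategy is the bootstrap argument already used in the proof of Proposition~\ref{pr.dfpsi}. The difference $d_{k}:=u_{k}^{(1)}-w_{k}^{(1)}$ satisfies a non-homogeneous Dirichlet problem for the Hodge Laplacian: using the relation~\eqref{eq:hodge_witten} between $\Delta_{\textrm H}$ and $\Delta_{f,h}$, together with~\eqref{eq.eqwk}--\eqref{eq.estimrestewk} for $w_{k}^{(1)}$ and the facts that $\Pi w_k^{(1)}:=u_k^{(1)}\in F^{(1)}$ with $\|\Delta_{f,h}^{D,(1)}|_{F^{(1)}}\|=\mathcal{O}(e^{-c/h})$ (Proposition~\ref{pr.number}) and hence, by Lemma~\ref{le.L2W} applied on $\Omega_{+}\setminus\overline{\Omega_{-}}$, $\|\Delta_{f,h}^{D,(1)}u_{k}^{(1)}\|_{W^{n,2}}=\tilde{\mathcal{O}}(e^{-c/h})$, one gets a system
$$
\left\{
\begin{array}[c]{l}
\Delta_{\textrm H}^{(1)}d_{k}=\tilde r_{k}(h),\\
\mathbf{t}d_{k}\big|_{\partial(\Omega_{+}\setminus\overline{\Omega_{-}})}=0,\quad
\mathbf{t}d^{*}d_{k}\big|_{\partial(\Omega_{+}\setminus\overline{\Omega_{-}})}=\tilde\varrho_{k}(h),
\end{array}
\right.
$$
where the boundary term $\tilde\varrho_{k}(h)$, coming from $\mathbf{t}d^{*}_{f,h}d_{k}$ via $d^{*}_{f,h}=hd^{*}+\mathbf{i}_{\nabla f}$ and the boundary data in~\eqref{eq.eqwk}, is controlled in $W^{n-1/2,2}(\partial(\Omega_{+}\setminus\overline{\Omega_{-}}))$ once $d_{k}$ is controlled in $W^{n,2}$. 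Starting from the $L^{2}$ estimate, the $W^{1,2}$ bound is obtained exactly as in Proposition~\ref{pr.dfpsi} (comparing $\|d_{f,h}d_{k}\|^{2}+\|d_{f,h}^{*}d_{k}\|^{2}$ on both sides, using $4\||\nabla f|d_{k}\|^{2}+2\|d_{f,h}d_{k}\|^{2}+2\|d_{f,h}^{*}d_{k}\|^{2}\geq h^{2}(\|dd_{k}\|^{2}+\|d^{*}d_{k}\|^{2})$ from~\eqref{eq:dfh_d}--\eqref{eq:dfh*_d*}), and then the elliptic regularity of the Dirichlet Hodge Laplacian with non-homogeneous data (e.g.~\cite[Theorem 2.2.6]{Schz}) gains two derivatives at each step; an induction on $n$ yields $\|d_{k}\|_{W^{n,2}(\Omega_{+}\setminus\overline{\Omega_{-}})}=\tilde{\mathcal{O}}(e^{-c/h})\leq C_{n}e^{-c'/h}$ for any $c'<c$.

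The main obstacle is purely bookkeeping rather than conceptual: keeping track of the various boundary components $\partial\Omega_{+}$ and $\partial\Omega_{-}$ in~\eqref{eq.eqwk} and verifying that the inhomogeneous co-normal boundary term $\tilde\varrho_k(h)$ is genuinely of lower order (order $0$ in $d_k$, hence permitting the bootstrap to close), and that the trace theorem applied to $\mathbf{i}_{\nabla f}d_k$ loses only half a derivative. All constants must be shown independent of $k$ and $h$, which is immediate since there are finitely many $k$ and all estimates~\eqref{eq.estimwkform},~\eqref{eq.estimrestewk} were stated with uniform constants. No new geometric input beyond Hypotheses~\ref{hyp.0},~\ref{hyp.4},~\ref{hyp.5} and~\eqref{eq:dnf_omega+} is needed.
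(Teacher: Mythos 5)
Your proposal is correct and follows essentially the same route as the paper: the $L^{2}$ estimate via the spectral theorem combined with~\eqref{eq.estimwkform} and~\eqref{eq.hypnu}, almost orthonormality inherited from the exact orthonormality of the $w_{k}^{(1)}$, the intermediate $W^{1,2}$ bound from the relation between $d_{f,h},d_{f,h}^{*}$ and $d,d^{*}$, and the $W^{n,2}$ bootstrap on the non-homogeneous Dirichlet Hodge problem built from~\eqref{eq.eqwk}, Lemma~\ref{le.L2W} and the fact that $\Delta_{f,h}-h^{2}\Delta_{\textrm{H}}$ is of order zero. No gap.
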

\begin{proof}
Let us introduce
$v_{k}^{(1)}=u_{k}^{(1)}-w_{k}^{(1)}$ for $k\in \left\{1,\ldots,
  m_{1}^{D}(\Omega_{+}\setminus\overline{\Omega_{-}})\right\}$. The one-form $v_{k}^{(1)}$
belongs to the form domain of
$\Delta_{f,h}^{D,(1)}(\Omega_{+}\setminus\overline{\Omega_{-}})$ and
the spectral theorem leads to
\begin{align*}
\nu(h)\|v_{k}^{(1)}\|_{L^{2}(\Omega_{+}\setminus\overline{\Omega_{-}})}^{2}
&\leq \|d_{f,h} v_{k}^{(1)}\|_{L^{2}(\Omega_{+}\setminus\overline{\Omega_{-}})}^{2}
+\|d_{f,h}^{*}v_{k}^{(1)}\|_{L^{2}(\Omega_{+}\setminus\overline{\Omega_{-}})}^{2}\leq Ce^{-\frac{c}{h}}
\leq Ce^{-\frac{c_{1}}{h}}
\end{align*}
owing to \eqref{eq.estimwkform} and
$\sigma\left(\Delta_{f,h}^{D,(1)}(\Omega_{+}\setminus\overline{\Omega_{-}})\right)\cap
[0,\nu(h)]\subset [0,e^{-\frac{c_{0}}{h}}]$. 
%
%
With \eqref{eq.hypnu} this implies $\|v_{k}^{(1)}\|_{
  L^{2}(\Omega_{+}\setminus\overline{\Omega_{-}})}^{2}=\mathcal{O}(e^{-\frac{c_{2}}{h}})$\,.
By using
$$
h^{2}\left(\|dv_{k}^{(1)}\|_{L^{2}(\Omega_{+}\setminus\overline{\Omega_{-}})}^{2}+
  \|d^{*}v_{k}^{(1)}\|_{L^{2}(\Omega_{+}\setminus\overline{\Omega_{-}})}^{2}\right)
\leq 2 \|d_{f,h}v_{k}^{(1)}\|_{L^{2}(\Omega_{+}\setminus\overline{\Omega_{-}})}^{2}+
2\|d_{f,h}^{*}v_{k}^{(1)}\|_{L^{2}(\Omega_{+}\setminus\overline{\Omega_{-}})}^2
+C\|v_{k}^{(1)}\|_{L^{2}(\Omega_{+}\setminus\overline{\Omega_{-}})}^{(2)}\,,
$$
we obtain
$$
\|v_{k}^{(1)}\|_{W^{1,2}(\Omega_{+}\setminus\overline{\Omega_{-}})}^{2}=
\mathcal{O}(h^{-2}e^{-\frac{c_{2}}{h}})= \mathcal{O}(e^{-\frac{c_{2}}{2h}})\,.
$$
Thus, the almost orthonormality property  of $(u_{k}^{(1)})_{k\in \left\{1,\ldots,  m_{1}^{D}(\Omega_{+}\setminus\overline{\Omega_{-}})\right\}}$ is  due to the
orthonormality of $(w_{k}^{(1)})_{k\in \left\{1,\ldots,
  m_{1}^{D}(\Omega_{+}\setminus\overline{\Omega_{-}})\right\}}$\,.

The $W^{n,2}$-estimates~\eqref{eq:wn2} are then obtained by a
bootstrap argument (induction on $n$), using the
elliptic regularity of the Hodge Laplacian.
With $\Delta_{f,h}^{D,(1)}(\Omega_{+}\setminus
\Omega_{-})u_{k}^{(1)}= \tilde{\mathcal{O}}(e^{-\frac{c_{0}}{h}})$ in
any $W^{n,2}$ (see Lemma~\ref{le.L2W}), the equation~\eqref{eq.eqwk} leads to 
$$
\left\{
  \begin{array}[c]{l}
   \Delta_H
    v_{k}^{(1)}=r'_{k}(h)-h^{-2} \left(\Delta_{f,h}-h^2 \Delta_H\right)v_{k}^{(1)}\,,\\
  \mathbf{t} v_{k}^{(1)}\big|_{\partial \Omega_{+}\cup \partial \Omega_{-}}=0\,,\quad
 \mathbf{t}d^{*}v_{k}^{(1)}\big|_{\partial \Omega_{-}}=0\,,\quad
  \mathbf{t}d^{*}v_{k}^{(1)}\big|_{\partial
    \Omega_{+}}=-h^{-1}\rho_{k}-h^{-1}\mathbf{i}_{\nabla f}v_{k}^{(1)}\,,
  \end{array}
\right.
$$
where
$\|r'_{k}(h)\|_{W^{n,2}(\Omega_{+}\setminus\overline{\Omega_{-}})}$
satisfies the same estimate~\eqref{eq.estimrestewk} as
$\|r_{k}(h)\|_{W^{n,2}(\Omega_{+}\setminus\overline{\Omega_{-}})}$\,. Using
the fact that the zeroth order differential  operator 
$(\Delta_{f,h}-h^{2} \Delta_H)=|\nabla
f|^{2}+h(\mathcal{L}_{\nabla f}+\mathcal{L}_{\nabla f}^{*})$ is
bounded in $L^\infty$-norm, we thus obtain the
$W^{n,2}$-estimates~\eqref{eq:wn2} by induction on $n$.
\end{proof}
We are now in position to prove Proposition~\ref{pr.morseresult}. 
\begin{proof}[Proof of Proposition~\ref{pr.morseresult}.]
Let us apply Theorem~\ref{th.main} and its Corollary~\ref{co.trace} to the almost orthonormal basis $(u_{k}^{(1)})_{1\leq k\leq
   m_{1}^{D}(\Omega_{+}\setminus\overline{\Omega_{-}})}$ introduced in
 Lemma~\ref{pr.wk} (see Remark~\ref{rem:almost_orthonormal}). From the
 estimate~\eqref{eq:wn2} and the fact that
 $\lim_{h\to 0}h\log\lambda_{1}^{(0)}(\Omega_{+})=-2\kappa_{f}$
, we deduce
\begin{align*}
  &\lambda_{1}^{(0)}(\Omega_{+})=
\frac{h^{2}\sum_{k=1}^{m_{1}^{D}(\Omega_{+}\setminus\overline{\Omega_{-}})}
\left(\int_{\partial\Omega_{+}} e^{-\frac{f}{h}}\mathbf{i}_{n}w_{k}^{(1)}(\sigma)~d\sigma\right)^{2}}{
\int_{\Omega_{+}}
e^{-\frac{2f(x)}{h}}~dx} (1+\mathcal{O}(e^{-\frac{c}{h}}))\,,\\
&
\partial_{n}u_{1}^{(0)}\big|_{\partial \Omega_{+}}=
-\sum_{k=1}^{m_{1}^{D}(\Omega_{+}\setminus\overline{\Omega_{-}})}\frac{\int_{\partial
    \Omega_{+}}e^{-\frac{f(\sigma)}{h}}\mathbf{i}_{n}
  w_{k}^{(1)}(\sigma)~d\sigma}{\left(\int_{\Omega_{+}}e^{-\frac{2f(x)}{h}}~dx\right)^{1/2}}\mathbf{i}_{n}w_{k}^{(1)}+ \mathcal{O}(e^{-\frac{\kappa_{f}+c}{h}})\,,
\end{align*}
where the last remainder term is measured in $W^{n,2}(\partial
\Omega_{+})$-norm for any $n \in \nz$.
In particular, we deduce
$$\frac{e^{-\frac{f}{h}}}{\left(\int_{\Omega_{+}}e^{-\frac{2f(x)}{h}}~dx\right)^{1/2}}\partial_{n}u_{1}^{(0)}\big|_{\partial
\Omega_{+}}=
-\sum_{k=1}^{m_{1}^{D}(\Omega_{+}\setminus\overline{\Omega_{-}})}
\left(\int_{\partial
    \Omega_{+}} 
  \theta_{k}(\sigma)\,d\sigma\right)
\theta_{k}+\mathcal{O}(e^{-\frac{2\kappa_{f}+c}{h}})\quad
\text{in}~L^{1}(\partial \Omega_{+})$$
and
\begin{equation}\label{eq:lambda_theta}
\lambda_{1}^{(0)}(\Omega_{+})=h^{2}\sum_{k=1}^{m_{1}^{D}(\Omega_{+}\setminus\overline{\Omega_{-}})}
\left(\int_{\partial
    \Omega_{+}}\theta_{k}~d\sigma\right)^{2}(1+\mathcal{O}(e^{-\frac{c}{h}}))\,,
\end{equation}
where $\displaystyle{\theta_{k}=\frac{e^{-\frac{f}{h}}}{\left(\int_{\Omega_{+}}e^{-\frac{2f(x)}{h}}~dx\right)^{1/2}}
\mathbf{i}_{n}w_{k}^{(1)}\big|_{\partial\Omega_{+}}}$\,.

Using $\partial_{n}u_{1}^{(0)}\big|_{\partial\Omega_{+}}\leq 0$ and
the fact that the $\theta_{k}$'s have disjoint supports, the following
estimates hold:
\begin{align*}
\left(\int_{\Omega_{+}}e^{-\frac{2f(x)}{h}}~dx\right)^{-1/2}
\left\|e^{-\frac{f}{h}}\partial_{n}u_{1}^{(0)}\big|_{\partial
\Omega_{+}}\right\|_{L^{1}(\partial \Omega_{+})}
&=
\sum_{k=1}^{m_{1}^{D}(\Omega_{+}\setminus\overline{\Omega_{-}})}
\left(\int_{\partial
    \Omega_{+}}\theta_{k}(\sigma)\,d\sigma\right)^{2}+{\mathcal{O}}(e^{-\frac{2\kappa_{f}+c}{h}})
\\
&=
h^{-2}\lambda_{1}^{(0)}(\Omega_{+})(1+\tilde{\mathcal{O}}(e^{-\frac{c}{h}}))\,.
\end{align*}
In the last equality, we used~\eqref{eq.applambda10} to
get a lower bound on $\lambda_{1}^{(0)}(\Omega_{+})$.
By recalling that the Dirichlet boundary condition
$u_{1}^{(0)}\big|_{\partial \Omega_{+}}=0$ implies
$$
\partial_{n}\left[e^{-\frac{f}{h}}u_{1}^{(0)}\right]\big|_{\partial
    \Omega_{+}}
= 
e^{-\frac{f}{h}}
\partial_{n}u_{1}^{(0)}\big|_{\partial \Omega_{+}}\,,
$$
we thus get
\begin{equation}\label{eq:dnu_theta}
-\frac{\partial_{n}\left[e^{-\frac{f}{h}}u_{1}^{(0)}\right]\big|_{\partial \Omega_{+}}}{ 
\left\|\partial_{n}\left[e^{-\frac{f}{h}}u_{1}^{(0)}\right]\right\|_{L^{1}(\partial
  \Omega_{+})}}
=
\frac{\sum_{k=1}^{m_{1}^{D}(\Omega_{+}\setminus\overline{\Omega_{-}})}
\left(\int_{\partial \Omega_{+}}\theta_{k}~d\sigma\right)\theta_{k}
}{\sum_{k=1}^{m_{1}^{D}(\Omega_{+}\setminus\overline{\Omega_{-}})}
 \left(\int_{\partial \Omega_{+}}\theta_{k}~d\sigma\right)^{2}
 }
+
\tilde{\mathcal{O}}(e^{-\frac{c}{h}})\quad\text{in}~L^{1}(\partial\Omega_{+})\,.
\end{equation}

In order to get estimates from~\eqref{eq:lambda_theta}
and~\eqref{eq:dnu_theta} in terms of $f$, it remains to approximate the quantities
 $\theta_{k}$  and $\int_{\partial\Omega_{+}}\theta_{k}~d\sigma$ in
 the limit $h \to 0$. Recall that
$$
\theta_{k}=\frac{e^{-\frac{f}{h}}}{\left(\int_{\Omega_{+}}e^{-\frac{2f(x)}{h}}~dx\right)^{1/2}}
\mathbf{i}_{n}w_{k}^{(1)}\big|_{\partial\Omega_{+}} \text{ and }
w_{k}^{(1)}=\frac{\chi_{k}z_{k}^{(1)}}{\|\chi_{k}z_{k}^{(1)}\|_{L^2({\mathcal
    V}_k')}}.
$$
The estimates are obtained using the Laplace method and the WKB
expansion \eqref{eq.wkb} together with~\eqref{eq.estimzkwkb} to approximate $z_{k}^{(1)}$\,.
\begin{description}
\item[$\bullet~\int_{\Omega_{+}}e^{-\frac{2f(x)}{h}}\,dx:$] A direct application
  of the Laplace method gives
$$
\int_{\Omega_{+}}e^{-\frac{2f(x)}{h}}~dx=
e^{-2\frac{f(U_{0})}{h}}(\pi
h)^{\frac{d}{2}}\left( \det(\Hess f)(U_{0})\right)^{-1/2}(1+\mathcal{O}(h))\,,
$$ 
where $U_{0}$ is the unique global minimum of $f$.
\item[$\bullet~\|\chi_{k}z_{k}^{(1)}\|_{L^{2}(\mathcal{V}_{k}')}:$]
  Recall the coordinates around $U_{k}^{(1)}$ used in~\eqref{eq.fphi}
  and~\eqref{eq.wkb}. Using these coordinates
  and~\eqref{eq.estimzkwkb}, there is a
  $\mathcal{C}^{\infty}_{0}(\left\{x^{d}\leq 0\right\})$ function
  $\alpha(x,h)\sim\sum_{k=0}^{\infty}\alpha_{k}(x)h^{k}$ with
  $\alpha_{0}(0)=1$, such that
  \begin{align*}
   \|\chi_{k}z_{k}^{(1)}\|_{L^{2}(\mathcal{V}_{k}')}^{2}&=
 \int_{\left\{x^{d}\leq
     0\right\}}e^{-2\frac{\varphi_{k}(x)}{h}}\alpha(x,h)~dx^{1}\ldots
 dx^{d} 
\\
&=
 \int_{\left\{x^{d}\leq
     0\right\}}e^{\frac{2\partial_{n}f(U_{k}^{(1)}) x^d}{h}}
e^{-\frac{\sum_{j=1}^{d-1}\lambda_{j}(x^{j})^{2}}{h}}\alpha(x,h)~dx^{1}\ldots
 dx^{d} \\
&= \frac{h}{2\partial_{n}f(U_{k}^{(1)})}
\frac{(\pi
  h)^{\frac{d-1}{2}}}{\sqrt{\lambda_{1}\ldots\lambda_{d-1}}}
(1+\mathcal{O}(h))
\\
&=
\frac{(\pi h)^{\frac{d+1}{2}}}{2\pi\partial_{n}f(U_{k}^{(1)})
  \left(\det(\Hess f\big|_{\partial
    \Omega_{+}})(U_{k}^{(1)})\right)^{1/2}}(1+\mathcal{O}(h))\,.
  \end{align*}
We applied the
  Laplace method to get the estimate of the integral (using the fact
  that $\partial_{n}f(U_{k}^{(1)})>0$ by~\eqref{eq:dnf_omega+}).
\item[$\bullet~\theta_{k}:$] On the one hand, using
  $f(x)=f(U_{k}^{(1)})+\partial_{n}f(U_{k}^{(1)})x^{d}+\frac{1}{2}\sum_{j=1}^{d-1}\lambda_{j}(x^{j})^{2}$
  in a neighborhood of $U_k^{(1)}$
 (see~\eqref{eq.fphi}), we have: on $\partial \Omega_+$ (so that $x^d=0$),
$$
\chi_k \frac{e^{-\frac{f}{h}}}{
\left(\int_{\Omega_{+}}e^{-\frac{f(x)}{h}}~dx\right)^{1/2}}
= \chi_k \, e^{-\frac{f(U_{k}^{(1)})-f(U_{0})}{h}}
(\pi h)^{-\frac{d}{4}}\left(\det(\Hess f)(U_{0})\right)^{1/4}
e^{-\frac{\sum_{j=1}^{d-1}\lambda_{j}(x^{j})^{2}}{2h}} (1+\mathcal{O}(h))\,.
$$
On the other hand, the  function $\mathbf{i}_{n}w_{k}^{(1)}\big|_{\partial\Omega_{+}}=\frac{\chi_{k}\mathbf{i}_{n}z_{k}^{(1)}\big|_{\partial\Omega_{+}}}{\|\chi_{k}z_{k}^{(1)}\|_{L^2({\mathcal
      V}_k')}}
$ satisfies
$$
\mathbf{i}_{n}w_{k}^{(1)}\big|_{\partial\Omega_{+}}=
\chi_{k}\frac{\sqrt{2\pi\partial_{n}f(U_{k}^{(1)})}
\left(\det(\Hess f\big|_{\partial
  \Omega_{+}})(U_{k}^{(1)})\right)^{1/4}}{(\pi h)^{\frac{d+1}{4}}}
e^{-\frac{\sum_{j=1}^{d-1} \lambda_{j}(x^{j})^{2}}{2h}}(1+\mathcal{O}(h))\,.
$$
From these two estimates, $\theta_k$ satisfies
$$\theta_{k}=A_{k} \, \chi_{k} \,
e^{-\frac{\sum_{j=1}^{d-1}
    \lambda_{j}(x^{j})^{2}}{h}}(1+\mathcal{O}(h))$$
where $A_{k}= \frac{\sqrt{2\pi\partial_{n}f(U_{k}^{(1)})}}{(\pi h)^{\frac{2d+1}{4}} }
\left(\det(\Hess f\big|_{\partial
  \Omega_{+}})(U_{k}^{(1)})\right)^{1/4}\left(\det(\Hess
f)(U_{0})\right)^{1/4} e^{-\frac{f(U_{k}^{(1)})-f(U_{0})}{h}}
$.
\item[$\bullet~\displaystyle{\int_{\partial \Omega_+} \theta_{k}}:$]
  The Laplace method implies that
$$
\int e^{-\frac{\sum_{j=1}^{d-1}
    \lambda_{j}(x^{j})^{2}}{h}}~dx^{1}\ldots dx^{d-1} = \frac{(\pi h)^{\frac{d-1}{2}}}{\sqrt{\lambda_{1}\ldots
    \lambda_{d-1}}}(1 +\mathcal{O}(h))=
(\pi h)^{\frac{d-1}{2}}\left(\det(\Hess f\big|_{\partial \Omega_{+}})(U_{k}^{(1)})\right)^{-1/2}(1 +\mathcal{O}(h))\,.
$$ 
We thus obtain
$$
\int_{\partial \Omega_+} \theta_{k} =
\frac{\sqrt{2\pi \partial_{n}f(U_{k}^{(1)})}\left(\det(\Hess
  f)(U_{0})\right)^{1/4}}{(\pi h)^{\frac{3}{4}}\left(\det(\Hess f\big|_{\partial
    \Omega_{+}})(U_{k}^{(1)})\right)^{1/4}}
e^{-\frac{f(U_{k}^{(1)})-f(U_{0})}{h}} (1+\mathcal{O}(h))\,.
$$
\end{description}
Putting together the above information and using~\eqref{eq:lambda_theta}
and~\eqref{eq:dnu_theta} finally imply
$$
\lambda_{1}^{(0)}(\Omega_{+})
=
\sqrt{\frac{h\det(\Hess
  f)(U_{0})}{\pi}}\sum_{k=1}^{m_{1}^{D}(\Omega_{+}\setminus\overline{\Omega_{-}})}
\frac{2\partial_{n}f(U_{k}^{(1)})
}{\sqrt{\det(\Hess f\big|_{\partial
    \Omega_{+}})(U_{k}^{(1)})}}
e^{-2\frac{f(U_{k}^{(1)})-f(U_{0})}{h}}
(1+\mathcal{O}(h))\,,
$$
which is exactly~\eqref{eq.l10Hess} and
\begin{align}
\label{eq.denssortHess}
- \frac{\partial_{n}\left[e^{-\frac{f}{h}}u_{1}^{(0)}\right]\big|_{\partial \Omega_{+}}}{ 
\left\|\partial_{n}\left[e^{-\frac{f}{h}}u_{1}^{(0)}\right]\right\|_{L^{1}(\partial
  \Omega_{+})}}
&=
\frac{\sum_{k=1}^{m_{1}^{D}(\Omega_{+}\setminus\overline{\Omega_{-}})}
\partial_{n}f(U_{k}^{(1)})e^{-2\frac{f(U_{k}^{(1)})-f(U_{0})}{h}}
\, \chi_{k} \, e^{-\frac{\sum_{j=1}^{d-1}
    \lambda_{j}(x^{j})^{2}}{h}}
}{(\pi h)^{\frac{d-1}{2}}
\sum_{k'=1}^{m_{1}^{D}(\Omega_{+}\setminus\overline{\Omega_{-}}) }
\frac{\partial_{n}f(U_{k'}^{(1)})}{\sqrt{\det(\Hess
f\big|_{\partial\Omega_{+}})(U_{k'}^{(1)})}}
e^{-2\frac{f(U_{k'}^{(1)})-f(U_{0})}{h}}}(1+\mathcal{O}(h)).
\end{align}
We thus obtain estimates of $\lambda_1^{(0)}(\Omega_+)$ and $- \frac{\partial_{n}\left(e^{-\frac{f}{h}}u_{1}^{(0)}\right)\big|_{\partial \Omega_{+}}}{ 
\left\|\partial_{n}\left(e^{-\frac{f}{h}}u_{1}^{(0)}\right)\right\|_{L^{1}(\partial
  \Omega_{+})}}$ in terms of second order Taylor expansions of $f$
around the local minima $U_k^{(1)}$. This ends the first step of the proof.

Actually, the two estimates~\eqref{eq.l10Hess} and~\eqref{eq.denssortHess} can be rewritten in a simpler form using
again the Laplace method. By recalling the equality  $f(x)=f(U_{k}^{(1)})+\partial_{n}f(U_{k}^{(1)})x^{d}+\frac{1}{2}\sum_{j=1}^{d-1}\lambda_{j}(x^{j})^{2}$
  in a neighborhood of $U_k^{(1)}$, the
Laplace method gives by similar computations to those performed above
\begin{align*}
&\frac{\int_{\partial \Omega_{+}} 2\partial_{n}f(\sigma)
  e^{-2\frac{f(\sigma)}{h}}~d\sigma}{
\int_{\Omega_{+}}e^{-2\frac{f(x)}{h}}~dx}=\sqrt{\frac{h\det(\Hess
  f)(U_{0})}{\pi}}\sum_{k=1}^{m_{1}^{D}(\Omega_{+}\setminus\overline{\Omega_{-}})}
\frac{2\partial_{n}f(U_{k}^{(1)})
}{\sqrt{\det(\Hess f\big|_{\partial
    \Omega_{+}})(U_{k}^{(1)})}}
e^{-2\frac{f(U_{k}^{(1)})-f(U_{0})}{h}}
(1+\mathcal{O}(h))\,,\\
&
\frac{(2\partial_{n}f) e^{-\frac{2f}{h}}\big|_{\partial \Omega_{+}}
}{
\left\|
(2\partial_{n}f)
  e^{-\frac{2f}{h}}\right\|_{L^{1}(\partial \Omega_{+})}
}= 
\frac{\sum_{k=1}^{m_{1}^{D}(\Omega_{+}\setminus\overline{\Omega_{-}})}
\partial_{n}f(U_{k}^{(1)})e^{-2\frac{f(U_{k}^{(1)})-f(U_{0})}{h}}
\, \chi_{k} \, e^{-\frac{\sum_{j=1}^{d-1}
    \lambda_{j}(x^{j})^{2}}{h}}
}{(\pi h)^{\frac{d-1}{2}}
\sum_{k'=1}^{m_{1}^{D}(\Omega_{+}\setminus\overline{\Omega_{-}}) }
\frac{\partial_{n}f(U_{k'}^{(1)})}{\sqrt{\det(\Hess
f\big|_{\partial\Omega_{+}})(U_{k'}^{(1)})}}
e^{-2\frac{f(U_{k'}^{(1)})-f(U_{0})}{h}}}
 + \mathcal{O}(h)
\end{align*}
where the last remainder term is measured in
$L^{1}(\partial\Omega_{+})$-norm. Comparing with the two
estimates~\eqref{eq.l10Hess} and~\eqref{eq.denssortHess} above, we thus obtain~\eqref{eq.l10int}
and~\eqref{eq.denssortpfn}. This concludes the proof.
\end{proof}
\subsection{Beyond Morse assumptions}
\label{se.nonMorse}

In this section, we discuss Hypotheses~\ref{hyp.1} and~\ref{hyp.2} for
functions $f$ which do not fulfill the Morse assumptions of
Hypothesis~\ref{hyp.4} above. In Sections~\ref{se.onedim}
and~\ref{se.twoD}, we present two examples (respectively in dimension $1$ and $2$) of
functions $f$ which do not fulfill Hypothesis~\ref{hyp.4}, but for which
Hypotheses~\ref{hyp.1} and~\ref{hyp.2} still hold
true. Section~\ref{se.genrem} is first devoted to a few remarks that will be useful in  the
examples we will discuss below.

\subsubsection{General remarks}
\label{se.genrem}

First, we will use the duality between the
chain complexes associated with $d_{f,h}$ and $d_{f,h}^{*}$. More precisely,
conjugating with the Hodge $\star$-operator exchanges $p$ and $\dim M-p$
forms, $d$ and $d^{*}$, $f$ and $-f$,
Neumann and Dirichlet boundary conditions.
This was used extensively in \cite{Lep4,LNV}.

Second, the following Lemma will also be useful. It is a variant of
Proposition~\ref{pr.gapO-}.
\begin{lemme}
\label{le.apriori}
Let $\Omega$ be a regular bounded  domain of the Riemannian
manifold $(M,g)$ and let $f\in \mathcal{C}^{\infty}(\overline{\Omega})$ be
such that $(\nabla f)^{-1}(\left\{0\right\})$ has a unique non empty connected component in
$\Omega$\,.
\begin{itemize}
\item If $\partial_{n}f\big|_{\partial \Omega}>0$ then the two first
  eigenvalues of $\Delta_{f,h}^{N,(0)}(\Omega)$ satisfy
$$
\mu_{1}^{(0)}(\Omega)=0\quad\text{and}\quad \lim_{h\to 0}h\log
\mu_{2}^{(0)}(\Omega)= 0\,.
$$
\item If $\partial_{n}f\big|_{\partial \Omega}<0$ and $|\nabla
  f|^{2}-h \, \Delta f\geq 0$ in $\Omega$ for all $h\in (0,h_{0})$\,, then 
the first eigenvalue of $\Delta_{f,h}^{D,(0)}(\Omega)$ satisfies
$$
\lim_{h\to 0}h\log \lambda_{1}^{(0)}(\Omega)= 0\,.
$$
\end{itemize}
\end{lemme}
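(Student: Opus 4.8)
Looking at Lemma~\ref{le.apriori}, I need to prove two statements about the exponential scale of the first relevant eigenvalue of a boundary Witten Laplacian on a domain $\Omega$ where the critical set of $f$ is connected.

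\textbf{Plan for the Neumann case.} The lower bound $\lim\inf_{h\to 0} h\log \mu_2^{(0)}(\Omega)\geq 0$ is the analogue of Proposition~\ref{pr.gapO-}, and I would reproduce its proof almost verbatim, with $\mathrm{cvmax}-\min_\Omega f$ replaced by $0$. Concretely, I would argue by contradiction: if there were $\varepsilon_0>0$ and a sequence $h_n\to 0$ with $\mu_2^{(0)}(\Omega)\leq C e^{-2\varepsilon_0/h_n}$, then for the $L^2$-normalized second eigenfunction $\psi_2^{(0)}$ (orthogonal to $\psi_1^{(0)}=e^{-f/h}/\|e^{-f/h}\|$) one gets $\|d_{f,h}\psi_2^{(0)}\|^2_{L^2(\Omega)}\leq Ce^{-2\varepsilon_0/h}$. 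Using $d_{f,h}=e^{-(f-\min f)/h}(hd)e^{(f-\min f)/h}$, on the sublevel set $\Omega_t=\{f<\min_\Omega f + t\}$ with $t<\varepsilon_0/2$ one obtains $\|d(e^{(f-\min f)/h}\psi_2^{(0)})\|_{L^2(\Omega_t)}=\mathcal O(e^{-\varepsilon_0/(2h)})$, so by the Poincaré--Wirtinger inequality on $\Omega_t$ there is a constant $C_h$ with $\|\psi_2^{(0)}-C_h\psi_1^{(0)}\|_{L^2(\Omega_t)}=\mathcal O(e^{-\varepsilon_0/(2h)})$. On the other hand, since the critical set of $f$ is connected and contained in $\Omega_t$ for $t$ small, Proposition~\ref{pr.decayO-} applied with $U$ a small neighborhood of the critical set (using $\partial_n f>0$ on $\partial\Omega$ so the Agmon estimate applies) gives $\|\psi_1^{(0)}\|_{L^2(\Omega\setminus\Omega_t)}+\|\psi_2^{(0)}\|_{L^2(\Omega\setminus\Omega_t)}\leq C e^{-c/h}$ with $d_{Ag}(\Omega\setminus\Omega_t,U)\geq c>0$. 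These two estimates contradict $\langle\psi_1^{(0)},\psi_2^{(0)}\rangle_{L^2(\Omega)}=0$ for small $h$. For the upper bound $\lim\sup_{h\to 0}h\log\mu_2^{(0)}(\Omega)\leq 0$, I note this is automatic: $\mu_2^{(0)}(\Omega)\to 0$ is impossible to fail in the wrong direction since $h\log\mu_2^{(0)}(\Omega)\leq h\log(\text{const})\to 0$; more precisely $\mu_2^{(0)}(\Omega)$ is bounded above uniformly in $h$ on any fixed domain (it is $\mathcal O(1)$, indeed $\mathcal O(h^{?})$ is not needed), so $h\log\mu_2^{(0)}(\Omega)\le 0$ in the limit, and combined with the lower bound the limit is $0$.

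\textbf{Plan for the Dirichlet case.} Here $\psi_1^{(0)}=e^{-f/h}$ is no longer in the domain because of the Dirichlet condition, but it is still an approximate eigenvector once truncated away from $\partial\Omega$. For the upper bound $\lim\sup h\log\lambda_1^{(0)}(\Omega)\leq 0$: take $\chi\in\mathcal C_0^\infty(\Omega)$ equal to $1$ near the critical set of $f$, with $\nabla\chi$ supported in $\{d(x,\partial\Omega)\le\delta\}$; then $\langle \chi e^{-f/h}, \Delta_{f,h}^{D,(0)}\chi e^{-f/h}\rangle = \|d_{f,h}(\chi e^{-f/h})\|^2_{L^2} = h^2\|(d\chi)e^{-f/h}\|^2_{L^2}$, which after dividing by $\|\chi e^{-f/h}\|^2_{L^2}$ and using Lemma~\ref{le.intminmaj} is $\tilde{\mathcal O}(e^{-2(\min_{\{d(\cdot,\partial\Omega)\le\delta\}}f - \min_\Omega f)/h})$; the min-max principle gives $\lambda_1^{(0)}(\Omega)=\tilde{\mathcal O}(e^{-2(m_\delta-\min_\Omega f)/h})$ where $m_\delta\to\min_{\partial\Omega}f$ as $\delta\to 0$. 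Hence $h\log\lambda_1^{(0)}(\Omega)\leq 0$ in the limit after letting $\delta$ shrink. For the lower bound $\lim\inf h\log\lambda_1^{(0)}(\Omega)\geq 0$: this is where the hypothesis $|\nabla f|^2 - h\Delta f\geq 0$ enters. The operator $\Delta_{f,h}^{D,(0)}(\Omega) = -h^2\Delta + (|\nabla f|^2 - h\Delta f)$ by~\eqref{eq:lapl_0_form}, and since the potential term is nonnegative, $\Delta_{f,h}^{D,(0)}(\Omega)\geq -h^2\Delta^{D}(\Omega)$ as quadratic forms on $W^{1,2}_D(\Omega)$ — wait, this only gives $\lambda_1^{(0)}(\Omega)\geq h^2\lambda_1(-\Delta^D(\Omega)) = h^2 C(\Omega)$, and $h\log(h^2 C)\to 0$. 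So actually the lower bound follows directly from this form comparison: $\langle u,\Delta_{f,h}^{D,(0)}u\rangle \geq h^2\|du\|^2_{L^2}\geq h^2 C(\Omega)\|u\|^2_{L^2}$ for all $u\in W^{1,2}_D(\Omega)$, where $C(\Omega)>0$ is the first Dirichlet eigenvalue of the (metric) Laplacian, so $\lambda_1^{(0)}(\Omega)\geq h^2 C(\Omega)$ and $\lim\inf h\log\lambda_1^{(0)}(\Omega)\geq\lim h\log(h^2 C(\Omega)) = 0$. Here I should double-check that I have not used $\partial_n f<0$: that hypothesis together with connectedness is what one needs if instead one wanted the \emph{matching} exponential upper bound $e^{-2\kappa/h}$ with $\kappa$ involving $\min_{\partial\Omega}f$ — but for the coarse statement $\lim h\log = 0$, only $|\nabla f|^2-h\Delta f\ge 0$ is needed for the lower bound, and a cut-off quasimode near the connected critical set gives the upper bound; the sign condition $\partial_n f<0$ guarantees (via Hodge duality with the Neumann case, as indicated in Section~\ref{se.genrem}) that this critical set is indeed ``seen'' as the relevant well.

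\textbf{Main obstacle.} The delicate point is the lower bound in the Neumann case, i.e.\ the orthogonality-contradiction argument: one must be careful that the Poincaré--Wirtinger constant on the (possibly shrinking or $h$-independent) sublevel set $\Omega_t$ and the Agmon-distance lower bound $d_{Ag}(\Omega\setminus\Omega_t, U)\ge c>0$ are controlled uniformly — this is exactly the subtlety handled in Proposition~\ref{pr.gapO-}, and since the critical set is connected (unique non-empty component), $\Omega_t$ is connected for small $t$ and the constants behave as there. I would simply invoke the structure of that proof, replacing $\mathrm{cvmax}-\min_\Omega f$ by $0$ throughout and noting that the connectedness hypothesis plays the role that the single-well structure played implicitly. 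The Dirichlet case is comparatively soft given the form inequality $\Delta_{f,h}^{D,(0)}\geq -h^2\Delta^D$.
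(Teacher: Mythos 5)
Your two lower bounds --- the substantive parts of the lemma --- are correct. For the Dirichlet case you use exactly the paper's argument: $|\nabla f|^{2}-h\Delta f\ge 0$ reduces the quadratic form to $h^{2}\|d\omega\|^{2}$, and the Poincar\'e inequality gives $\lambda_{1}^{(0)}(\Omega)\ge C_{\Omega}h^{2}$. For the Neumann lower bound you transplant the scheme of Proposition~\ref{pr.gapO-} (Agmon decay onto a sublevel set $\Omega_{t}$ via Proposition~\ref{pr.decayO-}, Poincar\'e--Wirtinger on $\Omega_{t}$, contradiction with orthogonality to $e^{-f/h}$), whereas the paper runs an IMS localization onto a metric neighborhood $K_{\varepsilon}$ of the critical set and then performs the same Poincar\'e--Wirtinger/orthogonality contradiction there. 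The two routes are essentially equivalent: both hinge on the connectedness of $(\nabla f)^{-1}(\{0\})$ to make the localization region connected, and both need the uniform positivity of $|\nabla f|$ away from it. Your version has the merit of reusing an argument already written out in the paper; the paper's IMS version is slightly lighter since it avoids re-deriving the Agmon estimate on the new domain.

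The weak point is your treatment of the upper bounds; in the Dirichlet case the reasoning is actually wrong even though the conclusion survives. After normalizing $f\equiv 0$ on the critical set, the hypothesis $\partial_{n}f|_{\partial\Omega}<0$ forces $f\le 0$ on $\Omega$: the connected critical set is the \emph{maximum} of $f$, not its minimum, and $\inf_{\Omega}f$ is attained at the boundary. Hence $\chi e^{-f/h}$ does not concentrate near the critical set, Lemma~\ref{le.intminmaj} (which requires the minimum to be attained inside the domain) does not apply to bound $\|\chi e^{-f/h}\|_{L^{2}}$ from below, and the exponent $m_{\delta}-\min_{\Omega}f$ in your claimed bound equals $0$ for every $\delta$ (the minimum of $f$ over a collar of $\partial\Omega$ is already the global minimum) rather than being a positive quantity tending to $0$. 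Had it been positive, your exponentially small upper bound would contradict your own lower bound $\lambda_{1}^{(0)}(\Omega)\ge C_{\Omega}h^{2}$. Both upper bounds are in fact trivial, because only $\limsup_{h\to 0}h\log(\cdot)\le 0$ is needed: for Dirichlet, any fixed $h$-independent $\varphi\in\mathcal{C}^{\infty}_{0}(\Omega)$ has Rayleigh quotient $\bigl(h^{2}\|d\varphi\|^{2}+\int_{\Omega}(|\nabla f|^{2}-h\Delta f)\varphi^{2}\bigr)/\|\varphi\|^{2}=\mathcal{O}(1)$; for Neumann, a combination of two disjointly supported bumps chosen orthogonal to $e^{-f/h}$ does the same --- this is the one-line min-max argument you omitted when asserting that $\mu_{2}^{(0)}(\Omega)=\mathcal{O}(1)$ is ``automatic''. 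Replace your Dirichlet quasimode computation by this.
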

\begin{proof}
Up to the addition of a constant to the function $f$ (which only  affects the
normalization of $e^{-\frac{f}{h}}$), one may assume without loss of
generality that $f\equiv 0$ on $(\nabla f)^{-1}(\left\{0\right\})$
(using  the connectedness assumption on  $(\nabla f)^{-1}(\left\{0\right\})$). Then,
$f\geq 0$ in $\Omega$ when
$\partial_{n}f\big|_{\partial\Omega}>0$ and $f\leq 0$ when
$\partial_{n}f\big|_{\partial\Omega}< 0$\,.

The fact that $\mu_1^{(0)}(\Omega)=0$ is obvious, by considering the
associated eigenvector $e^{-\frac{f}{h}}$.  The Witten Laplacian acting on functions is the Schr{\"o}dinger type
  operator
$$
\Delta_{f,h}^{(0)}=-h^{2}\Delta +|\nabla f|^{2}-h (\Delta f)\,.
$$
Since the function $|\nabla f|^{2}-h \, \Delta f$ is uniformly bounded in
$\overline{\Omega}$, the two inequalities
$$
\limsup_{h\to 0}h\log \mu_{2}^{(0)}(\Omega)\leq 0
\quad\text{and}\quad
\limsup_{h\to 0}h\log \lambda_{1}^{(0)}(\Omega)\leq 0
$$
are consequences of the min-max principle. For the Dirichlet case,
 any fixed non zero function in
$\mathcal{C}^{\infty}_{0}(\Omega))$ will provide an $\mathcal{O}(1)$
Rayleigh quotient.
 For the Neumann case, consider
two regular functions $\chi_{1},\chi_{2}\in
\mathcal{C}^{\infty}_{0}(\Omega)$ such that $\supp \chi_{1}\cap
\supp\chi_{2}=\emptyset$ and $\|\chi_{1}\|_{L^{2}(\Omega)}=\|\chi_{2}\|_{L^{2}(\Omega)}=1$\,,
and take   $\psi_{h}=\alpha_{1}(h)\chi_{1}+\alpha_{2}(h)\chi_{2}$ such
that
$\|\psi_{h}\|_{L^{2}}^{2}=|\alpha_{1}(h)|^{2}+|\alpha_{2}(h)|^{2}=1$ and
$\langle \psi_{h}\,,\, e^{-\frac{f}{h}}\rangle_{L^2(\Omega)}=0$\,. We get $\langle
\psi_h\,,\, \Delta_{f,h}^{N,(0)} \psi_h\rangle_{L^2(\Omega)}=\mathcal{O}(1)$ and  the min-max
principle applied to
$\Delta_{f,h}^{N,(0)}(\Omega)$ on the orthogonal of $e^{-\frac{f}{h}}$
yields $\mu_{2}^{(0)}(\Omega)=\mathcal{O}(1)$ as $h\to 0$.
%
%


Let us first consider the case $\partial_{n}f\big|_{\partial \Omega}<0$ and $|\nabla
  f|^{2}-h \, \Delta f\geq 0$. It remains to prove that $
\liminf_{h\to 0}h\log \lambda_{1}^{(0)}(\Omega)\geq 0$.
Let $\omega$ be a normalized
  eigenfunction associated with $\lambda^{(0)}_1(\Omega)$:
  $\Delta_{f,h}^{D,(0)}(\Omega)\omega=\lambda_{1}^{(0)}(\Omega) \, \omega$ and $\|\omega\|_{L^{2}(\Omega)}=1$. 
Using Lemma~\ref{le.Agmon} with
  $\varphi=0$ and the Poincar\'e inequality, we get
$$
\lambda_{1}^{(0)}(\Omega)\geq h^{2} \| \nabla \omega
\|_{L^2(\Omega)}^2 \geq C_{\Omega} \, h^{2}\,.
$$
This concludes the proof in the case $\partial_{n}f\big|_{\partial \Omega}<0$ and $|\nabla
  f|^{2}-h \, \Delta f\geq 0$.

Let us now consider the case $\partial_{n}f\big|_{\partial
  \Omega}>0$. It remains to prove that $\liminf_{h\to 0}h\log
\mu_{2}^{(0)}(\Omega)\geq 0$. Let us reason by contradiction by
assuming that there exists $c>0$ and a sequence
$(h_{n})_{n\in\nz}$ such that 
$$
\lim_{n\to \infty}h_{n}=0\text{ and }
\mu_{2}^{(0)}(\Omega)\leq e^{-\frac{c}{h_{n}}}\quad\text{with}~c>0\,.
$$
Notice that $\mu_{2}^{(0)}(\Omega)$ depends on $n$. Let us introduce
$\omega_{n}$ a normalized eigenfunction associated with
$\mu_2^{(0)}(\Omega)$:
$\Delta_{f,h_{n}}^{N,(0)}\omega_{n}=\mu_{2}^{(0)}(\Omega)\omega_{n}$
and $\|\omega_n\|_{L^{2}(\Omega)}=1$. Notice that
$\int_{\Omega} \omega_{n}  e^{-\frac{f}{h_{n}}} =0$.
For $\varepsilon>0$, consider the open set 
$$
K_{\varepsilon}=\left\{x\in \overline{\Omega}\,, d(x,(\nabla
  f)^{-1}(\left\{0\right\})) < \varepsilon\right\}\,,
$$
such that $\overline{K_{\varepsilon}}$ is contained in $\Omega$ for $\varepsilon\in
(0,\varepsilon_{0})$ and $\varepsilon_0$ sufficiently small.
Take a partition of unity $\chi_{1}^{2}+\chi_{2}^{2}\equiv 1$ in
$\overline{\Omega}$ such that $\chi_{i}\in
\mathcal{C}^{\infty}(\overline{\Omega})$\,,
$\chi_{1}\equiv 1$ in a neighborhood of $K_{\varepsilon/2}$ and
$\supp \chi_{1}\subset K_{\varepsilon}$\,.
The IMS localization formula (see for example \cite{CFKS}) gives
\begin{align}
e^{-\frac{c}{h_n}}&\geq \langle 
\omega_n, \Delta_{f,h_n}^{N,(0)}(\Omega)\omega_n \rangle_{L^{2}(\Omega)} \nonumber\\
&= \langle \chi_{1}\omega_n\,,\,
\Delta_{f,h_n}^{N,(0)}(\Omega)\chi_{1}\omega_n\rangle_{L^{2}(\Omega)}
+
\langle \chi_{2}\omega_n\,,\,
\Delta_{f,h_n}^{N,(0)}(\Omega)\chi_{2}\omega_n\rangle_{L^{2}(\Omega)}
-h_n^{2}\sum_{j=1}^{2}\|\omega_n\nabla
\chi_{j}\|_{L^{2}(\Omega)}^{2}\,. \label{eq:ineq}
\end{align}
The lower bound (which is a consequence of $|\nabla f|^2 >0$ on $\supp
\chi_2$, and $\partial_n \chi_2 =0$ on $\partial \Omega$)
$$
\langle \chi_{2}\omega_{n}\,,\,
\Delta_{f,h_{n}}^{N,(0)}(\Omega)\chi_{2}\omega_{n}\rangle_{L^{2}(\Omega)}
\geq 
\langle \chi_{2}\omega_{n}\,,\,
|\nabla
f|^{2}\chi_{2}\omega_{n}\rangle_{L^{2}(\Omega)}-Ch_{n}\|\chi_{2}\omega_{n}\|_{L^{2}(\Omega)}^{2}
\geq \frac{1}{C_{\varepsilon}}\|\chi_{2}\omega_{n}\|_{L^{2}(\Omega)}^{2}
$$
for $n$ sufficiently large, together with~\eqref{eq:ineq} implies 
$$
\forall \delta>0,\,\forall\varepsilon>0,\, \exists
N\in\nz,\,
\forall n\geq N,\quad
\|\omega_{n}\|_{L^{2}(K_{\varepsilon})}^{2}\geq 1-\delta\,.
$$
Since $(\nabla  f)^{-1}(\left\{0\right\})$ is assumed connected and
for every point of the open set ${K}_{\varepsilon}$
the gradient flow associated with $f$ defines a path
to $(\nabla  f)^{-1}(\left\{0\right\})$,
${K}_{\varepsilon}$ is a connected open set.
The function $v_{n}=\omega_{n}\big|_{{K}_{\varepsilon}}$
belongs to $W^{1,2}({K}_{\varepsilon})$ with
$$
h_n^{2}e^{-2\frac{C\varepsilon^{2}}{h_{n}}}
\|de^{\frac{f}{h_{n}}}v_{n}\|_{L^{2}({K}_{\varepsilon})}^{2}
\leq
\|d_{f,h}v_{n}\|_{L^{2}({K}_{\varepsilon})}^{2}
\leq e^{-\frac{c}{h_{n}}}\,,
$$
thanks to the fact that $\exists C>0$,
$\forall x\in K_{\varepsilon}$, $0\leq f(x)\leq C\varepsilon^{2}$.
By choosing $\varepsilon>0$ so that $c-2C\varepsilon^{2}>0$, 
 the spectral gap estimate for the Neumann Laplacian in $\Omega$ (or
 equivalently the Poincar\'e-Wirtinger inequality in $\Omega$) provides a
constant $C_n$ such that
$$
\lim_{n\to\infty}
\|e^{\frac{f}{h_{n}}}v_{n}-C_n\|_{L^{2}({K}_{\varepsilon})}=0.$$
We thus deduce
$$
\lim_{n\to\infty}
\|\omega_{n}-C_ne^{-\frac{f}{h_{n}}}\|_{L^{2}({K}_{\varepsilon})}=0
\quad \text{with}\quad
\|\omega_{n}\|^{2}_{L^{2}({K}_{\varepsilon})}\geq
1-\delta\;,\; 
\|\omega_{n}\|_{L^{2}(\Omega)}=1\,.
$$
For $\delta<1$, this is in contradiction with $\int_{\Omega} \omega_{n}
e^{-\frac{f}{h_{n}}} = 0$.
\end{proof}
\subsubsection{A one-dimensional example}
\label{se.onedim}
In this section, we exhibit a simple one-dimensional example of a
function $f$ satisfying Hypotheses~\ref{hyp.1} and~\ref{hyp.2} though not
being a Morse function. An extension is then briefly discussed.
\begin{proposition}
  Consider a function $f\in
  \mathcal{C}^{\infty}(\overline{\Omega_{+}})$,
  $\Omega_{+}=(a_{+},b_{+})$, with $a_+ < b_+$ two real numbers,
  such that
  \begin{align*}
    &f^{-1}(0)=(f')^{-1}(0)=[a_{1},b_{1}]\quad -\infty<a_{+}<a_{1}\leq
    b_{1}< b_{+}<+\infty\\
   & f'(a_+)<0\quad\text{and}\quad f'(b_+)>0\,.
  \end{align*}
Then, for any $\Omega_{-}=(a_{-},b_{-})$ such that
$a_{+}<a_{-}<a_{1}\leq b_{1}<b_{-}<b_{+}$\,,
Hypotheses~\ref{hyp.1} and \ref{hyp.2} are valid with
$m_{0}^{N}(\Omega_{-})=1$\,, $m_{1}^{N}(\Omega_{-})=0$ and
$m_{1}^D(\Omega_{+}\setminus \overline{\Omega_{-}})=2$\,.
\end{proposition}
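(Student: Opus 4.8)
The strategy is to verify each of the four requirements of Hypotheses~\ref{hyp.1} and~\ref{hyp.2} directly, exploiting the one-dimensional structure and the already established abstract tools (Lemma~\ref{le.apriori}, Proposition~\ref{pr.gapO-}, the $\star$-duality between Dirichlet and Neumann realizations, and the commutation identity~\eqref{eq:vp_p_p+1}). First I would observe that, by construction, $f$ has on $\overline{\Omega_+}$ a single connected critical set $[a_1,b_1]$, with $f\equiv 0$ there, $f>0$ on $\overline{\Omega_+}\setminus[a_1,b_1]$ (because $f'<0$ to the left of $a_1$ and $f'>0$ to the right of $b_1$), $\partial_n f=-f'(a_+)>0$ and $\partial_n f=f'(b_+)>0$ on $\partial\Omega_+$; the same holds on $\Omega_-$ since $a_+<a_-<a_1\le b_1<b_-<b_+$, so $\partial_n f>0$ on $\partial\Omega_-$ as well, and Hypothesis~\ref{hyp.0} is satisfied. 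Hypothesis~\ref{hyp.3} holds with $\mathrm{cvmax}=0<\min_{\partial\Omega_+}f$ since there are no critical points outside~$\Omega_-$.

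Next I would handle the Neumann Laplacian on $\Omega_-=(a_-,b_-)$ restricted to $0$- and $1$-forms. For $p=0$: the kernel is one-dimensional, spanned by $e^{-f/h}$, giving $\mu_1^{(0)}(\Omega_-)=0$; and since $(\nabla f)^{-1}(0)=[a_1,b_1]$ is a single connected component in $\Omega_-$ and $\partial_n f>0$ on $\partial\Omega_-$, the first part of Lemma~\ref{le.apriori} gives $\lim_{h\to0}h\log\mu_2^{(0)}(\Omega_-)=0$. Hence for any $\nu(h)$ as in~\eqref{eq.hypnu} (e.g.\ $\nu(h)=h$), the only eigenvalue of $\Delta_{f,h}^{N,(0)}(\Omega_-)$ in $[0,\nu(h)]$ is $0$ for small $h$, so $m_0^N(\Omega_-)=1$ and~\eqref{eq.nombN}--\eqref{eq.tailleN} hold for $p=0$. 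For $p=1$: in dimension one, a $1$-form on $\Omega_-$ is $u\,dx$, and the Neumann boundary condition $\mathbf{n}\omega=0$ on a one-dimensional interval forces the form to vanish at the endpoints; this is exactly the Dirichlet realization of the scalar Witten Laplacian $\Delta_{-f,h}^{D,(0)}(\Omega_-)$ under the $\star$-duality ($f\mapsto -f$, Neumann $\leftrightarrow$ Dirichlet). Since $\partial_n(-f)<0$ on $\partial\Omega_-$ and $|\nabla f|^2-h\,\Delta(-f)=|f'|^2+hf''$ — here I would note this is $\ge 0$ for small $h$ on a neighborhood of the critical set and is $\ge c>0$ away from it, which is enough to run the second part of Lemma~\ref{le.apriori} (or, more simply, combine the a~priori lower bound $\lambda_1\ge C_\Omega h^2$ from the Agmon identity with $\varphi=0$ as in that proof) — one gets $\lim_{h\to0}h\log\lambda_1^{(0)}(\Delta_{-f,h}^{D,(0)}(\Omega_-))=0$, hence no eigenvalue of $\Delta_{f,h}^{N,(1)}(\Omega_-)$ in $[0,\nu(h)]$, i.e.\ $m_1^N(\Omega_-)=0$, and~\eqref{eq.nombN}--\eqref{eq.tailleN} hold vacuously for $p=1$. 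The decay estimate~\eqref{eq.decayhyp} is then vacuous, since there is no nonzero small eigenvalue of $\Delta_{f,h}^{N,(0)}(\Omega_-)$; so Hypothesis~\ref{hyp.1} holds with $\mathcal{V}_-$ any neighborhood of $\partial\Omega_-$.

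For Hypothesis~\ref{hyp.2}, the shell $\Omega_+\setminus\overline{\Omega_-}=(a_+,a_-)\cup(b_-,b_+)$ is a disjoint union of two intervals, so $\Delta_{f,h}^{D,(1)}(\Omega_+\setminus\overline{\Omega_-})$ is the orthogonal sum of the two one-dimensional Dirichlet Witten Laplacians on $1$-forms over $(a_+,a_-)$ and over $(b_-,b_+)$; it suffices to count small eigenvalues on each interval. On $(b_-,b_+)$, using $\star$-duality, $\Delta_{f,h}^{D,(1)}$ on the $1$-form sector is unitarily equivalent to $\Delta_{-f,h}^{N,(0)}((b_-,b_+))$; now $-f$ has no interior critical point on $[b_-,b_+]$ (since $[a_1,b_1]\subset\Omega_-$ and $f'>0$ on $(b_1,b_+]$), $-f$ is strictly decreasing there, and $\partial_n(-f)<0$ at $x=b_+$ while $\partial_n(-f)>0$ at $x=b_-$; so the associated Neumann-Witten complex has exactly one generalized minimum (at the endpoint $b_+$) and $\ker\Delta_{-f,h}^{N,(0)}$ is one-dimensional with eigenvector $e^{f/h}$, while the min-max/Agmon argument as in Lemma~\ref{le.apriori} shows the next eigenvalue is not exponentially small. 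Hence each of the two intervals contributes exactly one exponentially small eigenvalue, giving $m_1^D(\Omega_+\setminus\overline{\Omega_-})=2$, with all these eigenvalues in $[0,e^{-c_0/h}]$ for some $c_0>0$ (e.g.\ $c_0<2\min(\min_{[b_1,b_+]}(f-f(b_-)), \dots)$, made precise by the WKB/tunneling estimates on the interval, or simply by the exponential-smallness statements of~\cite{HeNi,Lep3} applied to the single-well intervals which are Morse there). This verifies~\eqref{eq.nombD}--\eqref{eq.tailleD} and completes the verification, for $\nu(h)$ any admissible function; the only mild subtlety — the \textbf{main obstacle} — is establishing with care that the second Neumann eigenvalue on each one-dimensional piece (including the degenerate-well interval $\Omega_-$) is bounded below by something like $c h^2$ rather than being exponentially small, which is precisely what Lemma~\ref{le.apriori} and the Agmon identity of Lemma~\ref{le.Agmon} deliver once one checks the sign conditions on $\partial_n f$ and the required positivity of $|\nabla f|^2-h\Delta(\pm f)$ near the flat critical set.
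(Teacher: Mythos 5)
Your verification is correct and follows essentially the same route as the paper: explicit one-dimensional form of the operators and boundary conditions, Lemma~\ref{le.apriori} (first bullet) for $m_{0}^{N}(\Omega_{-})=1$, Hodge-$\star$ duality plus the second bullet of Lemma~\ref{le.apriori} to reduce $m_{1}^{N}(\Omega_{-})=0$ to the absence of small eigenvalues of $\Delta_{-f,h}^{D,(0)}(\Omega_{-})$, the vacuousness of~\eqref{eq.decayhyp}, and the splitting of the shell into $(a_{+},a_{-})\cup(b_{-},b_{+})$. The one point of divergence is Hypothesis~\ref{hyp.2}: your primary argument (duality to $\Delta_{-f,h}^{N,(0)}$ on each shell interval followed by Lemma~\ref{le.apriori}) does not literally apply, since on $(b_{-},b_{+})$ the sign of $\partial_{n}(-f)$ is not constant on the boundary and $(\nabla f)^{-1}(\{0\})$ is empty there, so neither bullet of that lemma is available; your fallback --- that $f$ is Morse (critical-point free) on each shell interval with exactly one generalized critical point of index $1$ at $a_{+}$, resp.\ $b_{+}$, so the counting results of \cite{HeNi,Lep3} give~\eqref{eq.nombD}--\eqref{eq.tailleD} with $m_{1}^{D}=2$ --- is exactly the paper's argument and is the one to keep. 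Finally, on the positivity of $|f'|^{2}+hf''$ needed for the second bullet of Lemma~\ref{le.apriori} on $\Omega_{-}$, you are if anything more explicit than the paper (which simply asserts convexity of $f$); note only that this positivity near the flat critical set is an implicit assumption on $f$ rather than a consequence of the stated hypotheses, in both your write-up and the original.
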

Notice that for this example, Hypotheses~\ref{hyp.0} and~\ref{hyp.3}
are also satisfied, which mean that the results of
Theorem~\ref{th.main} are valid.
\begin{proof}
On an interval $I$ with the Euclidean metric, the one-forms can be written 
$u^{(1)}=u_{1}(x)\,dx$\,. The Witten Laplacians $\Delta_{f,h}^{(p)}(I)$
with $p=0,1$ are then given by
\begin{align*}
  \Delta_{f,h}^{(0)}(I)u^{(0)}&=\left(-h^{2}\partial_{x,x}+|\partial_x f|^{2}-h
(\partial_{x,x} f)\right)u^{(0)}\\
  \Delta_{f,h}^{(1)}(I)(u_{1}dx)&=\left[\left(-h^{2}\partial_{x,x}+|\partial_x
    f|^{2}+h(\partial_{x,x} f)\right)u_{1}\right]\,dx\,.
\end{align*}
The Dirichlet boundary conditions are given by
$$
u^{(0)}=0 \text{ on } \partial I \quad\text{and}\quad
-h \partial_x u_{1}+ (\partial_x f) \, u_{1}=0  \text{ on } \partial I
$$
while the Neumann boundary conditions are given by
$$
h \partial_{x}u^{(0)}+(\partial_x f)u^{(0)}=0  \text{ on } \partial I\quad\text{and}\quad 
u_{1}=0  \text{ on } \partial I\,.
$$
This is a particular case of the general duality recalled at the
beginning of Section~\ref{se.genrem}. Let us now check Hypotheses~\ref{hyp.1} and \ref{hyp.2}.

First, $e^{-\frac{f}{h}}$
belongs to the kernel of $\Delta_{f,h}^{N,(0)}(\Omega_{-})$\,. A
direct application of Lemma~\ref{le.apriori} shows
that~\eqref{eq.nombN} holds for $p=0$ with
$m_{0}^{N}(\Omega_{-})=1$. Second, by the duality argument, proving
that~\eqref{eq.nombN} holds for $p=1$ with $m_{1}^{N}(\Omega_{-})=0$
is equivalent to proving that there is no exponentially small
eigenvalues for $\Delta^{D,(0)}_{-f,h}(\Omega_-)$ (notice that $f$ has
been changed to $-f$). But this is a consequence of the second part of
Lemma~\ref{le.apriori}, since $f$ is convex.
Finally note that the condition \eqref{eq.decayhyp} is empty since the
only exponentially small eigenvalue of
$\Delta_{f,h}^{N,(0)}(\Omega_{-})$ is $0$\,. This shows that
Hypothesis~\ref{hyp.1} holds.

The
open set $\Omega_{+}\setminus \overline{\Omega_{-}}$ is the disjoint union of
the two open intervals $(a_{+},a_{-})$ and $(b_{-},b_{+})$\,. On each
of them $\partial_x f$ does not vanish and the Morse assumptions of
Hypothesis~\ref{hyp.4} are
satisfied. On $(a_{+},a_{-})$ (resp. $(b_{-},b_{+})$), $f$ has one
generalized critical point of index $1$ at $a_{+}$ (resp. at
$b_{+}$). Therefore, using the results of~\cite{HeNi} (see Section~\ref{se.assmorse}),~\eqref{eq.nombD} holds with
$m_{1}^{D}(\Omega_{+}\setminus\overline{\Omega_{-}})=2$\,.  This shows that
Hypothesis~\ref{hyp.2} holds.
\end{proof}

It is not
 difficult to treat the case when $f\in
\mathcal{C}^{\infty}([a_+,b_+])$ has a finite number of critical intervals
$$
(f')^{-1}(\left\{0\right\})=\cup_{n=1}^{2N+1}[a_{n},b_{n}]\,,\quad
a_{+}<a_{1}\leq b_{1}\ldots <a_{2N+1}\leq b_{2N+1}<b_{+}\,,
$$
with $f'(a_{+})<0$ and $f'(b_{+})>0$\,. Again,
$\Omega_{-}=(a_{-},b_{-})$,  with $a_+ < a_- < a_1 < b_{2N+1} < b_- < b_+$.
The local problems around every $[a_{n},b_{n}]$ can
be studied with the help of the duality argument and
Lemma~\ref{le.apriori}. Using an argument based on a partition of
unity, one can check that~\eqref{eq.nombN} and~\eqref{eq.nombD} hold with
$m_{0}^{N}(\Omega_{-})=2N+1$\,,\, $m_{1}^{N}(\Omega_{-})=2N$ and
$m_{1}^{D}(\Omega_{+}\setminus\overline{\Omega_{-}})=2N+2$\,. Hypothesis~\ref{hyp.0}
is of course satisfied.
Ensuring that
Hypothesis~\ref{hyp.3} and  the condition~\eqref{eq.decayhyp} hold
then requires to correctly choose the heights of the critical values. They
hold 
for example when
$\max_{1\leq n\leq 2N+1}f(a_{i})< \min
\left\{f(a_{+}),\,f(b_{+})\right\}$ and when
 $f(a_{1})$ and $f(b_{2N+1})$ are the two smallest critical values.

\subsubsection{A two-dimensional example}
\label{se.twoD}

This example is inspired from the works of Bismut, Helffer and
Sj{\"o}strand \cite{Bis,HeSj6,HeSj7} about Bott
inequalities.
We consider the following $\mathcal{C}^{\infty}$ radial functions in $\rz^{2}$
\begin{align*}
  &\varphi_{in}(x)=e^{-\frac{1}{(|x|^{2}-1)^{2}}}1_{[0,1]}(|x|)\,,\\
 &
 \varphi_{ext}\equiv 0~\text{for}~|x|\leq 1\,,\quad
 \varphi_{ext}~\text{strictly~convex~in}\left\{|x|>1\right\}\,.
\end{align*}
The domain $\Omega_{+}$ is the disc $D((-R,0),2R)$ and $\Omega_{-}$
the disc $D((-R,0),2R-1)$ with $R>3$. The function
$f$ is defined by $f(x)=\varphi_{in}(x)+\varphi_{ext}(\frac{x}{2})$.
The level sets of the function $f$ are represented on Figure~\ref{fig:2D_degenere}.




\begin{figure}[htbp]
\centerline{\includegraphics[width=0.5\textwidth]{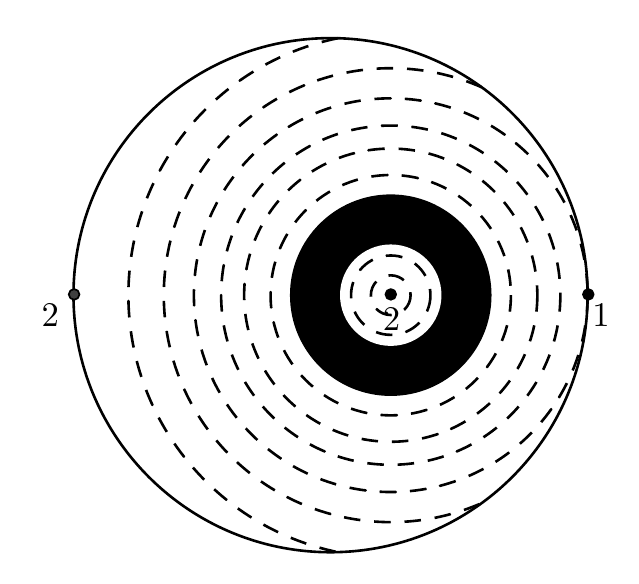}}
\caption{Only $\Omega_{+}$ is represented. The level sets of $f$
are represented by dashed lines. The black area is the $0$ level
set. The dots indicate the generalized critical points, together with
there indices (for Dirichlet boundary conditions). }\label{fig:2D_degenere}
\end{figure}

\begin{proposition}
\label{pr.exampdeg2D}
When $R>3$ is chosen large enough, the above triple
$(\Omega_{+},\Omega_{-},f)$ fulfills
 Hypotheses~\ref{hyp.0},~\ref{hyp.3},~\ref{hyp.1} and~\ref{hyp.2} with
 $m_{0}^{N}(\Omega_{-})=1$\,, $m_{1}^{N}(\Omega_{-})=1$ and
 $m^D_{1}(\Omega_{+}\setminus \overline{\Omega_{-}})=1$\,.
\end{proposition}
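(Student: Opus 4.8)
The plan is to verify the four hypotheses for the triple $(\Omega_+,\Omega_-,f)$ by combining the general duality (Hodge $\star$, exchanging $f\leftrightarrow -f$, Dirichlet$\leftrightarrow$Neumann) with Lemma~\ref{le.apriori}, exactly as in the one-dimensional example of Section~\ref{se.onedim}, but now accounting for the two-dimensional geometry. First I would check Hypothesis~\ref{hyp.0} and Hypothesis~\ref{hyp.3}: the critical set of $f$ is $(\nabla f)^{-1}(\{0\})=\overline{D((0,0),1)}$, which is a connected set contained in $\Omega_-$ provided $R$ is large enough; outside this disc, $\nabla\varphi_{in}=0$ while $\nabla(\varphi_{ext}(\cdot/2))\neq 0$ (strict convexity of $\varphi_{ext}$ on $\{|x|>1\}$), so $|\nabla f|>0$ on $\overline{\Omega_+}\setminus\Omega_-$. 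The condition $\partial_n f>0$ on $\partial\Omega_-$ and the inequality $\min_{\partial\Omega_+}f\ge\min_{\partial\Omega_-}f$ follow from the radial monotonicity of $\varphi_{ext}(\cdot/2)$ with respect to the point $(-R,0)$ (the common centre of $\Omega_\pm$), once $R$ is chosen so that $\Omega_-$ contains $\overline{D((0,0),1)}$ strictly; since all critical values of $f$ in $\Omega_-$ equal $\mathrm{cvmax}=\max\varphi_{in}=e^{-1}$ (constant on the flat part, $0$ on the black region... more precisely $\mathrm{cvmax}$ is attained on $\partial D((0,0),1)$ where $f$ has a degenerate maximum ridge) and since $\min_{\partial\Omega_+}f$ grows like $\varphi_{ext}(R)\to\infty$ as $R\to\infty$ while $\mathrm{cvmax}-\min_{\Omega_-}f=e^{-1}$ is fixed, Hypothesis~\ref{hyp.3} holds for $R$ large.

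Next I would verify Hypothesis~\ref{hyp.1}. The zero eigenvalue of $\Delta_{f,h}^{N,(0)}(\Omega_-)$ is $0$ with eigenvector $e^{-f/h}$; since $(\nabla f)^{-1}(\{0\})$ is connected and $\partial_n f>0$ on $\partial\Omega_-$, the first part of Lemma~\ref{le.apriori} gives $\lim_{h\to0}h\log\mu_2^{(0)}(\Omega_-)=0$, so in fact $\mu_2^{(0)}(\Omega_-)$ is not exponentially small and $m_0^N(\Omega_-)=1$, with~\eqref{eq.tailleN} trivially satisfied (the only small eigenvalue is $0$) and~\eqref{eq.decayhyp} void. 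For the one-forms, the duality argument says that exponentially small eigenvalues of $\Delta_{f,h}^{N,(1)}(\Omega_-)$ on $\rz^2$ correspond to exponentially small eigenvalues of $\Delta_{-f,h}^{D,(1)}(\Omega_-)$, hence — applying $\star$ once more on a $2$-dimensional domain where $1$-forms are self-dual up to sign — to exponentially small eigenvalues of $\Delta_{f,h}^{D,(0)}(\Omega_-)$ with $f$ replaced by $-f$. Now $-f$ on $\Omega_-$ has $\partial_n(-f)<0$ on $\partial\Omega_-$, and one must check the convexity-type condition $|\nabla f|^2 - h\Delta f\ge 0$; since $\varphi_{in}$ is flat ($\equiv 0$ with all derivatives zero) on a neighbourhood of $\partial D((0,0),1)$ and $\varphi_{ext}(\cdot/2)$ is strictly convex there, the only delicate region is the annulus $\{0<|x|<1\}$ where $\varphi_{in}>0$ — one needs to confirm that $|\nabla\varphi_{in}|^2$ dominates $h\Delta\varphi_{in}$ there for small $h$, which is true because $\nabla\varphi_{in}\ne 0$ on $\{0<|x|<1\}$. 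Hence the second part of Lemma~\ref{le.apriori} gives $\lim_{h\to0}h\log\lambda_1^{(0)}(\Omega_-;-f)=0$, so there is exactly... wait, this only shows the bottom eigenvalue is \emph{not} exponentially small, which would give $m_1^N(\Omega_-)=0$ — but the claim is $m_1^N(\Omega_-)=1$. So here I would instead argue directly: the expected $1$-form eigenvalue of $\Delta_{f,h}^{N,(1)}(\Omega_-)$ comes from the Bott-type degenerate critical circle $\{|x|=1\}$ (cf.~\cite{Bis,HeSj6,HeSj7}), which carries one "index-$1$" contribution; I would construct a quasimode supported near this circle whose Rayleigh quotient is exponentially small, giving $m_1^N(\Omega_-)\ge 1$, and use a partition of unity together with the non-resonance of all other regions (where $|\nabla f|>0$ in the appropriate sense, via IMS localization as in the proof of Lemma~\ref{le.apriori}) to get $m_1^N(\Omega_-)\le 1$.

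Finally, Hypothesis~\ref{hyp.2}: on $\Omega_+\setminus\overline{\Omega_-}$ the function $f$ coincides with $\varphi_{ext}(\cdot/2)$, which is strictly convex and has no interior critical points, and $\partial_n f>0$ on $\partial\Omega_+$ while $\partial_n f<0$ on $\partial(\Omega_+\setminus\overline{\Omega_-})$ along $\partial\Omega_-$... — here I would appeal to the Morse-type analysis of~\cite{HeNi,Lep3} recalled in Section~\ref{se.assmorse}, since $f$ restricted to this shell \emph{is} a Morse function with $f|_{\partial\Omega_+}$ Morse: $f|_{\partial\Omega_+}$ has a single non-degenerate minimum (the point of $\partial\Omega_+$ closest to the centre $(0,0)$ of the radial profile $\varphi_{ext}$), with positive outward normal derivative, giving one generalized critical point of index $1$, hence $m_1^D(\Omega_+\setminus\overline{\Omega_-})=1$ and~\eqref{eq.nombD},~\eqref{eq.tailleD} with $\nu(h)=h^{6/5}$. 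The main obstacle I anticipate is the $1$-form count $m_1^N(\Omega_-)=1$: unlike the $0$-form and the shell cases, this is genuinely a degenerate (Bott) situation not covered by the cited Morse results, so one must build the quasimode by hand near the critical circle $\{|x|=1\}$ and rule out spurious small eigenvalues elsewhere — I expect this to require a careful radial/angular separation of variables near that circle, using that $\varphi_{in}$ vanishes to infinite order at $|x|=1$, together with the IMS-localization argument of Lemma~\ref{le.apriori} to handle the complement. All statements about "$R$ large enough" are uniform in $h$ and only constrain the geometry, so they can be fixed at the outset.
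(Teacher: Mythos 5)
Your verification of Hypotheses~\ref{hyp.0} and~\ref{hyp.3}, of Hypothesis~\ref{hyp.2} (unique non-degenerate minimum of $f|_{\partial\Omega_+}$ because the discs are not centred at the origin, then the Morse results of Section~\ref{se.assmorse} on the shell), and of $m_0^N(\Omega_-)=1$ via the first part of Lemma~\ref{le.apriori} all follow the paper's route. The gap is in the count $m_1^N(\Omega_-)=1$, which is the only genuinely delicate point. Your duality reduction there is incorrect: in dimension $d=2$ the Hodge $\star$ sends $1$-forms to $1$-forms, so conjugation by $\star$ identifies $\Delta_{f,h}^{N,(1)}(\Omega_-)$ with $\Delta_{-f,h}^{D,(1)}(\Omega_-)$, \emph{not} with $\Delta_{-f,h}^{D,(0)}(\Omega_-)$; "applying $\star$ once more" simply undoes the first conjugation. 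This is why your computation returns $m_1^N(\Omega_-)=0$ and contradicts the statement. Your fallback — a hand-built $1$-form quasimode "near the critical circle $\{|x|=1\}$" plus an IMS upper bound — is left entirely as a sketch, is exactly the degenerate Bott-type analysis the paper is at pains to avoid, and is moreover mislocalised: the critical set of $f$ is $\{0\}\cup\{1\le|x|\le 2\}$, and the small nonzero $1$-form eigenvalue is paired (via~\eqref{eq:vp_p_p+1}) with the small $2$-form eigenvalue generated by the local \emph{maximum} of $f$ at the origin, not by the circle $\{|x|=1\}$.

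The idea you are missing is to compute $m_1^N(\Omega_-)$ \emph{indirectly}. The quasi-isomorphism of the reduced complex~\eqref{eq.redcompl} with the absolute Hodge--de~Rham complex of the disc gives the Euler-characteristic identity $m_0^N(\Omega_-)-m_1^N(\Omega_-)+m_2^N(\Omega_-)=1$, so it suffices to show $m_2^N(\Omega_-)=1$. By the (correct) duality $\star\colon\bigwedge^2\to\bigwedge^0$, this is equivalent to $m_0^D(\Omega_-)=1$ for the potential $-f$, i.e.\ a \emph{scalar} Dirichlet problem with a single well at the origin. That count is then obtained by a lower bound from the quasimode $\chi\,e^{f/h}$ and an upper bound by IMS localization: away from the origin one uses the second part of Lemma~\ref{le.apriori} (the sign condition $|\nabla f|^2+h\Delta f\ge 0$ holds on $\Omega_-\setminus\Omega_\varepsilon$ thanks to the infinite-order flatness of $\varphi_{in}$ at $|x|=1$), and near the origin any eigenfunction with exponentially small eigenvalue must be asymptotically collinear with $e^{f/h}$, so the exponentially small spectral subspace is one-dimensional. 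Without either the Euler-characteristic step or a complete direct analysis of the degenerate $1$-form problem, the proof of $m_1^N(\Omega_-)=1$ is not established.
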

\begin{proof}
Thanks to the convexity assumption on $x \mapsto \varphi_{ext}(\frac{x}{2})$ and its local
 behavior around $\left\{|x|=2\right\}$, Hypotheses~\ref{hyp.0} and~\ref{hyp.3}
 holds for $R>3$ large enough.

 The choice of non $0$-centered disks 
for $\Omega_{+}$ and $\Omega_{-}$ while $f$ is a radial function
implies that $f|_{\partial \Omega_{+}}$ has a unique local minimum and
therefore, using the results recalled in Section~\ref{se.assmorse},~\eqref{eq.nombD} is
satisfied with
$m_{1}^D(\Omega_{+}\setminus\overline{\Omega_{-}})=1$\,. This shows that
Hypothesis~\ref{hyp.2} holds.

The fact that~\eqref{eq.nombN} holds for $p=0$ with $m_{0}^{N}(\Omega_{-})=1$ is a direct application of
Lemma~\ref{le.apriori}. This also implies that the condition \eqref{eq.decayhyp} is
void. It only remains to prove that~\eqref{eq.nombN} holds for $p=1$
with $m_{1}^{N}(\Omega_{-})=1$. We will actually prove that~\eqref{eq.nombN} holds for $p=2$ with $m_{2}^{N}(\Omega_{-})=1$. Then the quasi-isomorphism  with the
absolute cohomology of the disc (see
Section~\ref{se.restric}) 
%
%
gives $m_{2}^{N}(\Omega_{-})-m_{1}^{N}(\Omega_{-})+m_{0}^{N}(\Omega_{-})=1$\,,
which indeed implies $m_{1}^{N}(\Omega_{-})=1$\,. Moreover, by the
duality argument,~\eqref{eq.nombN} holds for $p=2$ with
$m_{2}^{N}(\Omega_{-})=1$ if~\eqref{eq.nombD} holds for $p=0$ with
$m_{0}^{D}(\Omega_{-})=1$, $f$ being changed into $-f$\,. The proof of
this claim will conclude the demonstration.

In the rest of this proof $m_{0}^{D}(\Omega_{-})$ denotes the number
of small eigenvalues for $\Delta^{D,(0)}_{-f,h}(\Omega_-)$. The function
$-f$ has a local minimum at $x=(0,0)$. Applying the min-max principle with 
a quasimode $\chi(x)e^{\frac{f(x)}{h}}$ where $\chi$ is a smooth non negative
function such that $\chi\equiv 1$ on
$\{|x|\leq \frac{1}{4}\}$
and $\chi\equiv 0$ on $\{|x|\geq \frac{1}{2}\}$ implies that
$m_{0}^{D}(\Omega_{-})\geq 1$\,. 

Let us now consider  $\omega\in
D(\Delta_{-f,h}^{D,(0)}(\Omega_{-}))$ a normalized eigenvector
associated with an exponentially small eigenvalue: $\langle \omega\,,\,
\Delta_{-f,h}^{D,(0)}(\Omega_{-})\omega\rangle_{L^{2}(\Omega_-)}\leq e^{-\frac{c}{h}}$ for
some $c>0$.
%
%
Let $\chi_{1}^{2}+\chi_{2}^{2}=1$ be a partition of unity on $\Omega_-$,
with $\chi_1^2\equiv 1$ on
$\{|x|\leq \varepsilon \}$
and $\chi_1^2\equiv 0$ on $\{|x|\geq 2 \varepsilon\}$ (for
$\varepsilon < 1/4$). The IMS localization formula gives:
\begin{align}
\langle \omega\,,\,
\Delta_{-f,h}^{D,(0)}(\Omega_{-})\omega\rangle _{L^{2}(\Omega_{-})}
&=
\langle \chi_{1}\omega\,,\,
\Delta_{-f,h}^{D,(0)}(\Omega_{-})\chi_{1}\omega\rangle _{L^{2}(\Omega_{-})}
+
\langle \chi_{2}\omega\,,\,
\Delta_{-f,h}^{D,(0)}(\Omega_{-})\chi_{2}\omega\rangle
_{L^{2}(\Omega_{-})} \nonumber \\
& \quad
-h^{2}\sum_{j=1}^{2}\|\omega\nabla \chi_{j}\|_{L^{2}(\Omega_{-})}^{2}\,.\label{eq:IMS}
\end{align}
The second term of the right-hand side is equal to
$
\langle \chi_{2}\omega\,,\,
\Delta_{-f,h}^{D,(0)}(\Omega)\chi_{2}\omega \rangle_{L^2(\Omega_-
  \setminus \Omega_\varepsilon)}
$
with $\Omega_\varepsilon=\left\{x\in \Omega_{-}\,,\, |x| \le \varepsilon\right\}$\,.
Our choice of the function
$f(x)=\varphi_{in}(x)+\varphi_{ext}(\frac{x}{2})$ ensures that for
$h\in (0,h_{0})$ with $h_{0}$ small enough $|\nabla f|^{2}+h \Delta f$
is non negative on $\Omega_-\setminus \Omega_\varepsilon$. The second part of Lemma~\ref{le.apriori} thus
implies that there exists a function $\nu$ of $h$ such that
$$
\langle \chi_{2}\omega\,,\,
\Delta_{-f,h}^{D,(0)}(\Omega)\chi_{2}\omega\rangle_{L^{2}(\Omega_-
  \setminus \Omega_\varepsilon)}
\geq \nu(h)\|\chi_{2}\omega\|_{L^{2}(\Omega_- \setminus \Omega_\varepsilon)}^{2}\,,
$$
with $\liminf_{h\to 0}h\log \nu(h)=0$\,. In addition, exponential
decay estimates based on the Agmon identity implies that
$\sum_{j=1}^{2}\|\omega\nabla
\chi_{j}\|_{L^{2}(\Omega_{-})}^{2}={\mathcal O}(e^{-\frac{c}{h}})$
since $|\nabla f| > 0$ on $\supp(\chi_1) \cup \supp(\chi_2)$ (this is
obtained by adapting the arguments of Proposition~\ref{pr.decayO-},
for example). By using the IMS localization formula~\eqref{eq:IMS}, we
thus obtain that $\|\chi_{2}\omega\|_{L^{2}(\Omega \setminus \Omega_\varepsilon)}$ goes to zero
when $h$ goes to zero, and thus that $\lim_{h \to
  0}\|\chi_{1}\omega\|_{L^{2}(\Omega_-)}= \lim_{h \to
  0}\|\omega\|_{L^{2}(\Omega_\varepsilon)}=1$. Using then the same
argument as in the end of the proof of the
first part of Lemma~\ref{le.apriori}, we obtain that, for sufficiently
small $\varepsilon$, $\lim_{h \to
  0}\|\omega -C_h e^{\frac{f}{h}} \|_{L^{2}(\Omega_\varepsilon)}=0$, for some
constant $C_h \in \R$. The two limits $\lim_{h \to
  0}\|\omega\|_{L^{2}(\Omega_\varepsilon)}=1$ and $\lim_{h \to
  0}\|\omega -C_h e^{\frac{f}{h}} \|_{L^{2}(\Omega_\varepsilon)}=0$ imply that, in the asymptotic $h \to 0$, $\omega$
cannot be orthogonal to $\chi e^{\frac{f}{h}}$ (recall that $\chi
\equiv 1$ on $\Omega_\varepsilon$), which is in the
spectral subspace associated with exponentially small eigenvalues. This
concludes the proof.
\end{proof}

It is not difficult to adapt the previous argument to the case when
the function $f$ has several local maxima. Set $(x_{0},r_0)=(0,1)$  and consider a finite number of
points and radii $(x_{k},r_{k})_{1\leq k\leq N}$
 such that the open discs $D(x_{k},r_{k})$\,, $k=0,\ldots N$\,, are all
 disjoints
and included in $D(0,2)$\,. Let us consider the function
$
f(x)=\varphi_{ext}\left(\frac{x}{2}\right)
+\sum_{k=0}^{N}\varphi_{in}\left(\frac{x-x_{k}}{r_{k}}\right)
$ (see Figure~\ref{fig:2D_degenere_bis}).
Then the Hypotheses~\ref{hyp.0},~\ref{hyp.3},~\ref{hyp.1} and~\ref{hyp.2} hold with
$
m_{0}^{N}(\Omega_{-})=1$, $m_{1}^{N}(\Omega_{-})=N+1$ and
$m_{1}^{D}(\Omega_{+}\setminus\overline{\Omega_{-}})=1$.

\begin{remarque}
Interestingly, one can extend the last example to build a function $f$
for which the Hypothesis~\ref{hyp.1} is {\em not} satisfied. Consider
an infinite sequence $(x_{k},r_{k})_{k\in \nz}$\,, with 
$x_{0}=0$ and $r_{0}=1$\,, and 
 such that the open discs $D(x_{k},r_{k})$\,, $k \ge 0$\,, are all
 disjoints
and included in $D(0,2)$. Take the function
$
f(x)=\varphi_{ext}\left(\frac{x}{2}\right)+\sum_{k=0}^{\infty}\frac{r_{k}^{k}}{(1+k^{2})}\varphi_{in}\left(\frac{x-x_{k}}{r_{k}}\right)
$
in the domain $\Omega_-=D((-R,0), 2R-1)$ with $R>3$ large enough.
By Lemma~\ref{le.apriori} we know $m_{0}^{N}(\Omega_-)=1$\,, while
quasimodes associated with every $x_{k}$ show that the number of
eigenvalues of $\Delta_{f,h}^{N,(2)}(\Omega_-)$ (or equivalently
$\Delta_{-f,h}^{D,(0)}(\Omega_-)$) lying in $[0,e^{-\frac{\delta}{h}}]$
is larger than any fixed $n \in \nz$ for $h$ sufficiently small. Using
as in the proof of Proposition~\ref{pr.exampdeg2D} the identity $m_{2}^{N}(\Omega_{-})-m_{1}^{N}(\Omega_{-})+m_{0}^{N}(\Omega_{-})=1$, the number
of eigenvalues of $\Delta_{f,h}^{N,(1)}(\Omega_-)$ lying in
$[0,e^{-\frac{\delta}{h}}]$ 
is thus also larger than any $n \in \nz$ for $h$ sufficiently small. 
The Hypothesis~\ref{hyp.1} is not satisfied. 

Actually, there are
up to now no satisfactory necessary and sufficient
  conditions which guarantees that Witten Laplacians with general
  $\mathcal{C}^{\infty}$ potentials have a finite number of
  exponentially small eigenvalues.
\end{remarque}



\begin{figure}[htbp]
\centerline{\includegraphics[width=0.5\textwidth]{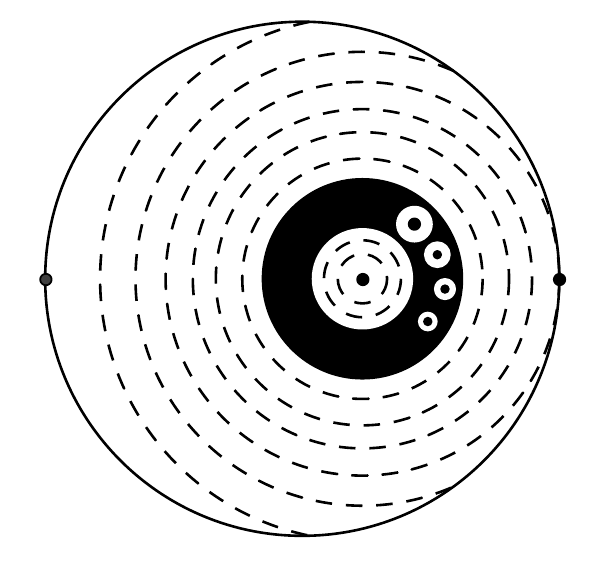}}
\caption{A variant of Figure~\ref{fig:2D_degenere} with $N=4$. The
  supports of the additional terms in $f$ (compared with Figure~\ref{fig:2D_degenere}) are represented by the white disks. }\label{fig:2D_degenere_bis}
\end{figure}

\appendix
\section{Riemannian geometry formulas}
\label{se.formulas}
For the sake of completeness and in order to help the reader not
so familiar with those tools, here is a list of formulas of
Riemannian geometry which were used in this text. 
We refer the reader for example to
\cite{AbMa,CFKS,GHL,Ste,Gol98} for
introductory texts to differential and Riemannian geometry.
 We also consider here only real-valued
differential forms (the extension to complex-valued
differential forms is easy).

Let $(M,g)$ be a $d$-dimensional Riemannian manifold.
The tangent (resp. cotangent) bundle is denoted
by $TM$ (resp. $T^{*}M$) and its fiber over $x\in M$, $T_{x}M$
(resp. $T_{x}^{*}M$)\,.
The exterior algebra over $T_{x}^{*}M$ is $\bigwedge
T_{x}^{*}M=\bigoplus_{p=0}^{d}\bigwedge^{p}T_{x}^{*}M$
endowed with the exterior product $\wedge$ and the associated
 fiber bundle is denoted by $\bigwedge T^{*}M=\bigoplus
 \bigwedge^{p}T^{*}M$\,.  The exterior product of $p$ elements
 $(\varphi_i)_{1 \le i \le p}$ of
 $T_{x}^{*}M$ is defined by:
$$\varphi_1 \wedge \ldots \wedge \varphi_p=\sum_{\sigma \in
  \mathfrak{S}_{\{1,\ldots,p\}}} \epsilon _{\{1,\ldots,p\}} (\sigma) \,
\varphi_{\sigma(1)}\otimes\cdots\otimes \varphi_{\sigma(p)} \,
,$$
where $\epsilon_{E}(\pi)$
is the signature of the permutation $\pi\in \mathfrak{S}_{E}$\,.
Differential forms are sections of this fiber bundle and their
regularity is encoded by the notation: $\bigwedge
\mathcal{C}^{\infty}(M)$ is the set of
$\mathcal{C}^{\infty}$-differential forms, $\bigwedge L^{2}(M)$ is the
set of $L^{2}$-differential forms, and so on. This notation was
used in the present text for the sake of  conciseness. A more standard and general notation would be
$\mathcal{C}^{\infty}(M;\bigwedge T^{*}M)$ as for
$\mathcal{C}^{\infty}(M;E)$, the set of $\mathcal{C}^{\infty}$
sections of
the differential fibre bundle $(E,\Pi)$ on $M$ with $\Pi: E\to M$ (a
section $x\to s(x)$ satisfies $\Pi(s(x))=x$).

In a local coordinate system $(x^{1},\ldots,x^{d})$ a basis of
$\bigwedge^{p} T_{x}^{*}M$ is formed by the elements
$$
dx^{I}=dx^{i_{1}}\wedge\cdots\wedge dx^{i_{p}} \quad,\quad
  I= \{i_{1},\ldots,i_{p}\}\quad,\quad i_{1}< \ldots <i_{p}\, .
$$
Here and in the following, $I=\{i_1, \ldots ,i_p\}$ denotes a subset of $\{1, \ldots, d\}$
with $\#I=p$ elements, which can be identified equivalently to an ordered
$p$-uplet $(i_1,\ldots,i_p)$ with $i_1 < \ldots < i_p$.

A differential form $\omega\in \bigwedge^{p}T^{*}M$ is written
$$
\omega=\sum_{\sharp I=p}\omega_{I}(x)~dx^{I}\,,
$$
and its differential is given by 
$$
d\omega=\sum_{\sharp I =p}\partial_{x^{i}}\omega_{I}(x)dx^{i}\wedge
dx^{I}\,.
$$
Remember that the exterior product is bilinear  associative and
antisymmetric:
$$
\omega_{1}\wedge \omega_{2}=(-1)^{p_{1}p_{2}}\omega_{2}\wedge
\omega_{1}\,,\quad \omega_{i}\in \bigwedge^{p_{i}}T^{*}_{x}M\,.
$$
The differential and the $\wedge$ product satisfy $d\circ d=0$ and
$$ d(\omega_{1}\wedge \omega_{2})=d\omega_{1}\wedge
\omega_{2}+(-1)^{p_{1}}\omega_{1}\wedge d\omega_{2}\,,\quad
\omega_{i}\in \bigwedge^{p_{i}}\mathcal{C}^{\infty}(M)\,.
$$A $\mathcal{C}^{\infty}$-vector field $X$ on $M$ is a
$\mathcal{C}^{\infty}$ section of $TM$, $X\in
\mathcal{C}^{\infty}(M;TM)$\,. The interior product $\mathbf{i}_{X}$
is  the local operation defined for $X_{x}\in T_{x}M$ and $\omega_{x}\in
\bigwedge^{p}T_{x}^{*}M$ by
\begin{equation}\label{eq:interior}
\mathbf{i}_{X_{x}}\omega_{x} (T_{2},\ldots, T_{p})=
\omega_{x}(X_{x},T_{2},\ldots, T_{p})\,,\quad \forall
T_{2},\ldots,T_{p}\in T_{x}M\,.
\end{equation}
For $X\in \mathcal{C}^{\infty}(M;TM)$ and $\omega_{i}\in
\bigwedge^{p_{i}}\mathcal{C}^{\infty}(M)$, one has
$$
\mathbf{i}_{X}(\omega_{1}\wedge \omega_{2})=
(\mathbf{i}_{X}\omega_{1})\wedge \omega_{2}
+(-1)^{p_{1}}\omega_{1}\wedge (\mathbf{i}_{X}\omega_{2})\,.
$$
When $\Phi:M\to N$ is a $\mathcal{C}^{\infty}$ map $\Phi_{*}$
denotes the functorial push-forward and $\Phi^{*}$ the functorial
pull-back. 
For a $\mathcal{C}^{\infty}$-map $\Phi$, two forms $\omega_{1}$,
$\omega_{2}$\,, one has
$$
\Phi^{*}(d\omega_{1})= d\left(\Phi^{*}\omega_{1}\right)\,,\quad
\Phi^{*}(\omega_{1}\wedge \omega_{2})=(\Phi^{*}\omega_{1})\wedge(\Phi^{*}\omega_{2})\,.
$$
When $\Phi$ is a diffeomorphism, $\omega$ a $p$-form and $X$ a vector
field
$$
\Phi^{*}\mathbf{i}_{X}\omega=\mathbf{i}_{\Phi^{*}X}\Phi^{*}\omega\,.
$$
When $\Phi$ is a diffeomorphism given by the exponential map of a
vector field $X$, this allows to define the Lie derivative
\begin{equation}\label{eq:Lie}
\mathcal{L}_{X}\omega=\frac{d}{dt}(e^{tX})^{*}\omega\Big|_{t=0}\quad\text{for}~\omega\in\bigwedge \mathcal{C}^{\infty}(M)\,.
\end{equation}
The Lie derivative satisfy
$$
\mathcal{L}_{X}(\omega_{1}\wedge
\omega_{2})=(\mathcal{L}_{X}\omega_{1})\wedge
\omega_{2}+\omega_{1}\wedge(\mathcal{L}_{X}\omega_{2})\,,
$$
and the magic Cartan's formula says
$$
\mathcal{L}_{X}=\mathbf{i}_{X}\circ d + d\circ \mathbf{i}_{X}\,.
$$
Differential forms $\omega$ with degree $p$ can be integrated along a $p+1$-chain,
or briefly a $p+1$-dimensional submanifold with boundary; let us write it $C$
with the boundary $\partial C$\,. Stoke's formula is written
$$
\int_{C}d\omega=\int_{\partial C}\omega\,,
$$
and it is the ground for de~Rham's cohomology.\\
The Riemannian structure adds the pointwise dependent scalar product
$g(x)$ given by
$$
\langle S,T\rangle_{T_{x}M}=\sum_{1\leq i,j\leq d}g_{i,j}(x)S^{i}T^{j}
$$
and with a dual metric
$(g^{i,j}(x))_{1\leq i,j\leq d}:=g({x})^{-1}$ defined on
$T^{*}_{x}M$\,. This is also written with Einstein's conventions
$$
g=g_{i,j}dx^{i}dx^{j}\quad, \quad g_{i,j}g^{j,k}=\delta_{i}^{k}\,.
$$
Both $g(x)$ and $g(x)^{-1}$ are extended by tensor product to $\bigwedge T_{x}M$
and $\bigwedge T_{x}^{*}M$: for $\omega,\omega'\in
\bigwedge^{p} \mathcal{C}^{\infty}(M)$,
$$\langle \omega'\,,\,
\omega\rangle_{\bigwedge^{p}T_{x}^{*}M}=\sum_{\#I=p}\sum_{\#J=p} \left(\prod_{k=1}^p
g^{i_k,j_k}\right) \omega'_I \omega_J,
$$
where $I=\{i_1,\ldots,i_p\}$ ($i_1< \ldots < i_p$) and $J=\{j_1,\ldots,j_p\}$ ($j_1< \ldots < j_p$).

%
%

 The Riemannian infinitesimal volume
(denoted simply by $dx$ in the text) is
in an oriented local coordinate system:
$$
d {\rm Vol}_{g}(x)=(\det g)^{\frac{1}{2}}~dx^{1}\wedge\ldots\wedge
dx^{d}=
(\det g)^{\frac{1}{2}}~dx^{1}\ldots dx^{d}\,.
$$
Those scalar products as non degenerate bilinear forms,
 allow identifications between forms
and vectors. Here are examples: when $\omega=\omega_{i}(x)dx^{i}$ is a one
form, the vector $\omega^{\sharp}$ is given by
$(\omega^{\sharp})^{i}=g^{i,j}\omega_{j}$; when
$X=X^{i}\partial_{x^{i}}$ is a vector field $X^{\flat}$ is the one
form defined by $(X^{\flat})_{i}=g_{i,j}X^{j}$\,. As an application the
gradient for a function is nothing but $\nabla
f=(df)^{\sharp}$\,. Similarly the Hessian of a function $f$ at a point
$x$, initially defined as a
bilinear form,  can be viewed a linear map of $T_{x}M$\,.

Another duality between forms of complementary degrees
$p+p'=d=\dim M$\,, is provided by Hodge~$\star$ operator.
When the Riemannian manifold $(M,g)$ is orientable (locally it is
always the case), the operator
$\star:\bigwedge^{p}\mathcal{C}^{\infty}(M)\to
\bigwedge^{d-p}\mathcal{C}^{\infty}(M)$\,, is defined by
$$
\int \langle \omega'\,,\,
\omega\rangle_{\bigwedge^{p}T_{x}^{*}M}~d{\rm Vol}_{g}(x)
=\int \omega'\wedge (\star \omega)\,,\quad \omega,\omega'\in
\bigwedge^{p} \mathcal{C}^{\infty}(M)\,.
$$
In a coordinate system it is given by
$$
(\star\omega)_{J}=\sum_{I}\delta_{I\cup J}^{\left\{1,\ldots,d\right\}}
\epsilon_{\left\{1,\ldots,d\right\}}(I,J)(\det g)^{1/2}(\omega^{\#})^{I}\quad,\quad
\left\{
\begin{array}[c]{l}
  I=\{i_{1},\ldots,i_{p}\}, \, i_1< \ldots < i_p\,,\\
  J=\{j_{1},\ldots,j_{d-p}\}, \, j_1< \ldots < j_{d-p}\,,\\
  (I,J)=(i_{1},\ldots,i_{p},j_{1},\ldots,j_{d-p})\,,
\end{array}
\right.
$$
where $\delta_{A}^{B}=1$ when $A=B$ and $0$ otherwise.
We have the additional properties for $\omega,\omega'\in
\bigwedge^{p}\mathcal{C}^{\infty}(M)$:
\begin{align*}
& \star(\lambda\omega+\omega')=\lambda\star\omega+\star\omega'\,,\quad
  \lambda\in \mathcal{C}^{\infty}(M)\,,\\
&\star\star\omega=(-1)^{p(d+1)}\omega\,,\\
&
\omega\wedge(\star \omega')= \omega'\wedge(\star \omega)\,,\\
& \star 1=d{\rm Vol}_{g}(x)\quad\text{(assuming $M$ oriented)}.
\end{align*}
The codifferential $d^{*}$ is defined as the formal adjoint  of the
differential $d:\bigwedge \mathcal{C}^{\infty}(M)\to
\bigwedge \mathcal{C}^{\infty}(M)$:
$$
\langle d\omega\,,\,\omega'\rangle=\langle \omega\,,\,
d^{*}\omega'\rangle \, .
$$
 With Hodge~$\star$ operator
(do the identification on a compact oriented manifold without boundary
with $\int_{M}d\eta =0$), it
is written
$$
\forall \omega\in
\bigwedge^{p}\mathcal{C}^{\infty}(M)\,,\quad
\left\{
  \begin{array}[c]{l}
 \star d^{*}\omega =(-1)^{p}d\star\omega\,,\\
\star d\omega=(-1)^{p+1}d^{*}\star\omega\,,\\
    d^{*}\omega=(-1)^{pd+d+1}\star d \star \omega\,.
  \end{array}
\right.
$$
The Hodge Laplacian is then given by
\begin{equation}\label{eq:Hodge}
\Delta_{\textrm{H}}=(d+d^{*})^{2}=dd^{*}+d^{*}d\,.
\end{equation}
It is possible to write $d^{*}$ and $\Delta_{\textrm{H}}$ in a coordinate
system. For example 
\begin{align*}
  &(d^{*}\omega)_{I}=-g^{i,j}\delta^{J}_{i\cup
    I}\epsilon_{J}(i,I)\nabla_{j}\omega_{J}\,,
  \quad(i,I)=(i,i_{1},\ldots,i_{p-1})\\
&
\nabla_{j}\omega_{J}=\partial_{x^{j}}\omega_{J}-\sum_{\ell=1}^{p}\omega_{I\cup\left\{k\right\}\setminus
i_{\ell}}
\epsilon_{I\cup\left\{k\right\}\setminus
i_{\ell}}(i_{1}\dots i_{\ell-1},k,i_{\ell+1},\dots i_{p})\Gamma^{~k}_{i_{\ell}~j}\\
& \Gamma^{k}_{i_{\ell},j} = \frac{1}{2}g^{k,
  m}\left(\partial_{x^{i_{\ell}}}g_{j,m}+\partial_{x^{j}}g_{m,
    i_{\ell}}-\partial_{x^{m}}g_{i_{\ell}, j}\right)\,,
\end{align*}
where one recognizes the covariant derivative $\nabla_{j}$ associated
with the metric $g$ (Levi-Civita connection) and the Christoffel
symbols $\Gamma^{j}_{k,\ell}$\,. The writing of $\Delta_{\textrm{H}}$ involves
the Riemann curvature tensor and is known as Weitzenbock's formula.
We wrote the above example to convince the non familiar reader that
the explicit writing in a coordinate system is not always more informative
than the intrinsic formula.

Here is the example of the Witten Laplacian, $\Delta_{f,h}=(d_{f,h}+d_{f,h}^{*})^{2}=d_{f,h}^{*}d_{f,h}+d_{f,h}d_{f,h}^{*}$: 
\begin{align}
   d_{f,h}&=e^{-\frac{f}{h}}(hd)e^{\frac{f}{h}}=hd +df\wedge \,, \label{eq:dfh_d} \\
 d_{f,h}^{*}&=e^{\frac{f}{h}}(hd^{*})e^{-\frac{f}{h}}=hd^{*}
+\mathbf{i}_{\nabla f} \, , \label{eq:dfh*_d*} \\
 \Delta_{f,h} &= d_{f,h}d_{f,h}^{*}+d_{f,h}^{*}d_{f,h}=
(hd+df\wedge)(hd^{*}+\mathbf{i}_{\nabla f})
+(hd^{*}+\mathbf{i}_{\nabla f}) (hd+df\wedge) \nonumber \\
& = h^{2}(dd^{*}+d^{*}d)+ [(df\wedge)\circ \mathbf{i}_{\nabla f}
+\mathbf{i}_{\nabla f}\circ(df\wedge)]
+ h[d\mathbf{i}_{\nabla f}+\mathbf{i}_{\nabla f}d]
+ h[(df\wedge)\circ d^{*}+d^{*}\circ(df\wedge)]
\nonumber \\
& = h^{2}\Delta_{\textrm{H}}+|\nabla f|^{2}+h(\mathcal{L}_{\nabla f}+
\mathcal{L}_{\nabla f}^{*})\,, \label{eq:hodge_witten}
\end{align}
where we used $\mathbf{i}_{X}(df\wedge \omega)=df(X)\omega -
df\wedge(\mathbf{i}_{X}\omega)$ with $X=\nabla f$\,, Cartan's magic
formula and an easy identification of $\mathcal{L}_{\nabla
  f}^{*}$\,. No explicit computation of $d^{*}$ or the Hodge Laplacian
is necessary to understand the structure of the Witten Laplacian. In
particular $\mathcal{L}_{X}+\mathcal{L}_{X}^{*}$ is clearly
 a zeroth order
differential operator because in a coordinate system the formal adjoint of
$a^{j}(x)\partial_{x^{j}}$ in $L^{2}(\rz^{d},\varrho(x)dx)$ equals
$-a^{j}(x)\partial_{x^{j}}+b[a,\varrho](x)$ where 
$b[a,\varrho]$ is the multiplication by a function of $x$. The
operator $\mathcal{L}_{\nabla f}+\mathcal{L}_{\nabla f}^{*}$ is not
the local action of a tensor field on $M$\,, because it does not
follow the change of coordinates rule for tensors.
Actually, one can give a meaning to the general
writing 
$$
\Delta_{f,h}^{(p)}=h^{2}\Delta_{\textrm{H}}^{(p)}+|\nabla
f|^{2}-h(\Delta f) + 2 h
\left(\Hess f\right)_{p}\,,
$$
where $\left(\Hess f\right)_{p}$ is an element of the curvature tensor
algebra (see~\cite{Jam} and references therein).

Let us conclude this appendix with integration by parts formulas in
the case of a manifold with a boundary. All these formulas rely first on Stokes
formula $\int_{\Omega}d\omega=\int_{\partial \Omega}\omega$ when
$\omega\in
\bigwedge^{d-1}\mathcal{C}^{\infty}(\overline{\Omega})$\,. Note that
the right-hand side of Stokes formula would equivalently (and more
explicitly written) $\int_{\partial \Omega}\omega=\int_{\partial
  \Omega}j^{*}\omega$ where $j:\partial \Omega\to \overline{\Omega}$
is the natural embedding map (a trace along $\partial \Omega$ is taken
and pointwise $j^{*}\omega_{x}$ is evaluated only on $(d-1)$ vector
{\em tangent} to $\partial \Omega$).
Another writing taken initially from \cite{Schz} is also convenient. 
For $\sigma\in \partial \Omega$ let $n(\sigma)$ be the outward normal
vector and write for any element $X\in T_{\sigma}M$
$X=X_{T}+X_{n} n$\,.\\
For $\omega\in \bigwedge^{p}\mathcal{C}^{\infty}(\overline{\Omega})$
defined $\mathbf{t}\omega$, and $\mathbf{n}{\omega}=\omega-\mathbf{t}\omega$ by
$$
\forall \sigma\in \partial \Omega\,,\, \forall X_{1},\ldots, X_{p}\in
T_{\sigma}\Omega\,,\quad
\mathbf{t}\omega(X_{1},\ldots,
X_{p})=\omega(X_{1,T},\ldots, X_{p,T})\,.
$$
If $(x^{1},\ldots,x^{d})=(x',x^{d})$ is a coordinate system in a
neighborhood $\mathcal{V}$ of
$\sigma_{0}\in \partial \Omega$ such that $\Omega\cap \mathcal{V}$ is
given locally by $\left\{x^{d}<0\right\}$\,, $\partial \Omega \cap
\mathcal{V}$ by $\left\{x^{d}=0\right\}$ and
$n=\partial_{x^{d}}$\,, then a $p$-form can be written
$$
\omega=\sum_{\scriptsize
  \begin{array}[c]{c}
 \sharp I=p\\ 
 d\not\in I
  \end{array}}
\omega_{I}dx^{I}+ 
\sum_{\scriptsize \begin{array}[c]{c}
    \sharp I'=p-1\\ d\not\in I'
  \end{array}}
\omega_{I'}dx^{I'}\wedge dx^{d}\,,
$$
and the operators $\mathbf{t}$ and $\mathbf{n}$ acts as
$$
\mathbf{t}\omega= \sum_{\scriptsize
  \begin{array}[c]{c}
 \sharp I=p\\ 
 d\not\in I
  \end{array}}\omega_{I}dx^{I}
\quad,\quad
\mathbf{n}\omega=  \sum_{\scriptsize \begin{array}[c]{c}
    \sharp I'=p-1\\ d\not\in I'
  \end{array}}\omega_{I'}dx^{I'}\wedge dx^{d}\,.
$$
Stokes formula can be written now $\int_{\Omega}d\omega=\int_{\partial
\Omega}\mathbf{t}\omega$ for $\omega\in
\bigwedge^{d-1}\mathcal{C}^{\infty}(\overline{\Omega})$\,, but
contrary to the operator $j^{*}$ the operator $\mathbf{t}$ makes sens
in a collar neighborhood of $\partial \Omega$\,. In particular the
following formula 
$$
\mathbf{t}d\omega=d\mathbf{t}\omega
$$
makes sense for any $\omega \in \bigwedge
\mathcal{C}^{\infty}(\overline{\Omega})$ and it is rather easy to
check with the above coordinates description.
One also gets in the same way
\begin{align}
\label{eq.tn1}
  & \mathbf{t}\omega=
  \mathbf{i}_{n}(n^{\flat}\wedge
  \omega)\quad\text{for }\omega\in \bigwedge\mathcal{C}^{\infty}(\overline{\Omega})\,,
\\
\label{eq.tn2}
&\star\mathbf{n}=\mathbf{t}\star\,,\quad
\star\mathbf{t}=\mathbf{n}\star\,,\\
\label{eq.tn3}
&
\mathbf{t}d=d\mathbf{t}\,,\quad \mathbf{n}d^{*}=d^{*}\mathbf{n}\,,\\
\label{eq.tn4}&
\mathbf{t}\omega_{1}\wedge \star \mathbf{n}\omega_{2}= \langle
\omega_{1}\,,\,
\mathbf{i}_{n}\omega_{2}\rangle_{\bigwedge^{p}T^{*}_{\sigma}\Omega}\times 
d{\rm Vol}_{g,\partial\Omega}\quad\text{for } \omega_{i}\in \bigwedge^{p}\mathcal{C}^{\infty}(\overline{\Omega})\,,
\end{align}
where we recall that $d{\rm Vol}_{g,\partial \Omega}(X_{1},\ldots,
X_{d-1})=d{\rm Vol}_{g}(n, X_{1},\ldots, X_{d-1})$\,.

The above formulas for example lead to the following integration by parts for
$\omega_{1},\omega_{2}\in
\bigwedge^{p}\mathcal{C}^{\infty}(\overline{\Omega})$~:
\begin{align*}
\langle d_{f,h}\omega_{1}\,,\, d_{f,h}\omega_{2}\rangle_{L^2(\Omega)}+ \langle
d_{f,h}^{*}\omega_{1}\,,\, d_{f,h}^{*}\omega_{2}\rangle_{L^2(\Omega)}
&=
\langle \omega_{1}\,,\,  \Delta_{f,h}\omega_{2}\rangle_{L^2(\Omega)}\\
&\quad
+ h\int_{\partial \Omega}(\mathbf{t}\omega_{2})\wedge{\star
  \mathbf{n}d_{f,h}\omega_{1}}
- h\int_{\partial \Omega}(\mathbf{t}d_{f,h}^{*}\omega_{1})\wedge(\star
\mathbf{n}\omega_{2})\,.
\end{align*}
This shows for example that $\Delta_{f,h}^{D}$
(resp. $\Delta_{f,h}^{N}$) with its form domain $W^{1,2}_{D}=\left\{\omega\in
  \bigwedge W^{1,2}\,,
 \mathbf{t}\omega=0\right\}$ (resp. $W^{1,2}_{N}=\left\{\omega\in
  \bigwedge W^{1,2}\,,
 \mathbf{n}\omega=0\right\}$) is associated with the Dirichlet
form $\|d_{f,h}\omega\|^{2}+\|d_{f,h}^{*}\omega\|^{2}$\,. Interpreting
the weak formulation of $\Delta_{f,h}\omega= f$ leads to the operator
domains $D(\Delta_{f,h}^{D})$ and $D(\Delta_{f,h}^{N})$ (we refer the
reader to \cite{HeNi} for details).  The boundary terms
of Lemma~\ref{le.Agmon}  are obtained in a very similar way.

\medskip
\noindent\textbf{Acknowledgements:} The first author would like to
thank C. Le Bris, D. Perez and A. Voter for useful discussions. This work was essentially
completed while the second author had a ``D{\'e}l{\'e}gation INRIA'' in CERMICS
at
Ecole des Ponts. He acknowledges the support of INRIA and
thanks the people of CERMICS for their hospitality.

\end{document}